\documentclass[11pt]{amsart}
\usepackage{aliascnt}
\usepackage{amsmath,amsfonts,amssymb,amsthm}
\usepackage{latexsym,bm,graphicx}
\usepackage[msc-links, lite, alphabetic]{amsrefs}
\usepackage{mathrsfs}
\usepackage{color}
\usepackage{hyperref}
\usepackage[all]{xy}

\title[Pinching  and collapsing]{Fibrations, the First Betti Number, and Almost Nonnegative Ricci Curvature}
\author{Hongzhi Huang, Xian-Tao Huang, Jikang Wang, and Xingyu Zhu}
\address{Hongzhi Huang \\ College of Information Science and Technology \\ Jinan University\\ Guangzhou 510275}
\email{\href{mailto:hyyqsaax@163.com}{hyyqsaax@163.com}
}

\address{Xian-Tao Huang\\School of Mathematics\\  Sun Yat-sen University\\ Guangzhou 510275}
\email{\href{mailto:hxiant@mail2.sysu.edu.cn}{hxiant@mail2.sysu.edu.cn}
}

\address{Jikang Wang\\Department of Mathematics\\ University of California, Berkeley\\ Berkeley 94720}
\email{\href{mailto:jikangwang1117@gmail.com}{jikangwang1117@gmail.com}
}

\address{Xingyu Zhu \\ Department of Mathematics\\ Michigan State University\\ Lansing 48824}
\email{\href{mailto:zhuxing3@msu.edu}{zhuxing3@msu.edu}
}

\newtheorem{theorem}{Theorem}[section]
\newtheorem{thm}{Theorem}[section]
\newtheorem{prop}[theorem]{Proposition}
\newtheorem{lem}[theorem]{Lemma}

\newtheorem{defn}[theorem]{Definition}
\newtheorem{rem}[theorem]{Remark}

\newtheorem{cor}[theorem]{Corollary}

\newtheorem{prob}[theorem]{Problem}
\newtheorem{claim}[theorem]{Claim}

\newtheorem{mainthm}{Theorem}[section] 
\newaliascnt{myMainThm}{mainthm}        
\newtheorem*{myMainThm*}{Theorem \Alph{mainthm}} 

\newenvironment{maintheorem}[1][]
{\begin{mainthm}[#1]}
	{\end{mainthm}}

\theoremstyle{definition}
\theoremstyle{remark}

\numberwithin{equation}{section}

\newcommand {\diam }{\mathrm{diam}}
\newcommand {\Isom }{\mathrm{Isom}}

\newcommand {\rank}{\mathrm{rank}}

\newcommand{\leb}{\mathcal{L}}

\newcommand {\supp }{\mathrm{supp}}

\newcommand {\id}{\mathrm{id}}

\newcommand {\Z}{\mathbb{Z}}

\newcommand {\R}{\mathbb{R}}

\newcommand {\GH}{\xrightarrow{\mathrm{GH}}}

\newcommand {\pGH}{\xrightarrow{\mathrm{pGH}}}

\newcommand {\N}{\mathbb{N}}

\newcommand {\Rm}{\mathrm{Rm}}
\newcommand {\inj}{\mathrm{inj}}
\newcommand {\Lip}{\mathrm{Lip}}

\newcommand {\Ric}{\mathrm{Ric}}

\newcommand {\RCD}{\mathrm{RCD}}

\newcommand{\meas}{\mathfrak{m}}
\newcommand{\defeq}{\mathrel{\mathop:}=}

\newcommand{\mb}{\mathbb}
\newcommand{\mc}{\mathcal}

\newcommand\tbbint{{-\mkern -16mu\int}}

\newcommand\dbbint{{-\mkern -19mu\int}}

\newcommand\bbint{
	{\mathchoice{\dbbint}{\tbbint}{\tbbint}{\tbbint}}
}

\newcommand{\XXint}[3]{{
		\setbox0=\hbox{$#1{#2#3}{\int}$}
		\vcenter{\hbox{$#2#3$}}\kern-.5\wd0}}

\begin{document}

\maketitle
\begin{abstract}

	In this paper, we prove fibration theorems for manifolds with almost nonnegative Ricci curvature and certain extra regularity assumptions. We show that a closed $n$-manifold $M$ satisfying
	\begin{align*}
	\diam(M)^2\sec_M \geq -\kappa \quad \text{and} \quad \diam(M)^2\Ric_M \geq -\delta,
	\end{align*}
	where $\delta>0$ is sufficiently small depending only on $n$ and $\kappa$, fibers over a $b_1(M)$-torus. This removes the upper sectional curvature bound required in the earlier result of Yamaguchi \cite{Y88}. As a corollary, we obtain a refinement of Yamaguchi's smooth fibration theorem (\cite{Y91}), showing that the fiber itself (rather than a finite cover of it) fibers over a $b_1$-torus. Our results extend to manifolds satisfying a generalized Reifenberg condition introduced in \cite{HH24}, which encompasses both a lower bound on sectional curvature and the local rewinding Reifenberg condition. In the nonsmooth setting, a similar result also holds for a non-collapsed $\mathrm{RCD}(-\epsilon(D,r,n),n)$ space whose diameter is bounded by $D$ and which satisfies the $(r,\delta(n))$-local rewinding Reifenberg condition. The proofs rely on an equivariant regularity theorem for almost submetries under a lower Ricci curvature bound.
    In addition, we study the stability of rank of Abelian actions along equivariant Gromov-Hausdorff convergence in this paper.


	\vspace*{10pt}
	\noindent {\it 2010 Mathematics Subject Classification}: 	53C21, 53C23, 53B21.
	

\end{abstract}
\tableofcontents

\section{Introduction}  

A closed manifold $M$ is said to have \emph{almost nonnegative Ricci curvature} if there exists a small constant $\delta(n) > 0$, depending only on the dimension $n$ of $M$, such that its diameter and the minimal eigenvalue of its Ricci tensor satisfy
\begin{align}\label{pinching-ricci}
	\diam(M)^2 \Ric_M \geq -\delta(n).
\end{align}
Gromov~\cite{Gr81-II} (see also Gallot~\cite{Ga83}) proved that if $M$ satisfies~\eqref{pinching-ricci} for a sufficiently small $\delta(n)$, then the first Betti number $b_1(M) \leq n$.
Furthermore, Gromov conjectured that the equality $b_1(M) = n$ implies that $M$ is diffeomorphic to the flat $n$-torus, a conjecture that was finally resolved by Colding~\cite{Col97}.

A natural question arises: under the pinching condition \eqref{pinching-ricci}, what specific geometric structure is guaranteed when $b_1(M) < n$? Given an upper bound on sectional curvature, Yamaguchi proved the following fibration theorem.

\begin{theorem}[\cite{Y88}]\label{thm-pinching-Yamaguchi-1}
	For any $n\in\mathbb{Z}^{+}$ and $\kappa>0$, there exists $\delta(n,\kappa)>0$ such that the following holds. Let $M$ be an $n$-dimensional closed Riemannian manifold satisfying \eqref{pinching-ricci} and
	\begin{align*}
	\diam(M)^2\,\sec_M \le \kappa.
	\end{align*}
	Then $M$ fibers over a $b_{1}(M)$-dimensional torus.
\end{theorem}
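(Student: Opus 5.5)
We argue by contradiction and compactness. Rescale so that $\diam(M)=1$; then $\Ric_M\ge -\delta$ and $\sec_M\le\kappa$. Suppose the statement fails. Then there is a sequence $M_i$ with $\Ric_{M_i}\ge -\delta_i\to 0$, $\sec_{M_i}\le \kappa$ and $\diam(M_i)=1$, none of which fibers over $T^{b_1(M_i)}$. Since $b_1(M_i)\le n$ by Gromov's theorem, we may pass to a subsequence and assume $b_1(M_i)=k$ is constant.

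Let $\Gamma_i=\pi_1(M_i)$ and let $\hat M_i\to M_i$ be the cover with deck group $\Gamma_i/[\Gamma_i,\Gamma_i]$ modulo torsion, so the deck group is $\Z^k$. Passing to a subsequence, $(\hat M_i,\Z^k)$ converges in the equivariant pointed Gromov--Hausdorff sense to $(Y,G)$. By Cheeger--Colding splitting, $Y$ splits isometrically as $\R^{l}\times Z$, with $G$ acting cocompactly. The key claim is that, up to a finite-index subgroup, $G$ contains a lattice of translations of an $\R^k$ factor. More precisely, $G\supset\R^k$ acting by translations along $\R^k\times Z$, and $Z$ is compact. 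This is where the first Betti number enters. The generators of $\Z^k$ can be chosen (via a short-basis argument as in Gromov and Colding) with displacement bounded by a constant $C(n)$, and the rank of the limit abelian translation action is at least $k$. Stability of rank along equivariant convergence is exactly what one has to prove here. The almost-nonnegative Ricci bound forces the limit group to be abelian-by-compact, acting cocompactly on a space that splits off lines in the directions of the abelian part. Hence $M_i\GH X=Y/G$, and there is a natural submetry $X\to T^k=\R^k/\Lambda$ for a lattice $\Lambda$.

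Next, we use the upper curvature bound to upgrade this metric submetry to a smooth fibration $M_i\to T^k$. With $|\sec|$ only bounded above and $\Ric$ bounded below, one does not directly have Cheeger--Fukaya--Gromov. Instead we work on the universal or local covers. The upper bound $\sec\le\kappa$ gives a uniform lower bound on the conjugate radius, so we can pull the metric back to balls in tangent spaces via $\exp$. These pulled-back balls have injectivity radius bounded below, $\sec\le\kappa$ and $\Ric\ge-\delta$, which yields $C^{1,\alpha}$ harmonic-coordinate control (Anderson). On these local covers, we construct $\R^k$-valued almost-splitting maps from distance or harmonic functions along the $k$ limit line directions. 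These maps are equivariant up to the $\Z^k$-deck action modulo the lattice $\Lambda$, and by the $C^{1,\alpha}$ control they have nondegenerate differential, of rank $k$. We then average them to get a map $f_i\colon M_i\to T^k$ that is a submersion, hence a fiber bundle by Ehresmann.

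The main obstacle is this last step: showing that the almost-splitting maps have everywhere-surjective differential rather than merely an $L^2$ Hessian bound. Our first approach is to use the conjugate-radius lifting to obtain uniform $C^{1,\alpha}$ regularity of the lifted metrics, and then argue by a second contradiction. If $df_i$ degenerated somewhere, we would blow up at that point. The rescaled limit is a smooth manifold with $\Ric\ge0$, in which the $k$ limit harmonic functions are linear splitting functions with independent gradients, contradicting the degeneracy.
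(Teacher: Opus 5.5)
Your nondegeneracy step is not where the trouble lies. The bound $\sec\le\kappa$ together with $\Ric\ge-\delta$ already gives $\sec\ge-(n-2)\kappa-\delta$, so you do have two-sided curvature bounds, and your remark that Cheeger--Fukaya--Gromov is unavailable is wrong. This is exactly why the paper obtains the statement as a special case of Theorem~\ref{mainthm-mix-curv-pinching}. A $C^{1,\alpha}$-compactness argument on local lifts is a reasonable substitute for the paper's generalized Reifenberg condition and transformation theorem.

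The genuine gap is the passage from the limit to a map $M_i\to T^k$. Your claim that $X=Y/G$ admits a submetry onto $T^k=\R^k/\Lambda$ is false as soon as the limit $G$ of the deck groups is not discrete. Take $M_i$ to be flat $2$-tori with side lengths $\epsilon_i\to 0$ and $1$. Then $\hat M_i=Y=\R^2$, $G=\R\times\Z$, and $X=Y/G$ is a circle, which has no submetry onto $T^2$. Your key claim is also internally inconsistent: if $G\supset\R^k$ acts by translations on an $\R^k$-factor, then $Y/G$ keeps no torus factor at all.

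For the same reason, your local maps have nothing to be ``equivariant modulo $\Lambda$'' with respect to. The generator translating by $\epsilon_i$ must go to the identity of the discrete group $\Lambda$. Yet its $\lfloor\epsilon_i^{-1}\rfloor$-th power has displacement about $1$. So no homomorphism $\Z^k\to\Lambda$ approximates the action, and a center of mass cannot average over the infinite group $\Z^k$.

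The missing idea is an intermediate \emph{finite} cover.
\begin{enumerate}
\item Choose a discrete cocompact subgroup $H_0$ of the limit group whose translational part is a lattice in the Euclidean factor $\R^l$ of $Y$.
\item Realize $H_0$ on large balls of $\hat M_i$ by pseudo-subgroups of the deck group (Lemma~\ref{LocalAction}). This gives finite covers $\bar M_i\to M_i$ converging to $Y/H_0$, which does admit a submetry onto $T^l$. Their finite abelian deck groups $K_i$ converge to the compact group $G/H_0$.
\item Compactness of this limit group lets you replace the almost-action by a genuine homomorphism $\psi_i\colon K_i\to\Isom(T^l)$ (Lemma~\ref{lem6.2}, after \cite{MRW08}).
\item A center of mass over the finite group $K_i$ then gives an exactly $K_i$-equivariant regular map $\bar M_i\to T^l$ (Theorem~\ref{thm:eq-submetry}). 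This map descends to a fibration $M_i\to T^l/\psi_i(K_i)$.
\end{enumerate}
In the example above this target torus collapses, which a fixed $T^k=\R^k/\Lambda$ cannot capture.

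You also still owe the inequality $l\ge k$, which you name but do not prove. In the paper this is Lemma~\ref{lem:rank non-dec}: the rank of the limit group is at least $k$, proved by induction on the rank by passing to the quotient by a $\Z$-subgroup. The reverse inequality then comes from connectedness of the fibers, since a bundle over $T^l$ with connected fiber has $b_1\ge l$.
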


Yamaguchi (\cite{Y88}) further conjectured that the conclusion remains true even without the upper bound on sectional curvature. However, Anderson (\cite{An92-I}) constructed a counterexample as follows: for $n \geq 4$, there exists a sequence of closed $n$-manifolds $\{M_i\}$ such that $|\Ric_{M_i}| \to 0$, $b_1(M_i) = k$ (where $k\leq n-1$), and $M_i$ converges in the Gromov--Hausdorff sense to a flat torus $T^k$, but no cover of $M_i$ fibers over $S^1$.

When the pinching condition \eqref{pinching-ricci} is strengthened to the case of almost nonnegative sectional curvature, Yamaguchi established the following result.

\begin{theorem}[\cite{Y91}]\label{thm-pinching-Yamaguchi}
	There exists a constant $\delta(n) > 0$, depending only on $n$, such that any compact $n$-manifold $M$ satisfying
	\begin{align}\label{pinching-1.2}
		\diam(M)^2 \sec_M \geq -\delta(n)
	\end{align}
	has the following properties:
	\begin{enumerate}
		\item[(a)] A finite cover of $M$ fibers over a torus of dimension $b_1(M)$;
		\item[(b)] If $b_1(M) = n$, then $M$ is diffeomorphic to the $n$-torus.
	\end{enumerate}
\end{theorem}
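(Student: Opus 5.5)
The strategy follows Yamaguchi's original argument: argue by contradiction and pass to a limit, use the fibration theorem for collapse under a lower sectional curvature bound, and produce the torus from the abelianized fundamental group. Rescale so that $\diam(M)=1$, which makes \eqref{pinching-1.2} read $\sec_M\ge -\delta(n)$.

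\textbf{Contradiction setup.} Suppose the statement fails. Then there is a sequence $M_i$ with $\diam(M_i)=1$ and $\sec_{M_i}\ge -\delta_i\to 0$ for which (a) or (b) fails. Choose the sequence so that $b_1(M_i)=k$ is constant, which is possible since $b_1\le n$ by Gromov's estimate.

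\textbf{The torus action on a cover.} Let $\Gamma_i\subset \pi_1(M_i)$ be the kernel of $\pi_1(M_i)\to H_1(M_i;\Z)/\mathrm{torsion}\cong\Z^k$, and let $\hat M_i=\tilde M_i/\Gamma_i$. This is a normal cover of $M_i$ with deck group $\Z^k$, and it is an Alexandrov space with curvature $\ge -\delta_i$. Pass to an equivariant Gromov--Hausdorff subsequence
\[
(\hat M_i,\Z^k)\to(Y,G), \qquad Y/G=\lim M_i=X .
\]
Here $Y$ is a nonnegatively curved Alexandrov space and $G$ is an abelian closed group of isometries.
\begin{itemize}
\item First use short generators (Gromov's argument) to show that $\Z^k$ admits a basis whose displacements are all at most $C(n)$.
\item The limit $G$ is an abelian Lie group containing a lattice image, so $G\cong \R^{m}\times$ (compact).
\item By the splitting theorem, the $\R^k$-directions coming from the $\Z^k$-orbits split off isometrically, giving $Y=\R^k\times Y'$.
\item The induced action gives $X\to T^k=\R^k/\Lambda$, a flat torus quotient, together with a submetry $X\to T^k$.
\end{itemize}
The rank claim is the delicate point: the $\Z^k$ lattice must not lose rank in the limit, which I would guarantee by choosing generators that are almost orthogonal in the sense of Gromov's short basis.

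\textbf{Lifting the fibration.} Next apply Yamaguchi's fibration/almost-submetry regularity theorem for lower sectional curvature bounds. Since $T^k$ is smooth (flat) and $M_i$ is GH-close to $X$, which admits a submetry onto $T^k$, the composite almost-submetry $M_i\to X\to T^k$ can be smoothed. I would do this equivariantly on $\hat M_i$: take $k$ almost-linear functions $\hat M_i\to\R^k$ that are $\Z^k$-equivariant, built by averaging distance functions to far points along the lattice directions. The resulting map $f_i\colon M_i\to T^k$ is a $\epsilon$-almost Riemannian submersion, hence a smooth fiber bundle by Yamaguchi's regularity.

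The main obstacle is the regularity step: showing that these equivariant almost-linear functions have no critical points, i.e.\ that $df_i$ has full rank. In Alexandrov geometry I would argue this via $(k,\epsilon)$-strainers: in the split factor $\R^k$, each point admits $k$ strainer pairs, and strainers are stable under GH approximation. This gives nondegeneracy of the gradients and hence the submersion.

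Because the construction needs to pass to a cover only in order to rule out torsion in the lattice, Yamaguchi's statement (a) allows a finite cover here. The finite cover is the one corresponding to the torsion and the compact part of $G$, whose index is bounded by $C(n)$.

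\textbf{Part (b).} When $k=n$, the fiber is a compact manifold of dimension $0$, so $f_i$ is a finite covering of $T^n$. The covering degree must be $1$: otherwise $b_1$ of the finite cover would be inconsistent with $\pi_1$ surjecting onto $\Z^n$ with short generators. Therefore $M$ is diffeomorphic to $T^n$.
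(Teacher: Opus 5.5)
There is a genuine gap where you pass from $(Y,G)$ to a torus. You assume that the $\Z^k$-orbits converge to a lattice in the split Euclidean factor. From this you conclude that $X=Y/G$ submetrizes onto $T^k$ and that $\Z^k$-equivariant maps can be built along ``lattice directions''. This is false whenever the limit group has Euclidean factors.

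Take the flat tori $M_i=S^1_1\times S^1_{\epsilon_i}$ with $\epsilon_i\to0$, so $b_1=2$. Then $\hat M_i=\R^2$, with $\Z^2$ generated by the translations $(1,0)$ and $(0,\epsilon_i)$. The limit is $(\R^2,\Z\times\R)$, and $X=S^1_1$, which admits no submetry onto a $2$-torus. (Relatedly, $G\cong\R^m\times$ compact is wrong in general: a constant sequence of flat tori gives $G=\Z^k$.)

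Gromov's short basis does not help. The shortest generator converges to the identity, and the second Euclidean direction is seen only through powers $\gamma^{l_i}$ with $l_i\to\infty$. That is exactly what the induction in Lemma \ref{lem:rank non-dec} exploits to get $s\ge b$.

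Nor can you average over the infinite group $\Z^k$. An exactly equivariant map $\hat M_i\to\R^k$ needs a homomorphism $\rho_i\colon\Z^k\to\R^k$ with lattice image that already encodes the collapsed scale (here $\rho_i(0,\epsilon_i)=(0,\epsilon_i)$). Bounded-scale Gromov--Hausdorff data do not determine such a $\rho_i$. Your finite cover ``corresponding to the torsion and the compact part of $G$'' does not repair this, since in the example both are trivial.

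The paper does not reprove this cited theorem. Its proof of Theorem \ref{mainthm-mix-curv-pinching} covers it, because after normalizing $\diam(M)=1$ the bound $\sec\ge-\delta$ gives both hypotheses, and that proof is built around exactly this issue. It chooses a discrete $H_0<H$ whose projection to $\R^s$ is a lattice. Through the local abelian action Lemma \ref{LocalAction}, it realizes $H_0$ by intermediate finite covers $\check M_i\to M_i$ with finite abelian deck groups $K_i\to K=H/H_0$. These covers converge to $\bar X=Y/H_0$, which does submetrize onto $T^s$. In the example $\check M_i=S^1_1\times S^1_{m_i\epsilon_i}$ with $m_i\epsilon_i\to1$, so the index is unbounded and your $C(n)$ index bound does not come out.

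The paper then proceeds in four steps:
\begin{enumerate}
\item It upgrades the almost-homomorphism $K_i\to\Isom(T^s)$ to a homomorphism (Lemma \ref{lem6.1}).
\item It takes the center of mass over the finite group $K_i$ (Theorem \ref{thm:eq-submetry}).
\item It proves non-degeneracy not with strainers, which control distance coordinates but say nothing about an averaged map since $C^0$-closeness does not control differentials. Instead it uses the integral splitting estimates (c1)--(c3), which survive averaging through Lemma \ref{lem-6.1}, together with the transformation theorem under the generalized Reifenberg condition implied by $\sec\ge-1$ (Corollary \ref{cor7.2-smooth}).
\item The resulting map descends to $M_i$ with connected fibers, which is stronger than (a).
\end{enumerate}

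For (b), your degree argument is neither needed nor a proof. If the fibration lives on $M$ itself, $M$ is a connected finite cover of a flat $n$-torus and hence a torus. If it lives only on a finite cover, (b) does not follow. A manifold with $b_1=n$ finitely covered by $T^n$ is only a homotopy torus, and exotic tori $T^n\#\Sigma$ are finitely covered by the standard torus.
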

In fact, this theorem is a special case of Yamaguchi's general smooth fibration theorem in \cite{Y91}. The general theorem states that if a manifold with uniformly lower bound of sectional curvature collapses in the Gromov-Hausdorff sense to a lower-dimensional smooth manifold, then the collapsing yields a smooth fibration whose fiber satisfies the topological properties described above.

Note that under the hypotheses of Theorem \ref{thm-pinching-Yamaguchi-1}, the manifold $M$ admits a uniform lower bound on its sectional curvature, which depends only on $n$ and $\kappa$. In \cite{Y88}, this two-sided bound on sectional curvature is crucial for establishing regularity estimates for exponential maps and for carrying out a certain smoothing procedure of Riemannian metrics.

Recent advances in metric geometry under lower Ricci curvature bounds allow us to remove the upper sectional curvature assumption in Theorem~\ref{thm-pinching-Yamaguchi-1} and to avoid passing to a finite cover in Theorem~\ref{thm-pinching-Yamaguchi}. A main result of this paper unifies Theorems~\ref{thm-pinching-Yamaguchi-1} and~\ref{thm-pinching-Yamaguchi} as follows.

\begin{maintheorem}\label{mainthm-mix-curv-pinching}
	For any $n\in\mathbb{Z}^{+}$, $\kappa>0$, there exists $\delta(n,\kappa)>0$, such that the following holds. Let $M$ be an $n$-dimensional closed Riemannian manifold satisfying (\ref{pinching-ricci}), and
	\begin{equation}\label{sec}
			\diam(M)^2\,\sec_M \ge-\kappa.
	\end{equation}
Then $M$ fibers over a $b_{1}(M)$-torus with connected fibers.
\end{maintheorem}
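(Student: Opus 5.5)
We argue by contradiction and compactness. Rescale so that $\diam(M_i)=1$, and suppose there is a sequence $M_i$ with $\sec_{M_i}\ge -\kappa$, $\Ric_{M_i}\ge -\delta_i\to 0$, $b_1(M_i)=k$, none of which fibers over $T^k$. The first step is to pass to a covering on which the first Betti number is visible metrically. Let $\widehat M_i\to M_i$ be the regular cover with deck group $\Gamma_i\cong \Z^k$, i.e.\ the cover associated to $H_1(M_i;\Z)/\mathrm{torsion}$. By the Gromov--Cheeger--Colding theory for almost nonnegative Ricci curvature, after passing to a subsequence we get equivariant convergence
\[
(\widehat M_i,\Gamma_i)\to (Y,G),\qquad M_i\to X=Y/G .
\]
Here $Y$ splits as $\R^m\times N$ with $N$ compact, by the splitting theorem applied to the Ricci-limit of an infinite cover with cocompact abelian action. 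We also want $G$ to act so that its identity component acts trivially on $N$, and so that the image of $\Gamma_i$ in $\Isom(\R^m)$ converges to a lattice or to a closed subgroup containing translations of rank $k$. The key point is a \emph{rank stability} statement for abelian actions under equivariant GH convergence, which the abstract promises. It should give a surjection $\Gamma_i\to\Z^k\to \Lambda\subset \R^k$, with $\Lambda$ a lattice, compatible with the limit. From this we obtain almost Riemannian submersions / $\epsilon$-GH approximations $f_i\colon M_i\to T^k=\R^k/\Lambda$ which are equivariantly induced by $\Gamma_i$-equivariant maps $\widehat f_i\colon\widehat M_i\to\R^k$. Concretely, $\widehat f_i$ can be taken as $\Gamma_i$-equivariant harmonic (or averaged distance) splitting maps whose differentials are almost orthonormal in the integral sense (Cheeger--Colding).

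The second step upgrades $f_i$ to a genuine fibration. The map $f_i$ is an $\epsilon_i$-almost submetry from $M_i$ onto the flat torus. Because $\sec\ge-\kappa$, every point of $M_i$ has a neighborhood of definite size (depending on $n,\kappa$) that is GH-close, after rescaling, to a cone or metric space with curvature bounded below. In particular it satisfies the generalized Reifenberg condition of \cite{HH24}. We then invoke the equivariant regularity theorem for almost submetries under a lower Ricci bound, again promised in the abstract. Working on the universal cover of small balls (local rewinding), the lift of $f_i$ is an almost splitting map of an Alexandrov-type space into $\R^k$ with no singular directions at scale $r(n,\kappa)$. Hence it can be smoothed, equivariantly with respect to the local fundamental pseudogroup, into a submersion of maximal rank $k$. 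Taking $\Gamma_i$-equivariance into account, the descended map $M_i\to T^k$ is a smooth submersion, hence a fibration by Ehresmann since $M_i$ is closed.

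The third step shows the fibers are connected. Consider the induced map $\pi_1(M_i)\to\pi_1(T^k)=\Z^k$. By construction $\widehat f_i$ is equivariant with respect to $\Gamma_i\cong\Z^k$ acting on $\R^k$ via the lattice, so this map is the abelianization-mod-torsion map, which is surjective. The long exact sequence of the fibration shows that $\pi_0$ of the fiber is the cokernel of $\pi_1(M_i)\to\pi_1(T^k)$, so the fiber is connected. This contradicts the choice of the $M_i$.

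I expect the main obstacle to be the second step. Nondegeneracy of $d\widehat f_i$ at \emph{every} point fails for general Ricci limits, and Anderson's examples show this failure is real. We must therefore use the sectional lower bound, via the Reifenberg/local rewinding property, to rule out collapsed singular fibers at small scales. The hardest part is making the transformation-type estimate (nondegeneracy persists from large scale down to every scale) work equivariantly on the local universal covers. The first thing to try is the Cheeger--Jiang--Naber transformation theorem applied on rewound balls, using that in the local universal covers the balls are noncollapsed and Reifenberg.
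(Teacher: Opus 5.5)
\paragraph{Verdict: there is a genuine gap.} Step~1 asks for something that does not exist in general, and Step~2 depends on it. Your Step~3 is fine once you have the right map.

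\paragraph{Step 1: no fixed lattice compatible with the limit.} Theorem~\ref{thm:rank-stability} only gives $k=k_1+k_2$ for the limit group $H=\R^{k_1}\times\Z^{k_2}\times C$. It does not give a fixed lattice $\Lambda\subset\R^k$ with a surjection $\Gamma_i\to\Lambda$ compatible with the equivariant limit, and no such lattice exists once $k_1>0$. Take the flat tori $M_i=\R^2/(\Z(1,0)\oplus\Z(0,1/i))$, so $\hat M_i=\R^2$ and $H=\Z\times\R$.
\begin{itemize}
\item If $\hat f_i(\gamma x)\approx \hat f_i(x)+\rho_i(\gamma)$ with $\Lambda$ fixed, then $\rho_i$ must kill $(0,1/i)$, whose displacement is below the systole of $\Lambda$.
\item Hence $\rho_i$ also kills its $\lfloor i/2\rfloor$-th power, which translates by about $1/2$. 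This contradicts compatibility.
\item Put differently, $M_i$ converges to a circle, so it has no $\epsilon$-GH approximations or almost submetries onto a fixed $T^2$.
\end{itemize}

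\paragraph{Step 2: the regularization you invoke does not apply.} The equivariant regularization (Theorem~\ref{thm:eq-submetry}) needs a \emph{finite} group acting on a compact space, with target a \emph{compact} isometry group. It rests on the almost-homomorphism Lemma~\ref{lem6.1}, which needs a compact target, and on a center of mass averaged over a probability measure on the group. Neither applies to $\Gamma_i\cong\Z^k$ acting on $\hat M_i$ through translations of $\R^k$. Cheeger--Colding only provides local, non-equivariant splitting maps, so ``$\Gamma_i$-equivariant harmonic splitting maps'' is not justified as stated. The local-rewinding mechanism is also unavailable under $\sec\ge-\kappa$: in $S^2_\epsilon\times T^2$ small balls are simply connected and collapsed. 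That hypothesis belongs to Theorem~\ref{mainthm_B}. Here the sectional bound gives the generalized Reifenberg condition directly, and that is all Theorem~\ref{thm-trans-GRC} requires.

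\paragraph{How the paper avoids this.} It never asks for compatibility on $\hat M_i$.
\begin{itemize}
\item It picks a lattice $G_0$ in the translation group $\Pi_1(H)$ acting on $\R^s$, where $s\ge b$ by Lemma~\ref{lem:rank non-dec}, and lifts it to a discrete $H_0<H$.
\item Then $\bar X=(\R^s\times\hat Y)/H_0$ submetries onto the fixed torus $\R^s/G_0$, and $H/H_0$ is compact.
\item Lemma~\ref{LocalAction} turns this into finite covers $\tau_i^{-1}(M_i)\to M_i$ whose finite abelian deck groups $K_i$ converge to $(\bar X,H/H_0)$.
\item Theorem~\ref{thm:eq-submetry} then yields maps equivariant through homomorphisms $\psi_i$ into translations of $T^s$.
\item Corollary~\ref{cor7.2-smooth}, used at the small scale where the Reifenberg condition holds, makes them submersions. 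They descend to $M_i\to T^s/\psi_i(K_i)$, again an $s$-torus, and connected fibers force $s=b$.
\end{itemize}

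\paragraph{A repair that keeps your Step 3.} Build $\hat f_i$ as the lifted Albanese map of $L^2$-orthonormal harmonic $1$-forms.
\begin{itemize}
\item Bochner gives $\int_{M_i}|\nabla\omega|^2\le\delta_i\int_{M_i}|\omega|^2$.
\item With Poincar\'e and Bishop--Gromov on $\hat M_i$, the lift is then a harmonic almost splitting map on balls of radius comparable to $\min\{1,\kappa^{-1/2}\}$.
\item It is equivariant for the period lattice $\Lambda_i$, which may vary with $i$ and need not match the limit.
\item Theorem~\ref{thm-trans-GRC} then makes $M_i\to\R^k/\Lambda_i$ a submersion, and your Step 3 applies verbatim.
\end{itemize}
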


By \cite{KW}, manifolds satisfying (\ref{pinching-ricci}) have virtually nilpotent fundamental groups.
The following is an immediate corollary of the above theorem.

\begin{cor}
Suppose $M$ satisfies assumptions of Theorem \ref{mainthm-mix-curv-pinching}. If $\pi_1(M)$ is infinite, then the Euler characteristic number $M$ is $0$.
\end{cor}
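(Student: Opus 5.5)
We need to prove the corollary: if $\pi_1(M)$ is infinite then $\chi(M)=0$.

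The plan is to show $b_1(M)\ge 1$, then use the fibration from Theorem \ref{mainthm-mix-curv-pinching} and multiplicativity of the Euler characteristic.

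First, the fundamental group. By \cite{KW}, $\pi_1(M)$ is virtually nilpotent. Let $N\le\pi_1(M)$ be a nilpotent subgroup of finite index, and let $\hat M\to M$ be the corresponding finite cover. Since $\pi_1(M)$ is infinite, $N$ is an infinite, finitely generated nilpotent group.

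Next, the first Betti number of the cover. The abelianization $N/[N,N]$ of $N$ is infinite. This is standard: if it were finite, then every successive quotient of the lower central series would be finite, since they are images of tensor powers of $N^{ab}$. Hence $N$ would be finite, a contradiction. Therefore $b_1(\hat M)\ge 1$.

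Then check that $\hat M$ satisfies the hypotheses. The curvature conditions are pointwise, but the scale-invariant conditions involve $\diam(\hat M)$, which may exceed $\diam(M)$. So we cannot apply the theorem to $\hat M$ directly; we only need to know $b_1$ of $\hat M$ is positive. Instead we apply the theorem to $\hat M$ after observing that the diameter ratio is uncontrolled. This is the main obstacle, so we avoid passing to covers for the fibration.

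Alternative route avoiding this issue. Work on $M$ itself and split into two cases.

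\emph{Case $b_1(M)\ge1$.} Theorem \ref{mainthm-mix-curv-pinching} gives a fiber bundle $F\to M\to T^{k}$ with $k\ge1$ and compact fiber $F$. By multiplicativity of $\chi$ for fiber bundles over a connected base,
\[
\chi(M)=\chi(F)\,\chi(T^k)=0 .
\]

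\emph{Case $b_1(M)=0$.} Use the finite cover $\hat M\to M$ of degree $d$ from above, so that $\chi(\hat M)=d\,\chi(M)$. It then suffices to find a nonvanishing vector field on $\hat M$, or a fibration of $\hat M$ over a circle. Since $b_1(\hat M)\ge1$, we would like to apply Theorem \ref{mainthm-mix-curv-pinching} to $\hat M$.

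To handle the diameter of the cover, I would first try to choose $N$ with index bounded by a constant $C(n)$. This is available from the Kapovitch--Wilking result \cite{KW}: the nilpotent subgroup can be taken of index at most $C(n)$. Then $\diam(\hat M)\le C(n)\diam(M)$. Taking $\delta$ smaller, namely replacing $\delta$ by $\delta/C(n)^2$ and $\kappa$ by $C(n)^2\kappa$, the cover $\hat M$ satisfies the hypotheses. The theorem then gives that $\hat M$ fibers over $T^{b_1(\hat M)}$ with $b_1(\hat M)\ge1$, so $\chi(\hat M)=0$ and hence $\chi(M)=0$.

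The delicate point is exactly this bounded-index and diameter control. The rest of the argument is formal.
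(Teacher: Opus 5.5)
Your argument is correct and is essentially the one the paper intends when it calls this an immediate corollary of Theorem \ref{mainthm-mix-curv-pinching} via \cite{KW}: pass to a finite cover $\hat M$ whose fundamental group is infinite nilpotent, so $b_1(\hat M)\ge 1$; apply the theorem to $\hat M$ to get a fibration over a torus of positive dimension; then conclude with $\chi(\hat M)=d\,\chi(M)$. Your use of the Kapovitch--Wilking index bound $C(n)$ to control $\diam(\hat M)$, at the cost of shrinking $\delta$ and enlarging $\kappa$, makes explicit a step the paper leaves implicit, and it also makes your case split (and the contradictory middle paragraph) unnecessary, since the cover argument already handles $b_1(M)\ge 1$.
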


Furthermore, we can adapt Yamaguchi's smooth fibration theorem (\cite{Y91}) as follows.

\begin{theorem}\label{mainthm-fiber-sec}
	Given $n \geq 2$ and $r_0 > 0$, there exists a $\delta_0$ depending only on $n$, $r_{0}$  such that the following holds for every $\delta\in (0,\delta_0)$.
    Suppose $M$ and $N$ are compact Riemannian manifolds of dimensions $n$ and $k$ ($k \leq n$) respectively, and
    \begin{align}
    \sec_{M}\geq -1,
    \end{align}
	\begin{align}\label{5.2-001}
    \Ric_N \geq -1, \quad and \quad d_{\mathrm{GH}}(B_{r_{0}}(q),B_{r_{0}}(0^{k})) < \delta \quad \text{ for any } q\in N,
    \end{align}
		\begin{align}
	 d_{\mathrm{GH}}(M,N) <\delta.
	\end{align}
	Then there exists a smooth fibration $f \colon M \to N$ which is a $\Psi(\delta|n,r_0)$-Gromov-Hausdorff approximation and the fibers $F$ satisfy:
	\begin{enumerate}
		\item $b_1(F) \leq n - k$;
		
		\item $F$ fibers over the torus $T^{b_1(F)}$ with connected fibers.
	\end{enumerate}
\end{theorem}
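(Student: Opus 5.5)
The plan is to first produce the fibration $f\colon M\to N$ and then study the geometry of a single fiber, reducing conclusion (2) to Theorem~\ref{mainthm-mix-curv-pinching} (or to its generalized Reifenberg version) applied at the scale of the fiber.

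\textbf{Step 1: construct $f$.} The hypothesis \eqref{5.2-001} says $N$ is uniformly $(r_0,\delta)$-Reifenberg. Since $M$ is $\delta$-close to $N$ and $\sec_M\ge -1$, every ball $B_{r_0}(p)\subset M$ is GH-close to $B_{r_0}(0^k)$. At each point I would take harmonic or distance-type almost splitting maps $B_{r}(p)\to\mathbb{R}^k$. By the equivariant regularity theorem for almost submetries under lower Ricci bounds, which the abstract announces as the main tool, these maps are submersions on the relevant scale. Using them, I would glue a global map $f\colon M\to N$ by center-of-mass averaging in the harmonic coordinates of $N$, following Yamaguchi/Cheeger--Fukaya--Gromov. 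The result is a $\Psi(\delta|n,r_0)$-GH approximation with almost Riemannian submersion estimates, so $f$ is a smooth fiber bundle. I expect this step to be essentially standard once the regularity theorem is available.

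\textbf{Step 2: bound $b_1(F)$.} Fix $x\in N$ and $F=f^{-1}(x)$, and pass to the rescaled limit. Rescale $M$ by $\lambda_i\to\infty$ (with $\delta_i\to 0$) so that $\diam(F)\lambda_i$ stays bounded. Then $(\lambda_i M,p)$ converges to $\mathbb{R}^k\times Y$, with nonnegative curvature in the Alexandrov sense, and $F$ with its induced structure corresponds to a neighborhood of the compact factor $Y$. Let $\widetilde B$ be the universal cover of a small ball in $M$; it carries an action of the local fundamental group. Passing to equivariant limits shows that the image of $\pi_1(F)$, modulo torsion, acts with rank at most $n-k$ on the limit. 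Here I would invoke the stability of rank of abelian actions under equivariant GH convergence, which the paper studies. Combined with the homotopy exact sequence of the fibration, which gives a surjection from an abelian quotient of $\pi_1(F)$ onto a limit action, this yields $b_1(F)\le n-k$. This is the Gromov--Gallot-type bound transplanted to the fiber.

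\textbf{Step 3: fiber $F$ over $T^{b_1(F)}$.} Since $F$ is a compact smooth submanifold, I do not try to put an intrinsic curvature bound on it. Instead I work with the tubular neighborhood $U=f^{-1}(B_{r}(x))\cong B_r\times F$. Its lift to the cover $\widehat U$ corresponding to $\ker(\pi_1(F)\to H_1(F;\mathbb{R}))$ carries a free $\mathbb{Z}^{b_1(F)}$ action. After rescaling, $\widehat U$ is almost $\mathbb{R}^k\times\mathbb{R}^{b_1(F)}\times(\text{compact})$. I then run the proof of Theorem~\ref{mainthm-mix-curv-pinching}: construct $\mathbb{Z}^{b_1}$-equivariant almost splitting maps $\widehat U\to\mathbb{R}^{b_1}$, and use the equivariant regularity theorem to get a $\mathbb{Z}^{b_1}$-equivariant submersion. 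This descends to a submersion $F\to T^{b_1(F)}$ after restricting to the fiber; the restriction is transversal because the $\mathbb{R}^{b_1}$ directions are almost orthogonal to the $\mathbb{R}^k$ directions of $f$. Connectedness of the fibers follows as in Theorem~\ref{mainthm-mix-curv-pinching}: a disconnected fiber would give a finite cover of the torus through which the map factors, contradicting surjectivity of $H_1(F)\to H_1(T^{b_1})$ mod torsion.

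The main obstacle is Step 3, specifically getting uniform equivariant regularity at the fiber scale, which is arbitrarily small compared to $r_0$. The issue is that $\sec\ge-1$ becomes almost nonnegative after rescaling, but the $N$ factor contributes the extra $\mathbb{R}^k$. I would handle this by a contradiction/compactness argument with rescaled sequences, using the generalized Reifenberg formulation so that the $\mathbb{R}^k\times$(fiber limit) structure fits the hypotheses of Theorem~\ref{mainthm-mix-curv-pinching}'s general version.
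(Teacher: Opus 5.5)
Your Step 2 is essentially the paper's argument. All it needs is Lemma~\ref{lem:rank non-dec}, using that $H_i$ is generated by elements of displacement at most $3\diam F_i$, plus the fact that the limit group acts by translations on an $\mathbb{R}^{k_3-k}$-factor. The homotopy exact sequence plays no role.

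The genuine gap is Step~3. The equivariant regularity results (Theorem~\ref{thm:eq-submetry}, Proposition~\ref{prop-eq-fibration}) require a \emph{finite} group acting on the source and a \emph{compact} group acting on a \emph{compact} target. Equivariance is produced by a center of mass over a probability measure on $\Gamma$, and the almost-homomorphism is corrected to a homomorphism by Lemma~\ref{lem6.1}, which needs the target group compact. Neither step is available for $\mathbb{Z}^{b_1}$ acting on $\widehat U\to\mathbb{R}^{b_1}$.

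There is a further problem. After blowing up, $H_i$ converges to $G\cong\mathbb{R}^r$ with only $r\ge b_1$ known a priori, and its elements have arbitrarily small displacement. So there is no evident homomorphism $\rho\colon H_i\to\mathbb{R}^{b_1}$ with lattice image compatible with the limit action. Nor is $\widehat U$ known to be close to $\mathbb{R}^k\times\mathbb{R}^{b_1}\times(\text{compact})$.

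The missing idea is Lemma~\ref{LocalAction}:
\begin{enumerate}
\item After the slow blow-up of Claim~\ref{claim5.2}, pick a lattice $G_0\subset G$.
\item Quotient $B_{R_i}(\hat p_i)$ by the \emph{pseudo-group} approximating $G_0$.
\item This gives covers $\tau_i^{-1}(B_{R_i/5}(p_i))$ of the tubular neighborhood with \emph{finite} deck groups $K_i$, converging to $(\mathbb{R}^k\times T^r,K)$ with $K=G/G_0$ compact.
\end{enumerate}
Only in this setting can Proposition~\ref{prop-eq-fibration} be applied, with target $T^r$ and group $K$. Descending then gives a fibration $F\to T^r/\psi_i(K_i)$. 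The identification $r=b_1(F)$ comes only afterwards, from the fact that a fibration with connected fibers over $T^r$ forces $b_1\ge r$.

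Your $\pi_1$-surjectivity argument for connectedness has the same problem: it presupposes exact equivariance for the full deck group with target $\mathbb{R}^{b_1}/\rho(H)$, which is exactly what is missing. In the paper, connectedness comes from the new map being a Gromov--Hausdorff approximation onto a ball in $\mathbb{R}^k\times T^r$.

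Two further steps do not work as stated.

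\textbf{Transversality.} It cannot come from $L^1$-almost orthogonality of gradients, which says nothing pointwise. The paper keeps the base component \emph{exactly} equal to the lift $\bar f_i$ (the condition $\pi_2\circ\Upsilon=v$, possible because $\bar f_i$ is $K_i$-invariant) and averages only the torus component. It then gets non-degeneracy of the joint map from the $(k+r)$-generalized Reifenberg condition implied by $\sec\ge -1$, via Theorem~\ref{thm-trans-GRC} (Corollary~\ref{cor7.2-smooth}). Surjectivity of the joint differential is what makes the torus map a submersion on each fiber of $f$.

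\textbf{Step~1.} The submersion property comes from the generalized Reifenberg condition and Theorem~\ref{thm-trans-GRC}, not from the equivariant regularity theorem. Moreover, center of mass on $N$ needs convexity of $d_N^2$ and $C^3$ bounds in normal coordinates (cf.\ (\ref{5.160})--(\ref{5.161})). The hypotheses $\Ric_N\ge -1$ plus the Reifenberg condition do not give these; harmonic coordinates have no $C^{1,\alpha}$ control without an upper Ricci bound. The paper first runs Ricci flow on $N$ and uses pseudo-locality (Theorem~\ref{thm-psudoloc}). This yields a metric with small curvature, bounded derivatives, large injectivity radius and locally close distance, and the averaging is done there (Theorem~\ref{Fibrations}).
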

Compared to the original form in \cite{Y91}, the conclusion (2) here no longer requires passing to a finite cover. In the rigidity case $b_1(F) = n - k$, it follows immediately from (2) that $F$ is diffeomorphic to a torus.
In addition, compared to \cite{Y91}, the assumption on the lower dimensional manifold $N$ is weaken to (\ref{5.2-001}).

\begin{rem}
	Suppose we have a fibration $p:M\rightarrow T^{k}$ with connected fiber $F$, then we have a long exact sequence
	\begin{align}
		\cdots\rightarrow \pi_{n}(F)\rightarrow \pi_{n}(M)\rightarrow \pi_{n}(T^{k})\rightarrow\pi_{n-1}(F)\rightarrow\cdots \rightarrow\pi_{0}(E)\rightarrow0.
	\end{align}
	In particular, we have
	\begin{align}
		0=\pi_{2}(T^{k})\rightarrow \pi_{1}(F)\rightarrow \pi_{1}(M)\rightarrow \pi_{1}(T^{k})\rightarrow\pi_{0}(F)=0.
	\end{align}
	Thus $b_{1}(M)\geq b_{1}(T^{k})=k$.
	As a conclusion, $M$ in Theorem \ref{mainthm-mix-curv-pinching} (or $F$ in Theorem \ref{mainthm-fiber-sec}) cannot fiber over a $T^{b_1(M)+1}$ ($T^{b_1(F)+1}$ respectively).
\end{rem}

\begin{rem}
Indeed, the assumption of a lower bound on sectional curvature in Theorems \ref{mainthm-mix-curv-pinching} and \ref{mainthm-fiber-sec} can be weakened to a generalized Reifenberg condition, which is introduced in \cite{HH24}. This condition encompasses both the case of sectional curvature bounded below and that of manifolds with local rewinding Reifenberg condition (see \cite{HH24}*{Proposition 5.6}).
    See Remark \ref{rem5.4} for a discussion.
\end{rem}

In the non-smooth setting, Yamaguchi \cites{Y96} conjectures that a fibration theorem similar to \cite{Y91} still holds when $M$ is a compact Alexandrov space, see \cite{Fujio21} etc.  for recent development in this direction. Consequently, we propose the following problem.

\begin{prob}\label{conj-nonsmooth}
	Can one relax the smoothness assumption on $M$ in Theorems \ref{mainthm-mix-curv-pinching} or \ref{mainthm-fiber-sec}, and replace the corresponding curvature hypotheses by a suitable synthetic condition?
\end{prob}

Now we turn to fibration theorems in the $\RCD$ setting related to first Betti number. There are several results in the maximal first Betti number case, see \cites{MMP22,HKMPR,ZamoraZhu}. The reader can also refer to \cite{Wang2024} for some other extensions of fibration theorems to $\RCD$ spaces.

Our next result also contributes to Problem \ref{conj-nonsmooth}. Recall that a non-collapsed $\mathrm{RCD}(K,n)$ space $(X,d,\mathcal{H}^n)$ satisfies the $(r,\delta)$-local rewinding Reifenberg condition if for every $x\in X$, the universal cover $(\widetilde {B_r(x)},\tilde x)\to (B_r(x),x)$ satisfies
\begin{align*}
d_{\mathrm{GH}}\bigl(B_t(\tilde x),\,B_t(0^n)\bigr)\le t\delta,
\end{align*}
for all $0<t\le r/3$, where $B_t(0^n)\subset\mathbb{R}^n$.
\begin{rem}
	Thanks to the work of Cheeger-Fukaya-Gromov \cite{CFG}, any $n$-manifold with $|\sec| \le 1$ satisfies the $(r,\delta)$-local rewinding Reifenberg condition for suitable $r,\delta$. Philosophically, this condition can therefore be viewed as a synthetic analogue of bounded sectional curvature. However, it is important to note that, logically, it does not imply a lower curvature bound in the Alexandrov sense.
\end{rem}
Our next result is as follows.

\begin{maintheorem}\label{mainthm_B}
	Let $n\in\mathbb{N}$ and $r,D>0$. There exist constants $\delta:=\delta(n)$ and $\epsilon:=\epsilon(D,r,n)$ such that if $(X,d,\mathcal{H}^n)$ is an $\mathrm{RCD}(-\epsilon,n)$ space with $\operatorname{diam}(X)\le D$ and $X$ satisfies the $(r,\delta)$-local rewinding Reifenberg condition, then $X$ is homeomorphic to a fiber bundle over $T^{b_1(X)}$.
\end{maintheorem}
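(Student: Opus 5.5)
We argue by contradiction and compactness, and then build the fiber bundle over $T^{b_1(X)}$ from the Albanese-type torus that comes from $\pi_1$. Set $k=b_1(X)$ and suppose the statement fails. Then there are $\epsilon_i\to0$ and non-collapsed $\mathrm{RCD}(-\epsilon_i,n)$ spaces $X_i$ with $\diam(X_i)\le D$, each satisfying the $(r,\delta)$-local rewinding Reifenberg condition, none of which is homeomorphic to a bundle over $T^{b_1(X_i)}$. Passing to a subsequence, we may assume $b_1(X_i)=k$ for all $i$ (recall $k\le n$).

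\textbf{Step 1: the limit.} Pass to the cover $\bar X_i\to X_i$ whose deck group is $\Gamma_i=H_1(X_i;\Z)/\mathrm{torsion}\cong\Z^k$. After subsequences, $(\bar X_i,\bar x_i,\Gamma_i)$ converges in the equivariant GH sense to $(\bar Y,\bar y,G)$. Here $\bar Y$ is $\mathrm{RCD}(0,n)$ and $G$ is an abelian closed group acting cocompactly. We use the paper's stability result for the rank of abelian actions under equivariant GH convergence, together with the almost splitting theorem. Together these should show that $\bar Y$ splits as $\R^k\times K$ with $K$ compact and $G$ acting cocompactly by translations on the $\R^k$ factor. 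Then $X_i$ is GH-close to the flat torus $T^k=\R^k/\Lambda$, where $\Lambda$ is the limit lattice of $\Gamma_i$. Rescaling so that $\Lambda$ has a uniformly bounded basis requires a short lattice-normalization argument, as in Colding's and Yamaguchi's proofs.

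\textbf{Step 2: almost submetry.} Use almost-splitting harmonic (or distance-type) functions on $\bar X_i$ that are $\Gamma_i$-equivariant, namely $\Gamma_i$-equivariant $\delta$-splitting maps $\bar u_i:\bar X_i\to\R^k$. Averaging over the action yields such maps, and they descend to maps $u_i:X_i\to T^k$ which are $\Psi$-GH approximations and almost submetries at every scale $\le r$. The local rewinding Reifenberg condition enters here. On the universal cover of each ball $B_r(x)$ the space is Reifenberg-flat, so the equivariant regularity theorem for almost submetries (the main tool announced in the abstract) applies. It says the lift of $u_i$ to $\widetilde{B_r(x)}$ is, at all scales, close to the projection $\R^n\to\R^k$. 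The canonical Reifenberg/Cheeger–Colding bi-Hölder chart theory then upgrades this to a topological submersion. In a bi-Hölder chart $\widetilde{B_r(x)}\cong B_r(0^n)$, the lifted map is conjugate to a coordinate projection, and this structure is equivariant under the local fundamental-group action.

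\textbf{Step 3: bundle structure.} Descend the local product structures to $X_i$. This shows $u_i$ is a proper topological submersion, with local trivializations obtained by quotienting the equivariant product charts. A proper topological submersion with locally trivial structure over a manifold is a fiber bundle, by Siebenmann / a Kirby–Siebenmann type argument, or directly from the charts. To get connected fibers, and a base torus of dimension exactly $b_1(X_i)$, note that $u_{i*}:\pi_1(X_i)\to\Z^k$ is surjective by construction, since it factors through $H_1/\mathrm{torsion}$. The long exact sequence then forces $\pi_0(F)=0$. This contradicts the choice of $X_i$.

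\textbf{Main obstacle.} Step 2 is the hardest part: producing $u_i$ that is an almost submetry at \emph{all} small scales, uniformly, and not only at scale comparable to $D$. Under a Ricci bound alone this fails, as Anderson's examples show. I would first try to combine the transformation (non-degeneration) theorem for splitting maps with the rewinding Reifenberg condition. The point is that the lift to $\widetilde{B_r(x)}$ is a $\delta$-splitting map on an almost-Euclidean ball at every scale. Nondegeneracy then propagates down through the scales, because the space is Reifenberg at each scale, which rules out the scale-degeneration seen in Anderson's construction.
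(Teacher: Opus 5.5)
\textbf{There is a genuine gap: Steps 1--2 produce the map to the torus by a mechanism that fails.}

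Your local analysis in Steps 2--3 is essentially the argument of Theorem \ref{thm:bundlemap}: lift to $\widetilde{B_r(x)}$, complete to a full $n$-splitting map, push down through all scales with the lower-triangular transformation theorem, use the canonical Reifenberg chart in which the map is a coordinate projection, then apply Kirby--Siebenmann. The problem is how you obtain the map to the torus in the first place.

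\emph{Step 1 is false in general.} The limit of $(\bar X_i,\Gamma_i)$ is $(\mathbb{R}^k\times\hat Y,H)$ with $H\cong\mathbb{R}^{k_1}\times\mathbb{Z}^{k_2}\times C$. The group $H$ typically contains $\mathbb{R}$-factors, coming from homology classes carried by loops whose length tends to $0$. So $\Gamma_i$ does not converge to a lattice. The spaces $X_i$ converge to $(\mathbb{R}^k\times\hat Y)/H$, which need not be GH-close to, or even almost submetric onto, any fixed $k$-torus. For example, the flat tori $S^1_1\times S^1_{1/i}$ satisfy every hypothesis with $b_1=2$ and converge to a circle. Rescaling to normalize the lattice is not allowed: it destroys $\diam\le D$, and $\epsilon$ depends on $D$ and $r$.

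\emph{Consequences for Step 2.} The maps $u_i:X_i\to T^k$ that are $\Psi$-GH approximations do not exist in general. Moreover, ``averaging over the action'' of $\Gamma_i\cong\mathbb{Z}^k$ is unavailable, since an infinite discrete group has no invariant probability measure. So nothing in your proposal produces an \emph{exactly} equivariant map with splitting estimates. Also, the equivariant regularity theorem is not an all-scales statement; the descent through scales comes only from the transformation theorem.

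\emph{The missing idea.} You need to unwrap the collapsing directions by finite covers and average only over finite groups:
\begin{enumerate}
\item Choose a lattice $G_0$ in the translation part $\Pi_1(H)$, lift it to $H_0<H$, and apply Lemma \ref{LocalAction}(7). This gives finite covers $\tau_i^{-1}(X_i)\to X_i$ whose finite deck groups $K_i$ converge to $K=H/H_0$, acting on $\bar X=(\mathbb{R}^k\times\hat Y)/H_0$.
\item The space $\bar X$ carries an honest submetry onto $T^k=\mathbb{R}^k/G_0$, intertwining $K$ with a group of translations.
\item Theorem \ref{thm:eq-submetry} turns the resulting almost-equivariant almost-submetry into an exactly $K_i$-equivariant map with splitting estimates at scale $\sqrt{\delta}$. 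It does so by correcting the almost-homomorphism to a homomorphism $\psi_i$ (Lemma \ref{lem6.1}) and then taking the center of mass over $K_i$.
\item Only now does your local argument apply. It gives a bundle $\tau_i^{-1}(X_i)\to T^k$, which descends to $X_i\to T^k/\psi_i(K_i)$.
\end{enumerate}
The fibers are connected by the almost-submetry property. The base has dimension $b_1$: Lemma \ref{lem:rank non-dec} bounds it below by $b_1$, and connected fibers bound it above.

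A repair closer to your ``Albanese'' remark would use exactly $\rho$-equivariant harmonic functions on the abelian cover, with the almost-splitting coming from Bochner's formula rather than from GH-closeness to a fixed torus. That is a different argument from the one you wrote, and it would have to be carried out in the $\mathrm{RCD}$ setting.
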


Another contribution in this paper is the following observation on the stability of rank of Abelian actions along equivariant Gromov--Hausdorff convergence.

\begin{theorem}\label{thm:rank-stability}
Let $k\in \mathbb{N}$, $N \in [1,\infty)$ with $k \le N$, and $D>0$.
Assume that $\{(X_i,p_i,d_i,\mathfrak{m}_i)\}$ is a sequence of $\mathrm{RCD}(-1,N)$ spaces satisfying
\begin{align*}
b_1(X_i) = k \quad \text{and} \quad \operatorname{diam}(X_i) \le D.
\end{align*}
Let $(\hat{X}_i, \hat{p}_i, \hat{d}_i, H_i)$ be the normal covering space of $(X_i,p_i,d_i)$, where the transformation group is the first homology group
\begin{align*}
H_i = \pi_1(X_i,p_i) \big/ [\pi_1(X_i,p_i), \pi_1(X_i,p_i)].
\end{align*}

If
\begin{align*}
(\hat{X}_i, \hat{p}_i, \hat{d}_i, H_i) \xrightarrow{\mathrm{GH}}
(Y, \hat{p}, d, H = \mathbb{R}^{k_1} \times \mathbb{Z}^{k_2} \times C),
\end{align*}
where $C$ is a compact group, then
\begin{align*}
k = k_1 + k_2.
\end{align*}
\end{theorem}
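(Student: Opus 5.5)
We prove the two inequalities $k_1+k_2\le k$ and $k_1+k_2\ge k$ separately. By passing to a subsequence, the $H_i$ are finitely generated abelian groups with $H_i\cong \mathbb{Z}^k\times T_i$, where $T_i$ is finite. Let $\Gamma_i\subset H_i$ be a free abelian subgroup of rank $k$ mapping isomorphically onto $H_i/T_i$. It is convenient to replace $\hat X_i$ by $\bar X_i=\hat X_i/T_i$, which has deck group $\mathbb{Z}^k$. Passing to a further subsequence, $(\bar X_i,\mathbb{Z}^k)$ converges equivariantly to $(\bar Y,\bar H)$, where $\bar H=H/\lim T_i$ up to the compact part. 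Since $\lim T_i$ is a closed subgroup, the quotient changes only the compact part and possibly $\mathbb{R}$-factors. For a cleaner argument I would instead work directly with $H$ and keep track of torsion.

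\textbf{Upper bound $k_1+k_2\le k$.} The key is a growth or volume count. Because $\operatorname{diam}(X_i)\le D$, the $H_i$-orbit of $\hat p_i$ is $D$-dense and $H_i$ is generated by elements of displacement at most $2D$ (Gromov's short generators). The limit group $H$ is an abelian Lie group with $\hat p$-orbit $D$-dense in $Y$, so $Y/H$ is compact.

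Suppose $k_1+k_2>k$. Then the ball $B_R(e)$ in $H$, measured by displacement, contains at least on the order of $R^{k_1+k_2}$ disjoint elements or translates of a neighbourhood. Approximating these by elements of $H_i$, we find that in $H_i$ the number of elements of $\Gamma_i$-type with displacement at most $R$, counted modulo small displacement, grows like $R^{k_1+k_2}$. On the other hand, $H_i/T_i\cong\mathbb{Z}^k$ with word length comparable to displacement (comparison constants depending on $D$ and $R$), so its balls have size $\lesssim R^k$.

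The delicate point is that the small-displacement subgroup $\{h: d(h\hat p_i,\hat p_i)<\epsilon\}$ of $H_i$ converges to the identity component $\mathbb{R}^{k_1}\times C_0$, together with small elements. The cleaner route is to use the standard fact that the limit group $H$ must be a quotient of a limit of abelian groups generated by $\le$ boundedly many short generators. Concretely:
\begin{enumerate}
\item The ranks satisfy $k_1+k_2=\operatorname{rank}(H/C)$ as a "topological rank".
\item $H/H_0$ is finitely generated with rank $k_2$.
\item Each $\mathbb{R}$-factor arises from elements $h_i$ whose powers $h_i^{m}$, with $m\sim 1/\epsilon_i$, have displacement of order one. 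Such elements have infinite order in $H_i$, so they lie in $\mathbb{Z}^k\times T_i$ with nontrivial free part.
\end{enumerate}
Independence in the limit implies independence of their free parts in $\mathbb{Z}^k$. This uses the fact that a relation $\sum a_j h_{j,i}\in T_i$ with bounded $a_j$ would persist in the limit as a relation modulo the compact group. I would carry this out by choosing $k_1+k_2$ limit elements generating a lattice-like subgroup and lifting them.

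\textbf{Lower bound $k_1+k_2\ge k$.} Let $Y/H=\lim X_i$ (which holds since $\hat X_i/H_i=X_i$). If $k_1+k_2<k$, the limit is of lower rank than the sequence, so some primitive directions in $\mathbb{Z}^k$ become "compact" in the limit: there are elements $\gamma_i\in\Gamma_i$ of infinite order converging into $C$. The main tool is an RCD structural result: Gromov short generators together with the Kapovitch–Wilking / Sormani–Wei-type control on $\pi_1$. In particular, a nontrivial element converging to $C$ would require arbitrarily large powers $\gamma_i^m$ with bounded displacement. I would aim to contradict this using a homomorphism $H_i\to\mathbb{Z}$ represented by a harmonic-like or Busemann-type $1$-form (the dual of $b_1$), namely the Lipschitz function $f_i:\hat X_i\to\mathbb{R}$ that is equivariant with $f_i(\gamma x)=f_i(x)+\phi(\gamma)$. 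Its Lipschitz constant must be uniformly bounded, via a uniform norm bound on cohomology classes under $\mathrm{RCD}(-1,N)$ and bounded diameter (the stable norm comparison). Equivariance then forces $|\phi(\gamma_i^m)|\le Lm\cdot\mathrm{disp}$, which is incompatible with bounded displacement for all $m$ unless $\phi(\gamma_i)=0$ for every $\phi$. That would force $\gamma_i$ to be torsion, a contradiction.

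I expect the main obstacle to be the uniform Lipschitz bound for these equivariant functions, equivalently a uniform lower bound on the stable norm of integral classes in $H_1$ independent of $i$. I would try to obtain it by choosing generators of $H_1$ realized by loops of length $\le 2D$, and using a dual basis whose coefficients are controlled by a Minkowski-type reduction. The concern is that this Minkowski control may degenerate under collapse.
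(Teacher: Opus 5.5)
There is a genuine gap in the direction $k_1+k_2\le k$, which is the hard direction. Nothing in your argument uses that $H_i$ is the \emph{full} abelianization of $\pi_1(X_i)$, and without that hypothesis the inequality fails even with bounded diameter.

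Take the example in the remark after Lemma~\ref{lem:rank non-dec}. The flat torus with sides of length $k$ is a normal cover of $T^2/G_k$, and $T^2/G_k$ is a square torus of side $k/\sqrt{k^2+1}<1$. The deck group $\mathbb{Z}/(k^2+1)$ has rank $0$, yet the limit is $(\mathbb{R}^2,\mathbb{Z}^2)$. This example shows exactly where your steps break:
\begin{itemize}
\item You count in $H_i/T_i$, which silently assumes the torsion $T_i$ converges to a compact group. Here $T_i=H_i$ converges to $\mathbb{Z}^2$. For maximal $H_i$, compactness of $\lim T_i$ is Corollary~\ref{cor: compact torsion}, which the paper derives \emph{from} the theorem.
\item The $R^k$-growth of a rank-$k$ group only sets in beyond a scale that may diverge with $i$, while equivariant convergence sees $H_i(R)$ only for fixed $R$. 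Here $H_i$ looks like $\mathbb{Z}^2$ up to scale about $k$.
\item Relations $\sum_j a_{j,i}h_{j,i}\in T_i$ with $a_{j,i}\to\infty$ are invisible in the limit, so bounded-coefficient independence is not enough.
\item Your claim that the elements producing the $\mathbb{R}$- and $\mathbb{Z}$-factors have infinite order is unjustified, and it is false here.
\end{itemize}

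The missing ingredient is a topological use of $b_1(X_i)=k$. The paper proceeds as follows:
\begin{itemize}
\item It takes the lattice $H_0=\mathbb{Z}^{k_1}\times\mathbb{Z}^{k_2}\subset H$.
\item It uses Lemma~\ref{LocalAction}(7) to produce finite intermediate covers $\bar X_i\to X_i$ whose deck groups $K_i$ converge to $(Y/H_0,K)$ with $K$ compact.
\item It makes the approximations homomorphisms $\psi_i$ and sets $K_i'=\psi_i(K_i)$.
\item It applies Lemma~\ref{lem-surj-ghomo}, a surjection $\pi_1(X_i)\to\pi_1(\bar Y/K_i')$ that relies on semi-local simple connectivity of the limit.
\end{itemize}
This gives $k=b_1(X_i)\ge b_1(\bar Y/K_i')\ge k_1+k_2$. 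The last inequality holds because $\bar Y/K_i'$ is the quotient of $Y/C_i$ by a free action of a group isomorphic to $\mathbb{Z}^{k_1+k_2}$ (Claim~\ref{claim6.4}). The paper's second proof uses maximality instead through Lemma~\ref{lem:det} ($H_i(20D)$ determines $H_i$), together with the structure of abelian progressions.

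The direction $k_1+k_2\ge k$ is left open in your proposal, and the route you chose cannot be closed as stated. Equivariant functions realizing a full basis of $\mathrm{Hom}(H_i,\mathbb{Z})$ cannot have uniformly bounded Lipschitz constants under collapse. A class taking the value $1$ on a loop of length $\ell_i\to 0$ forces $\mathrm{Lip}\, f_i\ge 1/\ell_i$, which already happens for $S^1\times S^1_{1/i}$. The reduction to infinite-order elements converging into $C$ is also not justified.

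This direction needs neither analysis nor maximality; it is Lemma~\ref{lem:rank non-dec}. Since $H_i$ is generated by $H_i(3D)$, choose a $\mathbb{Z}$-subgroup generated by a short element of infinite order. Its orbit is a chain with bounded steps escaping to infinity, so its limit subgroup is noncompact and has rank at least $1$. Then pass to the free action of $H_i/\mathbb{Z}$ on $\hat X_i/\mathbb{Z}$, induct on $k$, and use additivity of rank in the limit.
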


\begin{rem}\label{rem:rank-stability}
The same conclusion of Theorem  \ref{thm:rank-stability} holds if the condition $$H_i = \pi_1(X_i,p_i) \big/ [\pi_1(X_i,p_i), \pi_1(X_i,p_i)]$$ is replaced by $$H_i = (\pi_1(X_i,p_i) \big/ [\pi_1(X_i,p_i), \pi_1(X_i,p_i)])/T_i,$$ where $T_i$ is the torsion subgroup of $\pi_1(X_i,p_i) \big/ [\pi_1(X_i,p_i), \pi_1(X_i,p_i)]$. See Corollary \ref{cor: compact torsion}.
\end{rem}
One application of Theorem \ref{thm:rank-stability} is to give a new proof of the fact that, under pointed measured Gromov--Hausdorff convergence of $\mathrm{RCD}$ spaces, the first Betti number cannot drop more than the rectifiable dimension does-a result established by Zamora \cite{Zamora2022} and Zamora-Santos-Rodriguez \cite{SantosZamoraPi1}. The difference in our argument is that we do not need to analyze the vanishing of small loops in the limit.

In the following we outlines the main ideas and techniques in the proof of Theorems \ref{mainthm-mix-curv-pinching} and \ref{mainthm_B}.

A technical result in the proofs of Theorems \ref{mainthm-mix-curv-pinching} and \ref{mainthm_B} is an equivariant, ``regular'' almost submetry-a tool that may admit further applications.

We call a map (not necessarily continuous) $f \colon (X,d_X) \to (Y,d_Y)$ a $\delta$-\emph{almost submetry} at scale $r_0$ if for every $p \in X$ and every $r \in(0, r_0)$,
\begin{align*}
f\big(B_{r-\delta}(p)\big) \subset B_{r}(f(p)) \subset T_{\delta}\!\big(f(B_{r+\delta}(p))\big),
\end{align*}
where for a set $A\subset Y$, $T_{r}(A)=\{y\in Y\mid d(y,a)< r,\ a\in A\}$ denotes its open $r$-neighborhood.
It is elementary that if a map $f \colon (X,d_X) \to (Y,d_Y)$ is a $\delta$-almost submetry, then it is $\delta$-almost onto.
In addition, if a map $f \colon (X,d_X) \to (Y,d_Y)$ is a $\frac{\delta}{2}$-Gromov-Hausdorff approximation, then it is a $\delta$-almost submetry.

With the above notion, the following technical result can be regarded as an effective, equivalent ``fibration''.

\begin{theorem}\label{thm:eq-submetry}
	Given $k \leq n$, $\epsilon > 0$, and $(N,h)$ a compact $k$-dimensional Riemannian manifold. Let $H$ be a closed subgroup of  $\Isom(N)$. Then there exists $\delta = \delta(n, N, H, \epsilon) > 0$ such that the following holds.
	
	Suppose $(X,d,\mathcal{H}^n)$ is a compact $\mathrm{RCD}(-1,n)$-space and $G$ is a finite subgroup of $\Isom(X)$. Assume there exist a map $f \colon X \to N$ and a map $\phi \colon G \to H$ such that:
	\begin{enumerate}
		\item[(i)] $f$ is a $\delta$-almost submetry at scale $1$.
		\item[(ii)] For all $\gamma \in G$ and all $x \in M$,
		\begin{align} \label{1.34341}
			d_N\big(f(\gamma(x)),\, \phi(\gamma)(f(x))\big) \leq \delta.
		\end{align}
	\end{enumerate}
	Then there exist a homomorphism $\psi \colon G \to H$ and a Lipschitz map $F \colon X \to N$ such that:
	\begin{enumerate}
		\item[(a)] $\sup_{x \in X} d_N(F(x), f(x)) < \epsilon$;
		\item[(b)] $F \circ \gamma = \psi(\gamma) \circ F$ for every $\gamma \in G$;
		\item[(c)] For any $x\in X$, there exists a normal coordinate chart $\Theta_{y}:B_{10\sqrt{\delta}}(y)\rightarrow V \subset \mathbb{R}^{k}$ for $y=F(x)$,
		such that $\Theta_{y}$ is an $\epsilon$-Gromov-Hausdorff approximation onto its image, and $\bar{F}=\Theta_{y}\circ F|_{B_{5\sqrt{\delta}}(x)}$ satisfies
		\begin{itemize}
			\item [(c1)] $\Lip \bar{F}\leq C(n)$,
			\item [(c2)]$|\Delta \bar{F}|_{L^{\infty}(B_{5\sqrt{\delta}}(x))}\leq \frac{C(n)}{\sqrt{\delta}}$,
			\item [(c3)] $\bbint_{B_{5\sqrt{\delta}}(x)}|\langle\nabla \bar{F}^{i}, \nabla \bar{F}^{j} \rangle- \delta_{ij}|\leq \epsilon$.
		\end{itemize}

	\end{enumerate}
\end{theorem}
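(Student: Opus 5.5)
We argue by contradiction and compactness, in three steps: make $\phi$ a genuine homomorphism, average a smooth regularization of $f$ over $G$, and obtain the analytic estimates (c) from $\delta$-splitting maps.

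\textbf{Step 1 (homomorphism).} Since $\delta$-almost submetry maps are $\delta$-almost onto, (ii) gives that for $\gamma_1,\gamma_2\in G$ the isometries $\phi(\gamma_1\gamma_2)$ and $\phi(\gamma_1)\phi(\gamma_2)$ of $N$ are $C\delta$-close in the $C^0$ sense on an $\delta$-dense set, hence everywhere. So $\phi$ is a $C\delta$-almost homomorphism into the compact Lie group $H$. By the Grove--Karcher--Ruh theorem (almost homomorphisms from a compact, here finite, group into a compact Lie group are close to homomorphisms), there is a homomorphism $\psi:G\to H$ with $d(\psi(\gamma),\phi(\gamma))\le \Psi(\delta|N,H)$ uniformly in $\gamma$. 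The constants depend only on $H$, not on $|G|$, which is essential. Consequently (ii) holds with $\psi$ in place of $\phi$, with error $\Psi(\delta)$.

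\textbf{Step 2 (regularization).} Embed $N$ isometrically and $H$-equivariantly into some $\mathbb{R}^L$. This is possible via Mostow--Palais, using an orthogonal representation of the compact group $H$, together with a tubular neighborhood and nearest point projection $\pi$. Since $f$ is a $\delta$-almost submetry, at scales $r\ge\sqrt\delta$ each ball $B_r(x)$ is GH-close to a product $\mathbb{R}^k\times Z$ in the pointed sense, and $f$ is close to the projection onto $\mathbb{R}^k$. I would construct a smooth approximation $\tilde f:X\to\mathbb{R}^L$ by solving, on each ball $B_{10\sqrt\delta}(x)$, a harmonic replacement of $f$ composed with normal coordinates. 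These are splitting maps in the sense of Cheeger--Colding, with gradient estimates from the Cheng--Yau/heat kernel estimates on $\mathrm{RCD}$ spaces. I would glue them with a partition of unity, or more cleanly take $\tilde f=h_t*f$ with heat flow at time $t=\delta$. Then I average:
\[
\hat f(x)=\frac1{|G|}\sum_{\gamma\in G}\psi(\gamma)^{-1}\tilde f(\gamma x),\qquad F=\pi\circ\hat f.
\]
Equivariance (b) follows because $\pi$ is $H$-equivariant and the heat flow commutes with the isometries of $X$. Property (a) follows from Step 1, since every summand is $\Psi(\delta)$-close to $f$. The Lipschitz bound and (c1) come from the Lipschitz bound of the heat flow at scale $\sqrt\delta$, using that $f$ is $\delta$-Lipschitz at scales above $\delta$. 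The Laplacian bound (c2) comes from the standard $|\Delta h_t u|\le C\,\mathrm{osc}(u)/t$ estimate, composed with the smooth maps $\pi$ and $\Theta_y$, whose second derivatives are controlled on the scale $10\sqrt\delta$ where $N$ is almost flat.

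\textbf{Step 3 and main obstacle.} The hardest point is (c3), the $L^1$ almost-orthonormality of the gradients of $\bar F$. Averaging over $G$ preserves the pointwise bounds, but the cross terms $\langle\nabla\bar F^i,\nabla\bar F^j\rangle$ are not linear, so orthonormality must be shown before averaging. Alternatively, one can observe that the $G$-translates $\psi(\gamma)^{-1}\tilde f\circ\gamma$ all have $L^1$-close gradients, since they are all $\delta$-splitting maps for the same almost-splitting. For this I would argue by contradiction: take a sequence $\delta_i\to0$, rescale balls by $1/\sqrt{\delta_i}$, and pass to the limit. The rescaled $f$ converges to a submetry onto $\mathbb{R}^k$, which by the splitting theorem on the $\mathrm{RCD}(0,n)$ limit is a projection $\mathbb{R}^k\times Z\to\mathbb{R}^k$. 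The rescaled $\bar F$ converge in $H^{1,2}$ to harmonic functions, with convergence of gradients by the Ambrosio--Honda stability of energies. These are then the linear coordinates, so the Hessian and gradient products converge, giving (c3) in the limit and hence a contradiction. The volume noncollapsing $\mathcal{H}^n$ is used to ensure measure convergence in this blow-up.
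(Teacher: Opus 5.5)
Your route is sound, but after Step 1 it differs substantially from the paper's.

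\textbf{Comparison.} Step 1 is exactly Lemma \ref{lem6.2}: almost-ontoness makes $\phi$ a $5\delta$-almost homomorphism, and Lemma \ref{lem6.1} produces $\psi$. From there the paper stays intrinsic on $N$. It rescales so that $N$ is almost flat at unit scale. It then glues local harmonic splitting maps $\exp_{q_\lambda}\circ u_\lambda$ by a Riemannian center of mass weighted by a partition of unity (Proposition \ref{fibration-blowup}). This is the intrinsic counterpart of your ``harmonic replacement plus partition of unity'' option. Finally it symmetrizes by a second center of mass, minimizing $E(x,y)=\frac12\int_\Gamma d_N^2(f_\gamma(x),y)\,dm_\Gamma(\gamma)$ over the translates $f_\gamma=\psi(\gamma)^{-1}\circ f\circ\gamma$.

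The estimates (c1)--(c3) come from differentiating the critical-point equation $D\vartheta^\alpha=-K^{\alpha\beta}D\,\partial_{y^\beta}E$, with $K\approx I$ and $\partial^2_{y_1y_2}d^2\approx-2I$. Hence $D\vartheta$ is $\Psi(\delta)$-close to an average of the gradients of the $\tilde f_\gamma$. Lemma \ref{lem-6.1} ($C^0$-close maps with bounded Laplacian have $L^2$-close gradients) then turns this into (c3) by direct computation. Compactness enters only through the existence of the local splitting maps.

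You instead use an equivariant embedding $N\subset\mathbb{R}^L$, heat flow at time $\delta$, linear averaging and nearest-point projection. Equivariance becomes trivial, and (c1)--(c2) follow from Gaussian bounds on $\nabla_x p_t$ and $\partial_t p_t$. You obtain (c3) non-constructively, by blow-up at scale $\sqrt\delta$ plus Ambrosio--Honda $H^{1,2}$-stability. You thus trade the second-order calculus in normal coordinates for heavier analytic input:
\begin{itemize}
\item time-derivative heat kernel bounds;
\item stability of the heat flow under pmGH convergence;
\item the fact that a submetry from an $\mathrm{RCD}(0,n)$ space onto $\mathbb{R}^k$ is the projection of a splitting.
\end{itemize}
The paper's machinery, in turn, is reused verbatim for Theorem \ref{Fibrations} and for the fibered version with the $\mathbb{R}^s$-factor $v$ in Proposition \ref{prop-eq-fibration}.

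\textbf{Points you must make explicit for your version to close.}

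First, Step 1 must give $d(\psi(\gamma),\phi(\gamma))\le C(N,H)\,\delta$, which Lemma \ref{lem6.1} does, not merely $\Psi(\delta)$. Since you blow up at scale $\sqrt\delta$, the translates $\psi(\gamma)^{-1}\circ\tilde f\circ\gamma$ must agree up to $o(\sqrt\delta)$. Otherwise they may converge to different linear maps whose average does not have orthonormal gradients. With the linear bound, and because $h_\delta$ commutes with $\gamma$ and with the linear action of $H$ on $\mathbb{R}^L$, you get $|\hat f-\tilde f|\le C\delta$ uniformly. So $\hat f$ and $\tilde f$ have the same blow-up. This, not ``orthonormality before averaging'', is what handles the nonlinearity of (c3).

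Second, (c2) needs the oscillation at scale $\sqrt t$:
\[
|\Delta h_t f(x)|\le Ct^{-1}\int p_{2t}(x,y)|f(y)-f(x)|\,dm(y)\le C(\sqrt t+\delta)/t .
\]
The global oscillation would only give $C/\delta$.

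Third, after rescaling by $\delta^{-1/2}$ the Laplacians of $\bar F$ are merely bounded by $C$; they do not tend to $0$. So the limits are not harmonic for that reason. They are linear because they are uniform limits of the time-$1$ heat flow of maps converging to the projection $\mathbb{R}^k\times Z\to\mathbb{R}^k$, which the product heat flow fixes. Ambrosio--Honda needs only the Laplacian bound, so the contradiction still goes through.

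Two smaller fixes:
\begin{itemize}
\item $f$ is not assumed measurable, so replace it by a Borel map at distance $O(\delta)$ before applying $h_\delta$.
\item Mostow--Palais gives only a smooth equivariant embedding. To keep the constant $C(n)$ in (c1), either use an isometric equivariant embedding or estimate $D(\Theta_y\circ\pi)\nabla\hat f$ directly.
\end{itemize}
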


\begin{rem}
	If we assume that $X$ is smooth and satisfies a generalized Reifenberg condition in Theorem \ref{thm:eq-submetry}, then according to the non-degeneracy theorem in \cite{HH24} and Ehresmann's lemma, the resulting map $F$ is a smooth fiber bundle. Hence, Theorem \ref{thm:eq-submetry} is an effective version of \cite{HuangHzh}*{Corollary 1.9}.
    In Section \ref{sec-6}, we will prove a fiber version of Theorem \ref{thm:eq-submetry} (see Proposition \ref{prop-eq-fibration}), which will be used in the proof of Theorem \ref{mainthm-fiber-sec}.
\end{rem}

The proof of Theorem \ref{mainthm-mix-curv-pinching} is by contradiction.
Suppose we have a contradicting sequence $M_i$ with $\mathrm{diam}(M_i)=1$ and $b_{1}(M_i)=b$. We consider the corresponding sequence of normal covering $P_i:\hat{M}_i\rightarrow M_{i}$ whose deck transformation group $H_i$ is the torsion-free part of the homology group of $M_i$.
We consider the equivariant Gromov-Hausdorff convergence $(\hat{M}_i,H_i)\GH (\R^s \times \hat{Y},H)$, where $\hat{Y}$ is a compact metric space, and $H$ is Abelian.
According to the upper semi-continuity of the rank of Abelian group action (Lemma \ref{lem:rank non-dec}), we have $s \geq b$.
It is easy to see that for any discrete Abelian cocompact isometric group action $H_0$ on $\R^s \times \hat{Y}$, $\R^s \times \hat{Y}/H_0$ admits an submetry over some $s$-dimensional torus.
Since $H$ may contain some nontrivial $\R$-factor, to ensure a submetry a torus of $s$-dimension, we take a discrete cocompact subgroup $H_0$ of $H$.
According to a technical lemma on local Abelian group action (see Lemma \ref{LocalAction}), we known that there exists a sequence of finite covers $\tau_i:\check{M}_i\rightarrow M_i$ with deck transformation group $K_i$, such that $\check{M}_i$ convergence to $\R^s \times \hat{Y}/H_0$.
In particular, each $\check{M}_i$ admits an almost-submetry $f_i:\check{M}_i\rightarrow T^s$ which almost communicates with the group action $K_i$ (i.e. (\ref{1.34341}) holds).
Thus we can apply Theorem \ref{thm:eq-submetry} to modify $f_i$ into an almost-submetry $F_i$ which communicates with the group action $K_i$ (i.e. (b) in Theorem \ref{thm:eq-submetry} holds).
By the smoothness of $\check{M}_i$ and the uniform lower bound of sectional curvature, each $F_i$ is a fibration over $s$-torus. The quotient map of $F_i$ gives a fibration $F'_i:M_i\rightarrow T^s$, and we get a contradiction.

The proof of Theorem~\ref{mainthm_B} is a minor modification of Theorem~\ref{mainthm-mix-curv-pinching}. In fact, due to the lack of a smooth structure, we cannot immediately conclude the non-degeneratcy of $dF_i$.
In contrast, by the transformation theorems as well as some topological results, we can conclude that $F_{i}$ is a topological fiber bundle map, see Theorem~\ref{thm:bundlemap}.

In the following, we give an outline of this paper. Section \ref{sec2} contains some notions and background results that are used in this paper. In Section \ref{sec-3} we prove a technical lemma on local Abelian group action.
Assuming Proposition \ref{prop-eq-fibration}, which ensures the existence of sufficiently regular equivariant maps, we will prove the pinching theorems \ref{mainthm-mix-curv-pinching} and \ref{mainthm_B} as well as the smooth fibration theorem \ref{mainthm-fiber-sec} in Sections \ref{sec-13} and \ref{sec-4}.
In Section \ref{sec-5}, we give two methods to prove Theorem \ref{thm:rank-stability}, and give a new proof of a theorem  established in \cite{Zamora2022} and \cite{SantosZamoraPi1} by applying Theorem \ref{thm:rank-stability}.
In Section \ref{sec-6}, we use the technique of center of mass to prove various theorems about the existence of sufficiently regular maps, including Proposition \ref{prop-eq-fibration}.
Finally, in Appendix \ref{sec-8}, we collect some facts on normal coordinates, which are used in Section \ref{sec-6}.

\vspace*{20pt}

\noindent\textbf{Acknowledgments.}

The authors would like to thank Profs. B.-L. Chen, X. Rong, H.-C. Zhang and X.-P. Zhu for constant encouragements.
The authors would like to thank Prof T. Yamaguchi for comments on this paper.
H. Huang is partially supported by National Natural Science Foundation of China 12571052 and Guang-dong Natural Science Foundation 2026A1515012296.
X.-T. Huang is partially supported by National Natural Science Foundation of China (Nos. 12271531, 12426202).
X.Z is supported by AMS-Simons travel grant.

\section{Preliminaries}\label{sec2}

In this paper, we denote by $\Psi(a_1,\ldots, a_l|b_1,\ldots, b_m)$ the nonnegative error function such that $\Psi(a_1,\ldots, a_l|b_1,\ldots, b_m)\to 0$ when $a_1,\ldots, a_l\to 0$ and $b_1,\ldots,b_m$ fixed. The $\Psi$ may change from line to line.



\subsection{Equivariant Gromov--Hausdorff convergence and isometry groups}

We review the notion of equivariant Gromov--Hausdorff convergence introduced by Fukaya and Yamaguchi \cites{Fukaya1986,FukayaYamaguchi1992}.

Let $(X,p)$ and $(Y,q)$ be two pointed metric spaces. Let $H$ and $K$ be closed subgroups of $\mathrm{Isom}(X)$ and $\mathrm{Isom}(Y)$, respectively. For any $r>0$, define the sets
$$H(r)=\{h \in H | d(hp,p) < r \}, \ K(r)=\{k \in K | d(kq,q) < r \}.$$

For $\epsilon>0$, a pointed $\epsilon$-equivariant Gromov--Hausdorff approximation (or simply an $\epsilon$-eGHA) is a triple of maps $(f,\phi,\psi)$, where
$$f:B_{\frac{1}{\epsilon}}(p) \to B_{\frac{1}{\epsilon}+ \epsilon}(q),\quad \phi:H\left(\frac{1}{\epsilon}\right) \to K\left(\frac{1}{\epsilon}\right),\quad \psi:K\left(\frac{1}{\epsilon}\right) \to H\left(\frac{1}{\epsilon}\right)$$
satisfy the following conditions:
\begin{enumerate}
    \item\label{item:AlmostIsom} $f(p)=q$, $f(B_{\frac{1}{\epsilon}}(p))$ is $2\epsilon$-dense in $B_{\frac{1}{\epsilon} + \epsilon}(q)$ and $|d(f(x_1),f(x_2))-d(x_1,x_2)|\le\epsilon$ for all $x_1,x_2\in B_{\frac{1}{\epsilon}}(p)$;
    \item $d(\phi(h)f(x),f(hx)) < \epsilon$ for all $h \in H(\frac{1}{\epsilon})$ and $x \in B_{\frac{1}{\epsilon}}(p)$;
    \item $d(kf(x),f(\psi(k)x))<\epsilon$ for all $k \in K(\frac{1}{\epsilon})$ and $x \in B_{\frac{1}{\epsilon}}(p)$.
\end{enumerate}

The equivariant Gromov--Hausdorff (eGH in short) distance $d_{GH}((X_i,p_i,G_i),(X,p,G))$ is defined as the infimum of $\epsilon$ so that there exists an $\epsilon$-eGHA.
We say a sequence of metric spaces with isometric actions $(X_i,p_i,G_i)$ converges to a limit space $(X,p,G)$, if $d_{\mathrm{GH}}((X_i,p_i,G_i),(X,p,G))\to 0$. We denote the convergence by $(X_i,p_i,G_i)\xrightarrow{\mathrm{GH}}(X,p,G)$ for short.


We have the following pre-compactness theorem for equivariant Gromov--Hausdorff convergence (see \cites{Fukaya1986,FukayaYamaguchi1992}).
\begin{theorem}[\cites{Fukaya1986,FukayaYamaguchi1992}] \label{eGH}
	Let $(X_i,p_i)$ be a sequence of metric spaces converging to a limit space $(X,p)$ in the pointed Gromov--Hausdorff sense. For each $i$, let $G_i$ be a closed subgroup of $\mathrm{Isom}(X_i)$. Then passing to a subsequence if necessary,
	$$(X_i,p_i,G_i)\xrightarrow{\mathrm{GH}} (X,p,G),$$
	where $G$ is a closed subgroup of $\mathrm{Isom}(X)$. Moreover, the quotient spaces $(X_i/G_i, \bar{p}_i)$ pointed Gromov--Hausdorff converge to $(X/G,p)$.
\end{theorem}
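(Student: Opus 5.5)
The statement just before this point is Theorem~\ref{eGH}, the Fukaya--Yamaguchi precompactness theorem. The plan has three steps: pass the groups to a limit by a diagonal Arzel\`a--Ascoli argument, check that the limit is a closed group, and then compare the quotients.

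\emph{Step 1 (limit of the groups).} Fix $\epsilon_i$-GH approximations $f_i\colon B_{1/\epsilon_i}(p_i)\to B_{1/\epsilon_i+\epsilon_i}(p)$ with $\epsilon_i\to 0$, together with approximate inverses $g_i$. Take a countable dense set $\{x_m\}\subset X$; note that $X$ is proper in the cases of interest, and we assume this. For each radius $R\in\mathbb{N}$ and each $h\in G_i(R)$, define the map $\hat h_i=f_i\circ h\circ g_i$ on $B_R(p)$. This is an $O(\epsilon_i)$-almost isometry, and $d(\hat h_i(p),p)<R+O(\epsilon_i)$.

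Define $G$ to be the set of all $\eta\in\mathrm{Isom}(X)$ for which there exist $h_i\in G_i$ such that $\widehat{(h_i)}_i\to\eta$ uniformly on compact sets, after passing to a subsequence. The properness of $X$ and an Arzel\`a--Ascoli argument show that any sequence $h_i$ with $d(h_ip_i,p_i)$ bounded has a subsequence converging to an isometry of $X$. Surjectivity follows by applying the same argument to $h_i^{-1}$.

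To make the convergence hold along a single subsequence for all of $G$, work in the space $\mathcal{C}$ of closed subsets of $\mathrm{Isom}(X)$, topologized by the compact-open topology restricted to $\{\eta : d(\eta p,p)\le R\}$. Each closure $\overline{\{\widehat{h}_i : h\in G_i(R)\}}$ lies in a compact hyperspace, by Blaschke's theorem, since the set of isometries moving $p$ by at most $R$ is compact. Choose a diagonal subsequence, over $R\in\mathbb{N}$, along which these sets Hausdorff-converge, and let $G$ be the union of the limits.

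\emph{Step 2 (the limit is a closed group).} Closedness is automatic, because Hausdorff limits of sets are closed and the union is locally finite in $R$. For the group property, if $\widehat{(h_i)}\to\eta$ and $\widehat{(k_i)}\to\zeta$, then $\widehat{(h_ik_i)}$ differs from $\widehat{(h_i)}\circ\widehat{(k_i)}$ by $O(\epsilon_i)$ on compact sets, so it converges to $\eta\zeta$. Inverses are handled in the same way.

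This Hausdorff-limit construction supplies the maps $\phi_i\colon G_i(1/\epsilon)\to G(1/\epsilon)$ and $\psi_i$ in the opposite direction required by the definition of an $\epsilon$-eGHA. The main obstacle is here: the convergence is only uniform on compacta, and one must choose $\phi_i(h)$ as a nearest element of $G$. Uniformity in $h$ comes from the compactness of the sets involved, so a contradiction argument gives $d(\phi_i(h)f_i(x),f_i(hx))\to0$ uniformly for $x\in B_{1/\epsilon}(p_i)$.

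\emph{Step 3 (quotients).} Define $\bar f_i\colon X_i/G_i\to X/G$ by $[x]\mapsto[f_i(x)]$. The distance $d([x],[y])=\inf_{h}d(x,hy)$ is realized by elements with bounded displacement, so the equivariance estimates give
\[
\bigl|d([f_ix],[f_iy])-d([x],[y])\bigr|\le 3\epsilon_i'
\]
on bounded balls. Almost-surjectivity is inherited from $f_i$. Hence $(X_i/G_i,\bar p_i)\to (X/G,\bar p)$ in the pointed Gromov--Hausdorff sense.
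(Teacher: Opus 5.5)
The paper does not prove this statement; it quotes it from Fukaya and Fukaya--Yamaguchi. Your scheme is the standard one from those sources: Arzel\`a--Ascoli for almost-isometries with bounded displacement, a diagonal Hausdorff-limit extraction, then passage to quotients, with properness assumed. Steps 2 and 3 are fine once a single good subsequence is in hand.

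The step that fails as written is the one producing that subsequence. You apply Blaschke's theorem in the hyperspace of closed subsets of the compact set of isometries of $X$ moving $p$ by at most $R$. But $\hat h=f_i\circ h\circ g_i$ is not an isometry of $X$: it need not be continuous, and it is defined only on a ball $B_{r_i}(p)$ with $r_i\to\infty$. So $\{\hat h : h\in G_i(R)\}$ is not a subset of that compact set, and its closure is not a point of the hyperspace. In the space of all such maps with the uniform metric there is no compactness to appeal to. The diagonal extraction is therefore not justified, and that extraction is exactly the content of the precompactness theorem.

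The repair uses your own Step 1.
\begin{enumerate}
\item Let $\mathcal{I}_R=\{\eta\in\mathrm{Isom}(X): d(\eta p,p)\le R+1\}$, which is compact for proper $X$.
\item Measure distances by $\rho(\alpha,\beta)=\sum_{m\ge 1}2^{-m}\min\{1,\sup_{B_m(p)}d(\alpha x,\beta x)\}$, truncating the sum at the radius of the domain of $\hat h$; this costs at most $2^{1-r_i}$.
\item Your Arzel\`a--Ascoli claim plus a contradiction argument gives $\delta_i:=\sup_{h\in G_i(R)}\rho(\hat h,\mathcal{I}_R)\to 0$.
\item Let $A^R_i\subset\mathcal{I}_R$ be the closure of the set of $\eta$ with $\rho(\eta,\hat h)\le\delta_i$ for some $h\in G_i(R)$. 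These are closed subsets of a fixed compact metric space.
\item Blaschke's theorem and a diagonal argument now give $A^R_i\to G^{(R)}$ in the Hausdorff sense for every $R\in\mathbb{N}$ along one subsequence.
\item The bound $d_H(A^R_i,G^{(R)})+\delta_i\to0$ supplies, uniformly in the group element, both the maps $\phi_i,\psi_i$ and the closure of $G=\bigcup_R G^{(R)}$ under products and inverses, exactly as in your Step 2.
\end{enumerate}
Equivalently, embed all $X_i$ and $X$ in a common proper space $Z$ realizing the pointed convergence. Then apply Blaschke to the graphs of $h|_{\bar B_m(p_i)}$, which are compact subsets of a fixed compact subset of $Z\times Z$. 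With this modification your argument is complete.
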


\begin{defn}\label{defn-k-euc}
Given a metric space $X$, we say $B_{r}(p)\subset X$ is $(\delta,k)$-Euclidean, if there exists a metric space $(Z,d_{Z})$ such that
\begin{align}
d_{\mathrm{GH}}(B_{r}(p),B_{r}((0^{k},z))\leq\delta r,
\end{align}
where $(0^{k},z)\in \mathbb{R}^{k}\times Z$.
\end{defn}

\subsection{$\RCD (K,N)$ spaces}\label{sec:RCDgeo}

$\textmd{RCD}(K,N)$ spaces, where $K\in \R$ and $N\in [1,\infty)$, are metric measure spaces $(X,d,m)$ with generalized Ricci curvature bounded from below by $K$ and dimension bounded from above by $N$, see \cites{AGMR15,EKS15,Gig13} etc.
$\textmd{RCD}(K,N)$ spaces includes all $n$-manifolds with $\Ric \geq K, n\leq N$ and their Ricci limit spaces.
If $N\in \mathbb{Z}^{+}$, and $m=\mathcal{H}^{N}$, then we say $(X,d,m)$ is a non-collapsed $\textmd{RCD}(K,N)$ space, see \cite{DePGig18} etc.
We assume that the reader is familiar with the basics of $\RCD(K,N)$ spaces.

Throughout this paper, when we say a group $G$ acting on a $\RCD$ space $(X,d,m)$, we require that each element of $G$ is a measure-preserving isometry of $(X,d,m)$.
A fact that we will frequently use is that the measure-preserving isometry group of an $\RCD(K,N)$ space is a Lie group (see \cites{Sosa2018,GuSan2019}).



\begin{defn}
Let $(X,d,\meas)$ be an $\RCD(K,N)$ space. Given $k \in \mathbb{N}$, we define the $k$-dimensional regular set as the set of points with unique tangent cone $\R^k$. More precisely,
\begin{align*}
\mc R^k(X)\defeq\left\{x\in X : (X,x,r^{-1}d,\meas(B_r(x))^{-1}\meas)\overset{\mathrm{GH}}\longrightarrow(\R^k,0^k, |\cdot|, \leb^k), \ \forall r\to 0^+.\right\}
\end{align*}
 There exists $n \le N$ such that $\meas(X \setminus \mc{R}^n(X))=0$ by Bru\'e--Semola \cite{BS_EssentialDim}. We call this $n$ the rectifiable dimension of $(X,d,\meas)$.
\end{defn}


The notion of $\delta$-splitting maps on manifolds or Ricci-limit spaces, which originates from Cheeger and Colding's works in \cites{CC96,CC97} etc., was introduced in \cite{CN}.
The notion of $\delta$-splitting maps can be generalized to $\RCD$ spaces, see e.g. \cite{BPS19} \cite{BNS}.

\begin{defn}\label{def-harm-split}
Let $(X,d,m)$ be an $\mathrm{RCD}(-1,N)$ space, $p\in X$ and $\delta > 0$ be fixed.
We say that $u:=(u_{1},\ldots,u_{k}) : B_{r}(p)\to \mathbb{R}^{k}$ is a $(\delta,k)$-splitting map if it belongs to the domain of the local Laplacian on $B_{r}(p)$, and
\begin{description}
  \item[(i)] $\mathrm{Lip} u \leq  C(N)$;
  \item[(ii)] $|\Delta u^a|_{L^{\infty}(B_{r}(p))}<\delta$;
  \item[(iii)] $\bbint_{B_{r}(p)}|\langle\nabla u^{a},\nabla u^{b}\rangle-\delta_{ab}|dm <\delta$ for any $a,b\in\{1,\ldots,k\}$.
\end{description}
If  (ii) is strengthened to $\Delta u=0$, then we say that $u$ is a $(\delta,k)$-splitting harmonic map.
We use $u : (B_{r}(p),p)\to (\mathbb{R}^{k},0^{k})$ to denote a map $u : B_{r}(p)\to \mathbb{R}^{k}$ with $u(p)=0^{k}\in \R^{k}$.
\end{defn}





The existence of an almost splitting function is equivalent to pointed Gromov-Hausdorff  closeness to a space that splits off an Euclidean factor $\R$, see \cites{BNS,CC97,CheegerColding2000a,CJN21}.

\begin{theorem}\label{thm-split-GHisom}
Let $1\leq N<\infty$ be fixed.
For every positive number $\delta \ll 1$, there exists $\epsilon(N,\delta)>0$ such that any $\epsilon\in(0,\epsilon(N,\delta))$ satisfies the followings.
If $(X,d,m)$ is an $\mathrm{RCD}(-\epsilon,N)$ space, $p\in X$, then
\begin{description}
  \item[(1)] if $d_{\mathrm{mGH}}(B_{4}(p),B_{4}^{\mathbb{R}^{k}\times Z}(0^{k},z))\leq \epsilon$ for some integer $k$ and some pointed metric measure space $(Z,z,d_{Z},m_{Z})$, then there exists a $(\delta,k)$-splitting map $u=(u^{1},\ldots,u^{k}) : B_{1}(p)\rightarrow\mathbb{R}^{k}$;
  \item[(2)] if there exists an $(\epsilon,k)$-splitting map $u : B_{4}(p)\rightarrow\mathbb{R}^{k}$ for some integer $k$, then $d_{\mathrm{mGH}}(B_{1}(p),B_{1}^{\mathbb{R}^{k}\times Z}(0^{k},z))\leq \delta$ for some pointed metric measure space $(Z,z,d_{Z},m_{Z})$; moreover, there exists $f:B_{1}(p)\rightarrow Z$ such that $(u-u(p),f):B_{1}(p)\rightarrow B_{1}^{\mathbb{R}^{k}\times Z}(0^{k},z)$ is a $\delta$-Gromov-Hausdorff isometry.
\end{description}
\end{theorem}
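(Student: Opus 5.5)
We argue both directions by contradiction and compactness. The main tools are the precompactness of $\mathrm{RCD}(-1,N)$ spaces in the pointed measured Gromov--Hausdorff topology, the stability of the $\mathrm{RCD}$ condition, and the $W^{1,2}$/$H^{1,2}$ stability of harmonic functions and of Laplacians along mGH-converging sequences. On the limit side we use the splitting theorem of Gigli for $\mathrm{RCD}(0,N)$ spaces.

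For \textbf{(1)}, the model case is $\mathbb{R}^k\times Z$ with the coordinate functions $x^1,\dots,x^k$. These are harmonic, $1$-Lipschitz and satisfy $\langle\nabla x^a,\nabla x^b\rangle=\delta_{ab}$.
\begin{itemize}
\item Suppose (1) fails. Then there are $\mathrm{RCD}(-\epsilon_i,N)$ spaces $X_i$ with $\epsilon_i\to0$ and $B_4(p_i)$ $\epsilon_i$-close to $B_4^{\mathbb{R}^k\times Z_i}$, but admitting no $(\delta,k)$-splitting map on $B_1(p_i)$.
\item Pass to a limit $B_4(p_\infty)\subset \mathbb{R}^k\times Z$, which is an $\mathrm{RCD}(0,N)$ ball. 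Pull back the coordinate functions via the GH approximations to obtain approximating functions $\tilde u^a_i$ on $B_3(p_i)$.
\item Solve the Dirichlet problem $\Delta u_i^a=0$ on $B_3(p_i)$ with boundary data $\tilde u^a_i$.
\item By stability of harmonic replacements (Ambrosio--Honda), $u_i^a\to x^a$ uniformly and in energy, and $|\nabla u_i^a|^2\to|\nabla x^a|^2$ in $L^1$. By polarization, $\langle\nabla u^a_i,\nabla u^b_i\rangle\to\delta_{ab}$ in $L^1(B_1)$.
\item The Lipschitz bound (i) with constant $C(N)$ comes from the Cheng--Yau gradient estimate for harmonic functions on $\mathrm{RCD}(-1,N)$ spaces, using the uniform $L^\infty$ bound on $u_i$ on $B_2$.
\end{itemize}
This yields $(\delta,k)$-splitting harmonic maps for large $i$, a contradiction.

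For \textbf{(2)}, we follow Cheeger--Colding.
\begin{itemize}
\item Start from an $(\epsilon,k)$-splitting map $u$ on $B_4(p)$. Bochner's inequality, applied to $|\nabla(\sum c_a u^a)|^2$ together with a good cutoff function, gives the Hessian estimate $\bbint_{B_2}|\mathrm{Hess}\,u^a|^2\le\Psi(\epsilon|N)$.
\item Argue by contradiction again. Take $\epsilon_i\to0$ and pass to a limit $X_\infty$, which is $\mathrm{RCD}(0,N)$ locally. The limit maps $u_\infty^a$ are harmonic with $\langle\nabla u_\infty^a,\nabla u_\infty^b\rangle=\delta_{ab}$ a.e. on $B_3$, by lower semicontinuity of the Hessian energy and convergence of the gradients.
\item Then $u_\infty$ has vanishing Hessian, so each $\nabla u^a_\infty$ generates a local measure-preserving isometric $\mathbb{R}$-flow. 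Applying the (local) splitting theorem gives $B_1(p_\infty)\cong B_1^{\mathbb{R}^k\times Z}(0^k,z)$, with $u_\infty-u_\infty(p_\infty)$ equal to the $\mathbb{R}^k$-coordinate.
\item The map $f$ is obtained by composing the GH approximation to $X_\infty$ with the projection onto $Z$. Because $u_i\to u_\infty$ uniformly, $(u-u(p),f)$ is a $\delta$-GH isometry for large $i$, a contradiction.
\end{itemize}

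The main obstacle is the localization in (2): the splitting theorem is global, while we only know that $u_\infty$ is a local isometric submersion on a ball. We would handle this by the Cheeger--Colding approach. First use the Hessian bound to control $|u(\gamma(t))-u(\gamma(0))-\langle\nabla u,\dot\gamma\rangle t|$ along almost all geodesics via segment inequalities. This gives the almost Pythagorean identity $d(x,y)^2\approx|u(x)-u(y)|^2+d_Z(f(x),f(y))^2$ directly on the ball, where $f$ is defined via the level set $u^{-1}(u(p))$ and nearest-point projection. This bypasses the global splitting theorem.
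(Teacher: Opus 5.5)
The paper does not prove this statement; it quotes it from the Cheeger--Colding, Cheeger--Jiang--Naber and Bru\`e--Naber--Semola literature. Your treatment of (1) follows the standard route: harmonic replacement of the Euclidean coordinates of the limit, Ambrosio--Honda stability, and Cheng--Yau for the Lipschitz bound. The Bochner Hessian estimate in (2) is also standard. Both are fine up to routine details. For instance, the pulled-back coordinates are not Sobolev functions, so you must first replace them by Lipschitz functions converging strongly in $H^{1,2}$ before solving the Dirichlet problem.

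The gap is in the last step of (2), which is where the real content lies.

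\emph{What is needed.} You need a local rigidity statement for the limit. Suppose $B_3(p_\infty)$ is a ball in an $\mathrm{RCD}(0,N)$ space and $u_\infty$ satisfies $\Delta u^a_\infty=0$, $\mathrm{Hess}\,u^a_\infty=0$ and $\langle\nabla u^a_\infty,\nabla u^b_\infty\rangle=\delta_{ab}$ there. Then $B_1(p_\infty)$ should be isomorphic \emph{as a metric measure space} to a ball in $\mathbb{R}^k\times Z$ with product measure, with $u_\infty-u_\infty(p_\infty)$ the Euclidean coordinate.

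\emph{Why neither of your routes supplies it.} Your first route asserts that $\nabla u^a_\infty$ generates a local measure-preserving isometric flow. That is exactly what Gigli's argument does not give you: it derives these properties from a globally defined $b$ via heat-flow duality. Your fallback does not supply it either, because an almost Pythagorean identity only controls distances.
\begin{itemize}
\item It gives no almost-surjectivity of $(u-u(p),f)$ onto the product ball; in Cheeger--Colding that comes from the gradient flow.
\item It gives nothing about the measure, yet (2) asserts mGH-closeness. Metric closeness does not force the measure to be nearly a product: for $N>1$, the interval $[-1,1]$ with $(2+x)^{N-1}dx$ is $\mathrm{RCD}(0,N)$.
\item In the $\mathrm{RCD}$ setting, ``controlling $u\circ\gamma$ along almost every geodesic via the segment inequality'' does not transfer verbatim, since $|\mathrm{Hess}\,u|$ is only an $L^2$ density. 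You must work with the second-order differentiation formula along $W_2$-geodesics, or with the regular Lagrangian flow of $\nabla u$.
\end{itemize}

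\emph{How to close it.} Use the regular Lagrangian flow of $\nabla u^a$, which is how the $\mathrm{RCD}$ references run the Cheeger--Colding argument.
\begin{itemize}
\item The flow moves points at unit speed while raising $u^a$ at unit rate. This gives the $\mathbb{R}^k$-translations and surjectivity.
\item $\Delta u^a=0$ makes the flow measure-preserving. Hence the disintegration of the measure along the $\mathbb{R}^k$-fibres is translation invariant, and the measure is a product.
\item $\mathrm{Hess}\,u=0$, or quantitatively the Hessian bound, makes the flow (almost) distance-preserving, which yields the Pythagorean identity.
\end{itemize}
With this lemma in place, your contradiction argument for (2) goes through.
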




\subsection{Transformation theorems and the generalized Reifenberg condition}


Since \cites{CN,CJN21}, there have been various versions of transformation theorems, see e.g. \cites{BNS22,WZh21,HondaPeng2023,HH24,HuangHzh22}.
The reader can refer to these papers for many interesting applications of the transformation theorems.

The following transformation theorem follows from \cite{HuangHzh22}*{Theorem 4.1}.
Even though in the statement of \cite{HuangHzh22}*{Theorem 4.1}, it holds for manifolds, but its proof works on $\RCD$-space.
Note that we do not require any non-collapsing assumption in Theorem \ref{thmA.1}. See also \cite{HH24}*{Theorem 1.7}.

\begin{theorem}\label{thmA.1}
Given $\epsilon,\eta> 0$, $n\in \mathbb{Z}^{+}$ and $L\geq 0$,
there exits $\delta_{0}=\delta_{0}(n,\epsilon,\eta,L)$, so that for any $\delta\in(0,\delta_{0})$ the following holds.
Suppose $(X, d,m)$ is an $\RCD(-\delta,n)$-space and $\overline{B_{2}(p)}$ is compact in $X$, and there exits $s\in(0, 1)$ such that, for any $r\in [s, 1]$, $B_{r}(p)$ is $(\delta,K)$-Euclidean but not $(\eta,K+1)$-Euclidean.
Let $u:B_{2}(p)\rightarrow \R^k$ be a Lipschitz map which belonging to the
domain of the local Laplacian on $B_2(p)$, where $1\leq k\leq K\leq n$, and suppose
\begin{align}\label{2.11-1}
|\Delta u^{\alpha}|_{L^{\infty}(B_{2}(p))}\leq L,
\end{align}
\begin{align}\label{2.11-2}
\bbint_{B_{2}(p)}|\langle \nabla u^{\alpha}, \nabla u^{\beta}\rangle-\delta_{\alpha\beta}|\leq \delta
\end{align}
for $1\leq \alpha,\beta\leq k$.
Then for every $r\in [s,1]$, there exists a $k\times k$ lower triangle matrix $T_{r}$ with positive diagonal entries
so that the followings hold:
\begin{description}
  \item[(1)] $\bbint_{B_{r}(p)}\langle\nabla(T_{r}u)^{\alpha},\nabla(T_{r}u)^{\beta}\rangle=\delta^{\alpha\beta}$;
  \item[(2)] $\bbint_{B_{r}(p)}|\langle\nabla(T_{r}u)^{\alpha},\nabla(T_{r}u)^{\beta}\rangle-\delta^{\alpha\beta}|\leq \epsilon$;
  \item[(3)] $|\Delta (T_{r}u)^{\alpha}|_{L^{\infty}(B_{1}(p))}\leq C(n,L)r^{-\epsilon}$;
  \item[(4)] $|\nabla (T_{r}u)^{\alpha}|_{L^{\infty}(B_{r}(p))}\leq C(n,L)$;
  \item[(5)] for $s\leq r\leq t\leq 1$, $|T_{r}T_{t}^{-1}|+|T_{t}T_{r}^{-1}|\leq 2\bigl(\frac{t}{r}\bigr)^{\epsilon}$, here $|\cdot|$ means the $L^{\infty}$-norm of a matrix.
\end{description}

\end{theorem}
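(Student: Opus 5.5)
The plan is to follow the standard Cheeger--Jiang--Naber scheme for transformation theorems, as carried out in \cite{HuangHzh22}*{Theorem 4.1}, and to check that each step uses only tools available on $\RCD(-\delta,n)$ spaces: the Bochner inequality, the heat-kernel and Cheng--Yau type gradient estimates, the segment inequality, and Theorem \ref{thm-split-GHisom}. Doubling of $m$ replaces volume non-collapsing throughout, so no lower bound on $m(B_1(p))$ is needed.

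\textbf{Step 1: definition of $T_r$ and the induction.} For each $r\in[s,1]$, let $T_r$ be the unique lower triangular matrix with positive diagonal obtained by Gram--Schmidt applied to the symmetric matrix $\bbint_{B_r(p)}\langle\nabla u^\alpha,\nabla u^\beta\rangle$. This gives (1) whenever that matrix is nondegenerate, which we show inductively. We argue over dyadic scales $r_j=2^{-j}$ and let $J$ be the first scale at which (2) fails with $\epsilon$ replaced by a smaller $\epsilon'\ll\epsilon$. For $j<J$, the rescaled map $v_j=r_j^{-1}T_{r_j}(u-u(p))$ on the rescaled ball $r_j^{-1}B_{r_j}(p)$ is close to a splitting map. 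It has $L^2$-almost orthonormal gradients, its gradient is bounded by the Cheng--Yau estimate, and its Laplacian is bounded by $r_j|T_{r_j}|L$.

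\textbf{Step 2: growth control on $T_r$.} Granting (2) at scales $r_j\ge r\ge r_{j+1}$, the key estimate is $|T_{r_{j+1}}T_{r_j}^{-1}-I|\le\Psi(\delta|n,\eta,L)$. Iterating this gives (5), namely $|T_rT_t^{-1}|\le 2(t/r)^{\epsilon}$. Combined with $|T_1-I|\le\Psi$, it yields $|T_r|\le C r^{-\epsilon}$. Since the Laplacian scales with $|T_r|L$, this produces (3). The gradient bound (4) then follows from the Bochner/Cheng--Yau gradient estimate on $B_{2r}$ applied to $T_ru$, using $\bbint|\nabla T_ru|^2\approx k$ and the small rescaled Laplacian $r^{2}|\Delta T_r u|\le C r^{2-\epsilon}$.

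\textbf{Step 3: the main obstacle, ruling out the first bad scale.} We argue by contradiction. Take a sequence of counterexamples with $\delta_i\to0$ and bad scales $r_{J_i}$, rescale by $r_{J_i}^{-1}$, and pass to a limit. This limit is an $\RCD(0,n)$ space $X_\infty$. By the hypothesis at all scales in $[s,1]$, every ball $B_R(p_\infty)$ with $R\le r_{J_i}^{-1}$ is $(0,K)$-Euclidean but not $(\eta,K+1)$-Euclidean, so $X_\infty$ splits as $\R^K\times Z$ with $Z$ not splitting a line at any scale up to $R\to\infty$.

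The maps $v_{J_i}$ have uniformly controlled Laplacian, since $r^{1-\epsilon}\to0$ after rescaling. Their gradients are almost orthonormal on all larger balls, by the induction hypothesis and the polynomial bound from Step 2. Hence they converge to a map $v_\infty$ that is harmonic on $X_\infty$ with polynomial growth of order at most $1+\epsilon$. On $\R^K\times Z$ such functions must be linear in the $\R^K$ factor; this uses the almost-rigidity that $Z$ has no linear-growth harmonic functions, which is exactly where "not $(\eta,K+1)$-Euclidean" enters, together with a Liouville-type three-circle argument. Consequently $\langle\nabla v_\infty^\alpha,\nabla v_\infty^\beta\rangle$ is constant and equal to $\delta^{\alpha\beta}$, which contradicts the failure of (2) at scale $r_{J_i}$.

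I expect the delicate points to be twofold. The first is the three-circle/frequency argument excluding harmonic functions with growth in $(1,1+\epsilon)$ on $Z$. The second is justifying the $W^{1,2}$ convergence of the $v_i$ under mGH convergence of $\RCD$ spaces, using the results of Ambrosio--Honda. Both are available in the $\RCD$ literature, which is why the manifold proof carries over verbatim.
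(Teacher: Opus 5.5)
This is essentially the paper's route. The paper gives no separate argument for Theorem \ref{thmA.1}; it simply invokes the Cheeger--Jiang--Naber-type first-bad-scale/compactness proof of \cite{HuangHzh22}*{Theorem 4.1} and asserts that it goes through on $\RCD(-\delta,n)$ spaces without non-collapsing, which is exactly what you outline. Two harmless adjustments: first, granting (2) only at level $\epsilon'$, doubling gives $|T_{r_{j+1}}T_{r_j}^{-1}-I|\le C(n)\epsilon'$ rather than $\Psi(\delta)$ a priori, which still yields (5) once $\epsilon'\le c(n)\epsilon$; second, since all conclusions are monotone in $\epsilon$, you may first shrink $\epsilon$ depending on $n$ and $\eta$, so that the growth-gap rigidity on $\R^K\times Z$ applies.
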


The following definition of the generalized Reifenberg condition is equivalent to the one given in \cite{HH24}*{Definition 1.4}.

\begin{defn}\label{defn:GRC}
Given a metric space $(X, d)$ and $p\in X$, if there exist $\delta\geq 0$, $k\in \mathbb{Z}^{+}$, $r_{0}>0$, and a function $\Phi:\R^+\to \R^+$ with $\lim_{\delta'\to0}\Phi(\delta')=0$, so that if some geodesic ball $B_{r}(p)$ is $(\delta',k')$-Euclidean, where $r\leq r_{0}$, $k'\geq k$, $\delta'\geq \delta$, then for every $s\leq r$, $B_{s}(p)$ is $(\Phi(\delta'),k')$-Euclidean, then we say $p$ satisfies the {$(\Phi, r_{0}; k,\delta)$-generalized Reifenberg condition}.

If for every $p\in U\subset X$, $p$ satisfies the {$(\Phi, r_{0}; k,\delta)$-generalized Reifenberg condition}, then we say $U$ satisfies the {$(\Phi,r_0; k,\delta)$-generalized Reifenberg condition}.

We will use the $(k,\delta)$-generalized Reifenberg condition if there is no ambiguity on $\Phi$ and $r_0$.
\end{defn}

The following theorem generalizes \cite{HH24}*{Theorem 4.6}, and their proofs are similar.

\begin{theorem}\label{thm-trans-GRC-111}
Given $\epsilon>0$, $n, k\in \mathbb{Z}^{+}$ with $k\leq n$, $L\geq 0$ and a function $\Phi_{0}:\R^+\to \R^+$ with $\lim_{\delta\to0}\Phi_{0}(\delta)=0$, there exists $\delta_{0}>0$ depending on $n$, $\epsilon$, $L$, and $\Phi_{0}$ such that the following holds for every $\delta\in(0,\delta_{0})$.
Suppose $(X, d,m)$ is an $\RCD(-\delta,n)$-space and $\overline{B_{2}(p)}$ is compact in $X$, and
\begin{equation}\label{4.8888888888888}
\Theta^{(M,g)}_{p,k'}(s)\le \Phi_{0}(\max\{\delta,\theta^{(M,g)}_{p,k'}(s)\})
\end{equation}
holds for every $s\in(0,1)$ and every integer $k\leq k'\leq n$.
Suppose in addition $B_{2}(p)$ is $(\delta,k)$-Euclidean, and $u:(B_{2}(p),p))\rightarrow (\R^{k_{1}},0^{k_{1}})$ is a Lipschitz map which belongs to the domain of the local Laplacian on $B_2(p)$, where $k_{1}\leq k$, and suppose (\ref{2.11-1}) (\ref{2.11-2}) hold
for $1\leq \alpha,\beta\leq k_{1}$, then for every $r\in(0,1]$, there exists a $k_{1}\times k_{1}$ lower triangle matrix $T_{r}$ with positive diagonal entries such that (1)-(5) in Theorem \ref{thmA.1} hold.
\end{theorem}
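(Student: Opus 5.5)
The plan is to reduce Theorem \ref{thm-trans-GRC-111} to the transformation theorem, Theorem \ref{thmA.1}, which already yields (1)--(5) on any scale interval $[s,1]$ on which $B_r(p)$ is $(\delta',K)$-Euclidean but not $(\eta,K+1)$-Euclidean, for a fixed $K$. We read hypothesis (\ref{4.8888888888888}) in the sense of the HH24 quantities. Here $\theta_{p,k'}(s)$ is the smallest $\delta'$ with $B_s(p)$ $(\delta',k')$-Euclidean, and $\Theta_{p,k'}(s)$ is the supremum of the $\theta_{p,k'}(t)$ for $t\le s$. The hypothesis then says exactly that $p$ satisfies a generalized Reifenberg condition in the sense of Definition \ref{defn:GRC}, with $\Phi=\Phi_0$, $r_0=1$ and threshold $\delta$.

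First I fix a small $\eta>0$ depending on $n,\epsilon$, and split $(0,1]$ into at most $n-k+1$ consecutive scale intervals $[s_{j+1},s_j]$. On each such interval the maximal $k'\ge k$ for which $B_r(p)$ is $(\Phi_0(\eta),k')$-Euclidean is constant, say $K_j$.

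The generalized Reifenberg condition is what makes this splitting possible with monotone $K_j$. At scale $1$, $B_2(p)$ is $(\delta,k)$-Euclidean, so every smaller ball is $(\Phi_0(\delta),k)$-Euclidean. Moreover, once $B_r(p)$ is $(\eta,k')$-Euclidean, every smaller ball is $(\Phi_0(\eta),k')$-Euclidean. Hence the Euclidean rank, measured with these two thresholds, can only increase as $r$ decreases. The number of jumps is therefore at most $n-k$. At each jump scale $s_j$ the ball becomes $(\eta,K_j+1)$-Euclidean, and it stays $(\Phi_0(\eta),K_j+1)$-Euclidean below $s_j$. On each interval we are thus in the setting of Theorem \ref{thmA.1} with $K=K_j$, Euclidean parameter $\Phi_0(\eta)$ (small when $\eta$ is small) and non-Euclidean parameter $\eta$. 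A standard two-threshold hysteresis choice, taking $\eta_1\ll\eta_2\ll\cdots$ with $\Phi_0(\eta_{j})\ll\eta_{j+1}$, avoids the mismatch between the "is Euclidean" and "is not Euclidean" constants. Since there are at most $n$ levels, all constants depend only on $n$, $\epsilon$ and $\Phi_0$.

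Next I iterate Theorem \ref{thmA.1} down the intervals. On $[s_1,1]$ I apply it to $u$ and obtain $T_r$ satisfying (1)--(5) with a small $\epsilon'$. At $r=s_1$ I rescale $B_{2s_1}(p)$ to unit size and apply it again to $T_{s_1}u$. This map satisfies (\ref{2.11-2}) at scale $s_1$ with small error by (2). After rescaling, its Laplacian bound is $C s_1\cdot s_1^{-\epsilon'}\le C$, which is the required $L$-type bound. The matrices obtained on the next interval are composed with $T_{s_1}$, and I continue in the same way.

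The main obstacle is keeping constants uniform under this concatenation. Naively composing bounds would multiply the constants in (3)--(5) by up to $n$ factors, and each new application needs the Laplacian bound $L$ to stay fixed. I would handle this as follows. Use $\epsilon'\ll\epsilon/n$ in each application. Note that (5) across a junction scale $s_j\le r\le t$ follows from multiplying the two estimates, which gives $4(t/r)^{2\epsilon'}$, and this can be absorbed by choosing $\epsilon'$ smaller. Then restate (5) with exponent $\epsilon$, the constant $2$ being recovered by the same normalization argument as in HH24 Theorem 4.6. The Laplacian bound (3) on $B_1(p)$ is propagated via (5): $|\Delta T_r u|\le |T_rT_1^{-1}|\,L\le 2r^{-\epsilon}L$. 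Finally, the normalization (1) is imposed at each $r$ by Gram--Schmidt, which keeps $T_r$ lower triangular with positive diagonal.
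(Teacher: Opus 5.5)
Your proposal is correct and follows the route the paper intends. The paper gives no separate argument and defers to \cite{HH24}*{Theorem 4.6}: with a threshold hierarchy $\Phi_0(\eta_j)\ll\eta_{j+1}$, the generalized Reifenberg condition splits $(0,1]$ into at most $n-k+1$ scale ranges of constant Euclidean rank $K_j\ge k\ge k_1$. Theorem \ref{thmA.1}, which is stated exactly for maps into $\mathbb{R}^{k_1}$ with $k_1\le K$, is then applied range by range, and the resulting matrices are composed.

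Two small points need tidying. First, at a restart the input map must be almost orthonormal on $B_{2s_j}(p)$, not $B_{s_j}(p)$; use $T_{2s_j}u$, or volume doubling together with the comparable-scale estimate. Second, do not multiply junction bounds in (5); the constant $4$ there cannot be absorbed when $t/r$ is close to $1$. Instead, note that each $T_r$ is uniquely fixed by (1), being the inverse Cholesky factor of the Gram matrix on $B_r(p)$, so (3)--(5) follow from (1)--(2) (with a smaller $\epsilon'$) at all scales by the same comparable-scale argument used for Theorem \ref{thmA.1}.
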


Based on Theorem \ref{thm-trans-GRC-111}, we can prove the following non-degeneracy theorem, see \cite{HH24}*{Theorem 1.5}.

\begin{theorem}\label{thm-trans-GRC}
Given $n\in \mathbb{Z}^{+}$, $L\geq 0$ and a function $\Phi_{0}:\R^+ \to \R^+$ with $\lim_{\delta\to0}\Phi_{0}(\delta)=0$, there exists $\delta_{0}>0$ depending on $n$, $L$ and $\Phi_{0}$ so that the following holds for every $\delta\in(0,\delta_{0})$.
Suppose $(M,g,p)$ is an $n$-dimensional manifold so that $\Ric\geq -\delta$, $\overline{B_{2}(p)}$ is compact in $M$, $B_{2}(p)$ is $(\delta,k)$-Euclidean, and there exists a $C^1$ map $u : B_{2}(p)\rightarrow\mathbb{R}^{k}$ (where $1\leq k\leq n$) which belongs to the domain of the local Laplacian on $B_2(p)$ and satisfies (\ref{2.11-1}) (\ref{2.11-2})
for $1\leq \alpha,\beta\leq k$.
Suppose in addition that $B_{1}(p)$ satisfies the $(\Phi_{0}, 1; k,\delta)$-generalized Reifenberg condition,
then
\begin{enumerate}
  \item for any $x\in B_{1}(p)$, $du : T_{x} M\rightarrow\mathbb{R}^{k}$ is non-degenerate;
  \item for any $p_{1},p_{2}\in B_{\frac{1}{2}}(p)$, it holds
      \begin{equation}\label{BiHolder-app}
	(1-\Psi(\delta))d(p_{1},u^{-1}(u(p_{2})))^{1+\Psi(\delta)}\le |u(p_{1})-u(p_{2})|\le (1+\Psi(\delta))d(p_{1},u^{-1}(u(p_{2}))),
\end{equation}

\item $B_{\frac{1}{2}}(0^k) \subset u(B_{\frac{1}{2}+\Psi(\delta)}(p))$, where  $\Psi(\delta):=\Psi(\delta|n, L, \Phi_{0})$.
\end{enumerate}

\end{theorem}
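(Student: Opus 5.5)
We argue as in \cite{HH24}*{Theorem 1.5}, now using Theorem \ref{thm-trans-GRC-111} in place of the transformation theorem there. The first step is to convert the generalized Reifenberg condition on $B_1(p)$ into the scale-wise hypothesis (\ref{4.8888888888888}) at every $x\in B_1(p)$. Take $k'\ge k$ and a scale $s$ at which $B_s(x)$ is $(\delta',k')$-Euclidean with $\delta'\ge\delta$. Definition \ref{defn:GRC} then gives that every smaller ball $B_t(x)$, $t\le s$, is $(\Phi_0(\delta'),k')$-Euclidean. This is exactly the control of the ``worst'' Euclidean deviation $\Theta$ below scale $s$ by $\Phi_0$ of the deviation $\theta$ at scale $s$, which is (\ref{4.8888888888888}). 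Since $B_2(p)$ is $(\delta,k)$-Euclidean, for $x\in B_1(p)$ the ball $B_1(x)$ is $(C\delta,k)$-Euclidean. We rescale by a factor $2$ so that the hypotheses of Theorem \ref{thm-trans-GRC-111} hold on $B_2(x)$, with $u-u(x)$ and constants depending on $n,L$. Conditions (\ref{2.11-1}) and (\ref{2.11-2}) pass to $B_1(x)$ at the cost of a factor $C(n)$, using doubling.

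\textbf{Non-degeneracy (1).} Theorem \ref{thm-trans-GRC-111} yields lower triangular matrices $T_r$ for all $r\in(0,1]$, with no lower cutoff $s$. Let $r\to0$. Since $M$ is smooth, $(M,r^{-1}g,x)$ converges to $(T_xM,0)$, and $r^{-1}(T_r u-T_ru(x))$ converges to the linear map $T_r\,du_x$ rescaled. Item (4) bounds the gradients, and item (3) gives $r|\Delta T_ru|\le C r^{1-\epsilon}\to0$. Hence $w_r:=r^{-1}T_r(u-u(x))$ is an $\epsilon'$-splitting map on $B_1$ of the rescaled ball. By item (1) its gradient Gram matrix averages to the identity, and $u$ is $C^1$. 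Therefore $T_r\,du_x(T_r\,du_x)^{T}$ is $\Psi$-close to $I_k$ for small $r$. This forces $du_x$ to have rank $k$: since $T_r$ is invertible, $du_x\,du_x^{T}$ is close to $T_r^{-1}T_r^{-T}$, which is positive definite. The one delicate point is that the Gram averages converge under the blow-up. This needs the $C^1$ hypothesis together with the improved integral convergence of harmonic-type functions under Cheeger--Colding theory, or simply continuity of $\nabla u$ at $x$ on the smooth manifold.

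\textbf{Bi-H\"older estimate (2).} The upper bound follows from $|\nabla u|\le 1+\Psi$, which is the Lipschitz estimate for almost splitting maps. For the lower bound, fix $p_1$ and put $\rho=d(p_1,u^{-1}(u(p_2)))$. At the scale $r\approx\rho$, the map $T_ru$ is an $\epsilon$-splitting map on $B_r(p_1)$. By Theorem \ref{thm-split-GHisom}(2), $T_ru$ is then a GH approximation onto the $\R^k$ factor. Thus $|T_ru(p_1)-T_ru(p_2)|\ge c\rho$; otherwise we could find a point of $u^{-1}(u(p_2))$ closer to $p_1$ than $\rho$, using surjectivity from item (3) at that scale. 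Item (5) bounds $|T_r^{-1}|\le 2r^{-\epsilon}$, since $T_1$ is close to the identity. Hence $|u(p_1)-u(p_2)|\ge c\rho^{1+\epsilon}$. Letting $\epsilon=\Psi(\delta)$ gives (\ref{BiHolder-app}).

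\textbf{Surjectivity (3).} We show $B_{1/2}(0^k)\subset u(B_{1/2+\Psi}(p))$ by a degree/iteration argument. Given $y\in B_{1/2}$, start at $p$ and iteratively move toward $y$. At each point $q$, the GH-splitting at scale $r$ (again by Theorem \ref{thm-split-GHisom} and the $T_r$) lets us find $q'$ with $|u(q')-y|\le\frac12|u(q)-y|$ and $d(q,q')\le(1+\Psi)|u(q)-y|^{1-\Psi}$. The steps sum to $1/2+\Psi$. Completeness of $\overline{B_2(p)}$ and continuity of $u$ give a limit point $q_\infty$ with $u(q_\infty)=y$.

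We expect the main obstacle to be the first step. Verifying (\ref{4.8888888888888}) from the generalized Reifenberg condition requires handling all integers $k'\ge k$ simultaneously, including scales where the ball is almost $(k'+1)$-Euclidean. These scales are what force the $r^{-\epsilon}$ losses rather than uniform bounds.
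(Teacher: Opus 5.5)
\textbf{Verdict: genuine gap.} Your overall route is the one the paper indicates: generalized Reifenberg condition $\Rightarrow$ hypothesis (\ref{4.8888888888888}) at every center $\Rightarrow$ matrices $T_r$ at all scales $\Rightarrow$ (1)--(3). The paper itself only says this follows from Theorem \ref{thm-trans-GRC-111} as in \cite{HH24}*{Theorem 1.5}. The first step, which you expect to be the main obstacle, is essentially immediate from Definition \ref{defn:GRC}.

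\textbf{The gap in (1).} The real difficulty is the step you flag in (1), and your proposed resolution fails. Saying that $w_r=r^{-1}T_r(u-u(x))$ is close to the linear map $T_r\,du_x$ multiplies the $o(r)$ Taylor error of $u$ by $T_r$, but item (5) only gives $|T_r|\le Cr^{-\epsilon}$. Worse, $T_r$ must blow up exactly in the case you want to exclude. Set $G(y)=(\langle\nabla u^{\alpha},\nabla u^{\beta}\rangle(y))_{\alpha\beta}$. Item (1) says $\bbint_{B_r(x)}G=T_r^{-1}T_r^{-T}$, and the left side tends to $G(x)$; so a degenerate $du_x$ forces $|T_r|\to\infty$. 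Mere continuity of $\nabla u$ therefore leaves an error of order $|T_r|^2\omega(r)\lesssim r^{-2\epsilon}\omega(r)$, which need not tend to $0$. $W^{1,2}$ convergence of the blow-ups only gives integral information, and says nothing at the single point $x$.

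What is missing is a rate that beats $r^{-\epsilon}$, and this is where smoothness of $M$ enters. Since $|\Delta u|\le L$ on a smooth manifold, interior $W^{2,p}$ estimates give $u\in C^{1,\theta}$ near $x$ for every $\theta<1$, so $|G(y)-G(x)|\le C_x d(x,y)^{\theta}$. Apply Theorem \ref{thm-trans-GRC-111} with $\epsilon<\theta/2$. Then
\begin{align*}
\bigl|T_rG(x)T_r^{T}-I\bigr|=\Bigl|\bbint_{B_r(x)}T_r\bigl(G(x)-G(y)\bigr)T_r^{T}\,dy\Bigr|\le C\,C_x\,r^{\theta-2\epsilon}\to 0 ,
\end{align*}
so $G(x)=du_x\,du_x^{T}$ is positive definite. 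Equivalently, you can use uniform $C^{1,\alpha}$ bounds for $w_r$ on the smoothly converging rescaled balls, together with interior Hessian bounds for the harmonic limit. The bare $C^1$ hypothesis does not suffice.

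\textbf{Problems in (2) and (3).} You invoke Theorem \ref{thm-split-GHisom}(2) at scales up to order one, where it does not apply. After rescaling to unit size, the Laplacian of $T_ru$ is only bounded by $C(n,L)r^{1-\epsilon}$, which is not small for $r\sim1$. For example, take $L>0$, $k=1$, and $u=R-|x|$ on $\R^n$ with $R=2+(n-1)/L$. This $u$ satisfies every hypothesis for every $\delta$, yet its level sets are spheres of radius about $R$, so $u$ is not the $\R$-component of a $\Psi(\delta)$-splitting at unit scale.

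The arithmetic in (3) is also off. Halving the error with a step as long as the current error gives total length about $2|y|$, so you only get $B_{1/2}(0^k)\subset u(B_{1+\Psi}(p))$. Likewise in (2), the constant $c$ and the factor $2$ in item (5) cannot be absorbed into $\rho^{\Psi}$ when $\rho$ is of order one, whereas (\ref{BiHolder-app}) requires $1-\Psi$.

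\textbf{How to repair (2) and (3).}
\begin{enumerate}
\item Choose $r_0\to0$ as $\delta\to0$ with $C(n,L)r_0^{1-\epsilon}$ and $r_0^{-n}\delta$ both small.
\item For $r\in[r_0,1]$, doubling and uniqueness of the Cholesky factor give $|T_{x,r}-I|\le\Psi$. Hence $|T_{x,r}|\le(1+\Psi)r^{-\Psi}$ at all scales.
\item Walk toward $y$ in steps of length about $r_0$, each moving $u$ by $(1\pm\Psi)$ times the step length in the direction of $y$.
\item Only at the end run the small-scale iteration, reducing the error by a factor $\Psi$ (not $\tfrac12$) per step.
\end{enumerate}
This gives sharp local surjectivity at every center and scale, which in turn yields the constant $1-\Psi$ in (2).
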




If $(X, d,\mathcal{H}^n)$ is a non-collapsed $\RCD(-\delta,n)$-space and $d_{\mathrm{GH}}(B_2(p), B_2(0^n))\leq \delta$, then by \cite{KM20}, $B_{1}(p)$ satisfies the $(\Phi_0;1,\delta)$-generalized Reifenberg condition for suitable $\Phi_0$. In particular, Theorem \ref{thm-trans-GRC-111} holds for $(X, d,\mathcal{H}^n)$.
Because of the lack of smooth structure, Theorem \ref{thm-trans-GRC} does not hold in this case.
However, based on the transformation theorem, one can prove the following Canonical Reifenberg theorem.

\begin{theorem}[\cites{CJN21,BNS,HondaPeng2023}] \label{Rei}
Suppose $(X,d,\mathcal{H}^n)$ is an $\RCD(-\delta,n)$ space and $u: B_2(p) \to \mathbb{R}^n$ is a Lipschitz map which belonging to the domain of the local Laplacian on $B_2(p)$ and satisfying (\ref{2.11-1}) (\ref{2.11-2}) for $1\leq \alpha,\beta\leq n$. Then for any $x,y \in B_1(p)$ we have
$$(1-\Phi(\delta))d(x,y)^{1+\Phi(\delta)} \le d(f(x),f(y)) \le (1+\Phi(\delta))d(x,y),$$
where  $\Psi(\delta):=\Psi(\delta|n, L)$.
\end{theorem}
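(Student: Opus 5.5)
This is the canonical Reifenberg bi-Hölder estimate for a map $u$ with almost orthonormal gradients on a non-collapsed $\RCD(-\delta,n)$ space. I would prove it by combining the transformation theorem with the almost-rigidity statement Theorem \ref{thm-split-GHisom}. The upper bound is the easy half. By the transformation theorem at scale $r=1$, together with (\ref{2.11-2}) and gradient estimates, $|\nabla u^\alpha|\le 1+\Psi(\delta)$ up to a rotation. Since $X$ is a length space and the Lipschitz constant of a Sobolev function on an $\RCD$ space is controlled by $\||\nabla u|\|_{L^\infty}$, I get $|u(x)-u(y)|\le(1+\Psi)d(x,y)$. If only the average is controlled, I would instead run the usual Cheng--Yau/Bochner argument. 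It gives $\sup_{B_{3/2}}|\nabla u|^2\le 1+\Psi(\delta)$, because $|\nabla u|^2-1$ is almost subharmonic with small $L^1$ norm and $\Delta u$ is bounded.

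For the lower bound, first note that $B_2(p)$ must be $\Psi(\delta)$-close to $B_2(0^n)$. This follows from Theorem \ref{thm-split-GHisom}(2): the $(\delta,n)$-splitting map forces $B_1(p)$ to be close to a ball in $\R^n\times Z$, and by volume non-collapsing (with $\meas=\mathcal H^n$) the factor $Z$ is a point. By \cite{KM20}, every ball $B_s(x)$ with $x\in B_1(p)$ and $s\le 1$ is then $(\Psi(\delta),n)$-Euclidean. Therefore Theorem \ref{thm-trans-GRC-111} applies at every $x\in B_1(p)$ and every scale $r\in(0,1]$. It yields matrices $T_{x,r}$ such that $T_{x,r}u$ is an $(\epsilon,n)$-splitting map on $B_r(x)$, with $|T_{x,r}^{-1}|\le 2r^{-\epsilon}$ by property (5), since $T_{x,1}$ is close to the identity.

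Now fix $x,y$ with $r=d(x,y)$, rescale $B_{2r}(x)$ to unit size, and apply Theorem \ref{thm-split-GHisom}(2) once more. The map $T_{x,2r}(u-u(x))/r$ is then an $\Psi$-GH approximation $B_1\to B_1(0^n)$, with no $Z$ factor by the argument above. Hence
\begin{align*}
|T_{x,2r}(u(y)-u(x))|\ge (1-\Psi)r .
\end{align*}
Using $|T_{x,2r}|\le 2r^{-\epsilon}$, this gives $|u(x)-u(y)|\ge \tfrac12(1-\Psi)r^{1+\epsilon}$. To obtain the constant $1-\Phi(\delta)$, I would let $\epsilon=\Phi(\delta)\to 0$ as $\delta\to 0$, and use that $T_{x,r}$ is $\Psi$-close to the identity at scales $r\ge r(\epsilon)$, so that the factor $2$ improves to $1+\Psi$.

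The main obstacle is ensuring that the transformation theorem holds uniformly down to all scales at every point. This is where non-collapsing, through \cite{KM20}, supplies the $(\Phi_0;1,\delta)$-generalized Reifenberg condition needed in Theorem \ref{thm-trans-GRC-111}. Equally important is the control of the drift of $T_{x,r}$ across scales, which costs only the factor $r^{-\epsilon}$ and thus produces the exponent $1+\Phi(\delta)$.
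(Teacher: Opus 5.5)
Your lower-bound architecture is the standard proof in \cites{CJN21,BNS} that the paper only cites. It uses \cite{KM20} for Reifenberg control at every center and scale, the transformation theorem for matrices $T_{x,r}$ with $T_{x,r}u$ a GH approximation at scale $r$, and the $r^{-\epsilon}$ drift of $T_{x,r}$ to produce the exponent $1+\Phi$. The gap is that you treat $u$ as a splitting map with \emph{small} Laplacian, whereas \eqref{2.11-1} only gives $|\Delta u^\alpha|\le L$ with $L$ fixed. This breaks your upper bound in two ways.
\begin{itemize}
\item Item (4) of Theorem~\ref{thmA.1} yields only $|\nabla(T_ru)|\le C(n,L)$, not $1+\Psi$.
\item $|\nabla u|^2-1$ is not almost subharmonic. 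Bochner's inequality contains $\langle\nabla u,\nabla\Delta u\rangle$, which $\|\Delta u\|_{L^\infty}$ does not control. Weakly this term is $\mathrm{div}(\Delta u\,\nabla u)-(\Delta u)^2$, the divergence of a field of size $\sim L$, which is not a small error in the mean value inequality.
\end{itemize}
Also, bounding each $|\nabla u^\alpha|$ separately only gives $|u(x)-u(y)|\le\sqrt{n}(1+\Psi)d(x,y)$; you need $|\nabla(a\cdot u)|\le 1+\Psi$ uniformly over unit vectors $a\in\mathbb{R}^n$.

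The missing step is a harmonic replacement at a small scale $s$. For $x\in B_{3/2}(p)$, write $a\cdot u=h+w$ on $B_s(x)$ with $h$ harmonic and $w=0$ on $\partial B_s(x)$. Then $\sup|w|\le CLs^2$, $|\nabla w|\le CLs$ on $B_{s/2}(x)$, and $\bbint_{B_{s/2}(x)}\bigl||\nabla h|^2-|\nabla(a\cdot u)|^2\bigr|\le CLs$. By doubling, $\bbint_{B_{s/2}(x)}(|\nabla(a\cdot u)|^2-1)_+\le C(n)s^{-n}\delta$. Bochner and the mean value inequality applied to $(|\nabla h|^2-1)_+$ then give $\sup_{B_{s/4}(x)}|\nabla(a\cdot u)|^2\le 1+C(n,L)(s^{-n}\delta+s)$, and $s=\delta^{1/(2n)}$ finishes.

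The same issue affects the lower bound at scales that are not small. After rescaling $B_r(x)$ to unit size, $T_{x,r}u$ has Laplacian at most $C(n,L)r^{1-\epsilon}$. This is below the threshold of Theorem~\ref{thm-split-GHisom} only for $r\le r_1(n,L,\epsilon)$, so that theorem cannot be used when $d(x,y)\ge r_1$. Nor can it be used to derive that $B_2(p)$ is close to $B_2(0^n)$; that must be taken as the standing hypothesis stated just before the theorem. Knowing that $T_{x,r}$ is close to the identity for $r\ge r_0$ (which follows from item (1) and doubling) does not by itself make $u$ a GH approximation at those scales.

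What is needed there is a compactness argument. As $\delta\to0$, bounded Laplacians give $W^{1,2}$-strong convergence to a map $u_\infty$ on $B_2(0^n)$ with $Du_\infty\in O(n)$ a.e. Since $\Delta u_\infty$ is bounded, $Du_\infty$ is continuous, which rules out folds, so $u_\infty$ is a rigid motion. Hence $|u(x)-u(y)|\ge d(x,y)-\Psi(\delta|n,L)$, which handles $d(x,y)\ge r_0$.

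Below $r_0:=\min\{r_1,C(n)^{-1/\epsilon}\}$ your argument works. Property (5) never turns the constant in $|T_{x,r}|\le C(n)r^{-\epsilon}$ into $1+\Psi$. Instead the constant is absorbed via $C(n)r^{-\epsilon}\le r^{-2\epsilon}$, which gives $\Phi=\max\{2\epsilon,\Psi\}$.
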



\section{Local Abelian groups actions}\label{sec-3}

In the following, we prove a lemma about local Abelian group actions on geodesic balls.

\begin{lem}\label{LocalAction}
Suppose $(X_{i}, p_{i}, d_{i}, G_{i})\pGH(X,p,d,G)$, where each $X_i$  and  $X$ are locally compact metric spaces.
If $G_{i}, G$ are all Abelian and $\Gamma$ is a finitely generated free Abelian subgroup of $G$ acting discontinuously on $X$, then
\begin{enumerate}
  \item There exists $R_{0}>0$ depending only on $\Gamma$ and $X$, so that for every $R\geq R_{0}$, there exists a couple of $\delta_{i}$-equivariant Gromov-Hausdorff approximations $f_{i}:(B_{2R}(p_{i}),p_{i})\rightarrow (B_{2R+\delta_{i}}(p),p)$, $\varphi_{i}:\Gamma(2R)\rightarrow G_{i}(2R+\delta_{i})$, such that $\varphi_{i}$ preserves the product and reverse operations, i.e., if $\gamma, \omega, \gamma\omega\in \Gamma(2R)$, then $\varphi_{i}(\gamma)\varphi_{i}(\omega)=\varphi_{i}(\gamma\omega)$ and $\varphi_{i}(\gamma^{-1})=\varphi_{i}(\gamma)^{-1}$.
  \item For any large $i$ depending on $R$, the local $\varphi_{i}(\Gamma(2R))$-action on $B_{R}(p_{i})$ induces a canonical equivalent relation: for each $x, y\in B_{R}(p_{i})$, $x\sim y$ holds if and only if there exists $\gamma\in\Gamma(2R)$ such that $x =\varphi_{i}(\gamma)(y)$.
      Similarly, the local $\Gamma(2R)$-action on $B_{R}(p)$ induces a canonical equivalent relation.
  \item Assume $X$ is a length space. Equipping the quotient metric, we have the following commutative diagram:
  \begin{align}\label{diag9.1}
\xymatrix@C=2.5cm{
  (B_{R}(p_{i}), {p}_{i}, \varphi_{i}(\Gamma(2R))) \ar[d]_{\sigma_{i}} \ar[r]^{\mathrm{GH}} & (B_{R}(p), p, \Gamma(2R)) \ar[d]^{\sigma} \\
  (B_{R}(p_{i})/\varphi_{i}(\Gamma(2R)), \bar{p}_{i}) \ar[r]^{\mathrm{GH}} & (B_{R}(p)/\Gamma(2R),\bar{p}),  }
\end{align}
  where $\sigma_{i}:B_{R}(p_{i})\rightarrow B_{R}(p_{i})/\varphi_{i}(\Gamma(2R))$, $\sigma:B_{R}(p)\rightarrow B_{R}(p)/\Gamma(2R)$ are the natural projection maps given by the equivalent relation in (2).
  \item If the $G_{i}$-action on $X_{i}$ is properly discontinuous and free, then the projection map $\sigma_{i}:B_{R}(p_{i})\rightarrow B_{R}(p_{i})/\varphi_{i}(\Gamma(2R))$ is a local homeomorphism.
      Moreover, there exists a surjective local homeomorphism  $\pi_{i}:B_{R}(p_{i})/\varphi_{i}(\Gamma(2R))\rightarrow B_{R}(p_{i})/G_{i}$ such that $\pi_{i}\circ\sigma_{i} = P_{i}$, where $P_{i}: B_{R}(p_{i})\rightarrow B_{R}(p_{i})/G_{i}$ is the standard projection.
  \item If each $X_{i}$ is a complete Riemannian manifold and the $G_{i}$-action on $X_{i}$ is properly discontinuous and free, then $B_{R}(p_{i})/\varphi_{i}(\Gamma(2R))$ is a manifold without boundary,  which admits a unique smooth structure so that $\sigma_{i}:B_{R}(p_{i})\rightarrow B_{R}(p_{i})/\varphi_{i}(\Gamma(2R))$ is a local diffeomorphism, and $\pi_{i}:B_{R}(p_{i})/\varphi_{i}(\Gamma(2R))\rightarrow B_{R}(p_{i})/G_{i}$ is also a surjective local diffeomorphism.
  \item There exists $R_{i}\rightarrow\infty$ such that the following commutative diagram holds:
  \begin{align}\label{diag9.2}
\xymatrix@C=2.5cm{
  (B_{R_{i}}(p_{i}), {p}_{i}, \varphi_{i}(\Gamma(2R_{i}))) \ar[d]_{\sigma_{i}} \ar[r]^{\mathrm{GH}} & (X, p, \Gamma) \ar[d]^{\sigma} \\
  (B_{R_{i}}(p_{i})/\varphi_{i}(\Gamma(2R_{i})), \bar{p}_{i}) \ar[r]^{\mathrm{GH}} & (X/\Gamma,\bar{p}).  }
\end{align}
\item Suppose $X_{i}$, $G_{i}$ as in (4), and $G/\Gamma$ is compact.
In (\ref{diag9.2}), for each large $i$, there exists a discrete closed Abelian subgroup $K_{i}<  \mathrm{Isom}(\pi_{i}^{-1}(B_{0.2R_{i}}(\underline{p}_{i})))$, where $\underline{p}_{i}=P_{i}(p_{i})\in X_{i}/G_{i}$, such that $\pi_{i}^{-1}(B_{0.2R_{i}}(\underline{p}_{i}))/K_{i}$ is isometric to $B_{0.2R_{i}}(\underline{p}_{i})$.
Further, up to a subsequence, we have the following commutative diagram:
  \begin{align}\label{diag9.3}
\xymatrix@C=2.5cm{
  (\pi_{i}^{-1}(B_{0.2R_{i}}(\underline{p}_{i})), \bar{p}_{i}, K_{i}) \ar[d]_{\pi_{i}} \ar[r]^{\mathrm{GH}} & (X/\Gamma,\bar{p}, K) \ar[d]^{\pi} \\
  (B_{0.2R_{i}}(\underline{p}_{i}), \underline{p}_{i}) \ar[r]^{\mathrm{GH}} & (X/G,\underline{p}).  }
\end{align}
where the limit group $K< \mathrm{Isom}(X/\Gamma)$ coincides with the image of the inclusion $G/\Gamma\rightarrow \mathrm{Isom}(X/\Gamma)$.
\end{enumerate}
\end{lem}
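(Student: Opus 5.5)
The plan is to build the almost-homomorphisms $\varphi_i$ from generators of $\Gamma$ and then obtain items (2)–(7) by restricting standard equivariant GH arguments to balls. Since $\Gamma\cong\mathbb{Z}^m$ is finitely generated free Abelian, I fix a basis $\gamma_1,\dots,\gamma_m$. Because $\Gamma$ acts discontinuously on $X$, for each $R$ the set $\Gamma(4R)$ is finite. I choose $R_0$ so large that all $\gamma_j$ lie in $\Gamma(R_0/10)$ and that $\Gamma(2R)$ generates $\Gamma$; this depends only on $\Gamma$ and $X$.

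\textbf{Step 1 (item (1)).} From the equivariant convergence I pick approximations $\phi_i'$ from $G$ to $G_i$. I set $g_{ij}=\phi_i'(\gamma_j)\in G_i$. For $\gamma=\gamma_1^{a_1}\cdots\gamma_m^{a_m}\in\Gamma(2R)$, the representation is unique since $\Gamma$ is free, and I define $\varphi_i(\gamma)=g_{i1}^{a_1}\cdots g_{im}^{a_m}$. Because $G_i$ is Abelian, the order of the factors is irrelevant. Hence $\varphi_i$ is the restriction of a genuine homomorphism $\mathbb{Z}^m\to G_i$, so it preserves products and inverses.

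It remains to check that $\varphi_i$ is a $\delta_i$-approximation on $\Gamma(2R)$. Only finitely many exponent vectors occur, with $|a_j|$ bounded in terms of $R$, $\Gamma$ and $X$. The product of boundedly many factors satisfies $d(\varphi_i(\gamma)f_i(x),f_i(\gamma x))\to 0$ by induction on word length, using the almost-equivariance on a slightly larger ball. This is where the discontinuity of the action is used: it bounds the word length uniformly.

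\textbf{Step 2 (items (2)–(3)).} Since $\varphi_i$ is a local homomorphism, the relation $x\sim y$ defined by some $\gamma\in\Gamma(2R)$ is reflexive and symmetric. For transitivity on $B_R(p_i)$, the two elements satisfy $\gamma,\omega\in\Gamma(2R)$ and $\gamma\omega$ moves a point of $B_R$ to $B_R$. For large $i$ this forces $\gamma\omega\in\Gamma(2R)$ up to the discrete gap. To get a gap, I enlarge $R$ slightly, using the finiteness of $\Gamma(4R)$. The commutative diagram then follows by the usual argument that the quotient of an $\delta_i$-equivariant GHA is a $\Psi$-GHA between the quotient metrics, as in Theorem \ref{eGH}. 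The length-space assumption is needed here to identify the quotient metric locally.

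\textbf{Step 3 (items (4)–(5)).} If the $G_i$ action is free and properly discontinuous, then for small $r$ the translates $gB_r(x)$ are disjoint. Therefore $\sigma_i$ is injective on small balls and is a local homeomorphism. The map $\pi_i$ is induced by $\varphi_i(\Gamma(2R))\subset G_i$, which gives $\pi_i\circ\sigma_i=P_i$. In the Riemannian case, smooth structures descend through the local isometries.

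\textbf{Step 4 (items (6)–(7)).} Item (6) follows by a diagonal argument in $R$. For item (7), I take $K_i$ to be the image in $\Isom(\pi_i^{-1}(B_{0.2R_i}))$ of $G_i$ modulo $\varphi_i(\Gamma)$, acting as deck transformations of $\pi_i$. The Abelian property makes this action well defined on the quotient. Compactness of $G/\Gamma$ gives cocompactness, so the restriction to $0.2R_i$ avoids boundary effects. The limit group is then identified with $G/\Gamma$ via Theorem \ref{eGH}.

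\textbf{Main obstacle.} I expect the main obstacle to be transitivity and well-definedness of the local quotient near the boundary of $B_R$: points whose connecting group element lies just outside $\Gamma(2R)$. I would handle this with the gap argument from discreteness and by working on $B_{R}$ while using $\Gamma(2R)$.
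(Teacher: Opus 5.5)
Your Steps 1--3 follow the paper's route for items (1)--(6). That route is: a basis of $\Gamma$, $\varphi_i(\gamma)=\phi_i(\gamma_1)^{l_1}\cdots\phi_i(\gamma_m)^{l_m}$, induction on word length with exponents bounded by finiteness of $\Gamma(2R)$, and discreteness of the orbit for transitivity.

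Two small points on those steps.
\begin{itemize}
\item The induction needs the approximations on a ball of radius comparable to $C(R)R$ (the paper uses $10C(R)R$), not a ``slightly larger'' one. Partial products of the word can have displacement far beyond $2R$.
\item Your perturbation of $R$ is actually needed. The paper deduces $\gamma\omega\in\Gamma(2R)$ from $d(\gamma\omega(p),p)\le 2R$, although $\Gamma(2R)$ is defined by a strict inequality. Item (2) fails when $2R$ is an orbit distance: on $\mathbb{R}$ with $G_i=(1-\frac1i)\mathbb{Z}\to G=\Gamma=\mathbb{Z}$ and $R=\frac32$, the points $z=-\frac32+\frac1i$, $y=-\frac12$, $x=\frac32-\frac2i$ give $x\sim y\sim z$ but $x\not\sim z$.
\end{itemize}

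The genuine gap is item (7). $G_i$ does not act on the local quotient $B_{R_i}(p_i)/\varphi_i(\Gamma(2R_i))$, because a general $g\in G_i$ moves $B_{R_i}(p_i)$ off itself. Moreover, ``$G_i$ modulo $\varphi_i(\Gamma)$'' is not the right group. If you mean the pseudo-group $\varphi_i(\Gamma(2R_i))$, there is no group quotient to form. If you mean the image of your homomorphism $\mathbb{Z}^m\to G_i$, then elements of $\Gamma$ of large word length, where $\varphi_i$ is no longer controlled, can be sent to elements of small displacement that act nontrivially on the local quotient.

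For example, let $a_i$ be the screw motion of $\mathbb{R}^3$ about an axis parallel to $e_3$ at distance $N_i/(2\pi)$ from $p_i$, with angle $2\pi/N_i$ and axial shift $1/N_i$. Then $a_i$ is close to translation by $e_1$ near $p_i$, and $a_i^{N_i}$ is translation by $e_3$. So $G_i=\langle a_i\rangle\to G=\langle e_1,e_3\rangle$. For $\Gamma=\langle e_1,2e_3\rangle$ you get $\varphi_i(e_1)=a_i$, hence $\varphi_i(\Gamma)=G_i$ and your $K_i$ is trivial. But $K\cong G/\Gamma\cong\mathbb{Z}/2$, and $a_i^{N_i}$ induces a nontrivial isometry of the local quotient. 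Calling $K_i$ ``the deck group of $\pi_i$'' does not help either. $\pi_i$ is only a local homeomorphism of an incomplete space, and that some group of isometries acts transitively on its fibres is exactly what must be proved.

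What is missing is the paper's construction. One sets $\tau_i(g)(\bar x)=\sigma_i(g(x))$ only for $g\in G_i(0.25R_i)$ and representatives $x\in B_{0.75R_i}(p_i)$; this is well defined by commutativity, as you say. The argument then rests on two facts.
\begin{itemize}
\item (Fact 1) Let $D=\mathrm{diam}((G/\Gamma)(\bar p))<\infty$; this is where compactness of $G/\Gamma$ is used. Every $g\in G_i(r)$ with $r\le 1.5R_i$ lands in $G_i(D+4\delta_i)$ after multiplication by some $\varphi_i(\gamma)$ with $\gamma\in\Gamma(r+D+2\delta_i)$.
\item (Fact 2) Every point of $\pi_i^{-1}(B_{0.2R_i}(\underline{p}_i))$ has a $\sigma_i$-representative in $B_{0.25R_i}(p_i)$.
\end{itemize}
With these one proves three things.
\begin{itemize}
\item $\tau_i(g)\tau_i(h)=\tau_i(\varphi_i(\gamma)gh)$, so $K_i=\tau_i(G_i(0.25R_i))$ is an Abelian group even though $gh$ may leave $G_i(0.25R_i)$.
\item Two points with the same $\pi_i$-image differ by some $\tau_i(g)$, so the quotient by $K_i$ is $B_{0.2R_i}(\underline{p}_i)$.
\item The limit $K$ equals the image of $G/\Gamma$, by proving both inclusions: reduce $g_i$ to bounded displacement by Fact 1, pass to a limit element of $G(D)$, and compare the actions on lifted points.
\end{itemize}
Theorem \ref{eGH} only gives some limit group $K$; it does not identify it.
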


\begin{proof}
\textbf{Proof of (1).} Let $\{\gamma_{1},\ldots,\gamma_{k}\}$ be a linearly independent generating subset of $\Gamma$, then every $\gamma$ can be uniquely presented by $\gamma_{1}^{l_{1}}\ldots\gamma_{k}^{l_{k}}$ with $l_{1},\ldots,l_{k}\in \mathbb{Z}$.
For every $R> \max\{d(\gamma_{1}(p),p),\ldots,d(\gamma_{k}(p),p)\}$, since $\Gamma(2R)$ is finite, there exists $C(R) > 10$ such that for every $\gamma\in \Gamma(2R)$, the corresponding $l_{1},\ldots,l_{k}$ must satisfy $|l_{1}|+\ldots+|l_{k}|<C(R)$.
Let $\phi_{i}: G(10C(R)R)\rightarrow G_{i}(10C(R)R+\delta_{i}')$ and $h_{i}:(B_{10C(R)R}(p_{i}),p_{i})\rightarrow(B_{10C(R)R+\delta_{i}'}(p),p)$ be a couple of $\delta_{i}'$-equivariant Gromov-Hausdorff approximations, where $\delta_{i}'\downarrow0$.

For every $\gamma=\gamma_{1}^{l_{1}}\ldots\gamma_{k}^{l_{k}}\in \Gamma(2R)$, define
\begin{align}
\varphi_{i}(\gamma):=\phi_{i}(\gamma_{1})^{l_{1}}\ldots\phi_{i}(\gamma_{k})^{l_{k}}.\nonumber
\end{align}
Obviously $\varphi_{i}$ preserves the product and reverse operations.
Denote by $\bar{\gamma}=\gamma_{1}^{l_{1}-1}\ldots\gamma_{k}^{l_{k}}$, then for any $x\in B_{2R}(p_{i})$,
\begin{align}\label{9.3}
&d(h_{i}(\varphi_{i}(\gamma)(x)),\gamma(h_{i}(x)))\\
=&d(h_{i}(\phi_{i}(\gamma_{1})\varphi_{i}(\bar{\gamma})(x)),\gamma(h_{i}(x)))\nonumber\\
\leq&d(h_{i}(\phi_{i}(\gamma_{1})(\varphi_{i}(\bar{\gamma})(x))),\gamma_{1}(h_{i}(\varphi_{i}(\bar{\gamma})(x))))+ d(\gamma_{1}(h_{i}(\varphi_{i}(\bar{\gamma})(x))),\gamma_{1}\bar{\gamma}(h_{i}(x))) \nonumber\\
\leq &\delta_{i}'+d(h_{i}(\varphi_{i}(\bar{\gamma})(x)),\bar{\gamma}(h_{i}(x)))\nonumber\\
\leq &C(R)\delta_{i}'.\nonumber
\end{align}
In addition,
\begin{align}\label{9.4}
d(\varphi_{i}(\gamma)(p_{i}),p_{i}) &\leq d(h_{i}(\varphi_{i}(\gamma)(p_{i})),p)+\delta_{i}'\\
&\leq
d(\gamma(p),p)+d(h_{i}(\varphi_{i}(\gamma)(p_{i})),\gamma(h_{i}(p_{i})))+\delta_{i}'\nonumber\\
&\leq 2R+2C(R)\delta_{i}'.\nonumber
\end{align}
Take $\delta_{i}:=10C(R)\delta_{i}'$, then (\ref{9.4}) just means $\varphi_{i}(\gamma)\in G_{i}(2R+\delta_{i})$ for every $\gamma\in \Gamma(2R)$, and (\ref{9.3}) implies the map $f_{i}:=h_{i}|_{B_{2R}(p_{i})}$ and  $\varphi_{i}$ are a couple of $\delta_{i}$-equivariant Gromov-Hausdorff approximations.
This proves (1).

\textbf{Proof of (2).} For each $x, y\in B_{R}(p_{i})$, denote by $x\sim y$ if there exists $\gamma\in\Gamma(2R)$ such that $x =\varphi_{i}(\gamma)(y)$.
We verify $\sim$ is an equivalent relation as follows.
The reflexivity and symmetry are obvious.

Suppose $x\sim y$ and $y\sim z$ for $x, y, z \in B_{R}(p_{i})$, thus there exist $\gamma, \omega \in\Gamma(2R)$ such that $x =\varphi_{i}(\gamma)(y)$ and $y =\varphi_{i}(\omega)(z)$.
By (\ref{9.3}) we have $d(\gamma(f_{i}(y)), f_{i}(x))\leq \delta_{i}$ and $d(\omega(f_{i}(z)), f_{i}(y))\leq \delta_{i}$, then
\begin{align*}
d(\gamma\omega(f_{i}(z)), p)\leq d(\gamma\omega(f_{i}(z)), \gamma(f_{i}(y))) + d(\gamma(f_{i}(y)), f_{i}(x)) + d(f_{i}(x), f_{i}(p_{i}))\leq 3\delta_{i} + R,
\end{align*}
and hence
\begin{align}
d(\gamma\omega(p), p)\leq d(\gamma\omega(p), \gamma\omega(f_{i}(z))) + d(\gamma\omega(f_{i}(z)), p)\leq 2R + 4\delta_{i}.
\end{align}
Since $\Gamma(p)$ is discrete, for large $i$, we have $d(\gamma\omega(p), p)\leq 2R$, i.e. $\gamma\omega\in\Gamma(2R)$.
Then by (1), we have $x =\varphi_{i}(\gamma)\varphi_{i}(\omega)(z)=\varphi_{i}(\gamma\omega)(z)$, i.e., $x\sim z$.
Thus, $\sim$ has transitivity, hence is an equivalent relation.

Similarly, the local $\Gamma(2R)$-action on $B_{R}(p)$ induces an equivalent relation.

\textbf{Proof of (3).} Since $X$ is a length space, we could modify $h_{i}$ in (1) slightly so that $h_{i}(B_{R}(p_{i}))\subset B_{R}(p)$ is still a $\delta_{i}'$-Gromov-Hausdorff approximation and (\ref{9.3}) holds for some slightly larger $\delta_{i}'\downarrow 0$.
We define a map
$\bar{f}_{i}: (B_{R}(p_{i})/\varphi_{i}(\Gamma(2R)),\bar{p}_{i})\rightarrow (B_{R}(p)/\Gamma(2R),\bar{p})$ as follows:
for any $\bar{x}\in B_{R}(p_{i})/\varphi_{i}(\Gamma(2R))$, we choose some $x\in \sigma_{i}^{-1}(\bar{x})$, then define $\bar{f}_{i}(\bar{x})=\sigma(f_{i}(x))$.
Similar to the global case, one can verify that $\bar{f}_{i}$ is a $3\delta_{i}$-Gromov-Hausdorff approximation (the detailed proof is omitted here), and this proves (3).

\textbf{Proof of (4) and (5).} Since the $\varphi_i(\Gamma(2R))$ action is properly discontinuous and free, and it gives an equivalent relation on $B_{R}(p_{i})$ as in (2), the proofs of (4) and (5) are similar to the cases of global action, we omit them here. 

\textbf{Proof of (6).} It is by a standard diagonal argument basing on (3).

\textbf{Proof of (7).}  Since $G/\Gamma$ is compact, we have $D :=\mathrm{diam}((G/\Gamma)(\bar{p}))<\infty$.
Without loss of generality, we assume $R_{i}>100D$ for every $i$.

Firstly, we prove some elementary facts, which will be used several times below:

\textbf{Fact 1. } If $g\in G_{i}(r)$ for $r\leq 1.5 R_{i}$, then there exists $\gamma \in \Gamma(r + D + 2\delta_{i})$ such that $\varphi_{i}(\gamma)g \in G_{i}(D + 4\delta_{i})$.

\textbf{Proof of Fact 1:}
Note that there exists $\gamma^{-1}\in \Gamma$ such that $d(\gamma^{-1}(p), f_{i}(g(p_{i}))) < D + \delta_{i}$.
Then
\begin{align}
d(\gamma^{-1}(p),p)\leq d(\gamma^{-1}(p),f_{i}(g(p_{i})))+d(f_{i}(g(p_{i})),f_{i}(p_{i}))\leq D +r+2\delta_{i},\nonumber
\end{align}
i.e. $\gamma^{-1}\in \Gamma(r+D+2\delta_{i})\subset \Gamma(2R_{i})$.
So $d(\varphi_{i}(\gamma^{-1})(p_{i}), g(p_{i})) < D + 4\delta_{i}$, and hence $\varphi_{i}(\gamma)g \in G_{i}(D + 4\delta_{i})$.

\textbf{Fact 2. }
\begin{align}\label{9.6}
\pi_{i}^{-1}(B_{0.2 R_{i}}(\underline{p}_{i}))\subset \sigma_{i}(B_{0.25R_{i}}(p_{i})).
\end{align}

\textbf{Proof of Fact 2:} Given any $\bar{x}\in \pi_{i}^{-1}(B_{0.2R_{i}}(\underline{p}_{i}))$, choose $x\in B_{R_{i}}(p_{i})$ such that, $\sigma_{i}(x) =\bar{x}$.
Since $P_{i}(x) =\pi_{i}(\bar{x})\in B_{0.2R_{i}}(\underline{p}_{i})$, there exists $g\in G_{i}$ such that $d(x, g(p_{i})) < 0.2R_{i}$.
Then $g\in G_{i}(1.2R_{i})$.
By Fact 1, there exists $\gamma \in \Gamma(2R_{i})$ such that $\varphi_{i}(\gamma)g \in G_{i}(D + 4\delta_{i})$. Thus
\begin{align}
&d(\varphi_{i}(\gamma)(x), p_{i})=d(x, \varphi_{i}(\gamma^{-1})(p_{i}))\leq d(x, g(p_{i}))+d(g(p_{i}), \varphi_{i}(\gamma^{-1})(p_{i}))\\
 < &0.2R_{i} +D + 4\delta_{i}<0.25R_{i}.\nonumber
\end{align}
Since $\sigma_{i}(\varphi_{i}(\gamma)(x)) = \bar{x}$, (\ref{9.6}) follows.

We construct a map $\tau_{i}: G_{i}(0.25R_{i})\rightarrow \mathrm{Isom}(\pi_{i}^{-1}(B_{0.2R_{i}}(\underline{p}_{i})))$ as follows.
For any $g\in G_{i}(0.25R_{i})$, $\bar{x} \in \pi_{i}^{-1}(B_{0.2R_{i}}(\underline{p}_{i}))$, we take $x\in B_{0.75R_{i}}(p_{i})$ such that $\sigma_{i}(x) =\bar{x}$ (note that by (\ref{9.6}), such an $x$ always exists).
Note that $g(x) \in B_{R_{i}}(p_{i})$, thus $\sigma_{i}$ is defined at $g(x)$.
We define $\tau_{i}(g)(\bar{x}):= \sigma_{i}(g(x))$.

Now we verify the well-definedness of $\tau_{i}(g)(\bar{x})$.
For any $x, y \in B_{0.75R_{i}}(p_{i})$ satisfying $\sigma_{i}(x) = \sigma_{i}(y) =\bar{x}$, there exists $\gamma\in \Gamma(2R_{i})$ such that $x =\varphi_{i}(\gamma)(y)$.
So $g(x) = g(\varphi_{i}(\gamma)(y)) = \varphi_{i}(\gamma)(g(y))$, where the latter equality is by Abelianness of $G_{i}$.
Hence $\sigma_{i}(g(x)) =\sigma_{i}(g(y))$.

Next we verify $\tau_{i}(g)\in \mathrm{Isom}(\pi_{i}^{-1}(B_{0.2R_{i}}(\underline{p}_{i})))$.
For any $\bar{x}, \bar{y}\in \pi_{i}^{-1}(B_{0.2R_{i}}(\underline{p}_{i}))$,
$$
d(\tau_{i}(g)(\bar{x}), \tau_{i}(g)(\bar{y}))=d(\sigma_{i}(g(x)), \sigma_{i}(g(y))),
$$
where $x\in  \sigma_{i}^{-1}(\bar{x})\cap B_{0.75R_{i}}(p_{i})$, $y\in \sigma_{i}^{-1}(\bar{y})\cap B_{0.75R_{i}}(p_{i})$.
On the other hand,
\begin{align}\label{9.8}
d(\sigma_{i}(g(x)), \sigma_{i}(g(y))) = \min_{x',y'} d(x',y'),
\end{align}
where the minimum is taken among all $B_{R_{i}}(p_{i})\ni x'\sim g(x)$, $B_{R_{i}}(p_{i})\ni y'\sim g(y)$.
Combining the Abelianness of $G_{i}$, the right hand side of (\ref{9.8}) equals to
\begin{align}
\min_{\gamma_{1},\gamma_{2}\in \Gamma(2R_{i})} d(\varphi_{i}(\gamma_{1})(x), \varphi_{i}(\gamma_{2})(y)) = d(\bar{x}, \bar{y}).\nonumber
\end{align}
In conclusion, the map $\tau_{i}: G_{i}(0.25R_{i})\rightarrow \mathrm{Isom}(\pi_{i}^{-1}(B_{0.2R_{i}}(\underline{p}_{i})))$ is well-defined.

Denote the range of $\tau_{i}$ by $K_{i}$.
In the following, we verify that $K_{i}$ is a discrete closed Abelian subgroup of $\mathrm{Isom}(\pi_{i}^{-1}(B_{0.2R_{i}}(\underline{p}_{i})))$.

For any $g, h\in G_{i}(0.25R_{i})$, by fact 1, there exists $\gamma\in \Gamma(0.6R_{i})$ such that $\varphi_{i}(\gamma)gh \in G_{i}(D +4\delta_{i})\subset G_{i}(0.25R_{i})$.
In the following, we prove that
\begin{align}\label{9.9}
\tau_{i}(g)\tau_{i}(h) = \tau_{i}(\varphi_{i}(\gamma)gh).
\end{align}

For any $\bar{x}\in \pi_{i}^{-1}(B_{0.2R_{i}}(\underline{p}_{i}))$, by (\ref{9.6}), we choose $x\in B_{0.25R_{i}}(p_{i})$ such that $\sigma_{i}(x) = \bar{x}$.
By fact 1, there exists $\omega\in \Gamma(0.3R_{i})$ such that $\varphi_{i}(\omega)h\in G_{i}(D+4\delta_{i})$.
Then it is easy to check that
\begin{align}
&d(\varphi_{i}(\omega)h(x), p_{i}) \leq  0.3R_{i},\nonumber
\end{align}
and hence $\varphi_{i}(\omega)gh(x)=g\varphi_{i}(\omega)h(x)\in B_{0.55R_{i}}(p_{i})$.

Note that
\begin{align}
\varphi_{i}(\gamma)gh(x) = \varphi_{i}(\gamma)\varphi_{i}(\omega^{-1})g\varphi_{i}(\omega)h(x) = \varphi_{i}(\gamma\omega^{-1})g\varphi_{i}(\omega)h(x),
\end{align}
where we use the Abelianness of $G_{i}$ in the first equality, and use $\gamma\omega^{-1}\in \Gamma(2R_{i})$ in the latter equality.
So $\varphi_{i}(\gamma)gh(x)\sim g\varphi_{i}(\omega)h(x)$.

Since $\tau_{i}(h)(\bar{x})=\sigma_{i}(h(x))=\sigma_{i}(\varphi_{i}(\omega)h(x))$ with $\varphi_{i}(\omega)h(x)\in B_{0.75R_{i}}(p_{i})$, we have
\begin{align}
\tau_{i}(g)\tau_{i}(h)(\bar{x}) = \sigma_{i}(g\varphi_{i}(\omega)h(x)) = \sigma_{i}(\varphi_{i}(\gamma)gh(x)) = \tau_{i}(\varphi_{i}(\gamma)gh)(\bar{x}).
\end{align}
Since $\bar{x}\in \pi_{i}^{-1}(B_{0.2R_{i}}(\underline{p}_{i}))$ is arbitrary, (\ref{9.9}) holds.

(\ref{9.9}) means that the composition operation is closed in $K_{i}$.
In addition, by (\ref{9.9}), for any $g\in G_{i}(0.25R_{i})$, there exists $\gamma'\in\Gamma(0.6R_{i})$ such that, $\tau_{i}(g)\tau_{i}(g^{-1}) = \tau_{i}(\varphi_{i}(\gamma')) =\id$,
which verifies $\tau_{i}(g)$ is reversible in $K_{i}$.
Hence $K_{i}$ is a group.
Moreover, for any $g, h\in G_{i}(0.25R_{i})$, by (\ref{9.9}),
\begin{align}
\tau_{i}(g)\tau_{i}(h) = \tau_{i}(\varphi_{i}(\gamma)gh)= \tau_{i}(\varphi_{i}(\gamma)hg)=\tau_{i}(h)\tau_{i}(g),
\end{align}
hence $K_{i}$ is Abelian.
In addition, it is easy to see that $K_{i}$ is closed and discrete.
In conclusion, $K_{i}$ is a closed discrete Abelian subgroup of $\mathrm{Isom}(\pi_{i}^{-1}(B_{0.2R_{i}}(\underline{p}_{i})))$.

In the following, we will verify that $\pi_{i}^{-1}(B_{0.2R_{i}}(\underline{p}_{i}))/K_{i}$ is isometric to $B_{0.2R_{i}}(\underline{p}_{i})$.
We just need to verify that for any $\bar{x},\bar{y}\in \pi_{i}^{-1}(B_{0.2R_{i}}(\underline{p}_{i}))$ with $\pi_{i}(\bar{x}) =\pi_{i}(\bar{y})$, there exists $g\in G_{i}(0.25R_{i})$ such that $\bar{x}=\tau_{i}(g)(\bar{y})$.
By (\ref{9.6}), take $x, y\in B_{0.25R_{i}}(p_{i})$ such that $\sigma_{i}(x) = \bar{x}$ and $\sigma_{i}(y) = \bar{y}$.
So $P_{i}(x) = P_{i}(y)$.
This implies that there exists $h\in G$ such that $x = h(y)$.
It is easy to see $h\in G_{i}(0.5R_{i})$.
By fact 1, there exists $\gamma\in \Gamma(R_{i})$, such that $g := \varphi_{i}(\gamma)h \in G_{i}(D + 4\delta_{i})$.
So $x =\varphi_{i}(\gamma^{-1})g(y)$.
Note that $\varphi_{i}(\gamma^{-1})(y)= g^{-1}(x) \in B_{0.75R_{i}}(p_{i})$.
Thus,
\begin{align}
\bar{x} =\sigma_{i}(x)  =\sigma_{i}(\varphi_{i}(\gamma^{-1})g(y))= \sigma_{i}(g\varphi_{i}(\gamma^{-1})(y))= \tau_{i}(g)(\bar{y}).
\end{align}

By the above discussion, up to a subsequence, there exists some $K< \mathrm{Isom}(X/\Gamma)$ such that (\ref{diag9.3}) holds.
To finish the proof, we need to verify that $K = \mathrm{im}(G/\Gamma \rightarrow \mathrm{Isom}(X/\Gamma))$.

For any $\bar{g}\in K$, up to a subsequence, we choose $g_{i}\in G_{i}(0.25R_{i})$ such that $K_{i}\ni \tau_{i}(g_{i})\xrightarrow{\mathrm{GH}}\bar{g}$.
By fact 1, for each large $i$, there exists $\gamma_{i}\in \Gamma(0.3R_{i})$ such that $h_{i}:=\varphi_{i}(\gamma_{i})g_{i}\in G_{i}(D + 4\delta_{i})$.
It is easy to see that $\tau_{i}(h_{i}) =\tau_{i}(g_{i})$.
Passing to a subsequence, we may assume $h_{i}\xrightarrow{\mathrm{GH}} h\in G(D)$.
Let $\bar{h}$ be the image of $h\Gamma$ through $G/\Gamma \rightarrow \mathrm{Isom}(X/\Gamma)$.
For any $\bar{x}\in X/\Gamma$, take $\bar{x}_{i}\in \pi_{i}^{-1}(B_{0.2R_{i}}(\underline{p}_{i}))$ such that $\bar{x}_{i}\xrightarrow{\mathrm{GH}} \bar{x}$, and then according to (\ref{9.6}), we can choose $x_{i}\in B_{0.25R_{i}}(p_{i})$ such that $\sigma_{i}(x_{i}) = \bar{x}_{i}$.
Passing to a subsequence, we may assume $x_{i}\xrightarrow{\mathrm{GH}}x\in X$, then $\sigma(x) =\bar{x}$.
So
\begin{align}
\bar{g}(\bar{x}) = \lim \tau_{i}(h_{i})(\bar{x}_{i}) = \lim \sigma_{i}(h_{i}(x_{i})) = \lim \sigma(h(x)) = \bar{h}(\bar{x}),
\end{align}
which concludes that $\bar{g}= \bar{h}$.
Thus $K\subset \mathrm{im} (G/\Gamma\rightarrow\mathrm{Isom}(X/\Gamma))$.

For the other inclusion, let $g \in G$ with $G_{i}\ni g_{i}\xrightarrow{\mathrm{GH}}g$, and $\bar{g}$ be the image of $g\Gamma$ in $\mathrm{im} (G/\Gamma\rightarrow\mathrm{Isom}(X/\Gamma))$.
For any $\bar{x}\in X/\Gamma$, take $\bar{x}_{i}\in \pi_{i}^{-1}(B_{0.2R_{i}}(\underline{p}_{i}))$ such that $\bar{x}_{i}\xrightarrow{\mathrm{GH}} \bar{x}$, and then according to (\ref{9.6}), we can choose $x_{i}\in B_{0.25R_{i}}(p_{i})$ such that $\sigma_{i}(x_{i}) = \bar{x}_{i}$.
Passing to a subsequence, we may assume $x_{i}\xrightarrow{\mathrm{GH}}x\in X$ with $\sigma(x) =\bar{x}$.
So
\begin{align}
\lim \tau_{i}(g_{i})(\bar{x}_{i}) = \lim \sigma_{i}(g_{i}(x_{i})) = \sigma(g(x)) = \bar{g}(\bar{x}),
\end{align}
which implies $K_{i}\ni \tau_{i}(g_{i})\xrightarrow{\mathrm{GH}}\bar{g}$, and $\bar{g} \in K$.
By the arbitrariness of $\bar{g}$, we conclude that $\mathrm{im} (G/\Gamma\rightarrow\mathrm{Isom}(X/\Gamma)) \subset K$.

The proof is completed.
\end{proof}

\section{Proof of the pinching Theorems \ref{mainthm-mix-curv-pinching} and \ref{mainthm_B}}\label{sec-13}


\begin{proof}[Proof of Theorem \ref{mainthm-mix-curv-pinching}]
We argue by contradiction.
Suppose on the contrary, there exists a sequence of $n$-dimensional manifolds $M_{i}$ satisfies $\Ric_{M_{i}} \geq -\epsilon_{i}$, $\sec_{M_{i}} \geq -1$, and $\diam(M_{i})\leq 1$, $b_1(M_{i})=b$, where $\epsilon_{i}\downarrow0$, but each $M_{i}$ is not a fiber bundle over a $b$-torus.
Note that by Gromov (\cite{Gr81-II}), we have $b\leq n$, and the $b=0$ case is trivial, and the $b=n$ case is due to Colding (\cite{Col97}), so we may assume $b\in[1,n-1]$.

Denote by $\Gamma_{i}=\pi_{1}(M_{i})$.
Let $\tilde{M}_{i}$ be the Riemannian universal cover of $M_{i}$.
We choose (and fix) some $\tilde{p}_{i}\in \tilde{M}_{i}$ which projects to $p_{i}\in M_{i}$ by the covering map.
Let $\hat{M}_{i}:=(\tilde{M}_{i}/[\Gamma_{i},\Gamma_{i}])/T_{i}$, where $T_{i}$ is the torsion subgroup of $\Gamma_{i}/[\Gamma_{i},\Gamma_{i}]$, then $H_{i}=(\Gamma_{i}/[\Gamma_{i},\Gamma_{i}])/T_{i}$ is a finitely generated free Abelian group with $\mathrm{rank} (H_{i})=b$, and $\hat{M}_{i}/H_{i}=M_{i}$.
We denote by $\hat{p}_{i}\in \hat{M}_{i}$ the point which is the image of $\tilde{p}_{i}$ under the quotient map.

Up to passing to a subsequence, we consider the following commutative diagram:

\begin{align}\label{diag6.1}
\xymatrix@C=2.5cm{
  (\hat{M}_{i}, \hat{p}_{i}, H_{i}) \ar[d]_{P_{i}} \ar[r]^{\mathrm{GH}} & (\mathbb{R}^{s}\times\hat{Y}, (0^{s},\hat{y}_{\infty}), H) \ar[d]^{P_{\infty}} \\
  ({M}_{i}, p_{i}) \ar[r]^{\mathrm{GH}} & (X,p_{\infty}),  }
\end{align}
where $\hat{Y}$ cannot split off an $\mathbb{R}$-factor.
Obviously, $X$ is compact, and $H$ is Abelian. By the Cheeger-Gromoll's trick, one can prove that $\hat{Y}$ is also compact.

Since  $\diam(M_{i})\leq 1$,  by a standard argument, $H_i$ is generated by
$H_i(3)$. By Lemma \ref{lem:rank non-dec}, we have $\mathrm{rank}(H)\geq b$. (In fact, according to Theorem \ref{thm:rank-stability} and Remark \ref{rem:rank-stability}, we have $\mathrm{rank}(H)= b$; but the equality is not used in the proof.) Thus there is a $\mathbb{Z}^b$-subgroup of $H$ acting freely on $\R^s$, which implies $s\geq b$.

Note that $H$ acts separately on the two factors of $\mathbb{R}^{s}\times\hat{Y}$, that is, each $h\in H$ has the form $(h_{1}, h_{2})$, where $h_{1}\in \Isom(\mathbb{R}^{s})$, $h_{2}\in \Isom(\hat{Y})$.
Let $\Pi_{1}:\Isom(\mathbb{R}^{s}\times\hat{Y})\rightarrow \Isom(\mathbb{R}^{s})$, $\Pi_{1}:\Isom(\mathbb{R}^{s}\times\hat{Y})\rightarrow \Isom(\hat{Y})$ be the projection map defined by $\Pi_{1}((h_{1}, h_{2}))=h_{1}$, $\Pi_{2}((h_{1}, h_{2}))=h_{2}$.
Denote  $G=\mathrm{Im}(\Pi_{1})$.
Obviously, $\diam(\mathbb{R}^{s}/G)\leq 1$.
We recall a classical theorem which says that every element of an Abelian, cocompact subgroup of $\mathrm{Isom}(\R^{n})$ acts as a translation on $\R^{n}$.
Then each element of $G$ acts as a translation on $\R^{s}$.
We take a discrete subgroup $G_{0}$ of $G$ such that the flat torus $T^{s}:=\R^{s}/G_{0}$ satisfies $1\leq \diam(T^{s})\leq 4n$, and $\mathrm{inj}_{T^{s}}\geq 1$.
Note that $G/G_{0}$ is a compact group.

Denote by $N_{0}=\mathrm{ker}(\Pi_{1})$.
Note that elements in $N_{0}$ leave the $\R^{s}$ factor fixed, and hence we can view $N_{0}$ as a subgroup of $\Isom(\hat{Y})$, which is compact.


Let $\{\eta_{1},\eta_{2},\ldots,\eta_{s}\}$ be a set of linear independent generators of $G_{0}$.
For each $i\in\{1,\ldots,s\}$, fix $\gamma_{i}\in H$ such that $\Pi_{1}(\gamma_{i})=\eta_{i}$.
Let $H_{0}:=\mathrm{span}\{\eta_{1},\eta_{2},\ldots,\eta_{s}\}$, which is a discrete subgroup of $H$.
It is easy to check that $H/H_{0}$ is isometric to $G/G_{0}\rtimes N_{0}$.
Denote by $\bar{X}:=(\mathbb{R}^{s}\times\hat{Y})/H_{0}$.
It is easy to check $\diam(\bar{X})\leq \sqrt{D^{2}+(4n)^{2}} < 4nD$,
where $D:=\diam(\hat{Y})$.

There is a natural surjective map $F:\bar{X}\rightarrow T^{s}$ defined as follows: for every $\bar{x}\in \bar{X}$, take a pre-image $\hat{x}\in\mathbb{R}^{s}\times\hat{Y}$, then $F(\bar{x})$ is the projection of $\pi_{1}(\hat{x})$ to $T^{s}= \mathbb{R}^{s}/H_{0}$.
Moreover, it is easy to see that, $F$ is a submetry at scale $\frac{1}{4}$.


By (\ref{diag6.1}) and Lemma \ref{LocalAction}, up to choosing a subsequence of $i$, there exist $R_{i}\rightarrow \infty$ and pseudo-groups $\hat{K}_{i}\subset H_{i}(2R_{i}+\lambda_{i}')$
(where $\lambda_{i}'\downarrow 0$) such that
the following commutative diagram holds:
  \begin{align}\label{diag6.3}
\xymatrix@C=2.5cm{
  (B_{R_{i}}(\hat{p}_{i}), \hat{p}_{i}, \hat{K}_{i})) \ar[d]_{\sigma_{i}} \ar[r]^{\mathrm{GH}} & (\mathbb{R}^{s}\times\hat{Y}, (0^{s},\hat{y}_{\infty}), H_{0}) \ar[d]^{\sigma} \\
  (B_{R_{i}}(\hat{p}_{i})/\hat{K}_{i}, \bar{p}_{i}) \ar[d]_{\tau_{i}} \ar[r]^{\mathrm{GH}} & (\bar{X},\bar{p}) \ar[d]^{\tau},  \\
    (B_{R_{i}}(p_{i}), {p}_{i}) \ar[r]^{\mathrm{GH}} & (X, p_{\infty}),
    }
\end{align}
with $P_{i}|_{B_{R_{i}}(\hat{p}_{i})}=\tau_{i}\circ\sigma_{i}$.

Since $H/H_{0}\cong G/G_{0}\rtimes N_{0}$ is compact, by (\ref{diag6.1}), (\ref{diag6.3}) and (7) in Lemma \ref{LocalAction}, up to a subsequence, there exists a discrete closed subgroup $K_{i}<  \mathrm{Isom}(\tau_{i}^{-1}(B_{\frac{1}{5}R_{i}}(p_{i})))$, such that $\tau_{i}^{-1}(B_{\frac{1}{5}R_{i}}(p_{i}))/K_{i}$ is isometric to $B_{\frac{1}{5}R_{i}}(p_{i})$, and the following commutative diagram holds:
  \begin{align}\label{diag6.5}
\xymatrix@C=2.5cm{
  (\tau_{i}^{-1}(B_{\frac{1}{5}R_{i}}(p_{i})), \bar{p}_{i}, K_{i}) \ar[d]_{\tau_{i}} \ar[r]^{\mathrm{GH}} & (\bar{X},\bar{p}, K) \ar[d]^{\tau} \\
  (B_{\frac{1}{5}R_{i}}(p_{i}), p_{i}) \ar[r]^{\mathrm{GH}} & (X, p_{\infty}).  }
\end{align}
where $K< \mathrm{Isom}(\bar{X})$ coincides with the image of $H/H_{0}\rightarrow \mathrm{Isom}(\bar{X})$.
Since $\diam(M_{i})\leq 1$ and $\diam(\bar{X})< 4nD$, without loss of generality, we may assume $B_{\frac{1}{5}R_{i}}(p_{i})=M_{i}$ and $\diam(\tau_{i}^{-1}(M_{i}))< 5nD$ for every $i$.
$\tau_{i}^{-1}(M_{i})$ are compact manifolds without boundary.
Since $H$ acts separately on the two factors of $\mathbb{R}^{s}\times\hat{Y}$, it is easy to see that there is a natural homomorphism $\hat{\Psi}: K\rightarrow \Isom(T^{s})$ such that for every $g\in K$,
\begin{align}
\hat{\Psi}(g)(F(\bar{x}))=F(g(\bar{x})),
\end{align}
and each $\hat{\Psi}(g)$ can be lifted as a translation of $\R^s$.
In particular, for any discrete subgroup $K'< \hat{\Psi}(K)$, $T^s/K'$ is isometric to some $s$-dimensional torus.

Applying Theorem \ref{thm:eq-submetry} to the equivariant Gromov-Hausdorff approximations given in (\ref{diag6.5}) and the map $F:\bar{X}\rightarrow T^{s}$,  
there exist homomorphisms $\psi_{i}:K_{i}\rightarrow \hat{\Psi}(K)$ and
smooth maps $F_{i}:\tau_{i}^{-1}(M_{i})\rightarrow T^{s}$, such that $F_{i}$ converges uniformly to $F:\bar{X}\rightarrow T^{s}$,
\begin{align}\label{6.93}
F_{i}\circ g= \psi_{i}(g)\circ F \text{ for every }g\in K_{i}.
\end{align}
Since $\sec_{M_{i}} \geq -1$, each $M_{i}$ satisfies the $\Phi_{0}$-generalized Reifenberg condition for some $\Phi_{0}$ depending only on $n$.
Then by Corollary \ref{cor7.2-smooth}, $d F_{i}$ is non-degenerate at each point and $F_{i}$ is surjective onto $T^s$.
In particular, each $F_{i}$ is a fibration over $T^{s}$.


Denote $K_{i}'=\psi_{i}(K_{i})$, which is a discrete subgroup of $\hat{\Psi}(K)$.
By (\ref{6.93}), there is a well-defined smooth surjective map $\hat{F}_{i}:\tau_{i}^{-1}(M_{i})/K_{i}\rightarrow T^{s}_{i}:=T^{s}/K_{i}'$ such that $d \hat{F}_{i}$ is non-degenerate at every point.
Thus $\hat{F}_{i}$ gives a fibration from $M_{i}$ over $T^{s}_{i}$.

Note that the fibers of $M_{i}$ over $T^{s}_{i}$ comes from the fibers of $F_{i}:\tau_{i}^{-1}(M_{i})\rightarrow T^{s}$ by quotient out a suitable group.
Thus, the connectedness of the fiber of $M_{i}$ over $T^{s}_{i}$ comes from that of the fibers of $\tau_{i}^{-1}(M_{i})$ over $T^{s}$.
We assume the fiber of $\tau_{i}^{-1}(M_{i})$ over $T^{s}$ is disconnected, which is the disjoint unit of $\{S_{1},\ldots,S_{l}\}$.
Since $\mathrm{inj}_{T^{s}}\geq 1$, for every $q\in T^{s}$, $F_{i}^{-1}(B_{\frac{1}{5}}(q))$ is diffeomorphic to $B_{\frac{1}{5}}(q)\times \sqcup_{j=1}^{l} S_{j}$.
Since $F_{i}$ converges uniformly to $F:\bar{X}\rightarrow T^{s}$ and $F$ is a submetry at scale $\frac{1}{4}$, $F_{i}$ is a $\epsilon_{i}$-submetry at scale $\frac{1}{4}$ ($\epsilon_{i}\downarrow0$).
For every $\bar{q}_{1}, \bar{q}_{2}\in \tau_{i}^{-1}(M_{i})$ such that $d(\bar{q}_{1},\bar{q}_{2})\leq \frac{1}{10}$, since $F_{i}$ is an almost submetry, $d(F_{i}(\bar{q}_{1}),F_{i}(\bar{q}_{2}))\leq\frac{1}{9}$, we claim that $\bar{q}_{1}, \bar{q}_{2}$ fall in the same component of fibers.
Suppose otherwise, denote by $q=F_{i}(\bar{q}_{1})$, since $B_{\frac{1}{8}}(\bar q_{1})\subset  F_i^{-1}(B_{\frac{1}{5}}(q))\overset{\mathrm{diff}}{\sim} B_{\frac{1}{5}}(q)\times \sqcup_{j=1}^{l} S_{j}$, $B_{\frac{1}{8}}(\bar q_1)$ must be contained in $B_{\frac{1}{5}}(q)\times S_{1}$, where $ \bar{q}_{1}\in S_{1}$, which implies $\bar{q}_{2}\in S_{1}$, a contradiction.

We take $x_1, x_{2}\in \tau_{i}^{-1}(p_{i})$, let $y_1, y_{2}\in \bar{X}$ be the image of the Gromov-Hausdorff approximation of $x_1, x_{2}$, connect $y_1, y_{2}$ by a geodesic $\eta$, and take $\{\eta_1=y_1, \eta_2, \ldots , \eta_t=y_2\}\subset \eta$ such that $d(\eta_i,\eta_{i+1})\leq \frac{1}{20}$, and take $\{\gamma_1=x_1, \gamma_2, \ldots , \gamma_t=x_2\}\subset \tau_{i}^{-1}(p_{i})$ by the pre-image of the Gromov-Hausdorff approximation.
By the conclusion in the proceeding paragraph, we can conclude that $x_1, x_{2}$ is contained in the same component of the fiber.
This shows the connectedness of the fiber.

In conclusion, we get a contradiction, and the proof of Theorem \ref{mainthm-mix-curv-pinching} is complete.
\end{proof}

\begin{proof}[Proof of Theorem \ref{mainthm_B}]
The proof of Theorem~\ref{mainthm_B} is a minor modification of that of Theorem~\ref{mainthm-mix-curv-pinching}.
In the setting of Theorem~\ref{mainthm-mix-curv-pinching}, we apply Theorem \ref{thm:eq-submetry} to construct a smooth map $F$, and obtain that $dF$ is non-degenerate due to the lower bound on sectional curvature. Consequently, $F$ is a smooth fiber bundle map.

In contrast, in the setting of Theorem~\ref{mainthm_B}, the map $F$ constructed by Theorem \ref{thm:eq-submetry} may fail to be smooth. Nevertheless, by Theorem~\ref{thm:bundlemap} below, we can still conclude that $F$ is a topological fiber bundle map.
We remark that in order to apply Theorem~\ref{thm:bundlemap} , we need a fact that if a $\mathrm{RCD}(-\delta,n)$ space $(X,d,\mathcal{H}^n)$ satisfies the $(r,\delta)$-local rewinding Reifenberg condition, then any of its covering space $(\bar{X},\bar{d},\mathcal{H}^n)$ satisfies the $(r,\Psi(\delta|n))$-local rewinding Reifenberg condition.
In fact, the natural projection $\pi: \bar{X} \rightarrow X$ can be lifted to a local homeomorphism $\Pi: \widetilde{B_r(q)}\rightarrow \widetilde{B_r(p)}$ (where $\pi(q)=p$) which maps ${B_t(\tilde{q})}$ onto ${B_t(\tilde{p})}$ for any $t\leq r$.
\end{proof}

We can use the almost splitting map to construct a topological fiber bundle structure on a $\mathrm{RCD}$ space. See also \cite{Wang2024}.

\begin{theorem}\label{thm:bundlemap}
Suppose $(X,d,\mathcal{H}^n)$ is a compact $\mathrm{RCD}(-1,n)$ space satisfying the $(r,\delta')$-local rewinding Reifenberg condition for some $r>0$ and $\delta'\in (0,\delta'_n)$, where $\delta'_n$ is a sufficiently small positive constant depending only on $n$.
Let $F \colon X \to N$ be the map constructed in Theorem \ref{thm:eq-submetry}.
If $\epsilon$ in Theorem \ref{thm:eq-submetry} is sufficiently small, then $F \colon X \to N$ is a topological fiber bundle and the fiber is a topological $(n-k)$-manifold.

Moreover, if $G$ and $H$ are free actions, and $G$ is discrete, then the induced map
\begin{align*}
\bar{F} \colon X/G \to N/\psi(G)
\end{align*}
is also a topological fiber bundle.
\end{theorem}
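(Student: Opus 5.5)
We will show that $F$ is locally trivial over $N$ by building, near each point, a product chart in which $F$ is one coordinate projection.

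\textbf{Step 1: a local homeomorphism on a rescaled ball.} Fix $x\in X$ and $y=F(x)$. By Theorem \ref{thm:eq-submetry}(c), $\bar F=\Theta_y\circ F$ is a $(\epsilon,k)$-splitting-type map on $B_{5\sqrt\delta}(x)$ after rescaling this ball to unit size. Since $F$ is close to the almost submetry $f$, the rescaled ball is $(\Psi,k)$-Euclidean. The $(r,\delta')$-local rewinding Reifenberg condition says that the universal cover $\widetilde{B_r(x)}$ is $\delta'$-close to $\mathbb R^n$ at all scales $t\le r/3$. We lift $\bar F$ to $\tilde F$ on $\widetilde{B_r(x)}$. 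The lift keeps (c1)--(c3), since averages over covers are controlled by averages downstairs via the deck action and volume comparison; this transfer is only needed at scale $\sqrt\delta\ll r$. By Theorem \ref{thm-split-GHisom}(1), applied to the almost Euclidean cover, we complete $\tilde F$ to an $(\epsilon',n)$-splitting map $(\tilde F,\tilde v):\widetilde{B}\to\mathbb R^n$. The extra $n-k$ components $\tilde v$ come from the Euclidean structure orthogonal to the $\mathbb R^k$-direction, and the completed map still has small $L^\infty$ Laplacian. The canonical Reifenberg Theorem \ref{Rei}, together with the transformation Theorem \ref{thm-trans-GRC-111} (valid by \cite{KM20} in the non-collapsed setting), shows that $(\tilde F,\tilde v)$ is bi-H\"older. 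It is therefore a homeomorphism from a ball of the cover onto an open subset of $\mathbb R^n$ containing a ball of definite size.

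\textbf{Step 2: fibers are topological manifolds, and $F$ is a submersion.} The previous step says that in local coordinates on the cover, $\tilde F$ is the projection $\mathbb R^n\to\mathbb R^k$. We want the same statement on $X$. Since $\delta\ll r$, we work with balls $B_{s}(x)$ of size $s\sim\sqrt\delta$. The lift $\tilde F$ is equivariant for the local deck group, with trivial action on the $\mathbb R^k$ side. Following the argument of Theorem \ref{thm-trans-GRC}(2)--(3), we show: $F$ is open, $F(B_s(x))\supset B_{cs}(y)$, and each fiber $F^{-1}(y')\cap B_s(x)$ is a topological $(n-k)$-manifold with a uniform bi-H\"older distance estimate of the form \eqref{BiHolder-app}. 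This is the main obstacle, because $X$ itself need not be locally Euclidean at scale $\sqrt\delta$: $X$ may be collapsed relative to $r$. My plan is to avoid local Euclideanity of $X$ and instead prove that $F$ is a topological submersion using the cover. The deck group acts freely and properly, and it preserves the fibers of $\tilde F$. So the quotient of the product chart $B^k\times(\text{fiber})$ is $B^k\times(\text{fiber}/\text{deck})$, and the fiber of $F$ is locally this quotient, which is a manifold. To make this rigorous I would run a gradient-flow or Siebenmann-type argument: openness, the bi-H\"older estimate, and fibers being compact ENR manifolds together with local contractibility give local triviality.

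\textbf{Step 3: globalizing.} $F$ is proper because $X$ is compact, and Step 2 shows it is a topological submersion with manifold fibers. Applying the topological Ehresmann theorem of Siebenmann (compare \cite{Wang2024}), $F$ is a locally trivial fiber bundle with fiber a topological $(n-k)$-manifold. For the second statement, (b) says $F$ is $G$--$\psi(G)$ equivariant. Since $G$ and $\psi(G)$ act freely, $X\to X/G$ and $N\to N/\psi(G)$ are covering maps, and $\bar F$ is well defined. Local triviality descends by a direct check: over a small evenly covered ball $U\subset N/\psi(G)$ with a lift $\tilde U$, we have $\bar F^{-1}(U)\cong F^{-1}(\tilde U)/\mathrm{Stab}$. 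The stabilizer of $\tilde U$ in $\psi(G)$ is trivial, so $\bar F^{-1}(U)\cong F^{-1}(\tilde U)/\ker\psi$. Here $\ker\psi$ acts freely and fiber-preservingly on the trivialized product $\tilde U\times\Sigma$. After choosing the trivialization to be $\ker\psi$-invariant (by averaging the fiber coordinate, or by trivializing the quotient bundle directly via Step 2 applied on $X/G$, which inherits the local rewinding condition), this gives $U\times(\Sigma/\ker\psi)$.
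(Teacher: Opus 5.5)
Your overall architecture is the same as the paper's: lift to the local universal cover, complete $\tilde F$ to an $n$-splitting map, get a homeomorphic chart from the canonical Reifenberg theorem, deduce that $F$ is a topological submersion, and finish with Kirby--Siebenmann. But Step 2, which carries the proof, is not actually done.

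Your proposed mechanism is that the deck group preserves the fibres of $\tilde F$, so the quotient of a product chart $B^k\times(\text{fiber})$ is $B^k\times(\text{fiber}/\text{deck})$. This does not work as stated. The chart $(\tilde F,\tilde v)$ exists only on a ball $B_c(\tilde x)$ of the cover. That ball is not deck-invariant in exactly the situation you are worried about, namely when deck elements move $\tilde x$ by much less than $c$. Moreover, $\tilde v$ has no equivariance, so nothing in your construction gives a deck-invariant product neighbourhood whose quotient you could take. The fallback (``a gradient-flow or Siebenmann-type argument'' with ENR fibres and local contractibility) is not an argument. Finally, the appeal to Theorem \ref{thm-trans-GRC}(2)--(3) is unavailable here, since that theorem uses smoothness to get non-degeneracy of $du$.

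The missing idea is simple: you never need to handle the whole fibre, or the scale $\sqrt\delta$, on $X$ itself. The deck orbit of $\tilde x$ is discrete, so for some $r_0>0$ the covering projection maps $B_{r_0}(\tilde x)$ isometrically onto $B_{r_0}(x)$. Pointwise $r_0$ suffices; the paper takes it uniform by compactness. Restrict the homeomorphism $(\tilde F,\tilde v)$ to $B_{r_0}(\tilde x)$ and compose with this isometry. This gives a chart of $X$ at $x$ in which $\Theta_y\circ F$ is projection onto the first $k$ coordinates. That single observation shows both that the fibres are topological $(n-k)$-manifolds and that $F$ is a topological submersion.

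The paper does exactly this. It additionally renormalizes with a lower-triangular matrix $T$ from Theorem \ref{thmA.1}, so that $T\circ(\tilde F,v)$ is still splitting at scale $r_0$ and Theorem \ref{Rei} can be applied downstairs. Because $T$ is lower triangular it preserves $\{0^k\}\times\mathbb{R}^{n-k}$, so $F$ reads as $\pi\circ T^{-1}$ in that chart.

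In Step 3, ``averaging the fibre coordinate'' has no meaning for topological trivializations. Your alternative is fine, though, and is immediate. Since $G$ is finite and $\psi(G)\subset H$, both act freely, so $X\to X/G$ and $N\to N/\psi(G)$ are local homeomorphisms. Hence $\bar F$ is locally conjugate to $F$, so it is a proper topological submersion, and Kirby--Siebenmann applies directly, as in the paper.
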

\begin{proof}
Since (c1)-(c3) of Theorem \ref{thm:eq-submetry} hold, we rescale $X$, $N$ by a suitable large constant and still use $X$, $N$ and $F$ to denote the rescaled spaces and maps respectively, and rechoose suitable $r$ and $\epsilon$, we may assume that $(X,d,\mathcal{H}^n)$ is a $\mathrm{RCD}(-\epsilon,n)$ space satisfying the $(r,\delta')$-local rewinding Reifenberg condition for $r>100$, and for every $x \in X$, after compositing the normal coordinate chart $\Theta_y$, where $y=F(x)$, the resulting map $\Theta_y\circ F:B_{10}(x)\rightarrow \R^k$ is $(k,\epsilon)$-splitting.

For any $x \in X$, let $\tilde{x}$ be a lift of $x$ to the universal cover of $B_r(x)$. Then $\tilde{x}$ is a $(r/3,\delta)$-Reifenberg point. Since $X$ is compact, we may choose $r_0 < 1$ sufficiently small such that for every $x \in X$, the ball $B_{r_0}(x)$ is isometric to $B_{r_0}(\tilde{x})$. By the Reifenberg Theorem, $X$ is a topological manifold.

Fix $x \in X$ and set $y = F(x) \in N$. Using the inverse of the exponential map at $y$, we identify $B_{10}(y)$ with a ball in $T_yN \cong \mathbb{R}^k$. Then by the covering lemma (\cite{KW}*{Lemma 1.6}), it is standard that the restriction of $F$ to $B_{10}(x)$ lifts to a map
$\tilde{F} \colon B_{10}(\tilde{x}) \to \mathbb{R}^k$,
which is a $(k,\Psi(\epsilon|n))$-splitting map. Moreover, we may assume that $\tilde{F}(\tilde{x}) = 0^k \in \mathbb{R}^k$.
Since $\tilde{x}$ is an $(10,\delta')$-Reifenberg point, by the
Cheeger-Colding theory, if $\epsilon$ is sufficiently small, there exists a $(N-k,\Psi(\delta'|n))$-splitting harmonic map
$v \colon B_{10}(\tilde{x}) \to \mathbb{R}^{N-k}$
such that $(\tilde{F}, v)$ is a $(N,\Psi(\delta'|n))$-splitting map on $B_{10}(\tilde{x})$.

By the Transformation Theorem~\ref{thmA.1}, there exists a lower triangular matrix $T$ such that
$T \circ (\tilde{F}, v)$
is a $(N,\Psi(\delta'|n))$-splitting map on $B_{r_0}(\tilde{x})$. Since $T$ is lower triangular, it preserves the subspace $\{0^k\} \times \mathbb{R}^{N-k}$, i.e.,
$T\bigl(\{0^k\} \times \mathbb{R}^{N-k}\bigr) = \{0^k\} \times \mathbb{R}^{N-k}$.

Since $B_{r_0}(x)$ is isometric to $B_{r_0}(\tilde{x})$, the map
$T \circ (\tilde{F}, v)$
descends to a  $(N,\Psi(\delta'|n))$-splitting map $(\bar{F}, \bar{v})$ on $B_{r_0}(x)$.

By the canonical Reifenberg Theorem~\ref{Rei}, the map $(\bar{F}, \bar{v})$ is a homeomorphism from $B_{r_0}(x)$ onto its image. In particular, the set $\bar{F}^{-1}(0^k)$ is locally homeomorphic to $\mathbb{R}^{N-k}$ via the coordinate function $\bar{v}$. Hence, $\bar{F}^{-1}(0^k) \subset B_{r_0}(x)$ is a neighborhood of $x$ in $F^{-1}(y)$. It follows that $F^{-1}(y)$ is a topological $(N-k)$-manifold.

Next, we describe a local representation of $F$. The pair $(B_{r_0}(x), (\bar{F}, \bar{v}))$ provides a (topological) chart for $X$, while a neighborhood of $y$ can be identified with an open subset of $T_yN \cong \mathbb{R}^k$. In the $(B_{r_0}(x), (\bar{F}, \bar{v}))$ coordinate chart, the map
$F \colon X \to N$ is locally represented by
$\pi \circ T^{-1} \colon \mathbb{R}^N \to \mathbb{R}^k$,
where $\pi \colon \mathbb{R}^k \times \mathbb{R}^{N-k} \to \mathbb{R}^k$ is the projection map.

The map $\pi \circ T^{-1}$ is a submersion, since $T$ is non-degenerate. Therefore, $F$ is a topological submersion, that is, there exists an open neighborhood $U$ of $y=F(x)$ and an open neighborhood of $x$ homeomorphic to $U \times B_{r_0/2}(0^{n-k})$ such that $F:X \to N$ locally induces the projection map $\pi: U \times B_{r_0/2}(0^{n-k}) \to U$.

Since $F$ is a topological submersion and $X$ is compact, it follows from \cite{KirbySiebenmann}*{Essay II, Chapter 1} that
$F \colon X \to N$
is a topological fiber bundle with fibers homeomorphic to a $(n-k)$-manifold.

Moreover, for any $\gamma \in G$, we have
\[
F \circ \gamma = \psi(\gamma) \circ F.
\]
Thus, $F$ descends to a map between the quotient spaces
$\bar{F} : X/G \to N/\psi(G)$.
Since both $G$ and $H$ act freely and $G$ is discrete, it follows that $N/\psi(G)$ is a $k$-manifold.

By the argument above, $\bar{F}$ is also a topological submersion, and hence a topological fiber bundle.
\end{proof}

\section{Proof of the smooth fibration Theorem \ref{mainthm-fiber-sec}} \label{sec-4}

The fibration map $f:M\rightarrow N$ in Theorem \ref{mainthm-fiber-sec} is given by the following theorem.

\begin{theorem}\label{Fibrations}
    Given $n\in \mathbb{Z}^+$, $r_0>0$.
	Suppose $(M,g)$, $(N,h)$ are compact Riemannian manifolds of dimension $n$, $k$ respectively (where $k\le n$) such that
$\Ric_{M}\geq -1$,
	\begin{align}\label{5.2}
    \Ric_N \geq -1, \quad and \quad   d_{\mathrm{GH}}(B_{r_{0}}(q),B_{r_{0}}(0^{k})) < \delta \quad \text{ for any } q\in N,
    \end{align}
	and $d_{\mathrm{GH}}(M,N)\le\delta$. Then there exists a smooth map $f:M\rightarrow N$ which is a $\Psi(\delta)$-Gromov-Hausdorff approximation,
and for any $x\in M$, there exists a coordinate chart $\Theta_{y}:B_{2\sqrt{\Psi(\delta)}}(y)\rightarrow V \subset \mathbb{R}^{k}$ for $y=f(x)$,
	such that $\Theta_{y}$ is a $\Psi(\delta)$-Gromov-Hausdorff approximation, and $\bar{f}=\Theta_{y}\circ f|_{B_{\sqrt{\Psi(\delta)}}(x)}$ satisfies
	\begin{align}\label{1.1}
		\Lip \bar{f}\leq C_{0},
	\end{align}
	\begin{align}\label{1.2}
		|\Delta \bar{f}|_{L^{\infty}(B_{\sqrt{\Psi(\delta)}}(x))}\leq \frac{C_{0}}{\sqrt{\Psi(\delta)}},
	\end{align}
	\begin{align}\label{1.3}
		\bbint_{B_{\sqrt{\Psi(\delta)}}(x)}|\langle\nabla \bar{f}^{i}, \nabla \bar{f}^{j} \rangle- \delta_{ij}|\leq \Psi(\delta),
	\end{align}
where $C_{0}=C_{0}(n)$, $\Psi(\delta)=\Psi(\delta|n,r_{0})$.
If in addition, $M$ satisfies the $(\Phi_{0}, 1; k,\delta)$-generalized Reifenberg condition for a function $\Phi_{0}:\mathbb{R}^{+}\rightarrow \mathbb{R}^{+}$ with $\lim_{s\rightarrow 0^{+}}\Phi_{0}(s)=0$,
then if  $\delta$ is sufficiently small (depending only on $n$, $r_{0}$, $\Phi_{0}$), $f:M\rightarrow N$ is a surjective fibration.
\end{theorem}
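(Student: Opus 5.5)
The plan is to build $f$ by the center-of-mass technique, as announced for Section \ref{sec-6}, using harmonic almost-splitting maps as local building blocks. First I fix a $\delta$-Gromov--Hausdorff approximation $h\colon M\to N$. By (\ref{5.2}), every ball $B_{r_0}(q)\subset N$ is $\delta$-close to $B_{r_0}(0^k)$. Since $\Ric_N\ge -1$, Cheeger--Colding/Anderson-type $\epsilon$-regularity then gives harmonic-radius control at a scale $\rho=\rho(n,r_0)$, so normal coordinates $\Theta_y$ on $B_{2\sqrt{\Psi(\delta)}}(y)$ are $\Psi(\delta)$-GH approximations with $C^{1,\alpha}$ control of the metric. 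I will use the appendix facts on normal coordinates here.

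Pulling back through $h$, each ball $B_{r}(x)\subset M$ with $r=\sqrt{\Psi(\delta)}$ is GH-close, after rescaling, to a ball in $\mathbb{R}^k\times Z$. Theorem \ref{thm-split-GHisom}(1) then yields a $(\Psi,k)$-splitting harmonic map $u_x\colon B_{4r}(x)\to\mathbb{R}^k$ that is $\Psi r$-close to $\Theta_{h(x)}\circ h$. I choose a maximal $r/10$-separated net $\{x_\alpha\}$ with a partition of unity $\{\chi_\alpha\}$ subordinate to $\{B_{2r}(x_\alpha)\}$. The $\chi_\alpha$ should be built from good cutoff functions of Cheeger--Colding type (for which $|\nabla\chi_\alpha|\le C/r$ and $|\Delta\chi_\alpha|\le C/r^2$), with overlap multiplicity bounded by $C(n)$ via volume doubling. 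Then I define $f(x)$ as the Riemannian center of mass in $N$ of the points $\Theta_{h(x_\alpha)}^{-1}(u_{\alpha}(x))$ with weights $\chi_\alpha(x)$. This is well-defined because all these points lie within $\Psi r\ll\rho$ of each other, in a strongly convex ball. After smoothing the $u_\alpha$ slightly if needed (they are harmonic, hence smooth, on the manifold), $f$ is smooth, and it is $\Psi(\delta)$-close to $h$, hence a $\Psi(\delta)$-GH approximation.

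Next I verify (\ref{1.1})--(\ref{1.3}) for $\bar f=\Theta_y\circ f$. In the chart $\Theta_y$, the center of mass equals $\sum_\alpha\chi_\alpha\,\Phi_\alpha(u_\alpha)$ plus a correction that is quadratic in the spread of the points. Here the transition maps $\Phi_\alpha=\Theta_y\circ\Theta_{y_\alpha}^{-1}$ are $C^{1,\alpha}$-close to Euclidean isometries on the relevant scale. Write $\bar f=\sum_\alpha\chi_\alpha v_\alpha$ with $v_\alpha=\Phi_\alpha\circ u_\alpha$. The gradient is $\sum_\alpha\chi_\alpha\nabla v_\alpha+\sum_\alpha\nabla\chi_\alpha\,(v_\alpha-v_\beta)$, and the second term is $O(\Psi r/r)=O(\Psi)$. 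The Laplacian is $\sum_\alpha\Delta\chi_\alpha(v_\alpha-v_\beta)+2\sum_\alpha\langle\nabla\chi_\alpha,\nabla v_\alpha\rangle+\sum_\alpha\chi_\alpha\Delta v_\alpha$. Using $\sum_\alpha\nabla\chi_\alpha=0$, this is bounded by $C\Psi r/r^2+C/r\cdot(\text{closeness of }\nabla v_\alpha)+C/r$. That is at most $C_0/r$, which is the bound (\ref{1.2}). The $L^1$ orthonormality (\ref{1.3}) follows from that of each $u_\alpha$ together with the $O(\Psi)$ gradient errors. The Hessian terms from the center-of-mass correction are handled by the bounded geometry of $N$ at scale $\rho$.

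For the last assertion, rescale $B_r(x)$ to unit size. Then $\bar f$ satisfies (\ref{2.11-1}) and (\ref{2.11-2}) with $L=C_0$, the ball is $(\Psi,k)$-Euclidean, and the generalized Reifenberg condition holds. Theorem \ref{thm-trans-GRC} gives that $d\bar f$ is nondegenerate everywhere, so $f$ is a submersion. Part (3) gives local surjectivity onto balls; combined with the GH-approximation property and connectedness of $N$, this shows $f$ is surjective. Ehresmann's lemma, applied with $M$ compact, then shows $f$ is a fibration.

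The main obstacle is the Laplacian bound (\ref{1.2}) for the center of mass. It requires a second-order expansion of the center of mass in normal coordinates with quantitative metric control on $N$, which I expect is exactly where the appendix estimates and the harmonic-radius regularity on $N$ come in. It also requires good cutoff functions with $L^\infty$ Laplacian bounds, which is where I would first check that the Cheeger--Colding construction applies at scale $r$.
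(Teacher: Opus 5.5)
\textbf{There is a genuine gap at the first step.} Your architecture matches the paper's Propositions \ref{fibration-blowup} and \ref{fibration-smooth}(B): local harmonic splitting maps on $M$, a partition of unity from good cutoff functions, a Riemannian center of mass on $N$, then Theorem \ref{thm-trans-GRC} and Ehresmann. But the step everything rests on is false. From $\Ric_N\ge-1$ together with (\ref{5.2}) you do not get harmonic-radius or $C^{1,\alpha}$ control of $h$ at a scale $\rho(n,r_0)$.
\begin{itemize}
\item Anderson's $\epsilon$-regularity requires a two-sided Ricci bound. With only a lower bound, Cheeger--Colding gives Reifenberg (bi-H\"older) charts and nothing better.
\item Curvature can concentrate at arbitrarily small scales while (\ref{5.2}) still holds with the same $\delta$. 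For example, smooth cone points of small angle deficit at scale $\eta\to0$. Then $|\Rm_N|$ blows up, and the injectivity and convexity radii of $N$ go to zero, with no control in terms of $n,r_0,\delta$.
\item Consequently the normal charts $\Theta_y$ on $B_{2\sqrt{\Psi(\delta)}}(y)$ need not exist, and the points you average need not lie in a strongly convex ball.
\item You also lose the bounds on second and third derivatives of $d_N^2$, equivalently on the transition maps $\Phi_\alpha$. These are what you need to bound $\Delta v_\alpha=\nabla^2\Phi_\alpha(\nabla u_\alpha,\nabla u_\alpha)$ and the center-of-mass correction in (\ref{1.2}).
\item The appendix estimates you cite assume exactly this bounded geometry, namely (\ref{5.8})--(\ref{5.10}), which you have not produced for $N$.
\end{itemize}

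\textbf{The missing idea is to change the metric on $N$ before averaging.} After blowing up, the paper runs Ricci flow $h(t)$ on $N$ and applies Perelman's pseudo-locality (Theorem \ref{thm-psudoloc}). For $t\sim\delta$ this gives $|\Rm|\le\alpha t^{-1}$, $|\nabla^m\Rm|\le C_m\alpha t^{-1-m/2}$, $\inj\ge\rho\sqrt t$, and $|d_t-d_0|\le\alpha\sqrt t$ at scale $\sqrt t$.
\begin{itemize}
\item After rescaling by $(\alpha t)^{-1}$, this is exactly the bounded geometry (\ref{5.8})--(\ref{5.10}) that Proposition \ref{fibration-blowup} needs.
\item At the same time, $\mathrm{id}\colon(N,h)\to(N,h(t))$ is locally a $\Psi$-Gromov--Hausdorff approximation.
\item The center of mass is taken with respect to $h(t)$. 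The chart $\Theta_y$ is the $h(t)$-normal chart, regarded as a chart on $(N,h)$. This is why the theorem only promises ``a coordinate chart'' rather than normal coordinates of $h$.
\end{itemize}
With this substitution the rest of your plan parallels the paper. One difference: the paper differentiates $\partial_y H(x,y)=0$ implicitly, instead of expanding the center of mass as a linear average plus a quadratic correction. Once the target has bounded geometry, your expansion should also give the stated bounds. A minor point: making $u_x$ close to $\Theta\circ h$, and not merely splitting, needs the usual contradiction argument on top of Theorem \ref{thm-split-GHisom}(1).
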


Theorem \ref{Fibrations} generalizes \cite{HH24}*{Theorem 1.6} in two senses.
Compared with \cite{HH24}*{Theorem 1.6}, the assumptions on the lower dimensional manifold are slightly weaken to (\ref{5.2}), and we get more estimates (\ref{1.2}), (\ref{1.3}).

The proof of Theorem \ref{Fibrations} is tedious, and we delay it to Section \ref{sec7.1.1}.

In the following we will prove Theorem \ref{mainthm-fiber-sec}.
Since $\sec_{M}\geq -1$, according to \cite{HH24}*{Proposition 5.6}, $M$ satisfies the $(\Phi_{0}, 1; k,\delta)$-generalized Reifenberg condition for  $\Phi_{0}$ depending only on $n$.
We will verify that for sufficiently small $\delta$, the fibration $f:M\rightarrow N$ given by Theorem \ref{Fibrations} satisfies the topological properties.

Without loss of generality, we assume that $\Psi(\delta)$ in Theorem \ref{Fibrations} is greater than $\delta$.

In the following, we prove the topological properties of the fibrations as in Theorem \ref{mainthm-fiber-sec}.
For simplicity of notation, we assume $r_{0}=1$ in the assumptions of Theorem \ref{mainthm-fiber-sec}.
But the constants in the proof will implicitly depend on $r_{0}$.

In the proof, we will use an argument by contradiction. Let's fix some notations first.

Assume that we have a sequence of $\delta_{i}\downarrow 0$, sequences of manifolds $M_{i}$, $N_{i}$, sequence of maps $f_{i}:M_{i}\rightarrow N_{i}$ which is a fibration and a $\Psi(\delta_{i})$-Gromov-Hausdorff approximation as in Theorem \ref{Fibrations}.
We denote $f_{i}^{-1}(q_i)$ by $F_i$, and fix some $p_{i}\in F_{i}$.

By Theorem \ref{Fibrations}, $\Theta_{q_{i}}\circ f_{i}|_{B_{2\sqrt{\Psi(\delta_{i})}}(p_{i})}$ is a $\Psi(\delta_{i})$-Gromov-Hausdorff approximation.
Since we are interested in the properties of the fibers, for simplicity of notation, we replace $\Theta_{q_{i}}\circ f_{i}$ by $f_{i}$, and assume $N_{i}=B_{\sqrt{\Psi(\delta_{i})}}(0^{k})\subset \R^{k}$ with $q_{i}=0^{k}$, and $M_{i}=f_{i}^{-1}(B_{\sqrt{\Psi(\delta_{i})}}(0^{k}))$, $p_{i}\in f_{i}^{-1}(0^{k})$.
Note that in this case, $f_{i}$ is a trivial bundle, i.e., $M_{i}$ is diffeomorphic to $N_i\times F_{i}$.

Denote $\Gamma_{i}:=\pi_{1}(F_{i})$. Since $F_{i}$ is compact, $\Gamma_{i}$ is finitely generated, and $b_1^{(i)}:=\mathrm{dim} H_1(F_{i},\mathbb{R})<\infty$.
Let $\tilde{F}_{i}$ be the universal cover of $F_{i}$, and equip $\tilde{M}_{i}\overset{\mathrm{diff}}{\sim} N_{i} \times \tilde{F}_{i}$ with the natural pull-back metric from $M_{i}$.
We fix a $\tilde{p}_{i}\in \tilde{M}_{i}$ which projects to $p_{i}$ by the covering map.
Let $\hat{F}_{i}:=(\tilde{F}_{i}/[\Gamma_{i},\Gamma_{i}])/T_{i}$, where $T_{i}$ is the torsion subgroup of $\Gamma_{i}/[\Gamma_{i},\Gamma_{i}]$, then $H_{i}=(\Gamma_{i}/[\Gamma_{i},\Gamma_{i}])/T_{i}$ is a finitely generated free Abelian group with $\mathrm{rank} (H_{i})=b_{1}^{(i)}$, and $\hat{F}_{i}/H_{i}=F_{i}$.
We equip $\hat{M}_{i}\overset{\mathrm{diff}}{\sim} N_{i} \times \hat{F}_{i}$ with the quotient metric from $\tilde{M}_{i}$.
Let $\hat{p}_{i}\in \hat{M}_{i}$ be the point that is the image of $\tilde{p}_{i}$ under the quotient map.

\begin{claim}\label{claim5.2}
There exists a subsequence of $f_{i}:M_{i}\rightarrow N_i$ (here we use the same notation to denote the subsequence) as above, a sequence $\mu_{i}\downarrow 0$ and a positive integer $k_3\geq k$, so that the following holds:
if we rescale the metrics on $M_{i}$ and $N_{i}$ by $\mu_{i}^{-2}$, and still use $M_{i}$, $\tilde{M}_{i}$, $\hat{M}_{i}$ and $N_{i}$ to denote the above Riemannian manifolds with rescaled metrics, use $f_{i}$ to denote $\mu_{i}^{-1}f_{i}$, and use the same notation $F_{i}$, $\tilde{F}_{i}$, $\hat{F}_{i}$, $\Gamma_{i}$, $H_{i}$ as above, then we have
\begin{description}
  \item[(P1)] the fibration map $f_{i}:M_{i}\rightarrow N_{i}$ is a $\Psi_{1}(\delta_{i})$-Gromov-Hausdorff approximation;
  \item[(P2)] $\sec_{M_{i}}\geq -\Psi_{1}(\delta_{i})$;
  \item[(P3)] \begin{align}\label{3.2}
  \Lip f_{i}\leq C_{0},
  \end{align}
\begin{align}\label{3.4}
|\Delta {f}_{i}|_{L^{\infty}}\leq \Psi_{1}(\delta_{i}),
\end{align}
and for any $L\geq 1$, $1\leq s,t \leq k$,
\begin{align}\label{3.3}
\bbint_{B_{L}(p_{i})}|\langle\nabla {f}_{i}^{s}, \nabla {f}_{i}^{t} \rangle- \delta_{st}|\leq \Psi_{1}(\delta_{i});
\end{align}
  \item[(P4)] we have an equivariant Gromov-Hausdorff convergence in the following commutative diagram:
\begin{align}\label{diag2.1}
\xymatrix@C=2.5cm{
  (\hat{M}_{i}, \hat{p}_{i}, H_{i}) \ar[d]_{\pi_{i}} \ar[r]^{\mathrm{GH}} & (\mathbb{R}^{k_{3}}, 0^{k_{3}}, G) \ar[d]^{\pi_{\infty}} \\
  ({M}_{i}, p_{i}) \ar[r]^{\mathrm{GH}} & (\R^{k},0^{k}).  }
\end{align}

\end{description}
\end{claim}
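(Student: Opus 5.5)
The plan is a diagonal rescaling argument: choose a blow-up scale $\mu_i$ going to zero slowly enough that the rescaled base looks like a large Euclidean ball, and then read off (P1)--(P4) from the rescaled estimates of Theorem \ref{Fibrations}.

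\textbf{Choice of scale.} Write $\epsilon_i=\Psi(\delta_i)$. In the unrescaled picture we have $N_i=B_{\sqrt{\epsilon_i}}(0^k)$, and Theorem \ref{Fibrations} provides the following on balls $B_{\sqrt{\epsilon_i}}(x)$:
\begin{itemize}
\item $\Lip f_i\le C_0$;
\item $|\Delta f_i|\le C_0/\sqrt{\epsilon_i}$;
\item the averaged Gram-matrix deviation is at most $\epsilon_i$.
\end{itemize}
Also, $f_i$ is an $\epsilon_i$-Gromov--Hausdorff approximation, so the fibers have diameter at most about $\epsilon_i$. I take $\mu_i$ with $\epsilon_i\ll\mu_i\ll\sqrt{\epsilon_i}$; a concrete choice is $\mu_i=\epsilon_i^{3/4}$.

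\textbf{Verification of (P1)--(P3).} After scaling the metrics by $\mu_i^{-2}$, each of the following holds.
\begin{itemize}
\item $N_i$ becomes the ball $B_{\sqrt{\epsilon_i}/\mu_i}(0^k)$, whose radius tends to $\infty$.
\item The Gromov--Hausdorff error becomes $\epsilon_i/\mu_i\to0$, which gives (P1) with $\Psi_1(\delta_i)=\epsilon_i^{1/4}$.
\item $\sec\ge-\mu_i^2\to0$, which gives (P2).
\item The Lipschitz bound is scale invariant.
\item The Laplacian bound becomes $C_0\mu_i/\sqrt{\epsilon_i}=C_0\epsilon_i^{1/4}\to0$.
\end{itemize}
For (\ref{3.3}) on $B_L(p_i)$ with $L$ fixed, note that this ball is contained in the rescaled ball of radius $\sqrt{\epsilon_i}/\mu_i$ once $i$ is large. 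The Gram estimate is known on that larger ball. I transfer it down to $B_L(p_i)$ with the transformation theorem (Theorem \ref{thm-trans-GRC-111}). This is justified because $M_i$ satisfies the generalized Reifenberg condition coming from $\sec\ge-1$, and because of the $(\delta,k)$-Euclidean property, which follows from the GH closeness to $\mathbb{R}^k$. Together with $|\Delta f_i|\to0$, the theorem gives matrices $T_r$ that are close to the identity. This is the same mechanism as in Theorem \ref{Fibrations} itself. A smaller effective $\epsilon$ is then diagonalized into $\Psi_1$.

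\textbf{Verification of (P4).} The rescaled $\hat M_i$ satisfy $\Ric\ge-\mu_i^2$, so by Gromov precompactness and Theorem \ref{eGH} a subsequence converges equivariantly to some $(Y,\hat p,G)$, with $Y/G=\mathbb{R}^k$. By (P2), $\hat M_i$ are Alexandrov spaces with curvature $\ge-\mu_i^2$, hence $Y$ is a nonnegatively curved Alexandrov space. The remaining task is to show that $Y$ is Euclidean. The lifts of $f_i$ to $\hat M_i$ are $k$ almost-splitting functions with $|\Delta|\to0$ on all scales, so $Y=\mathbb{R}^k\times Z$. Since $H_i$ is abelian and acts cocompactly on the lifted fibers, the limit $G$ is abelian and acts cocompactly on $Z$. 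Applying the Cheeger--Gromoll argument from the proof of Theorem \ref{mainthm-mix-curv-pinching}, $Z=\mathbb{R}^{m}\times\hat Z$ with $\hat Z$ compact. I would pass to a further subsequence, or adjust $\mu_i$ by a further diagonal choice, so that $\hat Z$ is a point.

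\textbf{Main obstacle.} I expect the last step to be the hardest: showing $\hat Z$ is trivial, that is, that the limit is genuinely $\mathbb{R}^{k_3}$ with $k_3\ge k$. My approach is to pick $\mu_i$ between the scale where the cover $\hat F_i$ is still collapsed and the scale where the abelian cover unwinds. Concretely, I would take $\mu_i$ to be the largest scale at which the rescaled $\hat M_i$ is $\epsilon_i^{1/4}$-close to a Euclidean space of some dimension $k_3$; such a scale exists by the dimension bound $k_3\le n$ together with a pigeonhole argument on scales. I would then check that $\epsilon_i\ll\mu_i$ still holds.
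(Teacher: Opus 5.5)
There is a genuine gap in (P4), at exactly the step you flag as the main obstacle. Passing to a further subsequence cannot change the limit $\mathbb{R}^k\times\mathbb{R}^m\times\hat Z$. Your alternative, taking $\mu_i$ to be the largest scale at which the rescaled $\hat M_i$ is $\epsilon_i^{1/4}$-close to some Euclidean space, is unjustified. Neither $k_3\le n$ nor a pigeonhole over scales produces a Euclidean scale: if $\hat Z$ had a non-Euclidean tangent cone at the base point, no further blow-up would ever be Euclidean. A prescribed rate like $\epsilon_i^{1/4}$ is also out of reach, because nothing controls how fast $\hat M_i$ approaches its blow-up limits.

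The missing idea is homogeneity. Since $M_i=\hat M_i/H_i$ converges to $\mathbb{R}^k=Y/G$, the $G$-orbits are exactly the fibers of $\pi_\infty$. So $G$ acts \emph{transitively}, not merely cocompactly, on $\mathbb{R}^m\times\hat Z$. Because $\hat Z$ has no line factor, isometries split, and all points of $\hat Z$ are equivalent under $\mathrm{Isom}(\hat Z)$. Regular points exist by Cheeger--Colding theory, so every point of $\hat Z$ is regular, in particular the base point, with tangent cone some $\mathbb{R}^{k_2}$.

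The paper then blows up once more by $\lambda_j^{-1}$ with $\lambda_j\to0$, so that $\mathbb{R}^{k+m}\times\lambda_j^{-1}\hat Z\to\mathbb{R}^{k+m+k_2}$. It chooses $i_j$ so large that, at the first blow-up scale $\rho_{i_j}$, the GH and Laplacian errors are below $\lambda_j^2$ and the Gram error on $B_1$ is below $\lambda_j^{2+n}$. The new scale $\mu_j=\lambda_j\rho_{i_j}$ then keeps (P1)--(P3) and gives (P4) with $k_3=k+m+k_2$. This diagonal choice also settles your unverified check that $\epsilon_i\ll\mu_i$ survives the adjustment.

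Your justification of (\ref{3.3}) is also wrong as stated. Theorem \ref{thm-trans-GRC-111} does not give matrices close to the identity. Its item (5) only yields $|T_rT_1^{-1}|\le 2r^{-\epsilon}$, which is unbounded for $r=\mu_i/\sqrt{\epsilon_i}=\epsilon_i^{1/4}\to0$. This possible degeneration is exactly why Theorem \ref{thm-trans-GRC} gives only bi-H\"older control. The paper sidesteps the issue by taking $\rho_i=\Psi(\delta_i)^{\frac{1+n}{2n}}$, for which plain volume doubling suffices, since $(\sqrt{\Psi}/\rho_i)^n\,\Psi=\Psi^{1/2}\to0$. With your $\mu_i=\epsilon_i^{3/4}$, doubling gives only $\epsilon_i^{1-n/4}$, which is useless for $n\ge4$. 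You need either an exponent in $(\frac12,\frac12+\frac1n)$, or a separate compactness argument: an almost-isometric map to $\mathbb{R}^k$ with $|\Delta f_i|\to0$ has Gram matrix converging to the identity by $H^{1,2}$-convergence.
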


\begin{proof}[Proof of Claim \ref{claim5.2}]

We take $\rho_{i}=(\Psi(\delta_{i}))^{\frac{1+n}{2n}}$.
Firstly, we rescale the metrics on $M_{i}$ and $N_{i}$ by $\rho_{i}^{-2}$.
Up to a subsequence, we may assume the following commutative diagram holds:

\begin{align}\label{diag3.1}
\xymatrix@C=2.5cm{
  (\rho_{i}^{-1}\hat{M}_{i}, \hat{p}_{i}, H_{i}) \ar[d]_{\pi_{i}} \ar[r]^{\mathrm{GH}} & (\mathbb{R}^{k_{1}}\times\hat{Y}, (0^{k_{1}},\hat{y}_{\infty}), H) \ar[d]^{\pi_{\infty}} \\
  (\rho_{i}^{-1}{M}_{i}, p_{i}) \ar[r]^{\mathrm{GH}} & (\R^{k},0^{k}),  }
\end{align}
where $\hat{Y}$ cannot split off an $\mathbb{R}$-factor.

By Cheeger-Colding's theory, we have $k_{1}\leq n$.
On the other hand, it is standard that $\pi_{\infty}$ is a submetry (hence $k_{1}\geq k$), and $H$ acts on $\mathbb{R}^{k_{1}-k}\times\hat{Y}$ transitively (where we view $\R^{k_{1}}$ as $\R^{k}\times\R^{k_{1}-k}$).

If $\hat{Y}$ contains more than one point, then $\hat{y}_{\infty}$ is a $k_{2}$-regular point of $\hat{Y}$ for some $k_{2}\in\mathbb{Z}^{+}$
(for otherwise the transitivity of $H$ on $\mathbb{R}^{k_{1}-k}\times\hat{Y}$ will imply that there exists no regular point on $\mathbb{R}^{k_{1}}\times\hat{Y}$, contradicting to Cheeger-Colding's theory).
Thus we can take a sequence of $\epsilon_{j}\downarrow0$ such that $(\mathbb{R}^{k_{1}}\times \epsilon_{j}^{-1}\hat{Y},(0^{k_{1}},\hat{y}_{\infty}))\xrightarrow{\mathrm{GH}}(\mathbb{R}^{k_{1}+k_{2}},0^{k_{1}+k_{2}})$.

For each $j$, take sufficiently large $i_{j}$ such that $d_{\mathrm{GH}}(B_{1}^{\rho_{i_{j}}^{-1}\hat{M}_{i_{j}}}(\hat{p}_{i_{j}}),B_{1}((0^{k_{1}},\hat{y}_{\infty})))< \epsilon_{j}^{2}$, $\max_{1\leq s, t\leq k}\bbint_{B_{1}(p_{i})}|\langle\nabla {f}_{i_j}^{s}, \nabla {f}_{i_j}^{t} \rangle- \delta_{st}|< \epsilon_{j}^{2+n}$, $|\Delta{f}_{i_j}|_{L^{\infty}}< \epsilon_{j}^{2}$, and $f_{i_{j}}: \rho_{i_{j}}^{-1}M_{i_{j}} \rightarrow B_{\rho_{i_{j}}^{-1}}(0^{k})$ is an $\epsilon_{j}^{2}$-Gromov-Hausdorff approximation (here we use $f_{i_{j}}$ to denote $\rho_{i_{j}}^{-1}f_{i_{j}}$ for short).
The existence of such a subsequence $\{i_{j}\}$ is ensured by (\ref{1.2}), (\ref{1.3}) and the choice of $\rho_{i}$.

Denote by $\mu_{j}:=\epsilon_{j}\rho_{i_{j}}\rightarrow0$, and consider the rescale manifolds $\mu_{j}^{-1}M_{i_{j}}$.
After a relabeling of index, we use $M_{i}$ to denote this rescaled sequence, and use $N_{i}$, $\tilde{M}_{i}$, $\hat{M}_{i}$ to denote the corresponding rescaled sequences, and use $f_{i}$ to denote $\mu_{i}^{-1}f_{i}$.
Then it is direct to check that (P1)-(P4) hold for suitable $\Psi_{1}(\delta)$.

If $\hat{Y}$ in (\ref{diag3.1}) consists of only one point, then we just take $\mu_{i}=\rho_{i}$.
This completes the proof of Claim \ref{claim5.2}.
\end{proof}

\textbf{In the remainder of Section \ref{sec-3}, we will use the notations as in Claim \ref{claim5.2}.}

Let $D_{i}\overset{\triangle}{=}\mathrm{diam}F_{i}$. Obviously, $D_{i}\rightarrow 0$.

\begin{rem}\label{rem-com-diam}
In (\ref{diag2.1}), since $H_i$ are Abelian groups, $G$ is also Abelian.
It is standard to check that $G$ acts on $\R^k$-factor trivially, and acts on $\mathbb{R}^{k_{3}-k}$ transitively (where we view $\R^{k_{3}}$ as $\R^{k}\times\R^{k_{3}-k}$).
Thus $G$ is isomorphic to $\R^{k_{3}-k}$, and each element of $G$ acts as a translation on $\mathbb{R}^{k_{3}-k}$.

In (\ref{diag2.1}), the fibrations $f_{i}$ can be lifted to fibrations $\hat{f}_{i}: \hat{M}_{i}\rightarrow N_{i}$.
It is obvious that, $\Lip \hat{f}_{i}=\Lip {f}_{i}\leq C_{0}$, and each $\hat{f}_{i}$ is $H_{i}$-invariant.
Up to a subsequence, we assume $\hat{f}_{i}$ converges in locally uniformly sense to a Lipschitz function $\hat{f}_{\infty}:\R^{k_{3}}\rightarrow \R^{k}$, then $\hat{f}_{\infty}$ is $G$-invariant.
It is not hard to check that there exists some $\varpi\in \mathrm{Isom}(\R^{k})$ such that $\hat{f}_{\infty}=\varpi\circ\pi_{\infty}$.

\textbf{Without loss of generality, in the following we assume}
\begin{description}
  \item[(P5)] $\varpi=\id$.
\end{description}
\end{rem}

\begin{proof}[Proof of (1) in Theorem \ref{mainthm-fiber-sec}]
Suppose (1) in Theorem \ref{mainthm-fiber-sec} does not hold, then we have a contradicting sequence, and slowly blow up it to obtain $f_{i}:M_{i}\rightarrow N_i$, $F_{i}$, $\hat{F}_{i}$, $H_{i}$ as in Claim \ref{claim5.2}.
By the contradicting assumption, we have $\mathrm{rank} (H_{i})\geq n-k+1$.
Because $H_{i}$ acts isometrically on $\hat{F}_{i}$ with $\hat{F}_{i}/H_{i}=F_{i}$,
by a standard argument, $H_{i}$ is generated by $H_i(3D_i)$.
Then according to Lemma \ref{lem:rank non-dec}, $\mathrm{rank}(G)\geq n-k+1$.
However, by Remark \ref{rem-com-diam}, $\mathrm{rank}(G)\leq n-k$, which is a contradiction. This completes the proof of  (1) in Theorem \ref{mainthm-fiber-sec}.
\end{proof}

\begin{rem}
The regularity properties of $f$ (i.e. those in (P3)) are not used in the proof of (1) in Theorem \ref{mainthm-fiber-sec}, but they will be used in the proof of (2) of Theorem \ref{mainthm-fiber-sec}.
\end{rem}

\begin{proof}[Proof of (2) in Theorem \ref{mainthm-fiber-sec}]
Suppose (2) in Theorem \ref{mainthm-fiber-sec} does not hold, then by slowly blowing up a contradicting sequence as in Claim \ref{claim5.2}, we have $f_{i}:M_{i}\rightarrow N_i$, $F_{i}$, $\hat{F}_{i}$, $H_{i}$ satisfying (P1)-(P5), and $\mathrm{rank} (H_i)= b$ (we assume $1 \leq b\leq n-k$, since the case $b=0$ is trivial),
but $F_{i}$ does not fiber over $T^{b}$.

Denote $r=k_{3}-k$, where $k_{3}$ is from $(\ref{diag2.1})$.
Recall that $G$ is isomorphic to $\R^{r}$, and each element of $G$ acts as a translation on $\mathbb{R}^{r}$.
Lemma \ref{lem:rank non-dec} implies that $r\geq b$.
We take a lattice $G_{0}\cong\mathbb{Z}^{r}$ of $G$ such that the flat torus $T^{r}:=\R^{r}/G_{0}$ satisfies $1\leq \diam(T^{r})\leq 4n$, and $\mathrm{inj}_{T^{r}}\geq 1$.

By (\ref{diag2.1}) and Lemma \ref{LocalAction}, up to choosing a subsequence of $i$, there exist $R_{i}\rightarrow \infty$ and pseudo-groups $\hat{K}_{i}\subset H_{i}(2R_{i}+\lambda_{i}')$
(where $\lambda_{i}'\downarrow 0$) such that
the following commutative diagram holds:
  \begin{align}\label{diag5.3}
\xymatrix@C=2.5cm{
  (B_{R_{i}}(\hat{p}_{i}), \hat{p}_{i}, \hat{K}_{i})) \ar[d]_{\sigma_{i}} \ar[r]^{\mathrm{GH}} & (\R^{k+r}, 0^{k+r}, G_{0}) \ar[d]^{\sigma} \\
  (B_{R_{i}}(\hat{p}_{i})/\hat{K}_{i}, \bar{p}_{i}) \ar[d]_{\tau_{i}} \ar[r]^{\mathrm{GH}} & (\R^{k}\times T^{r},\bar{p}) \ar[d]^{\tau},  \\
    (B_{R_{i}}(p_{i}), {p}_{i}) \ar[r]^{\mathrm{GH}} & (\R^{k}, 0^{k}),
    }
\end{align}
with $\pi_{i}|_{B_{R_{i}}(\hat{p}_{i})}=\tau_{i}\circ\sigma_{i}$.
In (\ref{diag5.3}), $\tau:\R^{k}\times T^{r}\rightarrow \R^{k}$ is the projection map.

Since $G/G_{0}$ is compact, by (\ref{diag2.1}), (\ref{diag5.3}) and (7) in Lemma \ref{LocalAction}, up to a subsequence, there exists a discrete Abelian closed subgroup $K_{i}<  \mathrm{Isom}(\tau_{i}^{-1}(B_{\frac{1}{5}R_{i}}(p_{i})))$, such that $\tau_{i}^{-1}(B_{\frac{1}{5}R_{i}}(p_{i}))/K_{i}$ is isometric to $B_{\frac{1}{5}R_{i}}(p_{i})$, and the following commutative diagram holds:
  \begin{align}\label{4.5}
\xymatrix@C=2.5cm{
  (\tau_{i}^{-1}(B_{\frac{1}{5}R_{i}}(p_{i})), \bar{p}_{i}, K_{i}) \ar[d]_{\tau_{i}} \ar[r]^{\mathrm{GH}} & (\R^{k}\times T^{r},\bar{p}, K) \ar[d]^{\tau} \\
  (B_{\frac{1}{5}R_{i}}(p_{i}), p_{i}) \ar[r]^{\mathrm{GH}} & (\R^{k}, 0^{k}),  }
\end{align}
where $K< \mathrm{Isom}(\R^{k}\times T^{r})$ coincides with the image of $G/G_{0}\rightarrow \mathrm{Isom}(\R^{k}\times T^{r})$.
Thus $K$ is isomorphic to a $r$-torus, acting trivially on the $\R^{k}$-factor, and acting transitively on the $T^{r}$-factor, and $T^{r}/K'$ is also a flat $r$-torus for any discrete subgroup $K'< K$.

The non-degenerate map $f_{i}|_{B_{\frac{1}{5}R_{i}}(p_{i})}$ can be lifted to
$\bar{f}_{i}:\tau_{i}^{-1}(B_{\frac{1}{5}R_{i}}(p_{i}))\rightarrow \R^k$.
Then
\begin{align}\label{4.314}
\Lip \bar{f}_{i}\leq C_{0},
\end{align}
\begin{align}\label{4.312}
|\Delta \bar{f}_{i}|_{L^{\infty}}\leq \Psi(\delta_{i}).
\end{align}
In addition, by \cite{HH24}*{Lemma 5.3} and (\ref{3.3}), for any $L\geq 1$, $1\leq \alpha, \beta\leq k$,
\begin{align}\label{4.313}
\bbint_{B_{L}(\bar{p}_{i})}|\langle\nabla \bar{f}_{i}^{\alpha}, \nabla \bar{f}_{i}^{\beta} \rangle- \delta_{\alpha\beta}|\leq \Psi(\delta_{i}).
\end{align}
Up to a subsequence, it is not hard to check that, $\bar{f}_{i}$ converges in locally uniformly sense to $\tau$ (recall that (P5) holds).

Let $\Lambda_{i}:\tau_{i}^{-1}(B_{\frac{1}{5}R_{i}}(p_{i}))\rightarrow \R^{k}\times T^{r}$ be a $\delta_{i}'$-Gromov-Hausdorff approximation given in (\ref{4.5}).
Since each $f_{i}$ is a $\Psi_{1}(\delta_{i})$-Gromov-Hausdorff approximation, and each $\hat{f}_{i}$ is $H_{i}$-invariant, it is direct to verify that, the map $\Lambda_{i}'=(\bar{f}_{i}, P_{2}\circ \Lambda_{i})$ is a $\delta_{i}''$-Gromov-Hausdorff approximation (where $\delta_{i}''\downarrow0$) compatible with the isometric actions in (\ref{4.5}), where $P_{2}:\R^{k}\times T^{r}\rightarrow T^{r}$ is the natural projecting map.

\begin{claim}\label{claim5.5} For any $x\in \R^{k}$ and for sufficiently large $i$, the extrinsic diameter of $\bar{f}_{i}^{-1}(x)$ is at most $4n+1$.
\end{claim}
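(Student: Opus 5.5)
Fix $x\in\R^k$. The plan is to argue by contradiction, using the locally uniform convergence $\bar f_i\to\tau$ together with the compatible $\delta_i''$-Gromov--Hausdorff approximation $\Lambda_i'=(\bar f_i,P_2\circ\Lambda_i)$ of (\ref{4.5}). Suppose that along a subsequence there are points $a_i,b_i\in\bar f_i^{-1}(x)$ with $d(a_i,b_i)>4n+1$. The first step is to check that these points stay in a bounded region around $\bar p_i$, so that the Gromov--Hausdorff approximations apply to them. Since $\Lambda_i'$ is an approximation onto $\R^k\times T^r$ and its first component is exactly $\bar f_i$, every point $z$ in the domain ball satisfies
\begin{align*}
d(z,\bar p_i)\le |\bar f_i(z)-\bar f_i(\bar p_i)|+\diam(T^r)+\delta_i''.
\end{align*}
Here $\bar f_i(\bar p_i)\to 0^k$ and $\diam(T^r)\le 4n$. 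Hence every point of $\bar f_i^{-1}(x)$ lies within distance $|x|+4n+1$ of $\bar p_i$, which is well inside $\tau_i^{-1}(B_{\frac15 R_i}(p_i))$ for large $i$. So the whole fiber is seen by the approximation.

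The second step is to pass to the limit. Because $\Lambda_i'$ is an $\delta_i''$-isometry, we get
\begin{align*}
d(a_i,b_i)\le d_{\R^k\times T^r}\bigl(\Lambda_i'(a_i),\Lambda_i'(b_i)\bigr)+\delta_i''.
\end{align*}
The first components of $\Lambda_i'(a_i)$ and $\Lambda_i'(b_i)$ are both equal to $x$. Therefore
\begin{align*}
d_{\R^k\times T^r}\bigl(\Lambda_i'(a_i),\Lambda_i'(b_i)\bigr)=d_{T^r}\bigl(P_2\Lambda_i(a_i),P_2\Lambda_i(b_i)\bigr)\le \diam(T^r)\le 4n.
\end{align*}
This gives $d(a_i,b_i)\le 4n+\delta_i''<4n+1$ for large $i$, which is the desired contradiction. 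The extrinsic diameter is measured in $\tau_i^{-1}(B_{\frac15R_i}(p_i))$ with its induced length metric, and this is the metric for which $\Lambda_i'$ is an approximation, so the argument applies directly. Note also that $x$ fixed means the "sufficiently large $i$" may depend on $x$, as the statement allows. If one wants uniformity, it holds for $x$ in compact sets.

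The main obstacle I expect is the first step, namely verifying that the domain of $\Lambda_i'$ really contains all of $\bar f_i^{-1}(x)$. A point with $\bar f_i$-value $x$ could a priori lie far out, near the boundary of $\tau_i^{-1}(B_{\frac15R_i}(p_i))$, where $\Lambda_i'$ is not controlled. I would handle this by a continuity argument. The ball $B_{|x|+4n+2}(\bar p_i)$ is well approximated, and on its boundary sphere $\bar f_i$ takes values at distance at least about $4n+2-4n-\delta_i''>1$ from $x$ in the $\R^k$-direction, since the $T^r$-component contributes at most $4n$. Connectedness of the fiber components, which follows from the non-degeneracy of $f_i$ given by Theorem \ref{thm-trans-GRC} under the generalized Reifenberg condition, then prevents a fiber from escaping. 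Alternatively, I could use that $\bar f_i$ is the lift of the Gromov--Hausdorff approximation $f_i$ on the base, so far points have far $\bar f_i$-values.
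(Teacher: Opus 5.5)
Your second step is exactly the paper's proof. The paper takes near-diametral points of $\bar f_i^{-1}(x)$, observes that their images under $\Lambda_i'=(\bar f_i,P_2\circ\Lambda_i)$ have the same $\R^k$-component, and concludes that their distance is at most $\diam(T^r)+\delta_i''\le 4n+\delta_i''$. The paper has no localization step. It treats $\Lambda_i$ as a $\delta_i'$-Gromov--Hausdorff approximation on all of $\tau_i^{-1}(B_{\frac15R_i}(p_i))$, so the fiber lies in its domain by fiat. Read that way, your first step is unnecessary and your proof coincides with the paper's.

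If you want to justify the localization, which is a fair point the paper glosses, your argument does not do it.
\begin{itemize}
\item The displayed bound $d(z,\bar p_i)\le|\bar f_i(z)-\bar f_i(\bar p_i)|+\diam(T^r)+\delta_i''$ holds only where $\Lambda_i'$ is controlled. Using it to place the whole fiber there is circular.
\item The sphere argument shows only that no component of $\bar f_i^{-1}(x)$ crosses $\partial B_{|x|+4n+2}(\bar p_i)$. It cannot exclude a component lying entirely outside that ball. Using connectedness of the whole fiber instead would be circular, since Claim \ref{claim5.6} proves it from the present claim.
\item The alternative, ``far points have far $\bar f_i$-values'' because $\bar f_i=f_i\circ\tau_i$, does not follow for a covering unless the deck orbits are uniformly bounded.
\end{itemize}
That bound is the missing ingredient. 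By Fact 1 in the proof of Lemma \ref{LocalAction}(7), every element of $K_i$ is induced by some element of $H_i$ moving $\hat p_i$ by less than $\diam(T^r)+o(1)$. Here the relevant orbit $(G/G_0)\bar p$ is $\{0^k\}\times T^r$, so $d(g\bar p_i,\bar p_i)\le 4n+o(1)$ for all $g\in K_i$. Next, $\tau_i^{-1}(B_{\frac15R_i}(p_i))/K_i$ is isometric to $B_{\frac15R_i}(p_i)$, $f_i(p_i)=0^k$, and $f_i$ is a Gromov--Hausdorff approximation by (P1). Hence every $\bar z\in\bar f_i^{-1}(x)$ satisfies
\[
d(\bar z,\bar p_i)\le d(\tau_i(\bar z),p_i)+4n+o(1)\le |x|+4n+o(1).
\]
After this, your second step applies verbatim.
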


\begin{proof}[Proof of Claim \ref{claim5.5}]
Take any $x\in \R^{k}$. For sufficiently large $i$ and any $\eta>0$, choose $y_{1},y_{2}\in \bar{f}_{i}^{-1}(x)$ so that $d(y_{1},y_{2})>\diam(\bar{f}_{i}^{-1}(x))-\eta$.
By the Gromov--Hausdoff distance estimate $|d_{i}(y_{1},y_{2})-d_{T^{r}}(P_{2}(\Lambda_{i}'(y_{1})),P_{2}(\Lambda_{i}'(y_{2})))|\leq \delta_{i}''$, and the arbitrariness of $\eta$, we know that $\diam(\bar{f}_{i}^{-1}(x))\leq 4n+1$.
\end{proof}
Now we fix some large $L_{0}>100n$.

\begin{claim}\label{claim5.6}
For any $x\in B_{L_{0}}(0^{k})$ and large $i$, $\bar{f}_i^{-1}(x)$ is a compact  connected manifold without boundary, and $\bar{f}_{i}^{-1}(B_{L_{0}}(0^{k}))$ is diffeomorphic to $B_{L_{0}}(0^{k})\times \bar{f}_{i}^{-1}(0^{k})$.
\end{claim}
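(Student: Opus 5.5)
Here $\bar f_i$ denotes the lift of $f_i$ to $\bar M_i:=\tau_i^{-1}(B_{\frac15 R_i}(p_i))$. The plan is to show that $\bar f_i$ is a proper submersion over $B_{L_0}(0^k)$ with compact fibers, and then apply Ehresmann's lemma.

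\textbf{Submersion.} By (P2), $\bar M_i$ has $\sec\ge -\Psi_1(\delta_i)$. Hence, by \cite{HH24}*{Proposition 5.6}, it satisfies the $(\Phi_0,1;k,\delta)$-generalized Reifenberg condition with $\Phi_0$ depending only on $n$. Since $\bar M_i$ converges to $\R^k\times T^r$, every unit ball $B_2(y)$ with $y\in\bar f_i^{-1}(B_{2L_0}(0^k))$ is $(\Psi,k)$-Euclidean for large $i$. Combining this with (\ref{4.314}), (\ref{4.312}) and (\ref{4.313}) (applied with $L$ large, and localized to unit balls by the almost-rigidity, as in Theorem~\ref{thmA.1}), Theorem~\ref{thm-trans-GRC} applies at every such $y$. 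It gives that $d\bar f_i$ is non-degenerate on $\bar f_i^{-1}(B_{2L_0}(0^k))$. It also gives, via conclusion (3), that $B_{1/2}(\bar f_i(y))\subset \bar f_i(B_{1/2+\Psi}(y))$, so $\bar f_i$ maps onto $B_{L_0}(0^k)$.

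\textbf{Properness and compactness.} Claim~\ref{claim5.5} and the locally uniform convergence $\bar f_i\to\tau$ give a uniform bound on $\bar f_i^{-1}(\overline{B_{L_0}(0^k)})$: this set lies in a ball of radius about $L_0+4n+2$ about $\bar p_i$. That radius is much smaller than $\frac15R_i$, so the set stays away from the boundary of the domain. Therefore the restriction of $\bar f_i$ over $B_{L_0}(0^k)$ is proper. Each fiber is then a compact submanifold without boundary of codimension $k$, and Ehresmann's lemma yields the diffeomorphism $\bar f_i^{-1}(B_{L_0}(0^k))\cong B_{L_0}(0^k)\times\bar f_i^{-1}(0^k)$.

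\textbf{Connectedness (main obstacle).} I expect connectedness of the fibers to be the hardest step. I would follow the chain argument used at the end of the proof of Theorem~\ref{mainthm-mix-curv-pinching}.
\begin{enumerate}
\item Fix $x$, and suppose $\bar f_i^{-1}(x)=\sqcup_j S_j$. Then $\bar f_i^{-1}(B_{1/5}(x))\cong B_{1/5}(x)\times\sqcup_j S_j$ by the trivialization above.
\item $\bar f_i$ is an almost submetry, because it is $C^0$-close to $\tau$ and $\Lambda'_i=(\bar f_i,P_2\circ\Lambda_i)$ is a GHA. Hence a small ball $B_{1/8}(z)$ around a fiber point $z$ lies in $\bar f_i^{-1}(B_{1/5}(x))$, and, being connected, lies in a single sheet $B_{1/5}(x)\times S_j$.
\item So points of $\bar f_i^{-1}(x)$ at distance at most $1/10$ lie in the same component.
\item Given two points $z_1,z_2$ of the fiber, map them by $\Lambda'_i$ to $\{x\}\times T^r$. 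Join their images by a geodesic in $\{x\}\times T^r$ (which is connected), and subdivide it into points $1/20$ apart. Pull these points back through the GHA to points of $\bar f_i^{-1}(x)$, adjusting along the fiber using that $\bar f_i$ is almost onto with almost-submetry control.
\item Consecutive pulled-back points are then within $1/10$, so by step 3 the whole chain, and hence $z_1,z_2$, lie in one component.
\end{enumerate}
The delicate point is step 4: the pulled-back points must lie exactly on $\bar f_i^{-1}(x)$, not merely near it. This needs the almost-submetry property quantitatively, in the form of conclusion (2) of Theorem~\ref{thm-trans-GRC}: $d(p_1,\bar f_i^{-1}(x))\lesssim|\bar f_i(p_1)-x|^{1/(1+\Psi)}$. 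That estimate lets us move each approximate point onto the fiber at small cost.
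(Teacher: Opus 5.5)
Your argument is correct. Properness plus Ehresmann is the same as in the paper, but you take a different and heavier route for connectedness, and you re-derive non-degeneracy unnecessarily.

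\textbf{Connectedness.} The paper never moves points onto the fiber. By Claim~\ref{claim5.5}, the whole fiber $\bar f_i^{-1}(0^k)$ lies in the single ball $B_{4n+2}(\bar p_i)$. That ball lies in the trivialized set $\bar f_i^{-1}(B_{12n}(0^k))\cong B_{12n}(0^k)\times\bar f_i^{-1}(0^k)$. Being connected, the ball sits in one component $B_{12n}(0^k)\times S_j$. This is impossible if it contains points of every $S_j$. Connectedness of the other fibers over $B_{L_0}(0^k)$ then follows from the product structure.

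This uses only the diameter bound, coarse control of $\bar f_i$, and the trivialization; no quantitative regularity is needed.

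Your chain argument instead needs every point $w$ with $|\bar f_i(w)-x|$ small to lie within $o(1)$ of the exact fiber. That forces you to invoke conclusions (2) and (3) of Theorem~\ref{thm-trans-GRC}, and so to check its hypotheses at every point. Note that (2) bounds $d(w,\bar f_i^{-1}(x))$ only once some preimage of $x$ lies in the relevant half-ball; you get that preimage from (3) applied at a slightly larger scale. Your route works, and it gives the extra fact that each fiber, with its extrinsic metric, is Gromov--Hausdorff close to $T^r$. But this is more than the claim requires.

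\textbf{Non-degeneracy.} Theorem~\ref{thm-trans-GRC} is not needed here. $\bar f_i=f_i\circ\tau_i$ with $\tau_i$ a local isometry and $f_i$ a fibration, so $\bar f_i$ is automatically a submersion. Surjectivity onto $B_{L_0}(0^k)$ then follows from properness, openness, and $\bar f_i(\bar p_i)=0^k$.
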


\begin{proof}[Proof of Claim \ref{claim5.6}]
For any $x\in B_{L_{0}}(0^{k})$, by the non-degeneracy of $\bar{f}_{i}$, $\bar{f}_{i}^{-1}(x)\cap (\tau_i^{-1}(B_{L}({p}_{i})))$ is an $(n-k)$-dimensional manifold.
On the other hand, since $\diam(\bar{f}_{i}^{-1}(x))\leq 4n+1$, $\bar{f}_{i}^{-1}(x)$ is contained in $\overline{B_{2L_{0}}(\hat{p}_{i})}/\hat{K}_i$, which is compact in $B_{R_{i}}(\hat{p}_{i})/\hat{K}_i$.
Thus $\bar{f}_{i}^{-1}(x)$ is a compact manifold without boundary.
Combining with the non-degeneracy of $\bar{f}_{i}$, as well as the simply connectedness of $B_{L_{0}}(0^{k})$, it is standard that $\bar{f}_{i}^{-1}(B_{L_{0}}(0^{k}))$ is diffeomorphic to $B_{L_{0}}(0^{k})\times \bar{f}_{i}^{-1}(0^{k})$.

Note that $\bar{f}_i ^{-1}(0^k) \subset B_{4n+2}(\bar p_i)\subset \bar{f}_i^{-1}(B_{12n}(0^k))\overset{\mathrm{diff}}{\sim} B_{12n}(0^{k})\times \bar{f}_{i}^{-1}(0^{k})$.
If $\bar{f}_i ^{-1}(0^k)$ is disconnected, then $\bar{f}_i^{-1}(B_{12n}(0^k))$ is also disconnected, and hence $B_{4n+2}(\bar p_i)$ is contained in a connected component of $\bar{f}_i^{-1}(B_{12n}(0^k))$, contradicting the fact that $\bar{f}_i ^{-1}(0^k) \subset B_{4n+2}(\bar p_i)$.
\end{proof}

Since $\tau_{i}^{-1}(B_{L_{0}}(p_{i}))$ has a sectional curvature lower bound, it satisfies the $(\Phi_{0}', 1; k,\delta)$-generalized Reifenberg condition for some $\Phi_{0}'$. Then we apply Proposition \ref{prop-eq-fibration} and Corollary \ref{cor7.2-smooth} to the equivariant Gromov-Hausdorff approximations given by $\Lambda_{i}'$.
We conclude that there exist open sets $U_{i}'\subset \tau_{i}^{-1}(B_{L_{0}}(p_{i}))$, a sequence of smooth surjective maps $\bar{\Xi}_{i}:U_{i}'\rightarrow V'$ (where $V':=B_{\frac{1}{10}L_{0}}(0^{k})\times T^{r}$) which are both $\epsilon_{i}'$-equivariant Gromov-Hausdorff approximations (where $\epsilon_{i}'\downarrow 0$) and fibrations, and there exist  homomorphisms $\psi_{i}:K_{i}\rightarrow K$ so that
\begin{align}\label{4.9}
\bar{\Xi}_{i}\circ g= \psi_{i}(g)\circ \bar{\Xi}_{i} \text{ for every }g\in K_{i},
\end{align}
and
\begin{align}\label{4.91}
\tau\circ\bar{\Xi}_{i}=\bar{f}_{i}.
\end{align}
Denote $K_{i}'=\psi_{i}(K_{i})$, which is a discrete subgroup of $K$.
Then for every large $i$, $V'/K_{i}'$ is isometric to $B_{\frac{1}{10}L_{0}}(0^{k})\times T_{i}^{r}$, where $T_{i}^{r}:=T^{r}/K_{i}''$ is a flat $r$-torus.
By (\ref{4.9}), there is a well-defined map $\hat{\Xi}_{i}:U_{i}'/K_{i}\rightarrow B_{\frac{1}{10}L_{0}}(0^{k})\times T_{i}^{r}$.
By (\ref{4.91}), we have $\hat{\tau}_{i}\circ\hat{\Xi}_{i}={f}_{i}$, where $\hat{\tau}_{i}:B_{\frac{1}{10}L_{0}}(0^{k})\times T_{i}^{r}\rightarrow B_{\frac{1}{10}L_{0}}(0^{k})$ is the projection map.
Obviously $\hat{\Xi}_{i}$ is non-degenerated, which gives a fibration, and $\hat{\Xi}_{i}|_{f_{i}^{-1}(0^{k})}$ gives a fibration from $F_{i}$ onto $T_{i}^{r}$.
Finally, since the fibration $\bar{\Xi}_{i}:U_{i}'\rightarrow V'$ is an $\epsilon_{i}'$-equivariant Gromov-Hausdorff approximation, similar to the last part of the proof of Claim \ref{claim5.6}, the fiber of $\bar{\Xi}_{i}$ is connected.
The fiber of $\hat{\Xi}_{i}$ is also connected since it is the quotient of that of $\bar{\Xi}_{i}$ by a suitable group.
We get a contradiction, and this completes the proof of (2) in Theorem \ref{mainthm-fiber-sec}.
\end{proof}

\begin{rem}\label{rem5.4}
Theorem \ref{mainthm-fiber-sec} can be generalized to the following theorem.

\begin{theorem}\label{thm-Betti-number-rigidity}
Given $n\ge 2$, and $r_{0}>1$, and a function $\Phi_{0}:\mathbb{R}^{+}\rightarrow \mathbb{R}^{+}$ with $\lim_{s\rightarrow 0^{+}}\Phi_{0}(s)=0$, there exists a $\delta_0$ depending only on $n$, $r_{0}$, $\Phi_{0}$ such that the following holds for every $\delta\in (0,\delta_0)$.
Suppose $M$, $N$ are compact Riemannian manifolds with dimension $n$, $k$ respectively (where $k\le n$), such that
$\Ric_{M}\geq -1$,  $d_{\mathrm{GH}}(M,N)\le\delta$, and (\ref{5.2-001})  holds.
Suppose in addition that for any $p\in M$, $\widetilde{B_{p}(1)}$ satisfies the $(\Phi_{0}, 1; k,\delta)$-generalized Reifenberg condition.
Then there exists a smooth fibration map $f:M\rightarrow N$ which is a $\Psi_{1}(\delta|n,r_0)$-Gromov-Hausdorff approximation and satisfies the followings:
	\begin{enumerate}
		\item for any $q\in N$,
		\begin{align}
			b_1\overset{\triangle}{=}\mathrm{dim} H_1(f^{-1}(q),\mathbb{R})\leq n-k;
		\end{align}
		\item  $f^{-1}(q)$ fibers over $T^{b_1}$ with connected fibers.
	\end{enumerate}
	
\end{theorem}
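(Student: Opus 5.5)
The plan is to run the proof of Theorem \ref{mainthm-fiber-sec} again, replacing the sectional curvature lower bound everywhere by the generalized Reifenberg hypothesis on the local universal covers $\widetilde{B_p(1)}$.

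\textbf{Step 1: construct $f$.} First I would obtain $f$ from Theorem \ref{Fibrations}. To apply its fibration part, $M$ itself must satisfy the $(\Phi_0,1;k,\delta)$-generalized Reifenberg condition. Since $B_p(1)$ is locally isometric to $\widetilde{B_p(1)}$ at small scales, being $(\delta',k')$-Euclidean at a scale $r$ on one side transfers to the other, at least for $r$ below the size of the isometric neighborhood. The natural tool is the covering lemma \cite{KW}*{Lemma 1.6}, as used in the proof of Theorem \ref{thm:bundlemap}. Even if the transfer is imperfect, I only need the conclusions of Theorems \ref{thm-trans-GRC-111} and \ref{thm-trans-GRC} on $B_p(1)$. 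These can be obtained by lifting $u$ to the universal cover, applying the theorems there, and descending: the matrices $T_r$ are determined by $u$, and non-degeneracy of $du$ is a local property. This gives the non-degenerate $\Psi$-GH approximation $f$, which by Ehresmann is a surjective fibration.

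\textbf{Step 2: part (1).} This is purely metric and needs no change. I blow up a contradicting sequence as in Claim \ref{claim5.2}, with (P2) now dropped. The rank argument via Lemma \ref{lem:rank non-dec} and Remark \ref{rem-com-diam} uses only $\Ric\ge-1$, the GH convergence, and $\dim\le n$. It gives $\mathrm{rank}(H_i)\le n-k$.

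\textbf{Step 3: part (2).} I follow the proof of (2) of Theorem \ref{mainthm-fiber-sec}, using Lemma \ref{LocalAction}, diagram (\ref{4.5}), the lifts $\bar f_i$ and Claims \ref{claim5.5}--\ref{claim5.6}. The one place where sectional curvature entered is the assertion that $\tau_i^{-1}(B_{L_0}(p_i))$ satisfies a generalized Reifenberg condition, which was needed to apply Proposition \ref{prop-eq-fibration} and Corollary \ref{cor7.2-smooth}. Here I argue instead as follows. The space $\tau_i^{-1}(B_{L_0}(p_i))$ is a covering of a region in the rescaled $M_i$. Hence every unit ball in it has the same universal cover $\widetilde{B_p(1)}$ as the corresponding ball in $M_i$. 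Rescaling by $\mu_i^{-1}\ge1$ only improves the hypothesis, since the condition at scales $\le1$ after blow-up concerns smaller original scales. So the generalized Reifenberg condition holds on the universal covers of unit balls of the cover, uniformly in $i$, and by the local transfer of Step 1 the regularity conclusions hold there as well. With this, the equivariant map $\bar\Xi_i$, the quotient fibration $\hat\Xi_i$ over $B_{L_0/10}(0^k)\times T^r_i$, and the connectedness of fibers follow verbatim.

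\textbf{Main obstacle.} The delicate point is the local transfer of the generalized Reifenberg condition from $\widetilde{B_p(1)}$ to $B_p(1)$ and its covers. The Euclidean condition at scale $r$ is a statement about balls of radius $r$, and such a ball downstairs need not be isometric to a ball upstairs. I would handle this as in \cite{HH24}*{Proposition 5.6}: I would not transfer the condition itself. Instead I would lift each splitting map $u$ to $\widetilde{B_p(1)}$ and apply the transformation Theorem \ref{thm-trans-GRC-111} there; \cite{HH24}*{Lemma 5.3} controls the averaged gradient error under lifting. I would then use that non-degeneracy of $du$ at $x$ is equivalent to non-degeneracy of the lift at $\tilde x$. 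This yields Theorem \ref{thm-trans-GRC} on $M$ and on all intermediate covers, which is all that Step 1 and Step 3 require.
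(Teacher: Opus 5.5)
The gap is in Step 3. Steps 1 and 2 are essentially fine. Lifting the maps of Proposition \ref{fibration-smooth}, which live on balls of $M$ itself, to $\widetilde{B_p(1)}$ by precomposition with the covering projection works. So does applying Theorem \ref{thm-trans-GRC} there and descending non-degeneracy and the image inclusion. This is a legitimate substitute for the paper's appeal to \cite{HH24}*{Proposition 5.2}, and part (1) indeed uses only Ricci curvature.

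In Step 3 you claim that unit balls of the blown-up cover $\tau_i^{-1}(B_{L_0}(p_i))$ have universal cover $\widetilde{B_p(1)}$, and that rescaling by $\mu_i^{-1}$ only improves the hypothesis. Both are false.
\begin{itemize}
\item After rescaling, a unit ball is a ball of original radius $\mu_i\to0$. Its universal cover covers a component of the preimage of $B_p(\mu_i)$ in $\widetilde{B_p(1)}$. That component need not be simply connected, since loops in $B_p(\mu_i)$ may die only inside $B_p(1)$. So this universal cover is a further covering of a piece of $\widetilde{B_p(1)}$, not a subset of it.
\item The hypothesis concerns only the fixed space $\widetilde{B_p(1)}$, which after rescaling is the universal cover of a ball of radius $\mu_i^{-1}$, not of a unit ball. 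You are conflating the radii of the test balls in the generalized Reifenberg condition with the radius of the ball whose universal cover is taken.
\item The only available transfer of the generalized Reifenberg condition (\cite{HH24}*{Proposition 5.2}, as used in the paper) goes from a cover down to its base, not upward.
\item Your lift-and-descend remedy hits the same wall. To lift a map defined on an intermediate cover $W$ of a small region $V\subset M_i$ to $\widetilde{B_p(1)}$, you need the component of the preimage of $V$ in $\widetilde{B_p(1)}$ to cover $W$. That is a $\pi_1$-statement you never establish. In Step 1 the issue did not arise because the maps lived on $M$.
\end{itemize}

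The missing ingredient is the topological observation the paper makes. By (\ref{5.2-001}), $B_{1/2}(q)\subset N$ is simply connected, so $f^{-1}(B_{1/2}(q))\cong B_{1/2}(0^k)\times f^{-1}(q)$. Its universal cover $B_{1/2}(0^k)\times\widetilde{f^{-1}(q)}$ is a subset of $\widetilde{B_p(1)}$ for $p\in f^{-1}(q)$. This amounts to $\pi_1$-injectivity of the inclusion $f^{-1}(B_{1/2}(q))\subset B_p(1)$, which comes from the product structure of $f$ over the almost Euclidean balls of $N$.

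The contradiction argument only ever uses coverings of $f_i^{-1}$ of small balls. Hence $\tilde M_i$ is an open piece of this universal cover, and $\hat M_i$, $B_{R_i}(\hat p_i)/\hat K_i$ and $\tau_i^{-1}(B_{L_0}(p_i))$ are quotients of open pieces of it, at every blow-up scale. With this in hand, the generalized Reifenberg condition reaches $\tau_i^{-1}(B_{L_0}(p_i))$ by the cover-to-base transfer. Equivalently, in your language, you can lift $\bar\Xi_i$ to this universal cover, apply Theorem \ref{thm-trans-GRC} there, and descend non-degeneracy. After that, Corollary \ref{cor7.2-smooth} and the rest of your Step 3 go through as you describe.
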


In fact, since $\widetilde{B_{p}(1)}$ satisfies the $(\Phi_{0}, 1; k,\delta)$-generalized Reifenberg condition for any $p\in M$, according to \cite{HH24}*{Proposition 5.2},  $M$ satisfies the $(\Phi_{0}', 1; k,\delta)$-generalized Reifenberg condition for some $\Phi_{0}'$. Then we have a smooth fibration $f:M \rightarrow N$ according to Theorem \ref{Fibrations}.
Moreover, by  (\ref{5.2-001}) , $B_{\frac{1}{2}}(q)\subset N$ is simply-connected.
Hence, the universal cover of $f^{-1}(B_{\frac{1}{2}}(q))$, which is a subset of  $\widetilde{B_{p}(1)}$,  is differmorphic to $B_{\frac{1}{2}}(0^k)\times\widetilde{f^{-1}(q)} $.

Then we use a contradiction argument as in  the proof of Theorem \ref{mainthm-fiber-sec}.
Note that in the proof of Theorem \ref{mainthm-fiber-sec}, except for the use of Corollary \ref{cor7.2-smooth} at the last step, every other step uses only properties of Ricci curvature and the conclusion of Theorem \ref{Fibrations}.
In the last step,  to apply Corollary \ref{cor7.2-smooth}, we require that $\tau_{i}^{-1}(B_{L_{0}}(p_{i}))$  satisfies the generalized Reifenberg condition, which is implied by the assumption that $\widetilde{B_{p}(1)}$ satisfies the $(\Phi_{0}, 1; k,\delta)$-generalized Reifenberg condition for any $p\in M$.

We \textbf{conjecture} that the conclusion of Theorem \ref{thm-Betti-number-rigidity} holds even if $M$ satisfies the generalized Reifenberg condition.
\end{rem}

\section{Stability of rank of Abelian group action}\label{sec-5}

\subsection{Upper semi-continuity of the rank}

For an Abelian Lie group $G=\R^{l_1} \times \Z^{l_2} \times C$ where $C$ is compact, define $\mathrm{rank}(G)=l_1+l_2$.
\begin{lem}\label{lem:rank non-dec}
Let $(X_i,p_i,d_i,\meas_i,G_i)$ be a sequence of $\RCD(-1,N)$ spaces, $G_i$ be Abelian groups acting freely and properly discontinuously on $X_i$ such that $\mathrm{rank}(G_i)=k$ and $G_i$ is generated by $G_i(R)$ for some $R>0$. If
$$(X_i,p_i,d_i,\meas_i,G_i) \overset{\rm{GH}}{\longrightarrow} (X,p,d,\meas,G).$$ Then $\mathrm{rank}(G) \ge k$.

\end{lem}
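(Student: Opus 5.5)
Since $G_i$ is Abelian, finitely generated (by the finite set $G_i(R)$, finite by proper discontinuity), and of rank $k$, we can write $G_i\cong\Z^k\times T_i'$ with $T_i'$ finite. Passing to the finite-index subgroup $G_i'=\{g^{m}\}$ for a suitable $m$ would change generators, so instead we work with $G_i$ directly. The plan is to construct a $k$-dimensional vector space that grows linearly inside the limit group $G$, and to use it to force $\mathrm{rank}(G)\ge k$.

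\textbf{Step 1 (quotient diameter).} Since $G_i$ is generated by $G_i(R)$ and acts cocompactly on the orbit scale, $\diam(X_i/G_i)\le R$ is not automatic. We only need orbit geometry, however: consider the orbits $G_i p_i$ with the induced word metric $|\cdot|_i$ from the generating set $S_i=G_i(R)$. Then $d(gp_i,p_i)\le R|g|_i$.

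\textbf{Step 2 (growth comparison).} The key input is volume growth. By Bishop--Gromov on $\RCD(-1,N)$, together with the freeness and proper discontinuity of the action, the number of elements of $G_i(r)$ is comparable to $\meas_i(B_{r+\rho}(p_i))/\meas_i(B_{\rho}(p_i))$. Here $\rho$ must be controlled uniformly, which is delicate. A cleaner route is purely algebraic: the word-ball $\{|g|_i\le L\}$ contains at least $cL^k$ distinct elements of the free part. This holds because the projection of $S_i$ to $G_i/T_i'\cong\Z^k$ generates a finite-index sublattice, and any generating set of $\Z^k$ produces word balls of size $\ge \binom{L+k}{k}\ge L^k/k!$ (choose $k$ independent generators). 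Consequently $G_i(RL)\supset$ at least $L^k/k!$ distinct elements.

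\textbf{Step 3 (pass to the limit).} In the limit, $G(RL+1)$ is approximated by $G_i(RL)$ in the equivariant GH sense, but distinct elements may merge, so counting does not pass directly. Instead, use the structure $G=\R^{l_1}\times\Z^{l_2}\times C$, a Lie group (by Sosa / Guijarro--Santos). Suppose $l_1+l_2<k$. Then $G(r)$ is contained, up to the compact factor, in a set quotienting to a region of $\R^{l_1}\times\Z^{l_2}$ of size $O(r)$. The argument runs by contradiction: choose $k$ elements $g_i^1,\dots,g_i^k\in G_i(R')$ for a uniform $R'$, projecting to a basis of a finite-index sublattice of $\Z^k$. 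Uniformity of $R'$ follows from Step 2, since $k$ independent elements can be chosen among the generators $S_i\subset G_i(R)$. Pass to limits $g^j\in G(R')$ (for all words $\sum n_j g_i^j$ with $|n|\le L$ simultaneously, diagonally over $L$). This gives a homomorphism-like map $\Phi:\Z^k\to G$ that is a genuine homomorphism, because limits of products are products of limits. If $\mathrm{rank}(G)<k$, then $\Phi$ composed with $G\to G/(C\times \text{identity component torsion})$ has a nontrivial kernel direction. Concretely, there is $n\in\Z^k\setminus\{0\}$ with $\Phi(n)$ lying in the compact part, or $\Phi(mn)$ staying bounded for all $m$. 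The contradiction must then come from the fact that $d(\Phi(mn)p,p)$ bounded implies $d((\sum m n_j g_i^j)p_i,p_i)\le C$ for all $m\le M_i\to\infty$.

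\textbf{Main obstacle.} One must rule out infinitely many distinct elements of a free Abelian group $\Z^k$ acting freely and properly discontinuously, with orbit points staying in a bounded ball $B_C(p_i)$, while $i$ varies. For fixed $i$ this is impossible, but uniformity is the real issue, since small-displacement elements can exist in collapsing. I would handle it by a Bishop--Gromov packing count. The elements $(\sum m n_j g_i^j)$, $m\le M$, give $M$ distinct orbit points in $B_C(p_i)$, each with disjoint fundamental-domain-translate neighborhoods. So $M\cdot\meas_i(\text{Dirichlet cell}\cap B_{C}(p_i))\le\meas_i(B_{2C}(p_i))$, while the full orbit of $G_i(R)$-translates covers a ball. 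This shows that any such cyclic subgroup must grow linearly rather than stay bounded. Combining linear growth of every nontrivial direction of $\Z^k$ with $\mathrm{rank}(G)<k$ (so that $G(r)$ modulo compact has only $O(r^{\mathrm{rank}})$ "directions") yields a contradiction via Step 2's polynomial count $L^k$ versus $L^{\mathrm{rank}(G)}$. I expect making this counting uniform in $i$ to be the hardest part.
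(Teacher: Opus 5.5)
Your Steps 1--2 and the construction of the limit homomorphism $\Phi\colon\mathbb{Z}^k\to G$ are fine. The contradiction in Step 3 does not go through, for two reasons.

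First, the dichotomy you invoke is false. A homomorphism $\mathbb{Z}^k\to\mathbb{R}^{l_1}\times\mathbb{Z}^{l_2}\times C$ with $l_1+l_2<k$ can be injective with every cyclic subgroup unbounded, e.g.\ $(a,b)\mapsto a+b\sqrt{2}\in\mathbb{R}$. The counting fallback does not repair this. The $\gtrsim L^k$ distinct elements of $G_i(RL)$ need not stay separated in the limit, and $G(r)$ is a continuum once $l_1>0$. Moreover, under $\mathrm{Ric}\ge-1$ without cocompactness there is no $O(r^{\mathrm{rank}(G)})$ bound even for discrete $G$: a parabolic $\mathbb{Z}$ acting on $\mathbb{H}^2$ has $\#G(r)$ growing like $e^{r/2}$.

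Second, and more fundamentally, the uniform linear growth you aim for in your last paragraph is false under collapse. So a bounded $\Phi(\mathbb{Z}n)$ gives no contradiction at all. Take $X_i=\mathbb{R}^3$, $p_i$ at distance $1$ from the $z$-axis, and $G_i=\langle g_i\rangle$, where $g_i$ rotates by a fixed angle $\theta\notin 2\pi\mathbb{Q}$ about the $z$-axis and translates by $t_i\downarrow 0$ along it.
\begin{itemize}
\item The action is free and properly discontinuous, and $G_i$ is generated by $G_i(3)$.
\item Yet $\Phi(m)=\lim_i g_i^m$ is the rotation by $m\theta$, so $\Phi(\mathbb{Z})\subset\mathrm{SO}(2)$ is relatively compact.
\item Also $g_i^mp_i\in B_3(p_i)$ for all $|m|\le 1/t_i$. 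Your packing inequality only gives $M\lesssim 1/t_i$, since the fundamental domains are slabs of thickness $t_i$.
\end{itemize}
Nevertheless the limit group is $G=\mathrm{SO}(2)\times\mathbb{R}$, of rank $1$. Its $\mathbb{R}$-factor comes from words $g_i^{m_i}$ with $m_i\to\infty$ and $m_it_i\to s$. These are invisible to any argument built only from limits of words of fixed length.

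The missing idea is to take limits of entire subgroups, with exponents depending on $i$, and to peel off one direction at a time. The paper inducts on $k$:
\begin{itemize}
\item Pick $g_i\in G_i(R)$ of infinite order, and let $H_i=\langle g_i\rangle$ converge to $H\le G$.
\item Use the minimal $l_i$ with $d(g_i^{l_i}p_i,p_i)\ge R/2$ to get a nontrivial limit element in $H$.
\item $G_i/H_i$ acts freely and properly discontinuously on $X_i/H_i$, has rank $k-1$, and is generated by $(G_i/H_i)(R)$.
\item Apply the induction hypothesis to $(X_i/H_i,G_i/H_i)\to(X/H,G/H)$ and conclude with $\mathrm{rank}(G)=\mathrm{rank}(H)+\mathrm{rank}(G/H)$.
\end{itemize}

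The robust reason for $\mathrm{rank}(H)\ge1$ is that $H$ is noncompact. The displacements $d(g_i^mp_i,p_i)$ are unbounded in $m$ and change by at most $R$ per step. So for every $D$ some $g_i^{m_i}$ has displacement in $[D,D+R]$, and its limit lies in $H\setminus H(D)$. As the screw example shows, the limit of $g_i^{l_i}$ may itself generate a relatively compact subgroup. It is the noncompactness of $H$, not the powers of a single limit element, that carries the rank.
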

\begin{proof}
The case $k=0$ is trivial. Assume the lemma holds for $k-1$ and $\mathrm{rank}(G_i)=k$. Let $H_i \cong \mathbb{Z}$ be a subgroup of $G_i$. Let $g_i$ be a generator of $H_i$ such that $d(g_ip_i,p_i)\le R$. Since the action of $G_i$ is properly discontinuous, there exists a minimal $l_i\in \N$ such that $d(g_i^{l_i}p_i,p_i)\ge R/2$, then it is easy to see $d(g_i^{l_i}p_i,p_i)\le 3R/2$.  Passing to a subsequence if necessary, $H_i$ converges to a subgroup $H$ of $G$ and $g_i^{l_i}$ converges to a nontrivial element $g\in H$. Since $g\in H$ has infinite order and its powers have unbounded displacement, $g$ generates a noncompact subgroup of $H$, then $\mathrm{rank}(H) \ge 1$.

Since the action of $G_i$ is free and properly discontinuous, it is easy to check that the natural induced isometric action of $G_{i}/H_{i}$ on $X_{i}/H_{i}$ is also free and properly discontinuous. In addition, $G_i/H_i$ is generated by $G_i/H_i(R)$, and $\mathrm{rank}(G_i/H_i)=k-1$.
Consider the subconvergent sequence
$$(X_i/H_i,\bar{p}_i,\bar{d}_i,\bar{\meas}_i,G_i/H_i) \overset{\rm{GH}}{\longrightarrow} (X/H,\bar{p},\bar{d},\bar{\meas},G/H).$$
By the induction hypothesis, $\mathrm{rank}(G/H) \ge k-1$. Thus $\mathrm{rank}(G)= \mathrm{rank}(H)+\mathrm{rank}(G/H) \ge k$.
\end{proof}

\begin{rem}
    Note that the strict inequality can occur in Lemma \ref{lem:rank non-dec}. Consider the flat torus $T^2=S^1\times S^1$ where each of $S^1$ factor has length $k\in \N$. We can define a free $G_k\defeq \Z/(k^2+1)\mb Z$ action on $T^2$ generated by the rotation $(e^{\frac{2\pi i}{k^2+1}}, e^{\frac{2k\pi i}{k^2+1}})$. Let $k\to\infty$, the $T^2$ converges to $\R^2$ and $G_k$ action converges to the $G\defeq\mb Z^2$ action by standard translations. So $\rank(G_k)=0$ and $\rank(G)=2$. Here the sequence has no uniform diameter bound. Compare our stability of rank Theorem \ref{thm:rank-stability}.

    Meanwhile, also note that bounded displacement assumption on the generating set of $G_i$ is necessary. Take $\R$ with $G_i=\Z$ action generated by translating $i$ units: $x\to x+i$. Let $i\to \infty$, the $G_i=\Z$ action converges to the trivial action on $\R$ and the rank drops from $1$ to $0$.
\end{rem}

\subsection{Proof of Theorem \ref{thm:rank-stability}}

\begin{lem}\label{lem-surj-ghomo}
Let $({X}_{i},{d}_{i},{m}_{i}, p_{i})$ be a sequence of compact $\RCD(-1,N)$ spaces, and let $(\hat{X}_{i},\hat{d}_{i},\hat{m}_{i}, \hat{p}_{i})$ be the covering space of $({X}_{i},{d}_{i},{m}_{i},p_i)$ with deck transformation group ${G}_{i}$.
Suppose $(\hat{X}_{i},\hat{d}_{i},{G}_{i}, \hat{p}_{i})\xrightarrow{\mathrm{GH}} (\hat{Y}_{\infty},\hat{d}_{\infty},{G}_{\infty}, \hat{p}_{\infty})$ for some compact $\RCD$-space $\hat{Y}_{\infty}$, where the convergence is given by a sequence of $\epsilon_{i}$-equivariant Gromov-Hausdorff approximations $f_{i}:\hat{X}_{i}\rightarrow \hat{Y}_{\infty}$, $\psi_{i}: G_{i}\rightarrow G_{\infty}$, where each $\psi_{i}$ is a homomorphism, and $f_i(\hat{p}_{i})=\hat{p}_{\infty}$.
Then for sufficiently large $i$, there is a surjective homomorphism $\Psi_{i}:\pi_{1}(X_{i})\rightarrow \pi_{1}(\hat{Y}_{\infty}/\psi_{i}(G_{i}))$.
In particular, $\mathrm{rank}(\pi_{1}(X_{i}))\geq \mathrm{rank}(\pi_{1}(\hat{Y}_{\infty}/\psi_{i}(G_{i})))$.
\end{lem}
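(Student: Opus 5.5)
The plan is to realize $\hat{Y}_{\infty}/\psi_i(G_i)$ as (homotopy equivalent to) a quotient through which $\pi_1(X_i)$ maps onto. Set $Y_i:=\hat{Y}_{\infty}/\psi_i(G_i)$. The key point is that $\psi_i$ is a genuine homomorphism and $f_i$ is $\epsilon_i$-equivariant. Hence for large $i$ the group $\psi_i(G_i)$ is a closed subgroup of $G_\infty$ that is "$\epsilon_i$-dense" in the sense relevant to the action. Consequently $Y_i$ is $\Psi(\epsilon_i)$-Gromov--Hausdorff close to $X_i=\hat{X}_i/G_i$; this is the quotient part of Theorem~\ref{eGH}, with $f_i$ descending to a GH approximation $\bar f_i\colon X_i\to Y_i$.

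\textbf{Step 1: local simple connectivity of the target.} Since $\hat{Y}_\infty$ is a compact RCD space, it is semi-locally simply connected with a uniform scale: there is $r_0>0$ such that every loop in a ball of radius $r_0$ is contractible in the concentric ball of radius $Cr_0$. This follows from Wang's result on RCD spaces, or the Sormani--Wei type results used in \cite{KW}. I then need the same for $Y_i$ at a scale independent of $i$. The intended route is to note that $\psi_i(G_i)$ acts through a group converging to $G_\infty$, and to use the covering $\hat Y_\infty\to Y_i$ (or its orbifold-type generalization) to transfer local contractibility. This is also the main obstacle, since $\psi_i(G_i)$ need not act freely. The fallback is to work with loops in $\hat Y_\infty$ projected down, and to build the map on $\pi_1$ only via lifts, as in the covering lemma \cite{KW}*{Lemma 1.6}.

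\textbf{Step 2: define $\Psi_i$ by discretization.} Given a loop $c$ in $X_i$ based at $p_i$, subdivide it into points $x_0,\dots,x_m$ with consecutive distance $<r_0/10$. Take the points $\bar f_i(x_j)$ in $Y_i$, connect consecutive ones by minimal geodesics, and call the result $\Psi_i([c])$. The argument then runs as follows.
\begin{itemize}
\item \emph{Independence of subdivision.} This follows from Step 1: refining the subdivision changes the loop only by small triangles, which are contractible.
\item \emph{Homotopy invariance.} A homotopy discretizes into a grid whose small squares map to small loops in $Y_i$, which are null-homotopic by Step 1.
\item \emph{Homomorphism.} Concatenation of loops is respected by construction.
\end{itemize}

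\textbf{Step 3: surjectivity.} Given a loop $\bar c$ in $Y_i$ based at $\bar p_\infty$, discretize it finely. Pull the points back through the approximation to obtain points $x_j$ in $X_i$ whose consecutive distances are $<r_0/10+2\Psi(\epsilon_i)$, and join consecutive $x_j$ by geodesics in $X_i$. The image of this loop under $\Psi_i$ stays within $r_0/5$ of $\bar c$ at each node, so it is homotopic to $\bar c$ by Step 1. Hence $\Psi_i$ is onto.

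The rank inequality then follows by abelianizing. A surjection $\pi_1(X_i)\to\pi_1(Y_i)$ induces a surjection on abelianizations modulo torsion, so ranks do not increase.
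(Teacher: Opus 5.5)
\textbf{Step 1 fails, and Steps 2--3 depend on it.} The uniform statement you need there is false, not just hard. In general there is no scale $r_0$, independent of $i$, below which loops in $Y_i=\hat Y_\infty/\psi_i(G_i)$ are null-homotopic, even when the actions are free.

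Take $\hat X_i=\hat Y_\infty=S^1$ of length $1$, $G_i=\mathbb{Z}/i\mathbb{Z}$ acting by rotations, $G_\infty=\mathrm{SO}(2)$, $f_i=\mathrm{id}$ and $\psi_i$ the inclusion. This is an $\epsilon_i$-equivariant approximation with $\epsilon_i\sim 1/i$. Then $Y_i=X_i$ is a circle of length $1/i$. For $i>1/r_0$ all of $Y_i$ lies in one $r_0$-ball, yet $Y_i$ carries an essential loop.

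The real issue is therefore not freeness. It is that $\psi_i(G_i)$ accumulates onto the non-discrete group $G_\infty$, so the quotients $Y_i$ collapse and acquire arbitrarily short essential loops. Your downstairs discretization at a fixed mesh $r_0/10\gg\epsilon_i$ then cannot be well defined. In the example, the loop winding once around $X_i$ can be subdivided as $x_0=x_1=p_i$, which gives the constant loop, or finely, which gives the generator of $\pi_1(Y_i)$. So $\Psi_i$ depends on the subdivision. The same defect breaks your homotopy-invariance argument (``small squares are null-homotopic'') and your surjectivity argument.

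\textbf{The repair.} It is the fallback you mention, and it is exactly the paper's proof; it has to be carried out, not left as a remark.
\begin{enumerate}
\item Lift a loop at $p_i$ to a path $\hat\gamma$ in $\hat X_i$ from $\hat p_i$ to $g\hat p_i$, and subdivide it.
\item Push the nodes into $\hat Y_\infty$ by $f_i$ and join consecutive images by minimal geodesics.
\item Close up with a short geodesic from $f_i(g\hat p_i)$ to $\psi_i(g)\hat p_\infty$; it is short by $\epsilon_i$-equivariance. The resulting path in $\hat Y_\infty$ projects to a loop in $Y_i$.
\end{enumerate}
All independence-of-choice and homotopy-invariance arguments are then done rel endpoints in the single fixed compact space $\hat Y_\infty$, and only projected afterwards. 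There, Wang's semi-local simple connectivity theorem plus compactness gives a uniform $\delta$. The homomorphism property of $\psi_i$ gives compatibility with concatenation, since the lift of a product ends at $gh\hat p_i$ and $\psi_i(gh)=\psi_i(g)\psi_i(h)$.

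Surjectivity goes the same way in reverse. Lift a loop in $Y_i$ to a path in $\hat Y_\infty$ from $\hat p_\infty$ to $\psi_i(g)\hat p_\infty$. Pull the nodes back through $f_i$, join them by geodesics in $\hat X_i$ ending at $g\hat p_i$, and project to $X_i$.

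In the circle example this works because the small-scale topology of $Y_i$ is recorded by the endpoint $\psi_i(g)\hat p_\infty$ of the lifted path, not by the shape of a path downstairs. Your final rank argument via abelianization is fine.
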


\begin{proof}
The proof is a modification of \cite{SormaniWei2001} and \cite{Tuschman1995}.
According to \cite{Wang2024}, there exists some $\delta>0$ such that for every $y\in \hat{Y}_{\infty}$, every loop contained in $B_{\delta}(y)$ is null-homotopic in $\hat{Y}$.
Now we fix a sufficiently large $i$ such that $\epsilon_{i}< \frac{\delta}{100}$.
Given any $\alpha\in \pi_{1}(X_{i},p_{i})$ represented by a loop $\gamma:[0,1]\rightarrow X_{i}$ with $\gamma(0)=\gamma(1)=p_{i}$, we lift $\gamma$ to a curve $\hat{\gamma}:[0,1]\rightarrow \hat{X}_{i}$ with $\hat{\gamma}(0)=\hat{p}_{i}$, $\hat{\gamma}(1)= g_{i}(\hat{p}_{i})$ for some $g_{i} \in \hat{G}_{i}$.
Then we take finitely many points $\{0 =t_{0}< t_{1} < \ldots< t_{m}=1\} \subset [0,1]$ such that for every $1\leq j\leq m$, $\gamma([t_{j-1}, t_{j}])$ is contained in a ball of radius $\frac{\delta}{50}$.
Note that $f_{i}(\hat{\gamma}(1))$ and $\psi_{i}(g_{i})(\hat{p}_{\infty})$ can be  connected by a geodesic contained in a ball of radius $\frac{\delta}{40}$.
Let $\varphi(\hat{\gamma})$ be a curve in $\hat{Y}_{\infty}$ given by sequentially concatenating  minimizing geodesics from $f_{i}(\hat{\gamma}(t_{j-1}))$ to $f_{i}(\hat{\gamma}(t_{j}))$ and  from $f_{i}(\hat{\gamma}(1))$ to $\psi_{i}(g_{i})(\hat{p}_{\infty})$.
Then $\sigma_{i}(\varphi(\hat{\gamma}))$ is a loop in $\hat{Y}_{\infty}/\psi_{i}(G_{i})$ passing through $\hat{y}_i:=\sigma_{i}(\hat{p}_{\infty})$, where $\sigma_{i}: \hat{Y}_{\infty}\rightarrow \hat{Y}_{\infty}/\psi_{i}(G_{i})$ is the natural projection.
Define $\Psi_{i}(\alpha)$ as the element in $\pi_{1}(\hat{Y}_{\infty}/\psi_{i}(G_{i}),\hat{y}_i)$ represented by $\sigma_{i}(\varphi(\hat{\gamma}))$.

Given any loop $\gamma$ as above, two different choices of $\{t_{0}< t_{1} < \ldots< t_{m}\}$ will give curves uniformly $\frac{\delta}{20}$-close to each other in $\hat{Y}_{\infty}$, and by the semi-locally simply connectedness, the resulting $\varphi(\gamma)$ are easy to see to homotopic rel. endpoints in $\hat{Y}_{\infty}$, and after projecting to $\hat{Y}_{\infty}/\psi_{i}(G_{i})$, they give the same element in $\pi_{1}(\hat{Y}_{\infty}/\psi_{i}(G_{i}))$.
Similarly, if two loops $\gamma_{1},\gamma_{2}:[0,1]\rightarrow X_{i}$  with $\gamma_1(0)=\gamma_1(1)=\gamma_2(0)=\gamma_2(1)=p_{i}$ are homotopic rel. endpoints, then their lifts $\hat{\gamma}_{1},\hat{\gamma}_{2}$ are homotopic rel. endpoints, then by the semi-locally simply connectedness of $\hat{Y}$, it is easy to construct a homotopy rel. endpoints from $\varphi(\hat{\gamma}_{1})$ to $\varphi(\hat{\gamma}_{2})$, which gives the same element in $\pi_{1}(\hat{Y}_{\infty}/\psi_{i}(G_{i}),\hat{y}_i)$.
Thus $\Psi_{i}([\alpha])$ is well-defined, and $\Psi_{i}$ is a group homomorphism.

For any element $\beta\in \pi_{1}(\hat{Y}_{\infty}/\psi_{i}(G_{i}),\hat{y}_i)$, we take a loop $\hat{\eta}_i:[0,1]\rightarrow \hat{Y}_{\infty}/\psi_{i}(G_{i})$ representing it.
Because $\sigma_i:\hat{Y}_{\infty}\rightarrow\hat{Y}_{\infty}/\psi_{i}(G_{i})$ is a submetry,  we can lift $\eta$ to a curve $\hat{\eta}:[0,1]\rightarrow \hat{X}_{i}$ with $\hat{\eta}(0)=\hat{p}_{\infty}$, $\hat{\eta}(1)= \psi_{i}(g_{i})(\hat{p}_{\infty})$ for some $g_{i} \in G_{i}$.
Then we take finitely many points $\{0 =t_{0}< t_{1} < \ldots< t_{m}=1\} \subset [0,1]$ such that for every $1\leq j\leq m$, $\hat{\eta}([t_{j-1}, t_{j}])$ is contained in a ball of radius $\frac{\delta}{40}$.
Take $\hat{x}_1=\hat{p}_i$, $\hat{x}_j\in f_{i}^{-1}(\hat{\eta}(t_{j}))$ for $j>1$, and let $\hat{\gamma}$ be a curve in $\hat{X}_{i}$ given by sequentially concatenating  minimizing geodesics from $x_{i-1}$ to $x_{i}$ and  from $x_{m}$ to $g_{i}(\hat{p}_{i})$.  Suppose  $\alpha\in \pi_{1}(X_{i},p_{i})$ is represented by the loop obtained by projecting $\hat{\gamma}$  onto $X_i$, then by the semi-locally simply connectedness of $\hat{Y}_{\infty}$ and the above construction, it is easy to see $\Psi_{i}(\alpha)=\beta$. In conclusion, $\Psi_{i}$ is surjective.
This completes the proof of the lemma.
\end{proof}

\begin{proof}[Proof of Theorem \ref{thm:rank-stability}]
Without loss of generality, we assume $D=1$, and $(X_{i},p_{i},d_{i})\xrightarrow{\mathrm{GH}}(Z,p_{\infty},d_{\infty})$. then $\diam (Z)\leq 1$, and the following commutative diagram holds:

\begin{align}\label{diag6.1111}
\xymatrix@C=2.5cm{
  (\hat{X}_{i}, \hat{p}_{i}, H_{i}) \ar[d]_{P_{i}} \ar[r]^{\mathrm{GH}} & (Y, \hat{p}, H) \ar[d]^{P_{\infty}} \\
  ({X}_{i}, p_{i}) \ar[r]^{\mathrm{GH}} & (Z,p_{\infty}),  }
\end{align}

Since each $H_i$ is generated by $H_i(3)$, according to Lemma \ref{lem:rank non-dec}, we have $ k_1+k_2\geq k$.  In the following we will prove $k_1+k_2 \leq k$.

We fix a discrete subgroup $\Z^{k_2}$ of $\R^{k_2}$, and denote by $H_0:=\Z^{k_1}\times \Z^{k_2}\times \{e\} < H$, and $\bar{Y}:=Y/H_{0}$.

By (\ref{diag6.1111}) and Lemma \ref{LocalAction}, up to choosing a subsequence of $i$, there exist $R_{i}\rightarrow \infty$ and pseudo-groups $\hat{K}_{i}\subset H_{i}(2R_{i}+\lambda_{i}')$
(where $\lambda_{i}'\downarrow 0$) such that the following commutative diagram holds:
  \begin{align}\label{diag6.31111}
\xymatrix@C=2.5cm{
  (B_{R_{i}}(\hat{p}_{i}), \hat{p}_{i}, \hat{K}_{i})) \ar[d]_{\sigma_{i}} \ar[r]^{\mathrm{GH}} & (Y, \hat{p}, H_{0}) \ar[d]^{\sigma} \\
  (B_{R_{i}}(\hat{p}_{i})/\hat{K}_{i}, \bar{p}_{i}) \ar[d]_{\tau_{i}} \ar[r]^{\mathrm{GH}} & (\bar{Y},\bar{p}) \ar[d]^{\tau},  \\
    (B_{R_{i}}(p_{i}), {p}_{i}) \ar[r]^{\mathrm{GH}} & (Z, p_{\infty}),
    }
\end{align}
with $P_{i}|_{B_{R_{i}}(\hat{p}_{i})}=\tau_{i}\circ\sigma_{i}$.

Since $H/H_{0}$ is compact, by (\ref{diag6.1111}), (\ref{diag6.31111}) and (7) in Lemma \ref{LocalAction}, up to a subsequence, there exists a discrete closed Abelian subgroup $K_{i}<  \mathrm{Isom}(\tau_{i}^{-1}(B_{\frac{1}{5}R_{i}}(p_{i})))$, such that $\tau_{i}^{-1}(B_{\frac{1}{5}R_{i}}(p_{i}))/K_{i}$ is isometric to $B_{\frac{1}{5}R_{i}}(p_{i})$, and the following commutative diagram holds:
  \begin{align}\label{diag6.51111}
\xymatrix@C=2.5cm{
  (\tau_{i}^{-1}(B_{\frac{1}{5}R_{i}}(p_{i})), \bar{p}_{i}, K_{i}) \ar[d]_{\tau_{i}} \ar[r]^{\mathrm{GH}} & (\bar{Y},\bar{p}, K) \ar[d]^{\tau} \\
  (B_{\frac{1}{5}R_{i}}(p_{i}), p_{i}) \ar[r]^{\mathrm{GH}} & (Z, p_{\infty}).  }
\end{align}
where $K< \mathrm{Isom}(\bar{Y})$ coincides with the image of $H/H_{0}\rightarrow \mathrm{Isom}(\bar{Y})$.
Since $\diam(X_{i})\leq 1$ and $\diam(\bar{Y})< \infty$, without loss of generality, we assume that for every $i$, $B_{\frac{1}{5}R_{i}}(p_{i})=X_{i}$, and $\bar{X}_i:=\tau_{i}^{-1}(X_{i})$ are compact $\RCD$ spaces with uniformly bounded diameter.

According to \cite{MRW08}*{Lemma 3.2}, we assume the equivariant Gromov-Hausdorff approximation in (\ref{diag6.51111}) is given by a group homomorphism $\psi_{i}:K_{i}\rightarrow K$.
Denote $K_{i}'=\psi_{i}(K_{i})$, which is a discrete subgroup of $K$.
Choose suitable discrete groups $\hat{K}'_{i}$ such that $H_{0}<\hat{K}'_{i}<H$ and the image of $\hat{K}'_{i}$ under the homomorphism $H\rightarrow H/H_0\rightarrow \mathrm{Isom}(\bar{Y})$ is $K_{i}'$.
Thus $\bar{Y}/K'_{i}$ is isometric to $Y/\hat{K}'_{i}$.

Let $\pi_1:H \rightarrow \R^{k_1}\times \Z^{k_2}$ be the projection map.
Denote $C_i:=\hat{K}'_{i}\cap \pi_1^{-1}(0^{k_1+k_2}) $.

\begin{claim} \label{claim6.4}
$\hat{K}'_{i}/C_i$ acts effectively and freely on $Y/C_i$.
\end{claim}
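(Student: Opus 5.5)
The plan is to work entirely on the limit $Y$ and use only the abelian structure of $H\cong \R^{k_1}\times\Z^{k_2}\times C$ together with properness of the isometric action. None of the approximating spaces $X_i$ is needed.

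First I will check that the action is well defined. Since $H$ is Abelian, $C_i$ is normal in $\hat K'_i$, and every $g\in\hat K'_i$ commutes with $C_i$. Hence $g$ maps $C_i$-orbits to $C_i$-orbits, and $g(C_iy)=C_i\,g(y)$ gives an action of $\hat K'_i$ on $Y/C_i$. With the quotient metric, this action is by isometries, because $\bar d(C_ix,C_iy)=\min_{c\in C_i}d(x,cy)$ is preserved by $g$. Elements of $C_i$ act trivially, so the action descends to $\hat K'_i/C_i$. I will also note that $C_i=\hat K'_i\cap \pi_1^{-1}(0)$ is a discrete subgroup of the compact group $\{0\}\times C$, hence finite. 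This is not strictly needed, but it confirms that $Y/C_i$ is a reasonable metric space.

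The main step is freeness. Suppose $g\in\hat K'_i$ fixes a point $C_iy$ of $Y/C_i$, so $g(y)=c(y)$ for some $c\in C_i$. Then $h:=c^{-1}g\in\hat K'_i$ lies in the isotropy group $H_y$. Because $H$ is a closed subgroup of $\Isom(Y)$ and $Y$ is proper, the action of $H$ on $Y$ is proper, so $H_y$ is compact. The projection $\pi_1\colon H\to\R^{k_1}\times\Z^{k_2}$ is a continuous homomorphism, since it comes from the Lie group product decomposition. Therefore $\pi_1(H_y)$ is a compact subgroup of $\R^{k_1}\times\Z^{k_2}$. This group has no nontrivial compact subgroups, so $\pi_1(H_y)=\{0\}$. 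It follows that $h\in\hat K'_i\cap\pi_1^{-1}(0)=C_i$, hence $g=ch\in C_i$, and the class of $g$ in $\hat K'_i/C_i$ is trivial. So $\hat K'_i/C_i$ acts freely.

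Effectiveness follows immediately: an element of $\hat K'_i/C_i$ acting as the identity on $Y/C_i$ fixes every point, in particular one point, so it is trivial by freeness (assuming $Y\neq\emptyset$). The step I expect to need the most care is compactness of $H_y$ and continuity of $\pi_1$. For the first, I will invoke that isometry groups of proper metric spaces act properly; for the second, I will use that the identification $H\cong\R^{k_1}\times\Z^{k_2}\times C$ is an isomorphism of Lie groups, which holds because $H$ is a closed Abelian subgroup of the Lie group $\Isom(Y)$.
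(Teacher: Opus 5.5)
Your proposal is correct and follows essentially the same argument as the paper. In both, if $[g]$ fixes $[y]$, then $c^{-1}g$ lies in the compact isotropy group of $y$, so its image under $\pi_1$ in $\R^{k_1}\times\Z^{k_2}$ (which has no nontrivial compact subgroups) is trivial, forcing $g\in C_i$; your remarks on well-definedness, properness of the action, continuity of $\pi_1$, and deducing effectiveness from freeness make explicit what the paper leaves implicit.
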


\begin{proof}[Proof of the Claim \ref{claim6.4}:]
Recall that the action of $\hat{K}'_{i}/C_i$ on $Y/C_i$ is given by:  for any $k\in \hat{K}'_{i}$, $y\in Y$, $[k]([y]):=[k(y)]\in Y/C_i$.
If $[k]([y])=[y]$ for some $y\in Y$, then there exists $h_y\in  C_{i}$ such that $h_y^{-1}k(y)=y$.
Thus $h_y^{-1}k$ belongs to a compact subgroup of $H$, which implies $\pi_1(h_y^{-1}k)=0^{k_1+k_2}$.
Thus $k\in C_i$, i.e., $[k]$ is the identity element.
\end{proof}
    It is easy to see that $\hat{K}'_{i}/C_i$ is isomorphic to $\Z^{k_1}\times \Z^{k_2}$.
Note that $(Y/C_i)/(\hat{K}'_{i}/C_i)$ is isometric to $\bar{Y}/K'_{i}$.
By Claim \ref{claim6.4}, it is standard that   $b_{1}(\bar{Y}/K'_{i})\geq \rank(\hat{K}'_{i}/C_i)=k_1+k_2$.

Finally, according to Lemma \ref{lem-surj-ghomo}, we have $k=b_{1}(X_{i})\geq b_{1}(\bar{Y}/K'_{i})\geq k_1+k_2$.

The proof of Theorem \ref{thm:rank-stability} is completed.
\end{proof}

\begin{rem}
If the condition $H_i = \pi_1(X_i,p_i) \big/ [\pi_1(X_i,p_i), \pi_1(X_i,p_i)]$ in Theorem  \ref{thm:rank-stability} is replaced by $H_i = (\pi_1(X_i,p_i) \big/ [\pi_1(X_i,p_i), \pi_1(X_i,p_i)])/T_i$, where $T_i$ is the torsion subgroup of $\pi_1(X_i,p_i) \big/ [\pi_1(X_i,p_i), \pi_1(X_i,p_i)]$, then we still have $b_{1}(X_{i})=\rank(H_i)=k$, and the same proof of Theorem  \ref{thm:rank-stability} still holds.
\end{rem}

\subsection{Alternative proof of Theorem \ref{thm:rank-stability}: via Abelian progressions}
We present an alternative proof of Theorem~\ref{thm:rank-stability}. The material in this subsection will not be used in other sections.

\subsubsection{Groupification of local groups}
In this subsection, we explain how local groups determine a covering space in the $\RCD$ setting.
Let $(X,p,d,\mathfrak{m})$ be an $\RCD(-1,N)$ space. By \cite{MondinoWei2019}, any covering space $(\hat{X},\hat{p}, \hat{d},\hat{\mathfrak{m}})$ of $X$, with the induced metric and measure, is again an $\RCD(-1,N)$ space.

Assume $\diam (X) \le D$ and there exists an Abelian transformation group $\Gamma$ on $\hat{X}$ such that $X=\hat{X}/\Gamma$. Define the local group $$\Gamma(20D)=\{ g \in \Gamma \mid d(\hat{p},g\hat{p}) \le 20D\}.$$

$\Gamma(20D)$ is a pseudo-group; that is, for some $g_1,g_2 \in \Gamma(20D)$, the product $g_1g_2$ may not lie in $\Gamma(20D)$. To address this issue, we define the groupification $\hat{\Gamma}$ of $\Gamma(20D)$ as follows.

Let $F$ be the free group generated by elements $e_g$ for each $g \in \Gamma(20D)$. Take the quotient of $F$ by the normal subgroup generated by all elements of the form $e_{g_1}e_{g_2}e_{g_1g_2}^{-1}$, where $g_1,g_2 \in \Gamma(20D)$ and $g_1g_2 \in \Gamma(20D)$. The resulting quotient group is denoted by $\hat{\Gamma}$. Let $\tilde{\Gamma}$ be the Abelianization of $\hat{\Gamma}$.

There is a natural (pseudo-group) homomorphism
\begin{align*}
\iota : \Gamma(20D) \to \tilde{\Gamma}, \qquad \iota(g)=[e_g],
\end{align*}
where $[e_g]$ denotes the equivalence class of $e_g$ under the quotient map.

Define
\begin{align*}
\pi:\tilde{\Gamma} \to \Gamma, \quad
\pi([e_{g_1}e_{g_2}\cdots e_{g_k}])=g_1g_2\cdots g_k .
\end{align*}
Then $\pi \circ \iota$ is the identity on $\Gamma(20D)$; in particular, $\iota$ is injective. Since $\Gamma(20D)$ generates $\Gamma$, the map $\pi$ is surjective.

We say that $\Gamma(20D)$ determines $\Gamma$ if $\pi$ is an isomorphism. Roughly speaking, this means that all relations in $\Gamma$ can already be detected inside $\Gamma(20D)$.

We now consider the case where
$\Gamma = H = \pi_1(X,p)/[\pi_1(X,p),\pi_1(X,p)]$,
the maximal Abelian transformation group. We will show that $H(20D)$ determines $H$.

The proof follows the construction in \cites{FukayaYamaguchi1992,SantosZamoraPi1,Wang2023}. For the reader's convenience, we include a brief sketch of the argument.

\begin{lem}\label{lem:det}
Let $(\hat{X},\hat{p},H)$ be the covering space of $X$ with deck transformation group $H=\pi_1(X,p)/[\pi_1(X,p),\pi_1(X,p)]$. Then $H(20D)$ determines $H$.
\end{lem}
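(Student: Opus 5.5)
The plan is to build a covering space of $X$ directly from the groupification $\tilde{\Gamma}$ of $H(20D)$, and then use the maximality of $H$ among Abelian covers to show that $\pi$ is injective. This is the construction of Fukaya--Yamaguchi and Santos-Rodr\'iguez--Zamora.

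\textbf{Step 1: the space $\tilde{X}$.} Consider $\tilde{\Gamma}\times \overline{B_{10D}(\hat{p})}$. Glue it along the relation
\[
([g]\cdot\iota(h), x)\sim([g], h x)
\]
whenever $h\in H(20D)$ and $x,\,hx\in \overline{B_{10D}(\hat p)}$. Call the quotient $\tilde{X}$. The left multiplication action of $\tilde\Gamma$ passes to $\tilde X$, and there is a natural map $\Phi:\tilde{X}\to\hat{X}$ given by $([g],x)\mapsto \pi([g])x$.

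\textbf{Step 2: $\Phi$ is a covering map.} I expect this to be the main obstacle. The point to prove is that gluing charts locally never needs relations coming from products leaving $H(20D)$. Concretely:
\begin{itemize}
\item Since $\diam X\le D$, the sets $hB_{10D}(\hat p)$ for $h\in H$ cover $\hat X$.
\item If two translates $h_1\overline{B_{10D}(\hat p)}$ and $h_2\overline{B_{10D}(\hat p)}$ meet, then $d(h_1\hat p,h_2\hat p)\le 20D$, so $h_1^{-1}h_2\in H(20D)$.
\item Take a small ball $B_r(y)\subset\hat X$ with $r<D$. Any elements $h_1,h_2,h_3$ whose translates of $\overline{B_{10D}(\hat p)}$ all meet $B_r(y)$ have all pairwise quotients $h_a^{-1}h_b$ in $H(20D)$, with room to spare. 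The defining relations $e_{g_1}e_{g_2}=e_{g_1g_2}$ are therefore enough to make the gluings consistent.
\end{itemize}
Hence $\Phi^{-1}(B_r(y))$ is a disjoint union of copies of $B_r(y)$, indexed by $\ker\pi$. Connectedness of $\tilde X$ follows because $\tilde\Gamma$ is generated by $\iota(H(20D))$ and neighboring sheets are glued.

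\textbf{Step 3: conclude.} The composition $\tilde X\to\hat X\to X$ is then a connected regular covering with Abelian deck group $\tilde\Gamma$; $\tilde\Gamma$ acts freely and $\tilde X/\tilde\Gamma=X$. It is Abelian because we took the Abelianization of $\hat\Gamma$. By the Galois correspondence, $\tilde X$ corresponds to a normal subgroup $N\lhd\pi_1(X,p)$ with $\pi_1(X,p)/N\cong\tilde\Gamma$ Abelian, so $N\supset[\pi_1,\pi_1]$. Therefore $\tilde X\to X$ is covered by $\hat X$, and $|\tilde\Gamma|\le |H|$ in the sense that $\tilde\Gamma$ is a quotient of $H$.

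On the other hand, $\Phi$ makes $\tilde X$ a cover of $\hat X$, with $\pi:\tilde\Gamma\to H$ as the induced surjection. Putting the two together, $\tilde X\to\hat X\to\tilde X$ gives a composition of coverings whose total is a covering of $\tilde X$ by itself over $X$; more precisely, $N\subset[\pi_1,\pi_1]$. So $\hat X\cong\tilde X$ and $\ker\pi$ is trivial, which means $\pi$ is an isomorphism.

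Step 2 also needs the local simple connectivity of $X$ for covering theory in the $\RCD$ setting. This is supplied by Wang's semi-local simple connectedness \cite{Wang2024}, used as in Lemma \ref{lem-surj-ghomo}.
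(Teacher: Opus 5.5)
Your proposal follows the paper's proof: the same glued space $\tilde H\times_{H(20D)}\overline{B_{10D}(\hat p)}$ is viewed both as a cover of $\hat X$ with deck group $\ker\pi$ and as an Abelian cover of $X$ with deck group $\tilde H$, and maximality of the Abelian cover then forces $\ker\pi=\{e\}$; your Step 3 writes out the Galois-correspondence bookkeeping that the paper leaves implicit. One slip in Step 2: translates $h\overline{B_{10D}(\hat p)}$ that merely meet $B_r(y)$ only give $d(h_a\hat p,h_b\hat p)<20D+2r$, so there is no ``room to spare''. The consistency you need comes instead from the following fact: if $x$, $h_2x$, $h_1h_2x$ all lie in $\overline{B_{10D}(\hat p)}$, then $h_1,h_2,h_1h_2\in H(20D)$, hence $\iota(h_1)\iota(h_2)=\iota(h_1h_2)$, so the one-step gluing relation is already transitive, and this is exactly what makes the sheets over $B_r(y)$ disjoint.
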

\begin{proof}
We construct a space by gluing via an equivalence relation:
\begin{align*}
\tilde{H} \times_{H(20D)} \bar{B}_{10D}(\hat{p})
= (\tilde{H} \times \bar{B}_{10D}(\hat{p})) / \sim ,
\end{align*}
where the equivalence relation is defined by
\begin{align*}
(\tilde{g}\iota(g),x) \sim (\tilde{g},gx)
\end{align*}
for all $g \in H(20D)$, $\tilde{g} \in \tilde{H}$, and $x \in \bar{B}_{10D}(\hat{p})$ such that $gx \in \bar{B}_{10D}(\hat{p})$.
We endow $\tilde{H} \times_{H(20D)} \bar{B}_{10D}(\hat{p})$ with the induced length metric.

Then $\tilde{H} \times_{H(20D)} \bar{B}_{10D}(\hat{p})$ is a covering space of $\hat{X}$ with deck transformation group $\ker(\pi)$, where $\pi:\tilde{H}\to H$.

Moreover, $\tilde{H} \times_{H(20D)} \bar{B}_{10D}(\hat{p})$ is also a covering space of $X$ with Abelian deck transformation group $\tilde{H}$. By the universal property of Abelianization for $H$, $\ker \pi$ must be trivial, then the map $\pi$ must be injective and hence an isomorphism.
\end{proof}

We have the following lemma for quotient groups, whose proof is ignored here.

\begin{lem}\label{lem6.7}
Assume $(\hat{X},\hat{p},\Gamma)$ is a covering space of $X$ and $\Gamma(20D)$ determines the Abelian group $\Gamma$. Let $\Gamma'$ be a subgroup of $\Gamma$ such that $\Gamma'(20D)$ generates $\Gamma'$. Then $(\Gamma/\Gamma')(20D)$ determines $\Gamma/\Gamma'$.
\end{lem}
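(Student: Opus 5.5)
We mimic the proof of Lemma \ref{lem:det}: we build a covering of $X$ from the groupification of $(\Gamma/\Gamma')(20D)$ and show that the comparison map is injective. Write $\bar\Gamma=\Gamma/\Gamma'$ and $q:\Gamma\to\bar\Gamma$ for the quotient. Set $\hat X'=\hat X/\Gamma'$ with base point $\hat p'$, so that $\bar\Gamma$ acts on $\hat X'$ with quotient $X$. The local group $\bar\Gamma(20D)$ is taken with respect to $\hat p'$. Let $\tilde{\bar\Gamma}$ be its Abelianized groupification, with maps $\bar\iota$ and $\bar\pi:\tilde{\bar\Gamma}\to\bar\Gamma$ as in the text. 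Surjectivity of $\bar\pi$ is automatic: $\Gamma(20D)$ generates $\Gamma$, and $q$ does not increase displacement at $\hat p'$, so $\bar\Gamma(20D)$ generates $\bar\Gamma$. Only injectivity needs proof.

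\textbf{Step 1: lifting local elements.} For $\bar g\in\bar\Gamma(20D)$ we choose a lift $g\in\Gamma$ with $d(g\hat p,\hat p)=d(\bar g\hat p',\hat p')\le 20D$. Such a lift exists because $\hat X\to\hat X'$ is a quotient by isometries, so distances between orbits are realized. Hence $q(\Gamma(20D))=\bar\Gamma(20D)$.

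\textbf{Step 2: comparing groupifications.} The pseudo-homomorphism $q:\Gamma(20D)\to\bar\Gamma(20D)$ induces a homomorphism $\tilde q:\tilde\Gamma\to\tilde{\bar\Gamma}$, which is surjective by Step 1. By hypothesis $\pi:\tilde\Gamma\to\Gamma$ is an isomorphism. We would like to show
\[
\ker\tilde q=\langle\iota(\Gamma'(20D))\rangle .
\]
If this holds, then $\tilde{\bar\Gamma}\cong\tilde\Gamma/\langle\iota(\Gamma'(20D))\rangle\cong\Gamma/\Gamma'$, using that $\Gamma'(20D)$ generates $\Gamma'$. Composing with $\bar\pi$ then gives the identity, so $\bar\pi$ is an isomorphism. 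For the inclusion $\supseteq$, note that for $h\in\Gamma'(20D)$ we have $\bar\iota(q(h))=\bar\iota(e)=0$. For $\subseteq$, observe that $\tilde{\bar\Gamma}$ is presented by generators $e_{\bar g}$ and relations $e_{\bar g_1}e_{\bar g_2}=e_{\bar g_1\bar g_2}$ whenever all three elements lie in $\bar\Gamma(20D)$. We must show that each such relation lifts to a relation in $\tilde\Gamma$ modulo $\iota(\Gamma'(20D))$.

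\textbf{Step 3: the main obstacle.} The difficulty is this: given lifts $g_1,g_2,g_3$ with $q(g_1g_2)=q(g_3)$, we get $g_1g_2=g_3h$ with $h\in\Gamma'$, but $h$ need not lie in $\Gamma'(20D)$, and $g_1g_2$ need not lie in $\Gamma(20D)$. My first attempt would be to sidestep the algebra geometrically, as in Lemma \ref{lem:det}. We form the glued space $\tilde{\bar\Gamma}\times_{\bar\Gamma(20D)}\bar B_{10D}(\hat p')$. It is a covering of $\hat X'$ with deck group $\ker\bar\pi$, and a covering of $X$ with Abelian deck group $\tilde{\bar\Gamma}$. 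Pulling this back along $\hat X\to\hat X'$ gives a covering of $\hat X$. Composed with the covering $\hat X\to X$, it yields a covering of $X$ whose deck group $\tilde\Gamma''$ is an extension of $\ker\bar\pi$ by $\Gamma$. Because $\Gamma(20D)$ acts locally, this extension is generated by lifts of $\Gamma(20D)$. It therefore receives a surjection from $\tilde\Gamma\cong\Gamma$, which forces the pulled-back covering to be trivial over $\hat X$. Since $\Gamma'(20D)$ generates $\Gamma'$, the loops in $\hat X'$ coming from $\Gamma'$ lift to closed loops, so the covering is already trivial over $\hat X'$. Hence $\ker\bar\pi=0$. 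The technical verification is that the glued space is genuinely a covering space, namely that the equivalence relation is locally trivial. This uses $\diam X\le D$ and the $10D$-ball exactly as in Lemma \ref{lem:det}, and I expect it to be the most delicate point.
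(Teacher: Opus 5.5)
Your Steps 1 and 2 are correct and contain every ingredient of a complete proof. The gap is that the ``main obstacle'' you declare in Step 3 does not exist, and you replace the one missing line with a geometric sketch whose key claims are not proved.

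\textbf{The missing line.} The hypothesis that $\pi:\tilde\Gamma\to\Gamma$ is an isomorphism controls \emph{all} of $\Gamma'$, not only $\Gamma'(20D)$. Since $\pi(\iota(h))=h$ and $\Gamma'(20D)$ generates $\Gamma'$, we get $\pi(\langle\iota(\Gamma'(20D))\rangle)=\Gamma'$. Injectivity of $\pi$ then gives $\pi^{-1}(\Gamma')=\langle\iota(\Gamma'(20D))\rangle$. Now use the identity $\bar\pi\circ\tilde q=q\circ\pi$, which you can check on the generators $\iota(g)$. Suppose $\bar\pi(x)=0$, and write $x=\tilde q(y)$ using Step 1.
\begin{itemize}
\item Then $q(\pi(y))=0$, so $\pi(y)\in\Gamma'$.
\item Hence $y\in\langle\iota(\Gamma'(20D))\rangle$.
\item This subgroup lies in $\ker\tilde q$ by the inclusion you did prove, so $x=0$ and $\bar\pi$ is injective.
\end{itemize}
Equivalently, $\ker\tilde q\subseteq\ker(q\circ\pi)=\pi^{-1}(\Gamma')$, which is exactly your missing inclusion. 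Your worry that $h=g_1g_2g_3^{-1}$ need not lie in $\Gamma'(20D)$ disappears for the same reason. The element $\iota(g_1)+\iota(g_2)-\iota(g_3)$ equals $\pi^{-1}(h)$. That is a $\mathbb{Z}$-combination of elements $\iota(h')$ with $h'\in\Gamma'(20D)$, because $\Gamma'(20D)$ generates $\Gamma'$. The paper states this lemma without proof, so there is nothing to compare against; the chase above is all that is needed.

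\textbf{Step 3 as written.} It is not a proof. You claim that $\tilde\Gamma''$ ``receives a surjection from $\tilde\Gamma\cong\Gamma$''. That requires actually defining a homomorphism on $\tilde\Gamma$: choosing lifts of the elements of $\Gamma(20D)$ and checking that they satisfy the local relations. The natural choice is $y\mapsto(\pi(y),\tilde q(y))$ into $\Gamma\times_{\bar\Gamma}\tilde{\bar\Gamma}$. Verifying that this map is well defined and onto is precisely the algebra of Step 2, so the detour repackages the algebra rather than avoiding it. You also leave unverified the covering property of the glued space over $\hat X/\Gamma'$, which you yourself flag as the delicate point. Drop Step 3 and finish with the diagram chase above.
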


If local groups determine global groups, we have the following theorem from \cite{Wang2023}; see also \cite{FukayaYamaguchi1992}*{Theorem 3.10}.

\begin{theorem}\label{thm:G_0}
Fix $N,D,R>0$. Let $(X_i,p_i,d_i,\meas_i)$ be a sequence of $\RCD(-1,N)$ spaces with $\diam(X_i) \le D$. Suppose that $(\hat{X}_i,\hat{p}_i,H_i)$ is a normal covering space of $X_i$ with deck transformation group $H_i$. Assume that each $H_i$ is Abelian and that $H_i(20D)$ determines $H_i$.
Suppose that
$(\hat{X}_i,\hat{p}_i,H_i) \xrightarrow{\mathrm{GH}} (Y,\hat{p},H)$,
and that there exists a closed normal subgroup $H' \vartriangleleft H$ such that $H/H'$ is discrete and $H'$ is generated by $H'(R)$.

Then there exists a sequence of normal subgroups $H_i' \vartriangleleft H_i$ such that $H_i'$ converges to $H'$, $H_i/H_i' \cong H/H'$ for all sufficiently large $i$, and $H_i'$ is generated by $H_i'(R+\epsilon_i)$, where $\epsilon_i \to 0$. Moreover, $H/H'$ is finitely presented.

\end{theorem}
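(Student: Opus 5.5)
The plan is to follow the Fukaya--Yamaguchi strategy: first use the ``determination'' hypothesis to transport relations from the limit group back to $H_i$, and then build $H_i'$ as the subgroup generated by elements converging into $H'$.

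\textbf{Step 1 (finite presentation of the limit).} Since $H/H'$ is discrete and $\diam(Y/H)\le D$, the group $H/H'$ acts discretely and cocompactly on $Y/H'$. I will first show that the pseudo-group $(H/H')(20D)$ is finite and generates $H/H'$; this follows from the standard $\diam\le D$ argument used earlier to show that $H_i$ is generated by $H_i(3)$. Next I will pass the determination property to the limit. Consider the groupified group $\tilde\Gamma_i$ of $H_i(20D)$. The set $H_i(20D)$ converges to $H(20D)$, so for large $i$ each element of $H_i(20D)$ lies close to a coset of $H'$, giving a map $\rho_i\colon H_i(20D)\to (H/H')(20D+\epsilon_i)$. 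I need to check that $\rho_i$ respects the partial products, that is, that $g_1g_2\in H_i(20D)$ forces $\rho_i(g_1)\rho_i(g_2)=\rho_i(g_1g_2)$. This holds by discreteness of $H/H'$ (a small-displacement product lands in the identity coset) together with the gap between $20D$ and $20D+\epsilon_i$. By the universal property of groupification and the isomorphism $\tilde\Gamma_i\cong H_i$, the map $\rho_i$ extends to a homomorphism $\Phi_i\colon H_i\to H/H'$. It is surjective because $(H/H')(20D)$ generates $H/H'$ and every element there is a limit of elements of $H_i(20D)$. Finite presentability of $H/H'$ will then follow from Lemma \ref{lem:det}-type gluing: $H/H'$ is a quotient of a group presented by the finite set $(H/H')(20D)$ with relations from products inside the ball. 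To make this presentation exact, I will show that the same gluing construction over $Y/H'$ produces a covering on which $H/H'$ is determined by $(H/H')(20D)$.

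\textbf{Step 2 (construction of $H_i'$).} Set $H_i':=\ker\Phi_i$. This is automatically normal, and $H_i/H_i'\cong H/H'$. It remains to show that $H_i'$ is generated by $H_i'(R+\epsilon_i)$ and that $H_i'\to H'$. For generation, take $h\in H_i'$ and write it as a word in $H_i(20D)$. Using the Schwarz--Milnor-type construction on $\hat X_i/\langle H_i'(R+\epsilon_i)\rangle$, I will argue that the subgroup $L_i:=\langle H_i'(R+\epsilon_i)\rangle$ already satisfies $H_i/L_i\to H/H'$ bijectively. Concretely, the limit of $L_i$ contains $\langle H'(R)\rangle=H'$. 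Consequently $H_i/L_i$ converges to a discrete group that surjects onto $H/H'$ with limit kernel trivial. Determination of $H_i/L_i$ by its $20D$-ball (Lemma \ref{lem6.7}) then forces an isomorphism for large $i$, so $L_i=H_i'$. The convergence $H_i'\to H'$ follows, since the limit of $H_i'$ is contained in the closed set $\rho^{-1}(e)=H'$ and contains $H'$.

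\textbf{Main obstacle.} The hard step is the last argument: showing that $H_i/L_i$ has no extra elements that vanish only in the limit. This requires the uniform determination of $H_i/L_i$ by a ball of fixed radius, which Lemma \ref{lem6.7} supplies only once $L_i(20D)$ generates $L_i$. I would obtain this from the generation of $L_i$ by $L_i(R+\epsilon_i)$, after enlarging $20D$ to $\max(20D,R)$ if necessary.
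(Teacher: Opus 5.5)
Your architecture is sound. It is essentially the Fukaya--Yamaguchi mechanism, which the paper does not reprove but imports from \cite{Wang2023} and \cite{FukayaYamaguchi1992}. Determination of $H_i$ lets $\rho_i$ extend to a surjection $\Phi_i\colon H_i\to H/H'$, and with $H_i'=\ker\Phi_i$ and $L_i=\langle H_i'(R+\epsilon_i)\rangle$ one gets $\lim L_i=\lim H_i'=H'$. Three details are needed here. First, $H'$ must be \emph{open} in $H$. Second, $h_3^{-1}h_1h_2$ must be close to the identity uniformly on large balls; small displacement at $\hat p$ is not enough, since the isotropy at $\hat p$ need not lie in $H'$. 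Third, you need $\Phi_i(g)=hH'$ whenever $g\to h$ with displacement up to $\max(R,20D)$, which follows by chopping $g$ into boundedly many elements of $H_i(3D)$.

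However, both steps meant to close the argument fail as written. The gluing over $Y/H'$ in Step 1 cannot make the presentation exact. $H/H'$ need not act freely on $Y/H'$, so $Y/H'\to Y/H$ need not be a covering. Even if it were, the proof of Lemma \ref{lem:det} kills $\ker\pi$ only through the universal property of the maximal abelian cover, which $Y/H'$ does not have. This step is also unnecessary: $H/H'$ is abelian and generated by the finite set $(H/H')(3D)$, hence automatically finitely presented.

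The real gap is the end of Step 2. Your limit argument actually proves only a bounded statement: for each fixed $r$, $H_i'(r)\subseteq L_i$ for $i\ge i_0(r)$. Indeed, if $g_i\in H_i'(r)$ and $g_i\to h\in H'$, write $h$ as a product of boundedly many elements of $H'(R)$ and approximate these by $l_i\in L_i$. Then $g_il_i^{-1}\in H_i'$ has displacement $\to 0$, so it lies in $L_i$. This says nothing about elements of $\ker(\bar\Phi_i\colon H_i/L_i\to H/H')$ whose displacement grows with $i$. Determination of $H_i/L_i$ by its ball (Lemma \ref{lem6.7}) does not exclude them by itself.

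The missing idea is that $\ker\bar\Phi_i$ is generated by elements of displacement bounded independently of $i$. One way to get this:
\begin{enumerate}
\item Fix a finite presentation $\langle S\mid\mathcal R\rangle$ of $H/H'$ with $S=(H/H')(4D)$, and lift each $s\in S$ to some $g_s\in H_i(4D+\epsilon_i)$ with $\Phi_i(g_s)=s$.
\item For large $i$, the induced map $\beta_i\colon F(S)\to H_i/L_i$ is onto. This holds because $H_i$ is generated by $H_i(3D)$, and each $g\in H_i(3D)$ satisfies $g^{-1}g_{\Phi_i(g)}\in H_i'(8D)\subseteq L_i$.
\item Since $\bar\Phi_i\circ\beta_i$ is the presentation map, $\ker\bar\Phi_i=\beta_i(\langle\langle\mathcal R\rangle\rangle)$. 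It is therefore generated by the finitely many $\beta_i(w)$, $w\in\mathcal R$, whose displacements are at most $5D\max_{w}|w|$.
\item By the bounded statement, these generators are trivial for large $i$.
\end{enumerate}
Hence $L_i=H_i'$, and you need neither Lemma \ref{lem6.7} nor the enlargement of $20D$.

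Alternatively, determination of $H_i/L_i$ together with the pseudo-group isomorphism of the balls identifies $H_i/L_i$ with the fixed groupification of $(H/H')(20D)$. Every kernel element is then a fixed word of bounded length. Either way, some such bounded-generation step must be supplied; "limit kernel trivial'' plus determination, as stated, does not close the argument.
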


Notice that the original statements in \cites{Wang2023,FukayaYamaguchi1992} require that $\hat{X}_i$ be simply connected; this assumption is used only to ensure that $H_i(20D)$ determines $H_i$.

\subsubsection{Abelian progressions and relations in local groups}

In this subsection, we use the structure of Abelian progressions to study local groups. The main references for this subsection are \cites{BGT,Zamora2020,NSZ2025}.

Assume that a discrete Abelian group $\Gamma_i$ acts freely and isometrically on a metric space $(\hat{X}_i,\hat{p}_i)$. Suppose that we have the equivariant Gromov--Hausdorff convergence
\begin{align*}
(\hat{X}_i,\hat{p}_i,\Gamma_i) \xrightarrow{\mathrm{GH}} (\hat{X},\hat{p},\Gamma = \mathbb{R}^{l} \times C),
\end{align*}
where $C$ is a compact Lie group. Let $\phi_i$ be an equivariant Gromov--Hausdorff approximation from $\Gamma_i$ to $\Gamma$. In the following, we will focus on the groups and omit the base spaces on which they act.

Let $\pi: \Gamma \to \mathbb{R}^{l}$ denote the projection map. Then $\pi \circ \phi_i$ is a sequence of good approximations from $\Gamma_i$ to $\mathbb{R}^{l}$; see \cite{NSZ2025}*{Section 1.2} for the definition of good approximations.

Since any non-trivial subgroup of $\mathbb{R}^{l}$ is non-compact, for all sufficiently large $i$ we can find a maximal subgroup $S_i \vartriangleleft \Gamma_i$ such that $\pi \circ \phi_i(S_i)$ converges to the identity element. By quotienting out $S_i$ if necessary, we may assume that there is no non-trivial subgroups $S_i$ of $\Gamma_i$ such that $\pi \circ \phi_i(S_i)$ converging to $e \in \mathbb{R}^l$.

Next, we can apply the structure theory of approximate groups to
$\pi \circ \phi_i : \Gamma_i \to \mathbb{R}^l$,
see \cite{BGT} and \cite{Zamora2020}*{Theorem 7.1}.
For any $\delta > 0$ and all sufficiently large $i$, the set $\Gamma_i(\delta)$ is isomorphic to the grid part of an Abelian progression of rank $l$.

Roughly speaking, we can find $l$ elements
\begin{align*}
\{g_{i,1}, g_{i,2}, \dots, g_{i,l}\} \subset \Gamma_i(\varepsilon_i), \quad \varepsilon_i \to 0,
\end{align*}
such that they generate $\Gamma_i(\delta)$ and are linearly independent with respect to the relations in $\Gamma_i(\delta)$.
More precisely, if $m_s \in \mathbb{Z}$ for $1 \le s \le l$ and at least one $m_s \neq 0$, then for any product
$\prod_{t=1}^T (g_{t,1} g_{t,2} g_{t,3}^{-1})$,
where $g_{t,1}, g_{t,2}, g_{t,3} \in \Gamma_i(\delta)$ satisfy $g_{t,1} g_{t,2} = g_{t,3}$, it is impossible to reduce
$\prod_{t=1}^T (g_{t,1} g_{t,2} g_{t,3}^{-1})$
to the form $\prod_{s=1}^l g_{i,s}^{m_s}$ using only commutations of adjacent elements and cancellation of the form $g g^{-1}$ with $g \in \Gamma_i(\delta)$.

Notice that in \cites{BGT,Zamora2020}, $\delta$ is required to be sufficiently small so that the $\delta$-ball of the limit group contains no non-trivial subgroup. However, in our setting, since we use approximations to $\mathbb{R}^l$, which has no non-trivial compact subgroups, $\delta$ can be taken arbitrarily.

\subsubsection{Proof of Theorem \ref{thm:rank-stability}}
We prove Theorem \ref{thm:rank-stability} in this subsection.

Since $\mathrm{rank}(H_i) = k$, we can find elements
\begin{align*}
\{g_{i,s} \mid 1 \le s \le k\} \subset H_i(3D)
\end{align*}
that are linearly independent. Let $K_i$ denote the subgroup generated by these elements.

Passing to a subsequence if necessary, we may assume that $K_i$ converges to a subgroup $K \vartriangleleft H$.

\begin{lem}\label{lemma:com quotient}
The quotient group $H/K$ is compact.
\end{lem}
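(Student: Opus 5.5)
The plan is to prove that $K\backslash Y$, equivalently $Y/K$, has finite diameter, and then use that $H$ acts on $Y$ with compact quotient.

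\textbf{Step 1: reduction.} By Theorem \ref{eGH}, $Y/H$ is the limit of $\hat X_i/H_i=X_i$, so $\diam(Y/H)\le D$. It therefore suffices to find $R$ such that every $h\in H$ can be written as $h=\kappa c$ with $\kappa\in K$ and $d(c\hat p,\hat p)\le R$. Indeed, this makes the image of $H(R)$ surject onto $H/K$. Since $H(R)$ is compact (the $H$-action is proper), $H/K$ is then compact.

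\textbf{Step 2: a uniform bound at level $i$.} I want to show that every $g\in H_i$ can be written as $g=\kappa c$ with $\kappa\in K_i$ and $d(c\hat p_i,\hat p_i)\le R$, where $R$ is independent of $i$. Since the $g_{i,s}$ are $k$ linearly independent elements and $\rank H_i=k$, the index $[H_i:K_i]$ is finite. Its size, however, is not a priori bounded, so I cannot conclude directly. Instead I will work with $\hat X_i/K_i$, which is a compact $\RCD(-1,N)$ space carrying the free action of the finite group $H_i/K_i$ with quotient $X_i$. What is needed is a uniform bound on $\diam(\hat X_i/K_i)$.

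\textbf{Step 3: bounding $\diam(\hat X_i/K_i)$.} I would first try to choose the $g_{i,s}$ more carefully, as a ``short basis'' of the lattice part. Pass to $\bar H_i=H_i/T_i\cong\Z^k$, with torsion handled via Remark \ref{rem:rank-stability}. Pick successive minima with respect to the displacement norm $|g|=d(g\hat p_i,\hat p_i)$, which is a proper, subadditive function on $\Z^k$ because the action is free and properly discontinuous. By the standard Minkowski successive-minima argument for a subadditive norm on $\Z^k$ generated by elements of norm $\le 3D$ (recall $H_i$ is generated by $H_i(3D)$), the sublattice spanned by these minima has index bounded by $C(k)$. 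Moreover, every element of $H_i$ is within bounded word length, and hence displacement $\le C(k)D$, of that sublattice. This gives $R=C(k)D$. If the statement really concerns an arbitrary linearly independent choice inside $H_i(3D)$, the same argument still works: any $k$ independent elements of norm $\le 3D$ span a sublattice whose index, in terms of $H_i(3D)$-word length, is controlled by a volume bound of Bishop–Gromov type. Concretely, the number of elements of $H_i(3D)\cdot\hat p_i$ inside a ball is bounded relative to the number of orbit points of $K_i$, by packing with measure doubling. This is where the $\RCD$ hypothesis enters.

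\textbf{Step 4: passing to the limit, and the main obstacle.} Given $h\in H$, approximate it by $g_i\in H_i$ and write $g_i=\kappa_i c_i$ as in Step 2. Pass to subsequences: $c_i\to c\in H(R)$ by compactness, and $\kappa_i\to\kappa\in K$ because $K_i\to K$. Then $h=\kappa c$, and Step 1 concludes the proof. The main obstacle is Step 3, namely getting a bound on the cosets of $K_i$ that is uniform in $i$. I would try the volume-counting argument first: $\#(H_i\text{-orbit}\cap B_r)\le C\,\#(K_i\text{-orbit}\cap B_{r+R'})$ for all large $r$. Combined with polynomial growth of equal degree $k$ for both orbits, this bounds the index of $K_i$ in the lattice generated by $H_i(3D)$ up to torsion.
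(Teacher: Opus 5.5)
Your reduction is sound. Steps 1, 2 and 4 correctly show that the lemma is equivalent to a bound on $\diam(\hat X_i/K_i)$ uniform in $i$, and the passage to the limit is fine. The gap is Step 3, which is the entire content of the lemma. Neither route you sketch can work, because neither uses the hypothesis that makes the statement true: $H_i$ is the \emph{full} abelianization, so $H_i(20D)$ determines $H_i$ (Lemma \ref{lem:det}). Without this, the bound you want is false even on smooth manifolds with $\Ric\ge -1$.

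Here is a counterexample. Let $Q=(S^1\times S^{n-1})\#(S^1\times S^{n-1})$ with $n\ge 3$. Give it a fixed metric in which the free generators $a,b$ of $\pi_1(Q)$ are represented by short loops and $a$ realizes the systole. Let $\hat Q$ be the $\mathbb{Z}$-cover induced by $a\mapsto p$, $b\mapsto q$, with $p<q$ coprime. The action is free with fixed codiameter, $\pm p,\pm q\in H(3D)$ generate, and orbit growth is linear. Yet $K=\langle p\rangle$ is both the first successive minimum and an admissible ``linearly independent'' choice, and $\hat Q/K$ is the $p$-fold cyclic cover unwrapping $b$, whose diameter is of order $p$. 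On balls of radius $\ll p$ this action is indistinguishable from the $\mathbb{Z}^2$-action on the maximal abelian cover. So as $p\to\infty$ one gets $H=\mathbb{Z}^2$, $K\cong\mathbb{Z}$, and $H/K$ noncompact.

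Hence there is no ``standard'' Minkowski argument for a merely subadditive displacement function. Minkowski's theorem concerns norms, and the discrepancy between displacement and stable norm is not controlled by $k$ and $D$ alone. Bishop--Gromov packing cannot supply the missing topological input.

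Your counting route also targets the wrong quantity. As you note in Step 2, the index $[H_i:K_i]$ is genuinely unbounded in admissible examples. For $X_i=\R^2/(\frac1i\mathbb{Z}\times\mathbb{Z})$ and $K_i=\langle(2,0),(0,1)\rangle$, the index is $2i$ while $\hat X_i/K_i$ is a fixed $2\times 1$ torus. So an inequality $\#(H_i\hat p_i\cap B_r)\le C\,\#(K_i\hat p_i\cap B_{r+R'})$ with $C$ independent of $i$ fails: the ratio is of order $i$. Only the covering radius of $K_i$ is bounded, not the index.

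The paper obtains the bound as follows:
\begin{enumerate}
\item Pass to $\bar X_i=\hat X_i/K_i$, whose deck group $\bar H_i=H_i/K_i$ is finite.
\item By Lemma \ref{lem6.7}, $\bar H_i(20D)$ still determines $\bar H_i$.
\item By Theorem \ref{thm:G_0}, $\bar H/\bar H_0$ is finite.
\item If $\bar H=H/K\cong\R^l\times C$ with $l>0$, the abelian-progression structure theory makes $\bar H_i(20D)$ the relation-free grid part of a rank-$l$ progression. This forces $\bar H_i\cong\mathbb{Z}^l$, contradicting finiteness.
\end{enumerate}
The determination property is exactly what rules out the example above. There, the relation $q\bar a=p\bar b$ between the deck transformations induced by $a$ and $b$ has word length $p+q$ and is invisible in $H(20D)$.
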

\begin{proof}
Let $\bar{X}_i = \hat{X}_i / K_i$ and $\bar{H}_i = H_i / K_i$.
Since $H_i$ and $K_i$ have the same rank, the group $\bar{H}_i$ consists only of torsion elements.

By \cite{Wang2023}*{Theorem 3.1}, passing to a subsequence if necessary, we have
\begin{align*}
(\bar{X}_i, \bar{p}_i, \bar{H}_i) \xrightarrow{\mathrm{GH}} (\bar{Y}, \bar{p}, \bar{H} = H/K).
\end{align*}

Since $H_i(20D)$ determines $H_i$ and $K_i$ is generated by $K_i(3D)$,
it follows from Lemma \ref{lem6.7} that $\bar{H}_i(20D)$ determines $\bar{H}_i$.
By Theorem \ref{thm:G_0}, we can find a subgroup $\bar{H}_i' \le \bar{H}_i$ converging to $\bar{H}_0$, the identity component of $\bar{H}$, such that
$\bar{H}_i / \bar{H}_i' \cong \bar{H} / \bar{H}_0$.
Since $\bar{H}_i$ is finite, it follows that $\bar{H} / \bar{H}_0$ is compact.

Assume that $\bar{H} = H / K$ is non-compact. Since $\bar{H}$ has finitely many components,
$\bar{H} = \mathbb{R}^{l} \times C$,
where $l > 0$ and $C$ is compact. Let $\phi_i: \bar{H}_i \to \bar{H}$ be an eGHA, and let
$\pi: \bar{H} \to \mathbb{R}^{l}$
denote the projection map. Then the composition
$\pi \circ \phi_i: \bar{H}_i \to \mathbb{R}^{l}$
forms a sequence of good approximate groups.

After quotienting out subgroups if necessary, we may assume that $\pi \circ \phi_i$ has no subgroups $S_i \vartriangleleft \bar{H}_i$ such that $\pi \circ \phi_i (S_i)$ converges to $e \in \mathbb{R}^l$. Using the structure theory of approximate groups, for all sufficiently large $i$, the local group $\bar{H}_i(20D)$ coincides with the grid part of an Abelian progression of rank $l$. Since $\bar{H}_i(20D)$ determines $\bar{H}_i$, it follows that $\bar{H}_i \cong \mathbb{Z}^{l}$ for sufficiently large $i$. This contradicts the fact that $\bar{H}_i$ consists only of torsion elements.
\end{proof}

Since $H / K$ is compact and $Y / H$ is compact, it follows that $Y / K$ is also compact. In particular, $\hat{X}_i / K_i$ is a normal covering space of $X_i$ with bounded diameter. By choosing $D$ large enough if necessary, we may assume
\begin{align*}
\operatorname{diam}(\hat{X}_i / K_i) \le D.
\end{align*}

Let $G_i$ denote the maximal torsion-free subgroup of $H_i$ containing $K_i$. Then
$H_i = G_i \times T_i$,
where $T_i$ is the torsion subgroup of $H_i$. Since
\begin{align*}
\operatorname{diam}(\hat{X}_i / G_i) \le \operatorname{diam}(\hat{X}_i / K_i) \le D,
\end{align*}
it follows that $G_i(20D)$ determines $G_i$.

Assume that
\begin{align*}
(\hat{X}_i, \hat{p}_i, G_i) \xrightarrow{\mathrm{GH}} (Y, \hat{p}, G).
\end{align*}

Since $H / G$ is compact, we have
\begin{align*}
G \cong \mathbb{R}^{k_1} \times \mathbb{Z}^{k_2} \times C',
\end{align*}
where $C'$ is compact. Let
$\phi_i: G_i \to G$
denote an eGHA.

Since $G_i(20D)$ determines $G_i$, it follows from Theorem \ref{thm:G_0} that we can find a subgroup $A_i \vartriangleleft G_i$ such that
\begin{align*}
A_i \xrightarrow{\mathrm{GH}} A \cong \mathbb{R}^{k_1} \times C'  \quad \text{and} \quad G_i / A_i \cong G / (\mathbb{R}^{k_1} \times C') \cong \mathbb{Z}^{k_2}.
\end{align*}

Since $G_i$ is torsion-free, the short exact sequence
\begin{align*}
1 \longrightarrow A_i \longrightarrow G_i \longrightarrow G_i / A_i \longrightarrow 1
\end{align*}
splits. Hence we can find a subgroup $B_i \cong \mathbb{Z}^{k_2}$ of $G_i$ such that
\begin{align*}
G_i = A_i \times B_i.
\end{align*}

The generators of $B_i$ are eGH close to the generators of $G / (\mathbb{R}^{k_1} \times C')$, so we may assume that they lie in $B_i(20D)$. The generators of $A$ lie in a bounded set; by taking $D$ larger if necessary, we may assume that $A_i(20D)$ generates $A_i$.

By Lemma \ref{lem6.7}, $A_i(20D)$ determines $A_i \cong G_i / B_i$, and $B_i(20D)$ determines $B_i \cong G_i / A_i$. After passing to a subsequence, we may assume that
\begin{align*}
B_i \xrightarrow{\mathrm{GH}} B \vartriangleleft G.
\end{align*}

\begin{lem}\label{discrete}
We have
$A \cap B = \{\mathrm{Id}\}$.
In particular, $B \cong \mathbb{Z}^{k_2}$ and
$G = A \times B.$
\end{lem}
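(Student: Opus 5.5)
Write $G=\mathbb{R}^{k_1}\times\mathbb{Z}^{k_2}\times C'$, so that $G/(\mathbb{R}^{k_1}\times C')\cong\mathbb{Z}^{k_2}$ and $A\cong\mathbb{R}^{k_1}\times C'$ is exactly the identity-component-plus-compact part $G_0C'$. The plan is to show that $B$ meets $A$ only in the identity by controlling displacements. Then the quotient map $G\to G/A\cong\mathbb{Z}^{k_2}$ restricted to $B$ is injective. Since $G_i=A_i\times B_i$, the groups $A$ and $B$ together generate $G$, so $B\to G/A$ is also surjective. Hence $B\cong\mathbb{Z}^{k_2}$ and $G=A\times B$, using that $G$ is Abelian.

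The key step is to prove that $B$ is discrete and torsion-free with trivial intersection with $A$. I would argue by contradiction. Suppose $g\in A\cap B$ with $g\neq\mathrm{Id}$, and choose $b_i\in B_i$ with $b_i\to g$. Since $g\in A$, it is also approximated by $a_i\in A_i$, and so $a_i^{-1}b_i\to \mathrm{Id}$. Then $a_i^{-1}b_i$ has small displacement, hence lies in $G_i(\epsilon_i)$.

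Now project to $G_i/A_i\cong B_i\cong\mathbb{Z}^{k_2}$. Because $G_i/A_i$ converges to $G/A\cong\mathbb{Z}^{k_2}$, which is discrete, this is the content of Theorem \ref{thm:G_0}: the isomorphism $G_i/A_i\cong G/A$ is induced by the limit. Elements of small displacement therefore map to the identity in $G_i/A_i$, which gives $b_i\in A_i\cap B_i=\{e\}$. So $b_i=e$, and then $g=\lim b_i=\mathrm{Id}$, a contradiction.

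The same displacement argument, applied to the generators of $B_i$ (which are eGH-close to generators of $G/A$), shows that $B$ is generated by the limits of these generators. These limits map to a basis of $G/A$, so $B\to G/A$ is an isomorphism.

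The main obstacle is making precise that the isomorphism $G_i/A_i\cong G/A$ from Theorem \ref{thm:G_0} is compatible with the eGHA $\phi_i$. Concretely, one needs that for $h\in G_i(R)$ with $R$ bounded, the class of $h$ in $G_i/A_i$ equals the class of $\phi_i(h)$ in $G/A$. I would extract this from the construction in \cite{Wang2023}: there $A_i$ is generated by $A_i(R+\epsilon_i)$, and $A_i$ is precisely the set of elements whose approximants lie near $A$, with $A$ open in $G$ and $\mathbb{Z}^{k_2}$ discrete. Given that compatibility, $A\cap B=\{\mathrm{Id}\}$ follows immediately from the argument above.
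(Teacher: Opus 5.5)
Your argument is correct and is essentially the paper's proof. The paper also takes $a_i\in A_i$ and $b_i\in B_i$ converging to $g$, and observes that $b_ia_i^{-1}\to\mathrm{Id}$ forces $b_ia_i^{-1}\in A_i$; it attributes this to ``the definition of $A_i$'', which is exactly the compatibility of the construction in Theorem \ref{thm:G_0} with the approximations that you identify as the main obstacle. It then concludes $b_i\in A_i\cap B_i=\{\mathrm{Id}\}$, and your justification that $B\to G/A$ is surjective, via limits of the generators of $B_i$, fills in a step the paper only asserts.
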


\begin{proof}
Suppose, for contradiction, that there exists $g \neq \mathrm{Id}$ in $A \cap B$.
Then there exist sequences $a_i \in A_i$ and $b_i \in B_i$ converging to $g$. In particular, $b_i \neq \mathrm{Id}$ for all sufficiently large $i$.

Consider the elements
$b_i a_i^{-1}$,
which converge to $\mathrm{Id}$. By the definition of $A_i$, $b_i a_i^{-1} \in A_i$, and hence $b_i \in A_i$. This contradicts the fact that $B_i \cap A_i = \{\mathrm{Id}\}$.

Therefore, $A \cap B = \{\mathrm{Id}\}$. Since $A$ contains $G_0$ and $A \cap B = \{\mathrm{Id}\}$, the subgroup $B$ is discrete and satisfies
\begin{align*}
B \cong G / A \cong \mathbb{Z}^{k_2}.
\end{align*}
It follows that
$G = A \times B$.
\end{proof}

\begin{proof}[Proof of Theorem \ref{thm:rank-stability}]
We have
\begin{align*}
k = \mathrm{rank}(H_i) = \mathrm{rank}(G_i) = \mathrm{rank}(A_i) + \mathrm{rank}(B_i).
\end{align*}
Since $B_i \cong B \cong \mathbb{Z}^{k_2}$, it remains to show that
$\mathrm{rank}(A_i) = k_1$.

Consider $ A_i \xrightarrow{\mathrm{GH}} A=\mathbb{R}^{k_1} \times C'$. We proved before that $A_i(20D)$ determines $A_i$. By the same proof of Lemma \ref{lemma:com quotient} (last two paragraphs),
$\mathrm{rank}(A_i) = k_1$.
\end{proof}

\begin{cor}\label{cor: compact torsion}
Let $T_i$ be the torsion subgroup of $H_i$, and assume that
\begin{align*}
T_i \xrightarrow{\mathrm{GH}} T.
\end{align*}
Then $T$ is compact.
\end{cor}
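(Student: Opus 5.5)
We argue by contradiction, reusing the setting of the alternative proof of Theorem \ref{thm:rank-stability}: $H_i=G_i\times T_i$ with $G_i$ the maximal torsion-free subgroup containing $K_i$ and $T_i$ finite. Pass to a subsequence so that $(\hat X_i,\hat p_i,T_i)\xrightarrow{\mathrm{GH}}(Y,\hat p,T)$ with $T<H$ a closed abelian subgroup. If $T$ is not compact, then by the structure of abelian Lie groups $T\cong \R^{l_1}\times\Z^{l_2}\times C''$ with $l_1+l_2\ge 1$. We will show that this forces $\mathrm{rank}(H)>k$, contradicting the equality $k_1+k_2=k$ already proved.

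\textbf{Step 1: an independent noncompact direction.} We have $H_i=G_i\times T_i$, and the limit of $G_i$ is $G=A\times B$ with $\mathrm{rank}(G)=k$ by the proof of Theorem \ref{thm:rank-stability}. Moreover $H/G$ is compact: indeed $H/K$ is compact by Lemma \ref{lemma:com quotient}, and $K<G$. Since $T$ and $G$ are both closed in $H$, their product $TG$ is a subgroup of $H$. The goal of this step is to show that a noncompact $T$ contributes rank beyond that of $G$. A noncompact $T$ contains elements $t\in T$ with $d(t^m\hat p,\hat p)\to\infty$ as $m\to\infty$; pick approximating $t_i\in T_i$ via the eGHA. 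Because $T_i$ is finite, we instead track an element $t_i$ of large order. Since $T_i\cap G_i=\{e\}$, the element $t_i$ is independent from $G_i$, and we want this independence to survive in the limit.

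\textbf{Step 2: contradiction via the progression argument.} A more robust route is to run the argument of Lemma \ref{lemma:com quotient} with $T_i$ in place of $\bar H_i$. First, $T_i\cong H_i/G_i$, and by Lemma \ref{lem6.7} the local group $T_i(20D)$ determines $T_i$, provided $G_i(20D)$ generates $G_i$, which holds after enlarging $D$ as in the text. Second, the quotient $\hat X_i/G_i$ has bounded diameter, so $(\hat X_i/G_i, T_i)$ converges to some $(Y/G,\bar T)$ with $\bar T\cong H/G$ compact. Combining these, we consider the convergence of $T_i$ on $\hat X_i$ directly. If $T$ is noncompact, project it to its $\R^{l_1}$ factor, or treat the $\Z^{l_2}$ part via Theorem \ref{thm:G_0}. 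Quotient out the maximal subgroups that collapse. By the Breuillard–Green–Tao structure theory, $T_i(20D)$ is then the grid part of an abelian progression of rank $l\ge1$, and since $T_i(20D)$ determines $T_i$, we get $T_i\cong\Z^l$. This contradicts the finiteness of $T_i$. In the discrete case $l_1=0$, $l_2\ge1$, Theorem \ref{thm:G_0} applied to $H'=T_0$ (the identity component) gives $T_i/T_i'\cong T/T'$, which is infinite, again contradicting the finiteness of $T_i$.

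\textbf{Main obstacle.} The key technical point is that Theorem \ref{thm:G_0} and the progression argument are stated for groups acting cocompactly, with quotient diameter at most $D$. The $T_i$-action on $\hat X_i$ is not cocompact. The plan is to pass to the quotient $\hat X_i/G_i$, on which $T_i$ does act cocompactly with bounded diameter. The price is that one must check that noncompactness of $T$ on $Y$ transfers to noncompactness of the limit action on $Y/G$ modulo a compact kernel. This reduces to showing $T\cap G$ is compact. For that we would argue as in Lemma \ref{discrete}: if $a_i\in G_i$ and $t_i\in T_i$ converge to the same limit, then $t_ia_i^{-1}\to e$, and combining $T_i\cap G_i=\{e\}$ with the torsion-freeness of $G_i$ should give the required compactness.
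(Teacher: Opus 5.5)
Your stated target (a noncompact $T$ forces $\mathrm{rank}(H)>k$, contradicting Theorem~\ref{thm:rank-stability}) is the right one, but neither step reaches it.

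Step~1 stops exactly where a proof is needed, namely why the independence of $t_i$ from $G_i$ should survive in the limit.

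Step~2 mixes two different local groups. Lemma~\ref{lem6.7} gives determination for $T_i\cong H_i/G_i$ acting on $\hat X_i/G_i$. There every element moves $\bar p_i$ by at most $\mathrm{diam}(\hat X_i/G_i)\le D$, so the local group at scale $20D$ is all of $T_i$. The limit of that action is the compact group $H/G$, which cannot detect noncompactness of $T$. The progression argument and Theorem~\ref{thm:G_0}, on the other hand, are invoked for $T_i$ acting on $\hat X_i$, where their hypotheses fail. The quotient $\hat X_i/T_i$ is a normal cover of $X_i$ with deck group $H_i/T_i\cong\mathbb{Z}^k$, hence noncompact for $k\ge1$. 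Nothing shows that $T_i(20D)$, with displacement measured in $\hat X_i$, determines $T_i$.

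Your ``main obstacle'' paragraph then reduces everything to compactness of $T\cap G$. That is essentially the statement itself, since the image of $T$ in the compact group $H/G$ carries no information. The argument borrowed from Lemma~\ref{discrete} does not supply it. There, $b_ia_i^{-1}\to\mathrm{Id}$ forces $b_ia_i^{-1}\in A_i$ because $A_i$ was built via Theorem~\ref{thm:G_0} to contain all sufficiently small elements of $G_i$. Here, $t_ia_i^{-1}\to e$ in $H_i$ is perfectly compatible with $T_i\cap G_i=\{e\}$ and with $G_i$ being torsion-free, so no contradiction arises.

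The missing idea is to quotient by $T_i$ rather than by $G_i$. The group $H_i/T_i$ has the following properties:
\begin{itemize}
\item it is free abelian of rank $k$;
\item it acts freely and properly discontinuously on the normal cover $\hat X_i/T_i$ of $X_i$, which is again an $\RCD(-1,N)$ space;
\item it is generated by elements of displacement at most $3D$;
\item $(\hat X_i/T_i,H_i/T_i)\xrightarrow{\mathrm{GH}}(Y/T,H/T)$.
\end{itemize}
Lemma~\ref{lem:rank non-dec} then gives $\mathrm{rank}(H/T)\ge k$. Additivity of rank for the closed subgroup $T<H$ yields $k=\mathrm{rank}(H)=\mathrm{rank}(T)+\mathrm{rank}(H/T)\ge k+\mathrm{rank}(T)$. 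This forces $\mathrm{rank}(T)=0$, i.e.\ $T$ is compact.

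This is the paper's two-line proof. It is exactly the rigorous form of your Step~1 heuristic, and it needs no progression theory or determination of local groups.
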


\begin{proof}
Suppose, for contradiction, that $T$ is non-compact. Consider the quotient groups
\begin{align*}
H_i / T_i \xrightarrow{\mathrm{GH}} H / T.
\end{align*}
Then
\begin{align*}
\mathrm{rank}(H_i / T_i) = k > \mathrm{rank}(H / T) = k - \mathrm{rank}(T),
\end{align*}
which contradicts Lemma \ref{lem:rank non-dec}. Therefore, $T$ must be compact.
\end{proof}

\subsection{An application}

In what follows, the rectifiable dimension of an $\mathrm{RCD}$ space $(X,d,\mathfrak{m})$ will be denoted by $\operatorname{dim}(X)$. 
As an application of Theorem \ref{thm:rank-stability}, we give a new proof of Theorem \ref{thm:betti-stability} below, which was established by Zamora \cite{Zamora2022} and Zamora-Santos-Rodriguez \cite{SantosZamoraPi1}.

\begin{theorem}\label{thm:betti-stability}
	Let $D>0$ and let $(X_i,p_i,d_i,\mathfrak{m}_i)$ be a sequence of $\mathrm{RCD}\bigl(-1,N\bigr)$ spaces with a uniform diameter bound $\operatorname{diam}(X_i)\le D$. If $(X_i,p_i,d_i,\mathfrak{m}_i)$ converges in the pointed measured Gromov-Hausdorff  sense to $(X,p,d,\mathfrak{m})$, then for all sufficiently large $i$,
	\begin{align*}
	b_1(X_i)-b_1(X)\;\le\;\operatorname{dim}(X_i)-\operatorname{dim}(X).
	\end{align*}
\end{theorem}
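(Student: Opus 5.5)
We pass to the maximal free Abelian covers and combine Theorem \ref{thm:rank-stability} (with Remark \ref{rem:rank-stability}) with a dimension count in the limit. Let $k=b_1(X_i)$; after passing to a subsequence we may assume it is constant. Set $H_i=(\pi_1(X_i,p_i)/[\pi_1,\pi_1])/T_i\cong\Z^{k}$, and let $\hat X_i$ be the associated normal cover. By \cite{MondinoWei2019}, $\hat X_i$ is again an $\RCD(-1,N)$ space, with the same rectifiable dimension as $X_i$ since the covering map is a local isometry. By Theorem \ref{eGH} we may pass to a subsequence with
\begin{align*}
(\hat X_i,\hat p_i,H_i)\xrightarrow{\mathrm{GH}}(Y,\hat p,H),\qquad Y/H=X,
\end{align*}
and after a further subsequence also in the pointed measured sense. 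Since $\diam(X_i)\le D$, Cheeger--Gromoll splitting gives $Y=\R^{s}\times \hat Y$ with $\hat Y$ compact. As $H$ is Abelian with cocompact action, we can write $H=\R^{k_1}\times\Z^{k_2}\times C$. Theorem \ref{thm:rank-stability} with Remark \ref{rem:rank-stability} then yields $k=k_1+k_2$.

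Next we bound $b_1(X)$ from below by $k_2$, working entirely in the limit. Let $H^{\circ}=\R^{k_1}\times C'$ be the closed subgroup generated by the identity component together with the compact part; concretely, take the preimage of the maximal compact subgroup of $H/H_0$. Then $H/H^{\circ}\cong \Z^{k_2}$ acts on $Y/H^{\circ}$ with quotient $X$. Since $X$ is semi-locally simply connected (\cite{Wang2024}) and $Y/H^{\circ}$ is connected, this quotient map should be a regular covering with deck group $\Z^{k_2}$. If the action is not free, we replace it by the regular cover obtained from a finite-index free subgroup via the argument of Claim \ref{claim6.4}. Either way we get a surjection $\pi_1(X)\to\Z^{k_2}$ up to finite index, hence $b_1(X)\ge k_2$. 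This step is the main obstacle: one has to check that the discrete part of $H$ genuinely comes from a covering of $X$, i.e.\ that isotropy is finite. I would first try to argue freeness modulo compact isotropy, imitating the proof of Claim \ref{claim6.4}.

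Finally we bound $k_1$ by the dimension drop, for which we must show $\dim(X)\le \dim(X_i)-k_1$. The orbits of the $\R^{k_1}$ factor are closed subsets, and since $H$ commutes with the splitting, $\R^{k_1}$ acts freely by translations on an $\R^{k_1}$-subfactor of $\R^{s}$. Using the lower semicontinuity of rectifiable dimension under pmGH convergence (due to Kitabeppu), we get $\dim(Y)\le\dim(\hat X_i)=\dim(X_i)$. Moreover, $Y$ splits as $\R^{k_1}\times Y'$ with $X=Y'/(\Z^{k_2}\times C)$, and quotients by discrete or compact groups do not raise rectifiable dimension, so
\begin{align*}
\dim(X)\le \dim(Y)-k_1\le\dim(X_i)-k_1.
\end{align*}
Combining the three steps gives
\begin{align*}
b_1(X_i)-b_1(X)\le k_1+k_2-k_2=k_1\le \dim(X_i)-\dim(X).
\end{align*}
The claim that compact group quotients do not raise dimension, given measure compatibility, is also delicate, but I would handle it by looking at tangent cones at regular points.
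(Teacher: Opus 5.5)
Your first two steps match the paper. Theorem \ref{thm:rank-stability} gives $k=k_1+k_2$. The freeness you flag as the main obstacle in step two holds outright, by exactly the paper's argument. If $\bar g\neq e$ fixes $\bar y$, then $gh^{-1}$ fixes a lift $y$ for some $h\in\R^{k_1}\times C$. But $\langle gh^{-1}\rangle$ is noncompact, because its image in $\Z^{k_2}$ is infinite, and this contradicts compactness of isotropy. So no finite-index replacement is needed, and $b_1(X)\ge k_2$.

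\textbf{The gap is in the dimension step.} The $X_i$ are only $\RCD(-1,N)$, so $Y$ is only $\RCD(-1,N)$, and neither Cheeger--Gromoll nor the splitting theorem applies to it.

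First, $Y$ need not be $\R^s\times\hat Y$ with $\hat Y$ compact. Take the maximal free Abelian cover of a fixed compact hyperbolic surface.

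More to the point, an $\R^{k_1}$ inside a cocompact Abelian $H$ does not force $Y$ to split off $\R^{k_1}$. Let $f>0$ be smooth, $1$-periodic and nonconstant, and let $X_i$ be the torus $(\R^2,dx^2+f(x)^2dy^2)/\langle (x,y)\mapsto(x+1,y),\ (x,y)\mapsto(x,y+1/i)\rangle$. These tori have uniformly bounded curvature $-f''/f$ and diameter, and $\hat X_i=\R^2$. The limit is $Y=(\R^2,dx^2+f(x)^2dy^2)$ with $H=\Z\times\R$, so $k_1=1$. Yet $Y$ is not flat, so it splits off no line.

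Hence your identities $Y=\R^{k_1}\times Y'$ and $X=Y'/(\Z^{k_2}\times C)$ are unavailable, as is the claim that $\R^{k_1}$ acts by translations on a Euclidean factor. With them goes your bound $\dim X\le\dim Y-k_1$. Globally you would instead face a quotient by a free proper noncompact $\R^{k_1}$-action on a non-split $\RCD(-1,N)$ space, and there is no ready dimension theory for that.

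\textbf{What is missing is localization to a nonnegatively curved setting.}
\begin{enumerate}
\item The paper picks $\hat q\in Y$ over a regular point $q\in X$, with $\dim X=l$.
\item It blows up to $(s_jY,\hat q,H)\to(\R^l\times Z,(0,z),G)$ over $(s_jX,q)\to(\R^l,0)$. The limit is now $\RCD(0,N)$, and $G=\R^{k'}\times C'$ acts transitively on $Z$.
\item The key input is \cite{PanYe2024}*{Corollary 3.3}: the $\R^{k_1}$ subgroup survives in the blow-up limit, i.e.\ $G$ contains a closed subgroup isomorphic to $\R^{k_1}$, so $k'\ge k_1$.
\item The splitting theorem applied to $Z$ gives $\dim Z\ge k'\ge k_1$, hence $\dim Y\ge l+k_1$.
\item Lower semicontinuity of rectifiable dimension then finishes the argument as you intended.
\end{enumerate}
Your closing remark about tangent cones at regular points points in this direction, but you aim it at the compact-quotient issue rather than at the missing splitting.
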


\begin{proof}
Passing to a subsequence if necessary, we may assume that $b_1(X_i)=k$ for some fixed integer $k$.

Let
$H_i = \pi_1(X_i,p_i)\big/[\pi_1(X_i,p_i),\pi_1(X_i,p_i)]$
be the Abelianization of $\pi_1(X_i,p_i)$, and let $(\hat{X}_i,\hat{p}_i,\hat{d}_i,H_i)$ denote the normal covering space of $(X_i,p_i,d_i)$ with deck transformation group $H_i$. Passing to a subsequence if necessary, Theorem~\ref{thm:rank-stability} yields the following diagram:
\begin{align*}
\xymatrix@C=2.5cm{
  (\hat{X}_i, \hat{p}_i, \hat{d}_i, H_i) \ar[d]_{\pi} \ar[r]^{\mathrm{GH}} & (Y, \hat{p}, d, H = \mathbb{R}^{k_1} \times \mathbb{Z}^{k_2} \times C) \ar[d]^{\pi} \\
  (X_i, p_i) \ar[r]^{\mathrm{GH}} & (X,p)  }
\end{align*}
where $C$ is compact and $k=k_1+k_2$.

Note that $\mathbb{R}^{k_1}\times \{0\} \times C \cong \mathbb{R}^{k_1}\times C$ is a subgroup of $H$. We next show that
$H / (\mathbb{R}^{k_1}\times C) \cong \mathbb{Z}^{k_2}$
acts freely on
$\overline{Y} \defeq Y / (\mathbb{R}^{k_1}\times C)$.
It follows that $\overline{Y}$ is a normal covering of $X = Y/G$ with deck transformation group $\mathbb{Z}^{k_2}$.

Suppose that a nontrivial element $\bar{g} \in \mathbb{Z}^{k_2}$ fixes some $\bar{y} \in \overline{Y}$. Then there exist lifts $g \in H$ and $y \in Y$ such that $\pi(g)=\bar{g}$, $\pi(y)=\bar{y}$, and $gy = hy$
for some $h \in \mathbb{R}^{k_1}\times C \subset H$. In particular, $gh^{-1}$ fixes $y$. Since $g$ projects to a nontrivial element in $\mathbb{Z}^{k_2}$, it follows that $gh^{-1}$ has infinite order, so the cyclic group $\langle gh^{-1} \rangle$ is noncompact. This contradicts the fact that the isotropy group at $y$ is compact. Therefore, the $\mathbb{Z}^{k_2}$-action is free.

By the universal property of Abelianization, $b_1(X) \ge k_2$. Let $\dim X = l$. Taking a sequence $s_i \to \infty$ and a point $\hat{q} \in Y$ such that $q = \pi(\hat{q}) \in X$ is regular, we obtain the following diagram:
\begin{align*}
\xymatrix@C=2.5cm{
  (s_i Y, \hat{q}, H) \ar[d]_{\pi} \ar[r]^{\mathrm{GH}} & (\R^l\times Z, (0,z) ,G) \ar[d]^{\pi} \\
  (s_i X, q) \ar[r]^{\mathrm{GH}} & (\R^l,0)  }
\end{align*}
Then $G = \mathbb{R}^{k'} \times C'$, where $C'$ is compact, and $G$ acts transitively on $Z$.
By \cite{PanYe2024}*{Corollary 3.3}, $G$ contains a closed subgroup isomorphic to $\mathbb{R}^{k_1}$. In particular, $k' \ge k_1$.

Since $Z$ is $\mathrm{RCD}(0,n-l)$ and $G$ acts transitively (hence cocompactly) on $Z$, the splitting theorem implies that $Z$ splits off an $\mathbb{R}^{k'}$-factor. Consequently,
\begin{align*}
\dim Z \ge k' \ge k_1,
\end{align*}
and hence
\begin{align*}
\dim Y \ge k_1 + l = k_1 + \dim X.
\end{align*}

For sufficiently large $i$, by the lower semicontinuity of the rectifiable dimension (cf.~\cite{Kitabeppu2019}), we have
\begin{align*}
\dim(X_i) = \dim(\hat{X}_i) \ge \dim Y.
\end{align*}
Therefore, a direct computation yields
\begin{align*}
    b_1(X) \ge k_2
    = b_1(X_i) - k_1
    \ge b_1(X_i) + \dim(X) - \dim(Y)
    \ge b_1(X_i) + \dim(X) - \dim(X_i),
\end{align*}
as desired.
\end{proof}

\section{Regular maps var center of mass}\label{sec-6}

The following lemma will be frequently used in this section.

\begin{lem}\label{lem-6.1}
   Suppose $(X,d,m)$ is an $\RCD(-\delta, N)$ space, $B_p(10)\subset X$. Suppose $f_1, f_2:\Omega\rightarrow\R$ are two Lipschitz functions belonging to the domain of Laplacian, and satisfy
   \begin{align}
       |\Delta f_i|_{L^{\infty}(B_{10}(p))}\leq L, \quad |\nabla f_i|_{L^{\infty}(B_{10}(p))}\leq L, \nonumber
   \end{align}
   \begin{align}
|f_1(x)-f_2(x)|\leq \delta \nonumber
\end{align}
for every $x\in \overline{B_{10}(p)}$.
Then
\begin{align}
\bbint_{B_{10}(p)}|\nabla(f_1-f_2)|^{2}\leq\Psi(\delta|L, N).\nonumber
\end{align}

\end{lem}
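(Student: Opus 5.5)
The plan is a cutoff-function integration by parts. We set $h=f_1-f_2$, so that $|h|\le\delta$ on $\overline{B_{10}(p)}$, $|\Delta h|\le 2L$ and $|\nabla h|\le 2L$ on $B_{10}(p)$. The estimate cannot be made uniform in $\delta$ all the way to the boundary $\partial B_{10}(p)$, because near the boundary the gradients of $f_1$ and $f_2$ need not be close. So the plan is to bound a slightly smaller ball by $\Psi(\delta|L,N)$ and to control the collar using the Lipschitz bound together with volume doubling.

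\emph{Interior bound.} Fix $s\in(0,1)$ and take a good cutoff $\phi$ (Mondino--Naber type, available on $\RCD(-\delta,N)$ spaces) with $\phi\equiv1$ on $B_{10-s}(p)$, $\supp\phi\subset B_{10}(p)$, $0\le\phi\le1$ and $|\nabla\phi|\le C(N)/s$. Integrating by parts against the local Laplacian gives
\begin{align*}
\int\phi^2|\nabla h|^2=-\int\phi^2 h\,\Delta h-2\int\phi h\langle\nabla\phi,\nabla h\rangle .
\end{align*}
Using $|h|\le\delta$, both terms on the right are bounded by
\begin{align*}
2L\delta\, m(B_{10}(p))+\frac{4C(N)L\delta}{s}\, m(B_{10}(p)) .
\end{align*}
Dividing by $m(B_{10}(p))$ yields
\begin{align*}
\frac{1}{m(B_{10}(p))}\int_{B_{10-s}(p)}|\nabla h|^2\le C(N)L\delta\Bigl(1+\frac{1}{s}\Bigr).
\end{align*}

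\emph{Collar bound.} On the annulus $B_{10}(p)\setminus B_{10-s}(p)$ we use only $|\nabla h|^2\le4L^2$. The remaining task is to show that
\begin{align*}
\frac{m\bigl(B_{10}(p)\setminus B_{10-s}(p)\bigr)}{m(B_{10}(p))}\le\Psi(s|N).
\end{align*}
This follows from Bishop--Gromov for $\RCD(-\delta,N)$ with $\delta\le1$: the comparison ratio $m(B_r)/V_{-\delta,N}(r)$ is nonincreasing, so $m(B_{10-s})/m(B_{10})\ge V_{-1,N}(10-s)/V_{-1,N}(10)$ (monotonicity in $\delta$ of this ratio, or simply continuity in $\delta\in[0,1]$), and the right-hand side tends to $1$ as $s\to0$. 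Hence the collar contributes at most $4L^2\,\Psi(s|N)$.

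\emph{Conclusion.} Adding the two bounds and choosing $s=\sqrt{\delta}$ gives $\Psi(\delta|L,N)$. This is the step I expect to be the main obstacle: the boundary annulus is where the argument is genuinely non-uniform, and it is handled only through the volume-annulus estimate. The remaining technical point is that the integration by parts is legitimate, which holds because $h$ lies in the domain of the local Laplacian on $B_{10}(p)$ and $\phi^2h$ is a Lipschitz test function with compact support in $B_{10}(p)$.
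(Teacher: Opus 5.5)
Your proof is correct, but it takes a different route from the paper. The paper uses no cutoff. It applies the Gauss--Green formula for $\mathrm{RCD}$ spaces (Bru\`e--Pasqualetto--Semola) directly on the ball, obtaining $\int_{B_{10}(p)}|\nabla h|^2=-\int_{B_{10}(p)}h\,\Delta h-\int_{\partial B_{10}(p)}h\,(\nabla h\cdot\nu)_{\mathrm{int}}\,d\mathrm{Per}$. It then bounds the normal trace by $2L$ and the ratio $\mathrm{Per}(B_{10}(p))/m(B_{10}(p))$ by volume comparison. This gives a linear bound $C(N)L\delta$ in two lines, but it relies on the theory of sets of finite perimeter and normal traces of bounded divergence-measure fields. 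Your argument uses only the weak definition of the local Laplacian tested against the compactly supported Lipschitz function $\phi^2 h$, plus Bishop--Gromov, so it is more elementary. The price is a weaker rate, roughly $C(N)(L+L^2)\sqrt{\delta}$ after choosing $s=\sqrt{\delta}$. Also, a Mondino--Naber cutoff is more than you need: you never use a Laplacian bound on $\phi$, so a distance-based Lipschitz cutoff suffices.

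Your side remark that the estimate cannot be made uniform up to $\partial B_{10}(p)$ is mistaken, and the collar is not really the obstacle: the boundary contribution always carries the small factor $h$. You can see this inside your own argument. By locality, $\nabla\phi$ vanishes a.e. outside the annulus $A_s=B_{10}(p)\setminus B_{10-s}(p)$, and Bishop--Gromov gives $m(A_s)\le C(N)s\,m(B_{10}(p))$. Hence the cross term satisfies $|2\int\phi h\langle\nabla\phi,\nabla h\rangle|\le 4L\delta\,\frac{C}{s}\,m(A_s)\le C(N)L\delta\,m(B_{10}(p))$, uniformly in $s$. Letting $s\to0$ by monotone convergence gives $\bbint_{B_{10}(p)}|\nabla h|^2\le C(N)L\delta$ with no separate collar estimate. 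This is precisely a smoothed version of the paper's Gauss--Green computation.
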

\begin{proof}
By the Gauss-Green formula on the $\RCD$-spaces (see \cite{BPS23}*{Theorem 5.2}),
\begin{align}
&\bbint_{B_{10}(p)}|\nabla(f_1-f_2)|^{2} dm\nonumber\\
=&-\bbint_{B_{10}(p)}(f_1-f_2) \Delta(f_1-f_2)dm- \frac{1}{\mathrm{m}(B_{10}(p))}\int_{\partial B_{10}(p)}(f_1-f_2) (\nabla(f_1-f_2)\cdot \nu_{E})_{\mathrm{int}}d\mathrm{Per}\nonumber\\
\leq &L\max_{B_{10}(p)}|f_1-f_2|+2L\frac{\mathrm{Per}(\partial B_{10}(p))}{m(B_{10}(p))}\max_{\partial B_{10}(p)}|f_1-f_2|\nonumber\\
\leq&\Psi(\delta),\nonumber
\end{align}
where we use $|(\nabla(f_1-f_2)\cdot \nu_{E})_{\mathrm{int}}|_{L^{\infty}(\partial B_{10}(p),\mathrm{Per}) }\leq|\nabla(f_1-f_2)|_{L^{\infty}(B_{10}(p),m)}\leq 2L$  in the first inequality and use the volume comparison in the last inequality.
\end{proof}

\subsection{Existence of regular maps and fibrations}

\begin{prop}\label{fibration-blowup}
Given $n\in \Z^+ $. Suppose $(\tilde{X}, \tilde{d}_{\tilde{X}}, \tilde{m})$ is a $\RCD(-\delta,n)$ space and $(\tilde{N}, \tilde{h})$ is a $k$-dimensional Riemannian manifolds $(k\leq n)$,
and

\begin{align}\label{5.8}
|\Rm_{\tilde{N}}|\leq \delta,
\end{align}
\begin{align}\label{5.9}
|\nabla^{m} \Rm_{\tilde{N}}|\leq 1, \quad m=1,2,3,4,
\end{align}
\begin{align}\label{5.10}
\inj_{\tilde{N}}\geq \delta^{-1}.
\end{align}
Denote by $\tilde{Z}=\tilde{N}\times \R^s$  (where $s\in\{0\}\cup \mathbb{Z}^{+}$, $s\leq n-k$), and let $\pi_{1}:\tilde{Z}\rightarrow \tilde{N}$, $\pi_{2}:\tilde{Z}\rightarrow \R^s$ be the projection maps.
Let $\tilde{\Omega}$ be an open set in $\tilde{X}$.

Suppose that there exists a map $\tilde{\Phi}:T_{200}(\tilde{\Omega})\rightarrow \tilde{Z}$, such that
\begin{align}\label{5.11}
&\text{ for every }p\in T_{100}(\tilde{\Omega}), \text{ there exists a $\delta$-splitting harmonic map $u_p:B_{100}(p)\rightarrow \R^k$}\\
&\text{such that $u_p$ is $\delta$-close to $\exp_q^{-1}\circ \pi_1\circ \tilde{\Phi}$, where $q=\pi_1\circ \tilde{\Phi}(p)$.}\nonumber
\end{align}

Suppose in addition that $v:T_{200}(\tilde{\Omega})\rightarrow \R^{s}$ is a continuous map belonging to the domain of Laplacian with
\begin{align}\label{5.123323}
|\Delta v^{\alpha}|_{L^{\infty}}\leq \delta,\quad \forall \alpha\in\{1,\ldots ,s\},
\end{align}
\begin{align}\label{5.12322311}
|v(x)-\pi_{2}\circ \tilde{\Phi}(x)|\leq \delta,
\end{align}
and for any $x\in T_{100}(\tilde{\Omega})$, we have
\begin{align}\label{5.123223}
\bbint_{B_{100}(x)}|\langle\nabla v^{\alpha}, \nabla v^{\beta} \rangle- \delta_{\alpha\beta}|\leq \delta.
\end{align}

Then for every sufficiently small $\delta$, there exists a Lipschitz map $\Xi: T_{20}(\tilde{\Omega})\rightarrow \tilde{Z}$ such that $\pi_{2}\circ \Xi=v$, and $\Xi$ is $\Psi_{0}(\delta|n)$-close to $\tilde{\Phi}$.
In addition, for any $p\in \tilde{\Omega}$,  $\bar{\Xi}_{p}:=(\exp_{q}^{-1},\mathrm{id})\circ \Xi|_{B_{10}(p)}$ satisfies
\begin{align}\label{5.1231}
\Lip \bar{\Xi}_{p}\leq C(n),
\end{align}
\begin{align}\label{5.1233}
|\Delta \bar{\Xi}_{p}^{\alpha}|_{L^{\infty}(B_{10}(p))}\leq C(n),
\end{align}
\begin{align}\label{5.1232}
\bbint_{B_{10}(p)}|\langle\nabla \bar{\Xi}_{p}^{\alpha}, \nabla \bar{\Xi}_{p}^{\beta} \rangle- \delta_{\alpha\beta}|\leq \Psi_{0}(\delta|n)
\end{align}
for every $\alpha,\beta\in\{1,\ldots,k+s\}$, where $q=\pi_1\circ \Xi(p)$.
\end{prop}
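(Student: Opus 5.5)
The plan is to build the $\tilde N$-component of $\Xi$ by a Riemannian center of mass of the local harmonic splitting maps $u_p$, and to take $v$ itself as the $\R^s$-component. Concretely, I would set $\Xi=(\Xi_1,v)$, which gives $\pi_2\circ\Xi=v$ at once.

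\textbf{Step 1 (center of mass).} Choose a maximal $1$-separated net $\{p_j\}\subset T_{100}(\tilde\Omega)$. By volume comparison, each point of $T_{20}(\tilde\Omega)$ lies in at most $C(n)$ of the balls $B_{10}(p_j)$. Take a Lipschitz partition of unity $\{\phi_j\}$ subordinate to $\{B_{10}(p_j)\}$, with $|\nabla\phi_j|\le C(n)$ and $|\Delta\phi_j|\le C(n)$. Such good cutoffs exist on $\RCD(-\delta,n)$ spaces by the Mondino--Naber construction, followed by normalization. For $q_j=\pi_1\circ\tilde\Phi(p_j)$, put $w_j=\exp_{q_j}\circ u_{p_j}$, which is defined on $B_{100}(p_j)$. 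Since $\inj_{\tilde N}\ge\delta^{-1}$ and $|\Rm_{\tilde N}|\le\delta$, the exponential maps are almost isometries on balls of radius $\gg100$. By \eqref{5.11}, the maps $w_j$ with overlapping supports are all $2\delta$-close to $\pi_1\circ\tilde\Phi$. I would then define $\Xi_1(x)$ as the unique minimizer of $y\mapsto\sum_j\phi_j(x)\,d_{\tilde N}(y,w_j(x))^2$. Existence, uniqueness and smooth dependence follow from convexity of $d^2$ on balls far below the injectivity radius. This gives $\Xi$ $\Psi_0(\delta|n)$-close to $\tilde\Phi$.

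\textbf{Step 2 (estimates in normal coordinates).} Fix $p\in\tilde\Omega$ and $q=\pi_1\circ\Xi(p)$, and work in the chart $\exp_q^{-1}$. Thanks to \eqref{5.8}, \eqref{5.9} and \eqref{5.10} (Appendix facts on normal coordinates), the defining equation $\sum_j\phi_j\exp_{y}^{-1}(w_j)=0$ becomes
\[
\exp_q^{-1}\circ\Xi_1=\sum_j\phi_j\,\bar u_j+E,
\]
where $\bar u_j=\exp_q^{-1}\circ w_j$. The transition maps $\exp_q^{-1}\circ\exp_{q_j}$ are $C^4$-close to affine isometries $A_j$, with error $\Psi(\delta)$ on the relevant ball. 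So $\bar u_j=A_j u_{p_j}+(\text{small }C^2\text{ error})$. The correction $E$ is a smooth function of $(\phi_j,\bar u_j)$ that vanishes to second order in the pairwise differences $\bar u_j-\bar u_l$. The Lipschitz bound \eqref{5.1231} then follows from the chain rule. For \eqref{5.1233}, I would expand $\Delta\big(\sum\phi_j\bar u_j\big)=\sum\phi_j\Delta\bar u_j+2\sum\langle\nabla\phi_j,\nabla\bar u_j\rangle+\sum\Delta\phi_j\,\bar u_j$. The last term can be rewritten as $\sum\Delta\phi_j(\bar u_j-\bar u_{j_0})$, because $\sum\phi_j\equiv1$; it is therefore bounded by $C(n)\delta$. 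The middle term is bounded by $C(n)$, and $\Delta\bar u_j$ is controlled by the smallness of second derivatives of the transition maps. Applying the RCD chain rule to $E$ gives terms bounded by $C(n)$.

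\textbf{Step 3 (almost orthonormality).} For \eqref{5.1232}, the key is that $A_ju_{p_j}$ and $A_lu_{p_l}$ are harmonic up to $\Psi$ errors and $C^0$-close on overlaps. Lemma \ref{lem-6.1} then gives $\bbint|\nabla(A_ju_{p_j}-A_lu_{p_l})|^2\le\Psi$. Hence $\nabla(\exp_q^{-1}\circ\Xi_1)$ is $L^2$-close to $\nabla(A_{j_0}u_{p_{j_0}})$. The terms involving $\nabla\phi_j$ are multiplied by differences of size $\delta$, and $\nabla E$ is also $L^1$-small. So the $k$ components form an almost orthonormal frame.

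The mixed terms $\langle\nabla\bar\Xi^a_p,\nabla v^\beta\rangle$ are the main obstacle. My plan is to compare the $(k+s)$-tuple $(A_{j_0}u_{p_{j_0}},v)$ with $\tilde\Phi$. Since $\tilde\Phi$ and the limit $\tilde N\times\R^s$ are close to $\R^{k+s}$ at scale $100$, the tuple is a $\Psi$-GHA onto a Euclidean ball. Using Theorem \ref{thm-split-GHisom}, an almost-splitting $(k+s)$-map exists there, and by Lemma \ref{lem-6.1} applied componentwise (both maps are close in $C^0$ with bounded Laplacian) its gradients are $L^2$-close to ours. This forces the mixed inner products to be $\Psi$-small on $B_{10}(p)$. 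If the comparison fails, I would instead use the fact that Lipschitz maps with bounded Laplacian whose sum and difference are GH-approximations onto $\R$ have almost-orthogonal gradients, via the polarization identity and Cheeger--Colding's $L^2$ Hessian estimates.
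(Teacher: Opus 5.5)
Your construction matches the paper's: a Riemannian center of mass of the maps $\exp_{q_j}\circ u_{p_j}$, weighted by a good partition of unity, with $\Xi=(\Xi_1,v)$, and Lemma~\ref{lem-6.1} used to compare overlapping charts. Your Step~2 is a sound variant of the paper's computation. You solve the critical-point equation once as a smooth function of finitely many parameters, write $\exp_q^{-1}\circ\Xi_1=\sum_j\phi_j\bar u_j+E$ with $E$ quadratic in the differences $\bar u_j-\bar u_l$, and apply the chain rule. The paper instead differentiates $\partial_y H(x,f(x))=0$ implicitly, $Df^\alpha=-K^{\alpha\beta}D\partial_{y^\beta}H$, and uses the Leibniz rule for the divergence.

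\textbf{The gap is the mixed block} $\langle\nabla\bar\Xi^a_p,\nabla v^\beta\rangle$. You start from ``$\tilde\Phi$ is close to $\R^{k+s}$ at scale $100$, so $(A_{j_0}u_{p_{j_0}},v)$ is a $\Psi$-GHA onto a Euclidean ball''. Nothing in the hypotheses gives this: $\tilde\Phi$ is an arbitrary map, \eqref{5.11} constrains only its $\tilde N$-component, and \eqref{5.123323}--\eqref{5.123223} constrain only $v$. Your polarization fallback needs the same joint information. In fact no argument can close this step from the stated hypotheses. Take $n=2$, $k=s=1$, $\tilde X=\R^2$, $\tilde N=\R$, $\tilde\Phi(x)=(x_1,x_1)$, $u_p=x_1-p_1$ and $v=x_1$. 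Every hypothesis holds with zero error. Yet any $\Xi$ with $\pi_2\circ\Xi=v$ that is $\Psi$-close to $\tilde\Phi$ and satisfies \eqref{5.1231}, \eqref{5.1233} has $\bar\Xi^1_p$ uniformly $\Psi$-close to $x_1-p_1$. By (the proof of) Lemma~\ref{lem-6.1}, $\nabla\bar\Xi^1_p$ is then $L^2$-close to $\nabla v$, so the mixed average in \eqref{5.1232} is close to $1$.

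\textbf{The paper's proof has the same lacuna.} It asserts \eqref{5.133567} via a ``standard contradiction argument'', which needs exactly this kind of joint information. Where the proposition is actually used with $s>0$ (the proof of Proposition~\ref{prop-eq-fibration}), \eqref{5.133567} is derived separately from condition (c') before the proposition is invoked.

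\textbf{Repair.} You must add a joint hypothesis. The most economical choice is \eqref{5.133567} for the given charts, i.e.\ $\bbint_{B_{100}(p)}|\langle\nabla u_p^i,\nabla v^\alpha\rangle|\le\delta$. With it, your transfer step finishes exactly as in the paper's last step: Lemma~\ref{lem-6.1} makes $\nabla\bar\Xi^a_p$ $L^2$-close to $\nabla\bar u^a_{j_0}$, and the transition map $\exp_q^{-1}\circ\exp_{q_{j_0}}$ has bounded differential. Alternatively, you could assume $\tilde\Phi$ is a local $\delta$-GHA and go through Theorem~\ref{thm-split-GHisom}. In that case note that the splitting map it produces is $C^0$-close to your tuple only after composing with a rigid motion of $\R^{k+s}$, since almost isometries of Euclidean balls are close to isometries. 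The Gram condition is $O(k+s)$-invariant, so that suffices. But this hypothesis is stronger than what is available in case (c1) of Proposition~\ref{prop-eq-fibration}, where balls need not be close to $\R^{k+s}$.

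\textbf{A smaller omission.} $v$ is only assumed continuous. So the $v$-components of \eqref{5.1231}, and the Lipschitz hypothesis of Lemma~\ref{lem-6.1} whenever you apply it to $v$, require an $L^\infty$ gradient estimate derived from $|\Delta v|\le\delta$ and \eqref{5.123223}. The paper uses Honda--Peng's estimate for this.
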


\begin{prop}\label{fibration-smooth}
Suppose $\tilde{X}$ in Proposition \ref{fibration-blowup} is an $n$-dimensional manifold with $\Ric \geq -\delta$, then $\pi_1\circ \Xi$ is smooth.
Suppose in addition, $\tilde{X}$ satisfies the $(\Phi_{0}, 1; k+s,\delta)$-generalized Reifenberg condition (where $s\in\{0\}\cup \mathbb{Z}^{+}$, $s\leq n-k$), then if $\delta$ is sufficiently small (depending on $n$ and $\Phi_{0}$), $d\Xi$ is non-degenerated.

Furthermore, under the above assumptions, the following two conclusions hold respectively:
\begin{description}
  \item[(A)] if in addition $\tilde{\Omega}=B_{p_0}(L)$ for some $L$,  $\tilde{\Phi}:T_{200}(\tilde{\Omega})\rightarrow B_{(q_0,0^{s})}(L+200)\subset\tilde{Z}$ is a $\delta$-Gromov-Hausdorff approximation and $v(p_0)=0^s$, then there exists an open set $U$ such that $ U \subset \tilde{\Omega}\subset T_2(U)$ so that $\Xi(U)=B_{(q_0,0^{s})}(L-1)$, and $\Xi:U\rightarrow B_{(q_0,0^{s})}(L-1)$ is a fiberation and a $\Psi_{0}(\delta|n)$-Gromov-Haudorff approximation, and for any $p_{1},p_{2}\in U$ with $d_{\tilde{X}}(p_1,p_2)\leq1$, it holds
      \begin{equation}\label{BiHolder}
	(1-\Psi(\delta))d_{\tilde{X}}(p_{1},\Xi^{-1}(\Xi(p_{2})))^{1+\Psi(\delta)}\le d_{\tilde{Z}}(\Xi(p_{1}),\Xi(p_{2}))\le (1+\Psi(\delta))d_{\tilde{X}}(p_{1},\Xi^{-1}(\Xi(p_{2}))),
\end{equation}
where $\Psi(\delta)=\Psi(\delta|n,\Phi_{0})$.
  \item[(B)] if $s=0$,  $\tilde{\Omega}=\tilde{X}$, $\tilde{Z}=\tilde{N}$ are both compact manifolds, and $\tilde{\Phi}:\tilde{X}\rightarrow \tilde{N}$ satisfies
  \begin{align}\label{5.111}
\text{ for every }q\in \tilde{N}, \text{ there exists some }p\in\tilde{X} \text{ such that }d_{\tilde{N}}(q,\tilde{\Phi}(p))<\delta,
\end{align}
then $\Xi:\tilde{X}\rightarrow \tilde{N}$ is a surjective fibration and $\Psi_{0}(\delta|n)$-close to $\tilde{\Phi}$; and for any $p_{1},p_{2}\in \tilde{X}$ with $d_{\tilde{X}}(p_1,p_2)\leq1$, (\ref{BiHolder}) holds.
\end{description}
\end{prop}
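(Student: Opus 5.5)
The statement to prove is Proposition \ref{fibration-smooth}. The plan is to work chart by chart through the local representation $\bar{\Xi}_{p}=(\exp_q^{-1},\mathrm{id})\circ\Xi|_{B_{10}(p)}$ from Proposition \ref{fibration-blowup}, and then pass from local regularity to global fibration statements.

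\emph{Smoothness.} In Proposition \ref{fibration-blowup}, $\pi_1\circ\Xi$ should be built as a center of mass in $\tilde N$. Concretely, $\pi_1\circ\Xi(x)$ is the zero of
\[
y\mapsto \sum_j \phi_j(x)\,\exp_y^{-1}\!\bigl(\exp_{q_j}(u_{p_j}(x))\bigr),
\]
where $\{\phi_j\}$ is a partition of unity subordinate to a cover by balls $B_{100}(p_j)$. On a manifold with $\Ric\ge-\delta$ the harmonic maps $u_{p_j}$ are smooth. The bounds (\ref{5.8})--(\ref{5.10}) make $\exp$ smooth with controlled derivatives on the relevant scale, and the defining equation is nondegenerate in $y$ because the points $\exp_{q_j}(u_{p_j}(x))$ are $\Psi(\delta)$-close. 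The implicit function theorem then gives smoothness of $\pi_1\circ\Xi$, provided the cutoffs $\phi_j$ are taken smooth (for instance, mollified distance functions on the manifold).

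\emph{Non-degeneracy.} Fix $p$. By (\ref{5.1231})--(\ref{5.1232}), $\bar\Xi_p$ is a $C^1$ map into $\R^{k+s}$ with bounded Laplacian and almost orthonormal gradients on $B_{10}(p)$. For the $\R^s$-components this holds because $\pi_2\circ\Xi=v$, and $v$ is smooth by elliptic regularity on the manifold. Under the $(\Phi_0,1;k+s,\delta)$-generalized Reifenberg condition, $B_{2}(p)$ is $(\Psi(\delta),k+s)$-Euclidean, since $\Xi$ is close to $\tilde\Phi$, which maps to the Euclidean-like space $\tilde Z$. So Theorem \ref{thm-trans-GRC} applies after rescaling and gives that $d\bar\Xi_p$, and hence $d\Xi$, is non-degenerate at every point of $B_1(p)$. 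The main obstacle is this step. Theorem \ref{thm-trans-GRC} requires the Laplacian bound $L$ and the almost orthonormality at scale $2$, while we only have them on $B_{10}(p)$ with constant $C(n)$; we therefore take $L=C(n)$ so that $\delta_0$ depends only on $n$ and $\Phi_0$. We must also check that the $\delta$-Euclidean hypothesis genuinely follows from the Gromov--Hausdorff closeness together with the splitting maps via Theorem \ref{thm-split-GHisom}.

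\emph{Case (A).} Set $U=\Xi^{-1}(B_{(q_0,0^s)}(L-1))\cap\tilde\Omega$. Since $\Xi$ is $\Psi$-close to a $\delta$-GHA, we get $\tilde\Omega\subset T_2(U)$. Surjectivity onto the ball follows from item (3) of Theorem \ref{thm-trans-GRC}, which gives local openness with uniform radius $1/2$, combined with connectedness of the target ball. Properness of $\Xi|_U$ follows from the GHA property, since preimages of compact sets are bounded, and completeness is available if needed. A proper surjective submersion is a fibration by Ehresmann's lemma. The bi-Hölder estimate (\ref{BiHolder}) is item (2) of Theorem \ref{thm-trans-GRC} applied in the chart $\bar\Xi_{p_1}$, using that $\exp_q^{-1}$ is almost an isometry by (\ref{5.8}) and (\ref{5.10}).

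\emph{Case (B).} Here $\tilde X$ is compact, so $\Xi$ is proper. The image $\Xi(\tilde X)$ is closed, and it is open by the local surjectivity from Theorem \ref{thm-trans-GRC}(3); it is nonempty by (\ref{5.111}). Hence $\Xi$ is onto the connected manifold $\tilde N$, and Ehresmann's lemma gives a fibration. The bi-Hölder estimate follows exactly as in case (A).
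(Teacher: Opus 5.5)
Your proposal is correct and follows the paper's proof essentially step for step. Smoothness of $\pi_1\circ\Xi$ comes from the smooth harmonic $u_p$, smooth cut-offs and the implicit function theorem; non-degeneracy comes from applying Theorem \ref{thm-trans-GRC} to $\bar{\Xi}_p$; surjectivity comes from its local-surjectivity item; and the bi-H\"older bound comes from its item (2). The only difference is how you get surjectivity: you use an open--closed argument, whereas the paper covers each target point directly from a nearby $\tilde{\Phi}(p_\lambda)$, using (\ref{5.111}) in case (B).

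Two small corrections. First, $v$ only has bounded Laplacian, so it is $C^{1,\alpha}$ rather than smooth; this is presumably why the statement only claims $\pi_1\circ\Xi$ is smooth, and $C^1$ is all Theorem \ref{thm-trans-GRC} needs. Second, the $(\Psi(\delta),k+s)$-Euclidean hypothesis must come from the splitting maps $(u_p,v)$ via Theorem \ref{thm-split-GHisom}, as you suspected, and not from closeness to $\tilde{\Phi}$, which is assumed to be a Gromov--Hausdorff approximation only in case (A).
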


\begin{proof}[Proof of Proposition \ref{fibration-blowup}]
Firstly, we construct $\Xi$ satisfying (\ref{5.1231})-(\ref{5.1232}) by the technique of center of mass.
Note that in this step, we do not require that $\tilde{M}$ satisfies the generalized Reifenberg condition.

In the proof, we use $\tilde{\Phi}_{1}=\pi_{1}\circ\tilde{\Phi}$, $\tilde{\Phi}_{2}=\pi_{2}\circ\tilde{\Phi}$ for short.
We use $C$, $C_{1}$, $C_{2}, \ldots$ (respectively, $\Psi$, $\Psi_{1}$, $\Psi_{2}, \ldots$) to denote positive constants (respectively, positive functions with $\lim_{s\downarrow0}\Psi(s)=0$ etc.) depending only on $n$.
The values of $C$ or $\Psi$ may change from line to line.

Since $(\tilde{X}, \tilde{d}, \tilde{m})$ is a $\RCD(-\delta,n)$ space and (\ref{5.123323}) and (\ref{5.123223}) hold, by \cite{HondaPeng2023}*{Corollary 3.3}, it is easy to see that if $\delta$ is sufficiently small (depending only on $n$), then
\begin{align}\label{5.123123}
\Lip v\leq C_{1}
\end{align}
holds on $T_{50}(\tilde{\Omega})$.

We find a maximal set $\mathcal{A}=\{p_{\lambda}|\lambda=1,\ldots,\Lambda\}\subset T_{50}(\tilde{\Omega})$ such that $d_{\tilde{X}}(p_{\lambda},p_{\lambda'})\geq 2$ for any $\lambda\neq\lambda'$.
Then $\{B_{2}(p_{\lambda})\}_{p_{\lambda}\in \mathcal{A}}$ covers $T_{50}(\tilde{\Omega})$.

Suppose $\delta$ is sufficiently small (depending only on $n$).
By volume comparison theorem, there exists $C_{2}>0$ so that, for any $p\in \tilde{\Omega}$, there exist at most $C_{2}$ points $\{p_{i_{1}},p_{i_{2}},\ldots,p_{i_{r}}\}\subset \mathcal{A}$, such that $p\in B_{4}(p_{l})$ for $l\in \{i_{1},\ldots,i_{r}\}$, and $p\notin B_{4}(p_{s})$ for $p_{s}\in \mathcal{A}\setminus \{p_{i_{1}},\ldots,p_{i_{r}}\}$.
In addition, for each $\lambda\in \{1,\ldots,\Lambda\}$, we fix a good cut-off function $\Psi_{\lambda}:\tilde{X}\rightarrow \R^{+}$ such that $\supp \Psi_{\lambda} \subset B_{4}(p_{\lambda})$, $\Psi_{\lambda}\equiv 1$ on $B_{2}(p_{\lambda})$, $|\nabla \Psi_{\lambda}|_{L^{\infty}}+|\Delta\Psi_{\lambda}|_{L^{\infty}}\leq C_{3}$.
Let $\varphi_{\lambda}=(\sum_{s=1}^{\Lambda}\Psi_{s})^{-1}\Psi_{\lambda}$. It is easy to see that
\begin{align}
\sum_{\lambda=1}^{\Lambda}\varphi_{\lambda}\equiv 1 \quad \text{on} \quad T_{50}(\tilde{\Omega}),
\end{align}
\begin{align}\label{5.12}
|\nabla \varphi_{\lambda}|_{L^{\infty}}+|\Delta\varphi_{\lambda}|_{L^{\infty}}\leq C_{4}.
\end{align}

Since (\ref{5.8})-(\ref{5.10}) hold, by Appendix \ref{sec-8}, for each $q\in \tilde{N}$,
the exponential map $E_{q}\overset{\Delta}{=}\exp_{q}:(B_{\delta^{-\frac{1}{4}}}(0^k),0^k)\to (B_{\delta^{-\frac{1}{4}}}(q),q)$,
and its inverse map, which is denoted by $F_{q}:(B_{\delta^{-\frac{1}{4}}}(q),q)\to (B_{\delta^{-\frac{1}{4}}}(0^k),0^k)$, are both $\Psi(\delta)$-Gromov-Hausdorff approximations and $(1+\Psi(\delta))$-Lipschitz maps, and under the normal coordinate $F_{q}$,

\begin{align}\label{5.160}
\left|\frac{\partial^{t} }{\partial y_{i_{1}}^{j_{1}}\ldots\partial y_{i_{t}}^{j_{t}}}d^{2}_{\tilde{N}}(y_{1},y_{2})\right|\leq {C}_{5}, \quad t=1, 2, 3, \quad i_{r}=1\text{ or }2,
\end{align}
\begin{align}\label{5.162}
\left|\frac{\partial^{2} }{\partial y_{1}^{i}\partial y_{1}^{j}}d^{2}_{\tilde{N}}(y_{1},y_{2})-2\delta_{ij}\right|\leq \Psi_{2}(\delta),
\end{align}
\begin{align}\label{5.161}
\left|\frac{\partial^{2} }{\partial y_{1}^{i}\partial y_{2}^{j}}d^{2}_{\tilde{N}}(y_{1},y_{2})+2\delta_{ij}\right|\leq \Psi_{2}(\delta),
\end{align}
for every $y_{1},y_{2}\in B_{500}(q)$.

Given any $p_{\lambda} \in \mathcal{A}$, we use $E_{\lambda}$ to denote $E_{\tilde{\Phi}_{1}(p_{\lambda})}$ and use $F_{\lambda}$ to denote the inverse of $E_{\lambda}$ for simplicity.

Because (\ref{5.11}) (\ref{5.123323}) (\ref{5.12322311}) and (\ref{5.123223}) hold, for each $\lambda$, there exists a map $u_\lambda: (B_{50}(p_\lambda),p_\lambda)\to (\R^k,0^k)$, so that $(u_{\lambda},v)|_{B_{50}(p_\lambda)}$ is $\Psi_{3}(\delta)$-close to $(F_\lambda,\mathrm{id}_{\mathbb{R}^s})\circ \tilde{\Phi}|_{B_{50}(p_\lambda)}$, and
\begin{align}\label{5.131}
\Delta u_\lambda^{i} =0,
\end{align}
\begin{align}\label{5.132}
\mathrm{Lip} u^{i}_\lambda \leq C_6,
\end{align}
\begin{align}\label{5.133}
\bbint_{B_{50}(p_{\lambda})}|\langle\nabla u_\lambda^{i}, \nabla u_\lambda^{j} \rangle- \delta_{ij}|\leq \Psi_{3}(\delta)
\end{align}
for $i,j\in\{1,\ldots,k\}$, and
\begin{align}\label{5.133567}
\bbint_{B_{50}(p_{\lambda})}|\langle\nabla u_\lambda^{i}, \nabla v^{j} \rangle|\leq \Psi_{3}(\delta)
\end{align}
for $i\in\{1,\ldots,k\}$, $j\in\{1,\ldots,s\}$,
where (\ref{5.133567}) can be obtained using a standard contradiction argument based on the theory of $W^{1,2}$ convergence.

Take $f_\lambda:=E_\lambda\circ u_\lambda$.
Note that $d_{\tilde{N}}(f_{\lambda}(x),\tilde{\Phi}_{1}(x))\leq \Psi(\delta)$ for any $x\in B_{50}(p_{\lambda})$.
Define an energy function $H:T_{20}(\tilde{\Omega})\times \tilde{N}\to\R$ by
\begin{align}
	H(x,y)=\frac{1}{2}\sum_{\lambda} \varphi_\lambda(x) d_{\tilde{N}}^{2}(f_\lambda(x),y).
\end{align}

By (\ref{5.8}), (\ref{5.10}) and the convex radius estimate, for any fixed $x\in T_{20}(\tilde{\Omega})$, $H(x,\cdot)$ is strictly convex on $B_5(\tilde{\Phi}_{1}(x))$.
Furthermore, because
$$H(x,\tilde{\Phi}_{1}(x))=\frac{1}{2}\sum_{\lambda} \varphi_\lambda(x) d_{\tilde{N}}^{2}(f_\lambda(x),\tilde{\Phi}_{1}(x))\leq \Psi(\delta),$$
and $H(x,y)>8$ for $y\notin B_5(\tilde{\Phi}_{1}(x))$, we can see that $H(x,\cdot)$ has a unique minimum point, which is denoted by $f(x)$. Then we define $\Xi(x)=(f(x),v(x))$.
It is easy to check that $f$ is $\Psi(\delta)$-close to $\tilde{\Phi}_{1}$ on $T_{20}(\tilde{\Omega})$.
Combing this fact with (\ref{5.12322311}), it is obvious that $\Xi$ is $\Psi(\delta)$-close to $\tilde{\Phi}$ on $T_{20}(\tilde{\Omega})$.

Given any fixed $p\in \tilde{\Omega}$, let $q=f(p)$. We will abuse the notation that $y=(y^{i})$ stands for both a point around $q$ and its coordinate with respect to the normal coordinate $F_{q}$.

Suppose that $\lambda \in \mathcal{A}$ satisfies $\varphi_{\lambda}(x)\neq 0$ for some $x\in B_{10}(p)$, then by the triangle inequality, $B_{10}(p)\subset B_{24}(p_{\lambda})$.
Hence $B_{\frac{21}{2}}(q)\subset B_{25}(\tilde{\Phi}_{1}(p_{\lambda}))$.
For any such $\lambda$, we consider the coordinate transformation matrix $\eta_{\lambda}:F_{\lambda}(B_{\frac{21}{2}}(q))\rightarrow F_{q}(B_{\frac{21}{2}}(q))$.
i.e. $\eta_{\lambda}(\vec{y})=F_{q}(E_{\lambda}(\vec{y}))$.
By Appendix \ref{sec-8},
we have
\begin{align}\label{5.43}
\left|\frac{\partial \eta_{\lambda}^{\nu}}{\partial y^{\xi}_{\lambda}}\right|\leq C_{7},
\quad
\left|\frac{\partial^{2}\eta_{\lambda}^{\xi}}{\partial y_{\lambda}^{\omega} \partial y_{\lambda}^{\theta}}\right|\leq C_{7}.
\end{align}
Moreover, the matrix $(\frac{\partial \eta_{\lambda}^{\nu}}{\partial y^{\xi}_{\lambda}})_{\nu\xi}$ can be written in the form $(B^{(\lambda)}+A^{(\lambda)})_{\nu\xi}$, where the matrix $B^{(\lambda)}\in \mathrm{SO}(k)$, while the components of the matrix $A^{(\lambda)}$ satisfy $|A^{(\lambda)}_{\nu\xi}|\leq \Psi_4(\delta)$.

Let $\tilde{f}_{\lambda}:B_{10}(p)\rightarrow \mathbb{R}^{k}$ be given by
\begin{align}\label{5.37}
\tilde{f}_{\lambda}=\eta_{\lambda}\circ u_{\lambda}=F_{q}\circ f_{\lambda},
\end{align}
then by the chain rule,
it is easy to see that
\begin{align}\label{5.5332}
|\Lip \tilde{f}_{\lambda}|\leq C_{8},
\end{align}
\begin{align}\label{5.5333}
|\Delta \tilde{f}_{\lambda}^{\nu}|_{L^{\infty}(B_{10}(p))}\leq C_{8},
\end{align}
and
\begin{align}\label{5.45}
&\bbint_{B_{10}(p)}\bigl| \langle\nabla \tilde{f}^{\alpha}_{\lambda},\nabla \tilde{f}^{\beta}_{\lambda}\rangle-\delta_{\alpha\beta}\bigr|\\
=&\bbint_{B_{10}(p)}\bigl|(A^{(\lambda)}_{\alpha\xi}+B^{(\lambda)}_{\alpha\xi}) (A^{(\lambda)}_{\beta\theta}+B^{(\lambda)}_{\beta\theta}) \langle\nabla u^{\xi}_{\lambda},\nabla u^{\theta}_{\lambda}\rangle-\delta_{\alpha\beta}\bigr|\nonumber\\
\leq&\bbint_{B_{10}(p)}\bigl|B^{(\lambda)}_{\alpha\xi} B^{(\lambda)}_{\beta\theta} \langle\nabla u^{\xi}_{\lambda},\nabla u^{\theta}_{\lambda}\rangle-\delta_{\alpha\beta}\bigr|+\Psi(\delta)\nonumber\\
\leq &\bbint_{B_{10}(p)}|B^{(\lambda)}_{\alpha\xi} B^{(\lambda)}_{\beta\theta}|\bigl| \langle\nabla u^{\xi}_{\lambda},\nabla u^{\theta}_{\lambda}\rangle-\delta_{\xi\theta}\bigr|+\Psi(\delta)\nonumber\\
\leq &\Psi(\delta).\nonumber
\end{align}

Since $f_{\lambda}$ is $\Psi(\delta)$-Gromov-Hausdorff close to $\tilde{\Phi}_{1}$, it is easy to see $\tilde{f}_{\lambda}$ is $\Psi(\delta)$-Gromov-Hausdorff close to $F_{q}\circ \tilde{\Phi}_{1}$.
Thus, for those $\lambda, \lambda'\in \mathcal{A}$ such that $\supp \varphi_{\lambda}\cap \supp \varphi_{\lambda'} \cap B_{10}(p)\neq \emptyset$, we have
\begin{align}
|\tilde{f}_{\lambda}(x)-\tilde{f}_{\lambda'}(x)|\leq \Psi(\delta)
\end{align}
for every $x\in B_{10}(p)$, and then by Lemma  \ref{lem-6.1}, we have
\begin{align}\label{5.46}
\bbint_{B_{10}(p)}|\nabla(\tilde{f}^{\nu}_{\lambda}-\tilde{f}^{\nu}_{\lambda'})|^{2}
\leq\Psi(\delta).
\end{align}
Thus,
\begin{align}\label{5.67}
&\bbint_{B_{10}(p)}\bigl| \langle\nabla \tilde{f}^{\alpha}_{\lambda},\nabla \tilde{f}^{\beta}_{\lambda'}\rangle-\delta_{\alpha\beta}\bigr|\\
\leq &\bbint_{B_{10}(p)}\bigl| \langle\nabla \tilde{f}^{\alpha}_{\lambda},\nabla \tilde{f}^{\beta}_{\lambda}\rangle-\delta_{\alpha\beta}\bigr|+\bbint_{B_{10}(p)}\bigl| \langle\nabla \tilde{f}^{\alpha}_{\lambda},\nabla (\tilde{f}^{\beta}_{\lambda'}-\tilde{f}^{\beta}_{\lambda})\rangle\bigr|\nonumber\\
\leq&\Psi(\delta).\nonumber
\end{align}

Denote $\bar{f}_{p}\overset{\Delta}{=}(f^{1},\ldots,f^{k})=F_{q}\circ f:B_{10}(p)\rightarrow \R^{k}$.
By (\ref{5.11}) and Cheeger-Colding theory, it is easy to see that $\bar{f}_{p}(B_{10}(p)) \subseteq B_{11}(\bar{f}_{p}(p))$.
Note that the map $y=\bar{f}_{p}=(f^{1},\ldots,f^{k})$ satisfies the equations
\begin{align}\label{5.14}
\frac{\partial}{\partial y^\alpha }H(x,y)=0,\qquad \alpha=1,\ldots,k.
\end{align}
It is obvious that $\frac{\partial H}{\partial y^\alpha }(x,y)$ is $W^{1,2}$ with respect to the $x$-variable, smooth with respect to the $y$-variable for $y\in B_{20}(f(x))$, and Lipschitz on $(x,y)\in T_{20}(\tilde{\Omega})\times \tilde{N}$.

In the following, we will prove that there exist $C_{9}$ and $\Psi_{9}$ such that
\begin{align}\label{5.18-1}
\mathrm{Lip} \bar{f}_p \leq C_{9},
\end{align}
\begin{align}\label{5.18-3}
\bbint_{B_{10}(p)}|\langle\nabla {f}^{i}, \nabla {f}^{j} \rangle- \delta_{ij}|\leq \Psi_{9}(\delta),
\end{align}
\begin{align}\label{5.18-2}
|\Delta {f}^{i}|_{L^{\infty}(B_{10}(p))}\leq C_{9}
\end{align}
for every $i,j\in\{1,\ldots,k\}$.

We first prove (\ref{5.18-1}), which is the base for (\ref{5.18-3}) and (\ref{5.18-3}).






By (\ref{5.162}), we have
\begin{align}\label{5.125}
\left| \frac{\partial^{2}H}{\partial y^{\alpha}\partial y^{\beta}}(x,y)-\delta_{\alpha\beta} \right|
=\left| \sum_{\lambda}\varphi_{\lambda}\left[\frac{1}{2}\frac{\partial^{2}d^{2}_{f_{\lambda}(x)}}{\partial y^{\alpha}\partial y^{\beta}}(y)-\delta_{\alpha\beta} \right]\right|
\leq \Psi(\delta)
\end{align}
for any $(x,y)\in B_{10}(p)\times B_{25}(q)$.
Here and in the following, we use $d(y_{1}, y_{2})$ to denote $d_{\tilde{N}}(y_{1}, y_{2})$ for short, and use $d_{y_{1}}(y_{2})$ to denote $d_{\tilde{N}}(y_{1}, y_{2})$ as a function of $y_{2}$.

By (\ref{5.12}), (\ref{5.160}) and (\ref{5.132}), each $\frac{\partial}{\partial y^\alpha } H$ is $C(n)$-Lipschitz on $B_{10}(p)\times B_{25}(q)$.
For any $x_1,x_2\in B_{10}(p)$, let $y_1=\bar{f}_{p}(x_1)$, $y_2=\bar{f}_{p}(x_2)$. Then we have
\begin{align}\label{7.28}
\left|\frac{\partial}{\partial y^\alpha } H(x_1,y)-\frac{\partial}{\partial y^\alpha } H(x_2,y)\right|\leq C(n)\tilde{d}_{\tilde{X}}(x_1,x_2)
\end{align}
for any $y\in B_{25}(q)$ and $\alpha=1,\ldots,k$.

Suppose $y_1-y_2\neq 0$. Denote $\vec{P}=(\omega^1,\ldots,\omega^k):=\frac{1}{|y_2-y_1|}(y_2-y_1)\in \R^k$, $\bar{H}:=\Sigma_{\alpha=1}^{k}\omega^\alpha \frac{\partial}{\partial y^\alpha }H$.
By (\ref{5.14}), we have
\begin{align}\label{7.29}
\bar{H} (x_1,y_1)-\bar{H} (x_1,y_2)=-[\bar{H} (x_1,y_2)-\bar{H} (x_2,y_2)].
\end{align}
According to Lagrange's mean value theorem, there exists some $y'$ lying in the segment between $y_1$ and $y_2$, such that $|\bar{H} (x_1,y_1)-\bar{H} (x_1,y_2)|=|\nabla_{\vec{P}}\bar{H}(x_1,y')|\cdot|y_1-y_2|$, where $\nabla_{\vec{P}}\bar{H}$ is the directional derivative along $\vec{P}$.
Note that by (\ref{5.125}), $|\nabla_{\vec{P}}\bar{H}(x_1,y')|\geq \frac{1}{2}$.
Combing it with (\ref{7.28}) and (\ref{7.29}), we have
\begin{align}
|y_{1}-y_2|\leq 2C(n)\tilde{d}_{\tilde{X}}(x_1,x_2),
\end{align}
which implies (\ref{5.18-1}).

Since $\tilde{X}$ satisfies doubling property and Poincare inequality, according to the generalized Rademacher theorem (\cite{C99}), $f^{1},\ldots,f^{k}$ are differentiable almost everywhere.

In the following, given a Lipschitz function $f':\tilde{X}\rightarrow \R$, we use $D f'$ to denote the differential of $f'$, i.e. the $L^{\infty}$-section of $T^{*}\tilde{X}$  determined by  $f'$ (see \cite{C99}).

For any $\lambda\in \mathcal{A}$, under the intermediate invariant $y_{1}=\tilde{f}_{\lambda}(x)$ introduced in (\ref{5.37}), if $\tilde{f}_{\lambda}(x)$ is differentiable at $x \in B_{10}(p)$, and $y$ satisfies $d_{\tilde{N}}(f_{\lambda}(x),y)<100$, then by the chain rule, at $x \in B_{10}(p)$,
\begin{align}\label{7.42}
D \frac{\partial d^{2}(f_{\lambda}(x),y)}{\partial y^{s}}= \frac{\partial^{2}d^{2}(y_{1},y_{2})}{\partial y_{1}^{\nu}\partial y_{2}^{s}} D \tilde{f}_{\lambda}^{\nu}(x),
\end{align}
where we use the smoothness of $d_{\tilde{N}}(y_1,y_2)$ if $d_{\tilde{N}}(y_1,y_2)<100$.

Thus for any $x \in B_{10}(p)$ at where all of $\varphi_{\lambda}$, $f_{\lambda}$, ($\lambda\in \mathcal{A}$) are differentiable, by the Leibnitz rule of differential and (\ref{7.42}),
\begin{align}\label{5.23}
D(\frac{\partial H}{\partial y^{s}})=\frac{1}{2}\sum_{\lambda}\left[\frac{\partial d^{2}(f_{\lambda}(x),y)}{\partial y^{s}} D \varphi_{\lambda}+\varphi_{\lambda} \frac{\partial^{2}d^{2}(y_{1},y_{2})}{\partial y_{1}^{\nu}\partial y_{2}^{s}}\biggl|_{\substack{y_{1}=\tilde{f}_{\lambda}(x)\\y_{2}=\bar{f}_{p}(x)}} D \tilde{f}_{\lambda}^{\nu}\right]
\end{align}
holds at $(x,\bar{f}_{p}(x))$.

Let $x \in B_{10}(p)$ be any point at where all of $\varphi_{\lambda}$, $f_{\lambda}$, ($\lambda\in \mathcal{A}$), $f^{1},\ldots,f^{k}$ are differentiable.
Suppose in addition that there exist a Borel set $U\ni x$ and a Lipschitz map $\phi: X \rightarrow \R^{n'}$ forming a chart around $x$, whose existence is ensured by \cite{C99}*{Theorem 4.38}.
We consider $U\ni x'\rightarrow x$. By (\ref{5.14}), for each $\alpha=1,\ldots,k$, we have
\begin{align}\label{7.29-11}
0=\left[\frac{\partial H}{\partial y^\alpha }(x',\bar{f}_{p}(x'))-\frac{\partial H}{\partial y^\alpha }(x',\bar{f}_{p}(x))\right]+\left[\frac{\partial H}{\partial y^\alpha }(x',\bar{f}_{p}(x))-\frac{\partial H}{\partial y^\alpha }(x,\bar{f}_{p}(x))\right].
\end{align}
According to Lagrange's mean value theorem, there exists some $\xi$ lying in the segment between $\bar{f}_{p}(x)$ and $\bar{f}_{p}(x')$, such that
\begin{align}\label{7.29-11qt}
&\frac{\partial H}{\partial y^\alpha }(x',\bar{f}_{p}(x'))-\frac{\partial H}{\partial y^\alpha }(x',\bar{f}_{p}(x))\\
=&\frac{\partial^{2}H}{\partial y^{\beta} \partial y^\alpha }(x',\xi)(f^{\beta}(x')-f^{\beta}(x))\nonumber\\
=&\left[\frac{\partial^{2}H}{\partial y^{\beta} \partial y^\alpha }(x,\bar{f}_{p}(x))+o(1)\right]\cdot \left[Df^{\beta}(x)(\phi(x')-\phi(x))+o(|x'-x|)\right] \nonumber\\
=&\frac{\partial^{2}H}{\partial y^{\beta} \partial y^\alpha }(x,\bar{f}_{p}(x))Df^{\beta}(x)(\phi(x')-\phi(x))+o(|x'-x|).\nonumber
\end{align}
Combing (\ref{7.29-11qt}) with (\ref{7.29-11}), at such $x$, we have
\begin{align}\label{5.15}
D\frac{\partial H}{\partial y^\alpha }(x,y)\biggl|_{y=\bar{f}_{p}(x)}=-\frac{\partial^{2}H}{\partial y^{\beta} \partial y^\alpha }(x,\bar{f}_{p}(x))Df^{\beta}(x).
\end{align}
Let $(K^{\alpha\beta})$ be the inverse matrix of $(\frac{\partial^{2}H}{\partial y^{\alpha}\partial y^\beta}\bigl|_{(x,\bar{f}_{p}(x))})$.
Then
\begin{align}\label{5.19}
D f^{\alpha}=-K^{\alpha\beta}D\frac{\partial H}{\partial y^{\beta}}.
\end{align}

Note that by (\ref{5.125}), we have
\begin{align}\label{5.90}
|K^{\alpha\beta}-\delta_{\alpha\beta}|\leq \Psi(\delta).
\end{align}

By (\ref{5.23}), (\ref{5.19}), (\ref{5.90}) and (\ref{5.161}), for any $x \in B_{10}(p)$ at where all of $\varphi_{\lambda}$, $f_{\lambda}$, ($\lambda\in \mathcal{A}$), $f^{1},\ldots,f^{k}$ are differentiable and around where a chart exists, we have
\begin{align}\label{5.27}
&\left\langle D f^{\alpha}, D f^{\beta}\right\rangle
\\
=&K^{\alpha s}K^{\beta t}\left\langle D\frac{\partial H}{ \partial y^{s}}, D\frac{\partial H}{\partial y^{t}}\right\rangle\nonumber\\
=&\frac{1}{4}( \delta_{\alpha s}\pm \Psi(\delta))(\delta_{\beta t}\pm \Psi(\delta))\sum_{\lambda,\mu}\bigl\langle\varphi_{\lambda} (-2\delta_{\nu s}\pm \Psi(\delta))D \tilde{f}_{\lambda}^{\nu}\nonumber\\
&+\frac{\partial d^{2}(f_{\lambda}(x),y)}{\partial y^{s}}D \varphi_{\lambda},\varphi_{\mu} (-2\delta_{\tau t}\pm \Psi(\delta))D  \tilde{f}_{\mu}^{\tau} +\frac{\partial d^{2}(f_{\mu}(x),y)}{\partial y^{t}}D \varphi_{\mu}\bigr\rangle. \nonumber
\end{align}

At $(x, f(x))$, we have
\begin{align}\label{5.29}
\left|\frac{\partial d^{2}(f_{\mu}(x),y)}{\partial y^{t}}\right|=2\left|d(f_{\mu}(x),f(x))\langle \nabla d, \nabla y^{t}\rangle\right|\leq \Psi(\delta).
\end{align}
Combing it with (\ref{5.12}), (\ref{5.5332}), we have
\begin{align}\label{5.31}
\left|\sum_{\lambda,\mu}\varphi_{\lambda} \frac{\partial d^{2}(f_{\mu}(x),y)}{\partial y^{t}}\langle D \tilde{f}^{\nu}_{\lambda}, D \varphi_{\mu}\rangle\right|\leq \Psi(\delta),
\end{align}
\begin{align}\label{5.33}
&\left|\sum_{\lambda,\mu} \frac{\partial d^{2}(f_{\lambda}(x),y)}{\partial y^{s}} \frac{\partial d^{2}(f_{\mu}(x),y)}{\partial y^{t}}\langle D \varphi_{\lambda}, D \varphi_{\mu} \rangle\right|\leq \Psi(\delta).
\end{align}



Substituting (\ref{5.31}),  (\ref{5.33}) into (\ref{5.27}), and then integrating over $x\in B_{10}(p)$, we have

\begin{align}\label{5.35}
&\bbint_{B_{10}(p)}\bigl|\langle\nabla f^{\alpha}, \nabla f^{\beta}\rangle-\delta_{\alpha\beta}\bigr| \\
=&\bbint_{B_{10}(p)}\bigl|\langle D f^{\alpha}, D f^{\beta}\rangle-\delta_{\alpha\beta}\bigr| \nonumber\\
\leq &\bbint_{B_{10}(p)}\bigl|\sum_{\lambda,\mu}\varphi_{\lambda}\varphi_{\mu} \langle\nabla \tilde{f}_{\lambda}^{\alpha},\nabla \tilde{f}_{\mu}^{\beta}\rangle-\delta_{\alpha\beta}\bigr|+ \Psi(\delta) \nonumber\\
\leq &\sum_{\lambda,\mu}\bbint_{B_{10}(p)}\varphi_{\lambda}\varphi_{\mu}\bigl|\langle\nabla \tilde{f}_{\lambda}^{\alpha},\nabla \tilde{f}_{\mu}^{\beta}\rangle-\delta_{\alpha\beta} \bigr|+ \Psi(\delta). \nonumber
\end{align}
There are at most $C_{2}^{2}$ terms in the summation of (\ref{5.35}) which do not vanish, then  (\ref{5.18-3})  follows from  (\ref{5.67}) and (\ref{5.35}).

Now we turn to prove that for each $\alpha=1,\ldots,k$, $f^{\alpha}$ belongs to the domain of the local Laplacian on $B_{10}(p)$, and (\ref{5.18-2}) holds.
By (\ref{5.19}), (\ref{5.90}) and the Leibniz rule for the divergence, it suffice to prove that
\begin{align}\label{5.80-1}
\left|\left\langle D K^{\alpha\beta}, D\frac{\partial H}{\partial y^{\beta}}\right\rangle \right|\leq C,
\end{align}
almost everywhere, and $\frac{\partial H}{\partial y^{\beta}}$ belongs to the domain of the local Laplacian with
\begin{align}\label{5.80-3}
\left|\Delta \frac{\partial H}{\partial y^{\beta}} \right|_{L^{\infty}}\leq C.
\end{align}

Note that
\begin{align}\label{5.80-2}
&D K^{\alpha\beta}=-K^{\alpha s}K^{\beta\gamma}D\frac{\partial^{2} H}{\partial y^{s}\partial y^{\gamma}} \\
=&-\frac{1}{2}K^{\alpha s}K^{\beta\gamma}\sum_{\lambda}\biggl[\frac{\partial^{2} d^{2}(f_{\lambda}(x),y)}{\partial y^{s}\partial y^{\gamma}} D \varphi_{\lambda}+\varphi_{\lambda} \frac{\partial^{3}d^{2}(y_{1},y_{2})}{\partial y_{1}^{\nu}\partial y_{2}^{s}\partial y_{2}^{\gamma}}\biggl|_{\substack{y_{1}=\tilde{f}_{\lambda}(x)\\y_{2}=\bar{f}_{p}(x)}} D \tilde{f}_{\lambda}^{\nu}\nonumber\\
&+\varphi_{\lambda}\frac{\partial^{3} d^{2}(f_{\lambda}(x),y)}{\partial y^{s}\partial y^{\gamma}\partial y^{\omega} }D f^{\omega}\biggr]. \nonumber
\end{align}
where we use the fact that $\frac{\partial^{2} H}{\partial y^{s}\partial y^{\gamma}}(x, \bar{f}_{p}(x))$ is Lipschitz, and its differential can be computed similar to (\ref{7.29-11}) and (\ref{7.29-11qt}).

(\ref{5.80-1}) follows from (\ref{5.23}), (\ref{5.80-2}), (\ref{5.160}), (\ref{5.18-1}), (\ref{5.5332}) and (\ref{5.12}).

By (\ref{5.23}), (\ref{5.5333}), (\ref{5.12}), (\ref{5.29}), (\ref{5.160}) and the Leibniz rule for the divergence, to prove (\ref{5.80-3}), it suffice to prove that
\begin{align}\label{5.80-4}
\left|\left\langle D \left(\frac{\partial d^{2}(f_{\lambda}(x),y)}{\partial y^{\beta}}\biggl|_{y=f(x)}\right), D\varphi_{\lambda}\right\rangle \right|\leq C,
\end{align}
\begin{align}\label{5.80-5}
\left|\left\langle D \left(\varphi_{\lambda} \frac{\partial^{2}d^{2}(y_{1},y_{2})}{\partial y_{1}^{\nu}\partial y_{2}^{\beta}}\biggl|_{\substack{y_{1}=\tilde{f}_{\lambda}(x)\\y_{2}=\bar{f}_{p}(x)}}\right), D\tilde{f}_{\lambda}^{\nu} \right\rangle \right|\leq C
\end{align}
almost everywhere.

Note that
\begin{align}\label{5.80-6}
D \left(\frac{\partial d^{2}(f_{\lambda}(x),y)}{\partial y^{\beta}}\biggl|_{y=f(x)}\right)=\frac{\partial^{2}d^{2}(y_{1},y_{2})}{\partial y_{1}^{\alpha}\partial y_{2}^{\beta}}\biggl|_{\substack{y_{1}=\tilde{f}_{\lambda}(x)\\y_{2}=\bar{f}_{p}(x)}}D \tilde{f}_{\lambda}^{\alpha}(x)+\frac{\partial^{2}d^{2}(f_{\lambda}(x),y)}{\partial y^{\beta}\partial y^{\alpha}} D\bar{f}_{p}^{\alpha}(x),
\end{align}
(\ref{5.80-4}) follows from  (\ref{5.160}), (\ref{5.18-1}), (\ref{5.5332}) and (\ref{5.12}).
Similarly, (\ref{5.80-5}) can be proved.
Thus we finish the proof of (\ref{5.18-2}).

By the above notation, for any $p\in \tilde{\Omega}$, take $\bar{\Xi}_{p}(x):=(F_{q},\mathrm{id})\circ \Xi (x)= (\bar{f}_{p}(x),v(x))$ (where $q=f(p)$), then  (\ref{5.1231}) and (\ref{5.1233}) hold, and (\ref{5.1232}) holds for $\alpha,\beta\in\{1,\ldots,k\}$ or $\alpha,\beta\in\{k+1,\ldots,k+s\}$.

For $\alpha\in\{1,\ldots,k\}$ and $\beta\in\{k+1,\ldots,k+s\}$, we take some $p_{\lambda}\in\mathcal{A}$ such that $d_{\tilde{X}}(p,p_{\lambda})\leq 2$, and consider the map $\tilde{f}_{\lambda}:B_{10}(p)\rightarrow \mathbb{R}^{k}$ given by
(\ref{5.37}).
Note that
\begin{align}\label{5.99}
|\bar{f}_{p}(x)-\tilde{f}_{\lambda}(x)|\leq \Psi(\delta)
\end{align}
holds for every $x\in B_{10}(p)$.
Then by Lemma  \ref{lem-6.1},
\begin{align}\label{4.3121111}
\bbint_{B_{10}(p)}|\nabla ({f}^{\alpha}-\tilde{f}_{\lambda}^{\alpha})|^{2}
\leq\Psi(\delta).
\end{align}
On the other hand, by (\ref{5.133567}), (\ref{5.43}) and (\ref{5.37}),
\begin{align}\label{4.32345}
\bbint_{B_{10}(p)}|\langle\nabla \tilde{f}_{\lambda}^{\alpha}, \nabla v^{\beta} \rangle| \leq \Psi(\delta).
\end{align}
Hence
\begin{align}\label{4.31111}
&\bbint_{B_{10}(p)}|\langle\nabla \bar{\Xi}_{p}^{\alpha}, \nabla \bar{\Xi}_{p}^{\beta} \rangle|\\
=&\bbint_{B_{10}(p)}|\langle\nabla {f}^{\alpha}, \nabla v^{\beta} \rangle|\nonumber\\
\leq&\bbint_{B_{10}(p)}|\langle\nabla ({f}^{\alpha}-\tilde{f}_{\lambda}^{\alpha}), \nabla v^{\beta} \rangle|+ \bbint_{B_{10}(p)}|\langle\nabla \tilde{f}_{\lambda}^{\alpha}, \nabla v^{\beta} \rangle|\nonumber\\
\leq&\Psi(\delta).\nonumber
\end{align}
The proof of Proposition \ref{fibration-blowup} is completed.
\end{proof}

\begin{proof}[Proof of Proposition \ref{fibration-smooth}]

If $\tilde{X}$ is a smooth Riemannian manifold, then each $u_p$ in  (\ref{5.11}) is smooth.
In addition, the good cut-off functions $\varphi_{\lambda}$ $(\lambda\in \mathcal{A})$ can be chosen to be smooth. Then by the implicit function Theorem, the map $f$, which is constructed by center of mass, is smooth.

Suppose $\tilde{X}$ satisfies the $(\Phi_{0}, 1; k+s,\delta)$-generalized Reifenberg condition.
Since $\bar{\Xi}_p:B_{10}(p)\rightarrow \R^{k+s}$ satisfies (\ref{5.1231}), (\ref{5.1233}) and (\ref{5.1232}), by Theorem \ref{thm-trans-GRC},
if $\delta$ is sufficiently small (depending on $n$ and $\Phi_{0}$), then $d\bar{\Xi}_p$ is non-degenerated on $B_{5}(p)$, and
\begin{align}\label{6.21}
B_{4}((0^{k},v(p)))\subset \bar{\Xi}_p(B_{4+\Psi(\delta)}(p)) .
\end{align}
Because $\Xi|_{B_{5}(p)}=(E_{q},\mathrm{id})\circ \bar{\Xi}_{p}$, and by the arbitrariness of $p$, $d\Xi$ is non-degenerated on $\tilde{\Omega}$.

Now we go to prove (A).  Under the assumption in (A), $\tilde{\Phi}:B_{L+200}(p_{0})\rightarrow B_{L+200}((q_{0},0^s))$ is a $\delta$-Gromov-Hausdorff approximation,  for any $z=(q,t)\in B_{L-1}((q_{0},0^s))$, there exists some $p_{\lambda}\in \mathcal{A}$ such that $B_{10}(p_{\lambda})\subset B_{L+200}(p_{0})$ and $d_{\tilde{Z}}(z, \tilde{\Phi}(p_{\lambda}))< \frac{5}{2}$.
Hence
$$d_{\R^{k+s}}((F_{\lambda}(q),t),\bar{\Xi}_{p_{\lambda}}(p_{\lambda}))<3,$$
and by (\ref{6.21}), we conclude that $(F_{\lambda},\mathrm{id})(z)\in \mathrm{Im}(\bar{\Xi}_{p_{\lambda}})$,
and  $z\in\mathrm{Im}(\Xi)$.
By the arbitrariness of $z$, we have $B_{L-1}((q_{0},0^s))\subset \mathrm{Im}(\Xi)$.
We take the open set ${U}:=\Xi^{-1}(B_{L-1}((q_{0},0^s)))\cap\tilde{\Omega}$, then  $\Xi:U\rightarrow B_{L-1}((q_{0},0^s))$ is a surjective fiberation and Gromov-Hausdorff approximation.

Combining (2) of Theorem \ref{thm-trans-GRC} with the fact that $F_{q}:(B_{10}(q),q)\to (B_{10}(0^k),0^k)$ is $(1+\Psi(\delta|n))$-Lipschitz,
it is easy to see that $\Xi$ satisfies (\ref{BiHolder}) for  suitable $\Psi(\delta)=\Psi(\delta|n,\Phi_{0})$.
This finishes the proof of (A).

(B) can be proved similarly. By (\ref{5.111}),  for any $q\in \tilde{N}$, there exists some $p_{\lambda}\in \mathcal{A}$ such that $|F_{\lambda}(q)|=d_{\tilde{N}}(q, \tilde{\Phi}(p_{\lambda}))< 3$. Then we can prove $\tilde{N}= \mathrm{Im}(\Xi)$.

The proof of Proposition \ref{fibration-smooth} is completed.
\end{proof}

\subsubsection{Proof of Theorem \ref{Fibrations}}\label{sec7.1.1}

In order to prove Theorem \ref{Fibrations}, we will use the Ricci flow to deform the metrics satisfying (\ref{5.2}) to ones satisfying (\ref{5.8})-(\ref{5.10}).
We recall the following result, which originates from Perelman's famous pseudo-locality theorem (see \cite{Per02}*{Theorem 10.1, Corollary 10.2}).

\begin{thm}\label{thm-psudoloc}
For every small $0<\alpha\ll 1$, there exist constants $\delta = \delta(k, \alpha), \epsilon = \epsilon(k, \alpha)$ with the following properties.

Suppose $(X, g(t))_{0 \leq t \leq 1}$  is a Ricci flow solution on a $k$-dimensional compact manifold $X$. Suppose
\begin{align}
\mathrm{Ric}(x, 0) \geq -\delta,
\end{align}
\begin{align}
d_{\mathrm{GH}}(B_{\delta^{-1}}(x), B_{\delta^{-1}}(0^{k}))< \delta, \quad \forall \, x \in X.
\end{align}
Then for each $t\in (0,\epsilon^{2}]$, we have
\begin{align}\label{7.67-1}
|\mathrm{Rm}|(x,t) \leq \alpha t^{-1},
\end{align}
\begin{align}\label{7.67-2}
|\nabla^{m}\mathrm{Rm}|(x,t) \leq C_{m}\alpha t^{-1-\frac{m}{2}},
\end{align}
\begin{align}\label{7.67-3}
\mathrm{Vol}^{g(t)} (B_{\sqrt{t}}^{g(t)}(x)) \geq c t^{\frac{m}{2}},
\end{align}
\begin{align}\label{7.67-4}
\inj_{g(t)}\geq \rho\sqrt{t},
\end{align}
and for any $x, y \in X$ with $d_t(x,y) \leq \sqrt{t}$ or $d_0(x,y) \leq \sqrt{t}$, we have
\begin{equation} \label{7.67-5}
|d_t(x,y) - d_0(x,y)| \leq \alpha \sqrt{t}, \quad t \in [0, \epsilon^2],
\end{equation}
where $ c = c(k)$, $\rho = \rho(k)$, $C_{m}=C_{m}(k)$ are constants depending only on $k$.
\end{thm}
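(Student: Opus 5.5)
The statement is a version of Perelman's pseudo-locality theorem. I would not reprove it from scratch; instead I would derive each item from \cite{Per02}*{Theorem 10.1, Corollary 10.2}, combined with standard Shi-type estimates, Cheeger--Gromov--Taylor injectivity bounds and distance-distortion estimates.

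\textbf{Curvature bound \eqref{7.67-1}.} First rescale so that the hypotheses hold at a large scale. The assumptions $\Ric\ge-\delta$ and $d_{\mathrm{GH}}(B_{\delta^{-1}}(x),B_{\delta^{-1}}(0^k))<\delta$ imply, by Cheeger--Colding volume convergence, that every ball $B_r(x)$ with $r\le 1$ has volume at least $(1-\Psi(\delta))\omega_k r^k$. This puts us in the setting of Corollary 10.2 in \cite{Per02}, which uses Ricci lower bound plus almost-Euclidean volume. Alternatively, one can use the almost-Euclidean isoperimetric constant obtained via Cheeger--Colding/Reifenberg, which is the setting of Theorem 10.1. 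Either version yields $|\Rm|(x,t)\le \alpha t^{-1}$ for $t\in(0,\epsilon^2]$. Compactness of $X$ ensures the flow is complete, so the theorem applies on the whole time interval.

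\textbf{Derivative bounds \eqref{7.67-2}.} Apply Shi's local derivative estimates on the parabolic neighborhood $B_{\sqrt t}(x)\times[t/2,t]$, where $|\Rm|\le 2\alpha t^{-1}$. After parabolic rescaling to unit size these give $|\nabla^m\Rm|\le C_m\alpha t^{-1-m/2}$. Linearity in $\alpha$ holds because the curvature is already $\alpha$-small at unit scale.

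\textbf{Volume, injectivity radius and distance distortion.}
\begin{itemize}
\item \emph{Volume \eqref{7.67-3}.} Perelman's argument also gives volume noncollapsing at scale $\sqrt t$, via his $\kappa$-noncollapsing from the almost-Euclidean initial isoperimetric data; this is part of Theorem 10.1. I read the exponent there as $k/2$.
\item \emph{Injectivity radius \eqref{7.67-4}.} This follows from \eqref{7.67-1} and \eqref{7.67-3} by Cheeger--Gromov--Taylor.
\item \emph{Distance distortion \eqref{7.67-5}.} This is the delicate step, and I expect it to be the main obstacle, since only a Ricci lower bound (not a curvature bound) is available at $t=0$. I would use Perelman's distance-distortion lemma: $\partial_t d_t\ge -C\sqrt{\alpha/t}$ when $\Ric\le \alpha t^{-1}$ near the endpoints. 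Integrating gives $d_t\ge d_0-C\sqrt{\alpha t}$. For the upper bound on $d_t-d_0$, I would combine the almost-Euclidean structure at both times: $B^{g(t)}_{\sqrt t}$ is almost Euclidean by the curvature bound, and $B^{g(0)}_{\sqrt t}$ is almost Euclidean by hypothesis. A contradiction/compactness argument of Bamler--Wang or Hochard type then shows the identity map is a $\Psi$-GH approximation at scale $\sqrt t$, after shrinking $\epsilon$. Handling the short-time behavior near $t=0$ through this compactness argument is the step I expect to require the most care.
\end{itemize}
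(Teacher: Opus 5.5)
For \eqref{7.67-1}--\eqref{7.67-4} your plan is the paper's. Its proof is only the remark after the statement, which cites pseudo-locality (\cite{TW15}, Proposition 3.1, and \cite{Per02}, Theorem 10.1 and Corollary 10.2), then Shi's estimates and Cheeger--Gromov--Taylor, just as you do. You are also right that the exponent in \eqref{7.67-3} should be $k/2$.

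One correction: Perelman's Corollary 10.2 is the noncollapsing statement under the hypotheses of his Theorem 10.1 (scalar lower bound plus almost-Euclidean isoperimetry). It is not a ``Ricci lower bound plus volume'' version. The $\Ric\ge-\delta$ version the paper invokes is \cite{TW15}, Proposition 3.1, and your alternative isoperimetric route is the correct one.

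Your lower bound in \eqref{7.67-5} via Perelman's distance-distortion lemma is correct. It gives $d_t\ge d_0-C(k)\sqrt{\alpha t}$, so you must run pseudo-locality with $c(k)\alpha^2$ in place of $\alpha$.

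The genuine gap is the upper bound $d_t\le d_0+\alpha\sqrt t$. The paper does not prove it either; it takes it from \cite{CRX19}, Lemma 2.10. Your sketch does not establish it, for two reasons.

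\emph{First}, \eqref{7.67-1} and \eqref{7.67-4} make $B^{g(t)}_{\rho\sqrt t}(x)$ almost Euclidean, not $B^{g(t)}_{\sqrt t}(x)$. With only $\inj_{g(t)}\ge\rho(k)\sqrt t$, the time-$t$ metric at scale $\sqrt t$ could look like a flat $\R^{k-1}\times S^1$ with circle length comparable to $\rho\sqrt t$. This part is repairable by working at scale $\rho\sqrt t$ and chaining along a $g(0)$-geodesic in $N(k)$ pieces.

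\emph{Second}, and more seriously, the facts you plan to combine do not imply the reverse inequality. These are: both balls almost Euclidean, small curvature, and $d_t\ge d_0-o(\sqrt t)$. The pair $g$, $4g$ for a flat metric $g$ on a large torus satisfies all of them, yet $d_t=2d_0$. Likewise, in a compactness argument the limit of the identity maps is merely $1$-Lipschitz, and points whose $d_t$-distance escapes are lost. Integrating $\partial_t d_t$ does not help, since the only Ricci lower bound along the flow is $-(k-1)\alpha/t$, which is not integrable at $0$.

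The missing ingredient is a volume comparison between the two times. Since $X$ is compact, the minimum of scalar curvature is nondecreasing. Hence $R\ge-k\delta$ for all $t$, and $\mathrm{Vol}_{g(t)}(A)\le e^{k\delta t}\mathrm{Vol}_{g(0)}(A)$ for every Borel set $A$.

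With this, a packing argument closes the gap. Take $\eta\sqrt t\le r\le\rho\sqrt t$ with $\sqrt\alpha\ll\eta$.
\begin{enumerate}
\item By your lower bound, $B^{g(t)}_r(x)\subset A:=B^{g(0)}_{r+C\sqrt{\alpha t}}(x)$.
\item By Bishop--Gromov at time $0$ and the volume comparison, $\mathrm{Vol}_{g(t)}(A)\le(1+\Psi)\omega_kr^k$.
\item By G\"unther's comparison, $B^{g(t)}_r(x)$ alone has $g(t)$-volume at least $(1-\Psi)\omega_kr^k$.
\item So $A\setminus B^{g(t)}_r(x)$ cannot contain a $g(t)$-ball of radius $\eta\sqrt t$.
\item Hence $d_0(x,y)\le r-\eta\sqrt t$ forces $d_t(x,y)<r+\eta\sqrt t$, and chaining then reaches $d_0\le\sqrt t$.
\end{enumerate}
Without this ingredient, or the citation, \eqref{7.67-5} is not proved.
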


\begin{rem}
(\ref{7.67-1}) and (\ref{7.67-3}) follow from the pseudo-locality Theorem, see \cite{TW15}*{Proposition 3.1}, \cite{Per02}*{Theorem 10.1, Corollary 10.2}.
(\ref{7.67-4}) follows from (\ref{7.67-1}), (\ref{7.67-3}) and a standard result of Cheeger-Gromov-Taylor (\cite{CGT82}).
(\ref{7.67-2}) follows from (\ref{7.67-1}) and Shi's estimate (\cite{Shi89}).
The proof of (\ref{7.67-5}) can be found in \cite{CRX19}*{Lemma 2.10} (note that we have slightly modified the small $\alpha$ in the statement in \cite{CRX19}*{Lemma 2.10}).
\end{rem}

For simplicity of notation, we use $\Psi(\delta)$ to denote suitable positive functions $\Psi(\delta|n,r_{0})$, and the value of $\Psi(\delta)$ may change from lines to lines.
We assume $r_{0}=1$ in (\ref{5.2}) for simplicity.

Firstly, we blow up $(M,g)$, $(N,h)$ by $\delta$, and then, by a suitable rechoice of $\delta$, we may assume
	\begin{align}\label{5.2-0}
    \Ric_{M}\geq -\delta,
    \end{align}
	\begin{align}\label{5.2-1}
    \Ric_N \geq -\delta,
    \end{align}
	\begin{align}\label{5.3-1}
    d_{\mathrm{GH}}(B_{\delta^{-1}}(q),B_{\delta^{-1}}(0^{k})) < \delta \quad \text{ for any } q\in N,
    \end{align}
    \begin{align}\label{5.4-1}
    d_{\mathrm{GH}}(M,N)\le\delta.
    \end{align}
We assume $\Phi: M\rightarrow N$ is a $\delta$-Gromov-Hausdorff approximation that fulfills (\ref{5.4-1}).

Let $\frac{\partial h(t)}{\partial t}=-2\Ric(h(t)), h(0)=h$ be the Ricci flow solution on $N$.

Take $\alpha=\Psi_{1}(\delta|k)$ in Theorem \ref{thm-psudoloc} such that $\alpha>\delta^{\frac{1}{2}}$, $\delta< \epsilon(n,\alpha)$ and the conclusions of Theorem \ref{thm-psudoloc} hold.
Denote $\hat{h}=\alpha^{-1}\delta^{-1}h(\delta)$, $\hat{g}=\alpha^{-1}\delta^{-1}g$, $\check{h}=\alpha^{-1}\delta^{-1}h$.
For simplicity, we will use $\hat{M}$, $\hat{N}$, $\check{N}$ to denote $(M,\hat{g})$, $(N,\hat{h})$, and $(N, \check{h})$ respectively.
According to Theorem \ref{thm-psudoloc},
\begin{align}
|\mathrm{Rm}_{\hat{N}}|\leq \alpha,
\end{align}
\begin{align}
|\nabla^{m} \mathrm{Rm}_{\hat{N}}|\leq C_m\alpha^{1+\frac{m}{2}}, \quad m\geq 1,
\end{align}
\begin{align}
\inj_{\hat{N}}\geq \rho\alpha^{-\frac{1}{2}},
\end{align}
and for any $x, y \in N$ with $d_{\hat{N}}(x,y) \leq \alpha^{-\frac{1}{2}}$ or $d_{\check{N}}(x,y) \leq \alpha^{-\frac{1}{2}}$, we have
\begin{equation} \label{7.67-6}
|d_{\hat{N}}(x,y) - d_{\check{N}}(x,y)| \leq \alpha^{\frac{1}{2}}.
\end{equation}
Hence, given any geodesic ball $\Omega=B_{\alpha^{-\frac{1}{2}}}(q)\subset \check{N}$,
the identity map $\id: (\Omega, \check{h})\rightarrow (\Omega, \hat{h})$ is a $\alpha^{\frac{1}{2}}$-Gromov-Hausdorff approximation.

By (\ref{5.2-0}) and (\ref{5.4-1}), we have
\begin{align}
\Ric_{\hat{M}}\geq -\delta^{2},
\end{align}
\begin{align}\label{5.4-1-1}
d_{\mathrm{GH}}(\hat{M},\check{N})\le\delta^{\frac{1}{4}},
\end{align}
and $\Phi: \hat{M}\rightarrow \check{N}$ is a $\delta^{\frac{1}{4}}$-Gromov-Hausdorff approximation.

Let $\hat{\Phi}:\hat{M}\rightarrow \hat{N}$ be the map given by the composition of $\Phi: \hat{M}\rightarrow \check{N}$ and $\id: \check{N}\rightarrow \hat{N}$, then for any $p\in \hat{M}$, $\hat{\Phi}|_{B_{\alpha^{-\frac{1}{2}}}(p)}$ is a $\Psi(\delta)$-Gromov-Hausdorff approximation onto its image.
According to Cheeger-Colding's theory, for any $p\in \hat{M}$, there exists a $\Psi(\delta)$-Gromov-Hausdorff approximation, $u_p:B_{100}(p)\rightarrow \R^k$, so that $u_p$ is $\Psi(\delta)$-splitting harmonic map and is $\Psi(\delta)$-close to $F_q\circ \tilde{\Phi}$, where $q=\tilde{\Phi}(p)$, and $F_{q}$ is the normal coordinate.
In addition, for any $q \in \hat{N}=\check{N}$, we can find a $p\in\hat{M}$ such that $d_{\check{N}}(q,\Phi(p))<\delta^{\frac{1}{4}}$, and then $d_{\hat{N}}(q,\hat{\Phi}(p))<\delta^{\frac{1}{4}}+\alpha^{\frac{1}{2}}<\Psi(\delta)$.

By Proposition \ref{fibration-blowup}, there exists a smooth map $\hat{f}: \hat{M}\rightarrow \hat{N}$ which is $\Psi(\delta|n,r_{0})$-close to $\hat{\Phi}$,
and for the normal coordinates $F_{q}$, where $q=\hat{f}(p)$, $\bar{f}:=F_{q}\circ \hat{f}|_{B_{10}(p)}$
satisfies estimates similar to (\ref{5.1231}), (\ref{5.1233}) and (\ref{5.1232}) with $\Psi_{0}(\delta|n)$ replaced by suitable $\Psi(\delta|n,r_{0})$.

Now we consider the map $\check{f}:\hat{M}\rightarrow\check{N}$ obtained by composing $\hat{f}$ with the identity map $\id:\hat{N}\rightarrow\check{N}$.

Since $d_{\hat{N}}(\hat{f}(x),\hat{\Phi}(x))\leq\Psi(\delta)$ holds for every $x\in \hat{M}$, by (\ref{7.67-6}), we have $d_{\check{N}}(\check{f}(x),\Phi(x))\leq\Psi(\delta)$.
Since $\Phi:\hat{M}\rightarrow\check{N}$ is a $\delta^{\frac{1}{4}}$-Gromov-Hausdorff approximation, it is easy to check that  $\check{f}$ is a $\Psi(\delta)$-Gromov-Hausdorff approximation for  suitable $\Psi(\delta)$.

Given any $q$, let $\Omega_{q}$ be the geodesic ball $B_{20}(q)\subset \hat{N}$, and $\Theta_{q}:(\Omega_{q}, \check{h})\rightarrow B_{20}(0^{k})$ be the smooth map obtained by composing the identity map $\id: (\Omega_{q}, \check{h})\rightarrow (\Omega_{q}, \hat{h})$ with $F_{q}:(\Omega_{q}, \hat{h})\rightarrow B_{20}(0^{k})$.
$\Theta_{q}$ gives a chart around $q$, but in general $\Theta_{q}$ is not the normal coordinate.
It is easy to see that $\Theta_{q}:(\Omega_{q}, \check{h})\rightarrow B_{20}(0^{k})$ is a $\Psi(\delta)$-Gromov-Hausdorff approximation for suitable $\Psi(\delta)$.
For any $p\in\hat{M}$, $\bar{f}= F_{q}\circ \hat{f}|_{B_{10}(p)}= \Theta_{q}\circ \check{f}|_{B_{10}(p)}$ satisfies the estimates (\ref{5.1231}), (\ref{5.1233}) and (\ref{5.1232}), thus if we rescale back the metrics, the map $f:(M,g)\rightarrow (N,h)$ given by $f(x)=\check{f}(x)$, together with the suitably rescaled coordinate charts $\Theta_{q}$ satisfy (\ref{1.1})-(\ref{1.3}).

Finally, if $M$ satisfies the $(\Phi_{0}, 1; k,\delta)$-generalized Reifenberg condition, then $\hat{M}$ also satisfies the generalized Reifenberg condition.
According to case (B) of Proposition \ref{fibration-smooth}, for sufficiently small $\delta$ (depending on $n$, $r_{0}$ and $\Phi_{0}$), $\hat{f}: \hat{M}\rightarrow \hat{N}$ is a surjective fibration, and hence $\check{f}:\tilde{M}\rightarrow\check{N}$ is also a surjective fibration.

The proof of Theorem \ref{Fibrations} is completed.

\subsection{Existence of equivariant regular maps}\label{sec-6-1}

Proposition \ref{prop-eq-fibration} and its blow up version, are equivariant versions of Proposition \ref{fibration-blowup}.  Theorem \ref{thm:eq-submetry}
is the case of $s=0$ in Proposition \ref{prop-eq-fibration}.

\begin{prop}\label{prop-eq-fibration}
Given $n,k\in \mathbb{Z}^{+}$, $s\in \mathbb{Z}^{+}\cup \{0\}$, with $n\geq k$, $k+s\leq n$,  $L\ge 10$.
Suppose $(N,h)$ is a $k$-dimensional compact Riemannian manifold.
Suppose $(X,d, m)$ is a $\RCD(-1,n)$-space, $U_{0}\subset X$ is a bounded open set,
$v:T_2(U_0)\rightarrow \R^{s}$ is a continuous map belonging to the domain of Laplacian, such that for every $\omega_0\in B_{\frac{1}{2}L}(0^s)(\subset \R^s)$, $v^{-1}(\omega_0)$ is a non-empty compact subset of $U_0$, and
\begin{align}\label{4.09--1}
\bbint_{B_{1}(p)}|\langle\nabla v^{i}, \nabla v^{j} \rangle- \delta_{ij}|\leq \delta,\quad  \text{for every } p\in U_0,
\end{align}
\begin{align}\label{4.10--1}
|\Delta v|_{L^{\infty}}\leq 1.
\end{align}
Suppose that there exist a finite group $\Gamma\subset \mathrm{Isom}(U_{0})$, and a closed Lie group  $G\subset \mathrm{Isom}(N)$   such that
\begin{align}\label{6.7--1}
v\circ \gamma = v,
\end{align}
for every  $\gamma \in \Gamma$.
Consider $N\times B_{L}(0^{s})$  which is equipped with the product metric,  and suppose  $\eta\in G$ acts on  $N\times B_{L}(0^{s})$  with the $B_{L}(0^{s})$-factor fixed. Denote by $\pi_{1}:N \times \mathbb{R}^{s}\rightarrow N$, $\pi_{2}:N \times \mathbb{R}^{s}\rightarrow \R^{s}$  the projections.
Suppose that there exist maps $\Xi: T_{2}(U_{0})\rightarrow N\times B_{L}(0^{s})$ and $\phi:\Gamma\rightarrow G$ such that
	\begin{description}
		\item[(a)]  for every  $\gamma \in \Gamma$ and  $x \in U_0$,
		\begin{align} \label{1.343411234}
			d\big(\Xi(\gamma(x)),\, \phi(\gamma)(\Xi(x))\big) \leq \delta,
		\end{align}
        \item[(b)] $\Xi_{2}:=\pi_{2}\circ \Xi=v$,
\end{description}
and $\Xi$ satisfies one of the following conditions:
\begin{description}
    \item[(c1)]\label{c1} for every $p \in U_0$ and every $r \in(0, 1)$, $\Xi_{1}:=\pi_{1}\circ \Xi$ satisfies
\begin{align}\label{7.589}
     \Xi_{1}(B_{r-\delta}(p)) \subset B_{r}(\Xi_{1}(p)) \subset T_{\delta}(\Xi_{1}(B_{r+\delta}(p)\cap v^{-1}(v(p)))),
\end{align}
\item[(c2)]\label{c2}  $\Xi:T_{2}(U_{0})\rightarrow N\times B_{L}(0^{s})$ is a $\delta$-Gromov-Hausdorff approximation.

\end{description}
Then there exist a homomorphism $\psi:\Gamma\rightarrow G$ and a Lipschitz map $\Upsilon: U\rightarrow N\times B_{\frac{1}{10}L}(0^{s})$ (where $U\subset U_{0}$ is an open set) so that
\begin{align}\label{8.987-1}
\pi_{2}\circ \Upsilon=v,
\end{align}
\begin{align}\label{8.987-2}
    |\Upsilon(p)-\Xi(p)|<\Psi_{1}(\delta) \quad \text{for every} \quad p\in U,
\end{align}
\begin{align}\label{8.987-3}
    \Upsilon\circ \gamma=\psi(\gamma)\circ \Upsilon \quad \text{for every} \quad \gamma\in \Gamma.
\end{align}
In addition, for any $p\in U$ such that $B_{\sqrt{\Psi_{1}(\delta)}}(p)\subset U$,  $\bar{\Upsilon}_{p}:=(\exp_{q}^{-1},\mathrm{id})\circ \Upsilon|_{B_{\sqrt{\Psi_{1}(\delta)}}(p)}$ satisfies
\begin{align}\label{8.987-4}
\Lip \bar{\Upsilon}_{p}\leq C(n),
\end{align}
\begin{align}\label{8.987-5}
|\Delta \bar{\Upsilon}_{p}^{\alpha}|_{L^{\infty}(B_{\sqrt{\Psi_{1}(\delta)}}(p))}\leq \frac{C(n)}{\sqrt{\Psi_{1}(\delta)}},
\end{align}
\begin{align}\label{8.987-6}
\bbint_{B_{\sqrt{\Psi_{1}(\delta)}}(p)}|\langle\nabla \bar{\Upsilon}_{p}^{\alpha}, \nabla \bar{\Upsilon}_{p}^{\beta} \rangle- \delta_{\alpha\beta}|\leq  \Psi_{1}(\delta)
\end{align}
for every $\alpha,\beta\in\{1,\ldots,k+s\}$, where $q=\pi_1\circ \Upsilon(p)$.
Here $\Psi_{1}(\delta)$ depends on $n$, $L$, $G$, $N$ and the $G$-action on $N$.
\end{prop}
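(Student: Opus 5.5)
The plan is to proceed in three stages. First, replace the almost-homomorphism $\phi$ by a genuine homomorphism $\psi$. Second, produce a regular (almost splitting) map close to $\Xi$ by the center-of-mass construction of Proposition \ref{fibration-blowup}, at a small scale. Third, make this map equivariant by averaging over $\Gamma$ with a second center of mass.

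\emph{Stage 1: a homomorphism $\psi$.} For $\gamma_1,\gamma_2\in\Gamma$ and $x\in U_0$, apply (a) three times:
\begin{align*}
d\big(\phi(\gamma_1\gamma_2)\Xi(x),\,\phi(\gamma_1)\phi(\gamma_2)\Xi(x)\big)\le 3\delta .
\end{align*}
This uses that $\phi(\gamma_1)$ is an isometry and that $\gamma_2(x)\in U_0$.
\begin{itemize}
\item Under (c2), $\Xi$ is $\delta$-dense in $N\times B_L(0^s)$. Since $G$ acts trivially on the $\R^s$-factor, $\pi_1\circ\Xi$ is $\delta$-dense in $N$.
\item Under (c1), the second inclusion of (\ref{7.589}) gives the same density, using that $v^{-1}(\omega_0)\neq\emptyset$.
\end{itemize}
Hence $\phi(\gamma_1\gamma_2)^{-1}\phi(\gamma_1)\phi(\gamma_2)$ moves every point of $N$ by at most $\Psi(\delta)$. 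Because $G$ is a compact Lie group acting effectively on the compact manifold $N$, this forces the element to be $\Psi(\delta|N,G)$-close to the identity in a bi-invariant metric on $G$. So $\phi$ is a $\Psi(\delta)$-homomorphism of a finite group into a compact Lie group. The Grove--Karcher--Ruh theorem (almost homomorphisms into compact Lie groups are uniformly close to homomorphisms, with constants independent of $|\Gamma|$) then yields a homomorphism $\psi$ with $\sup_\gamma d_G(\psi(\gamma),\phi(\gamma))\le\Psi(\delta)$. Consequently (a) holds with $\psi$ in place of $\phi$ and $\delta$ replaced by $\Psi(\delta)$.

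\emph{Stage 2: a regular map $\Xi'$ close to $\Xi$.} Rescale $X$ and $N$ by $\Psi_1(\delta)^{-1/2}$, where $\Psi_1\gg\Psi$ is chosen so that three things hold.
\begin{itemize}
\item The rescaled $N$ satisfies (\ref{5.8})--(\ref{5.10}) with a small parameter $\delta'$; this only uses that $N$ is a fixed compact manifold.
\item The rescaled space is $\RCD(-\delta',n)$.
\item The rescaled $\Xi$ is a $\delta'$-almost submetry (under (c1)) or a $\delta'$-GHA (under (c2)) onto $\tilde N\times\R^s$ at scale $\gg 200$.
\end{itemize}
At every $p$, the ball $B_{400}(p)$ is then $\delta'$-close to a product $\R^k\times Z$. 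For (c1) one should argue via the almost-submetry and a contradiction/compactness argument; for (c2) this is immediate. Theorem \ref{thm-split-GHisom} then gives $\delta'$-splitting harmonic maps $u_p$ close to $\exp_q^{-1}\circ\pi_1\circ\Xi$, which is hypothesis (\ref{5.11}). The function $v$, after rescaling, satisfies (\ref{5.123323})--(\ref{5.123223}): its Laplacian gains a factor $\sqrt{\Psi_1}$, and the averaged gradient condition (\ref{4.09--1}) passes to small balls by the transformation theorem or by Cheeger--Colding. Proposition \ref{fibration-blowup} then yields $\Xi'=(f,v)$ that is $\Psi$-close to $\Xi$ and satisfies (\ref{5.1231})--(\ref{5.1232}).

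\emph{Stage 3: averaging.} For $x\in U$, define $\Upsilon_1(x)$ as the unique minimizer of
\begin{align*}
y\mapsto\frac{1}{2|\Gamma|}\sum_{\gamma\in\Gamma} d^2_N\big(\psi(\gamma)^{-1}f(\gamma x),y\big),
\end{align*}
and set $\Upsilon=(\Upsilon_1,v)$. The minimizer exists and is unique because, by Stage 1 and the closeness of $\Xi'$ to $\Xi$, all the points $\psi(\gamma)^{-1}f(\gamma x)$ lie in a $\Psi$-ball around $f(x)$, well inside the convexity radius. Reindexing $\gamma\mapsto\gamma\gamma_0$ and using that $\psi$ is a homomorphism gives (\ref{8.987-3}). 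Condition (\ref{6.7--1}) gives (\ref{8.987-1}), and (\ref{8.987-2}) is immediate from the closeness estimates.

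For the regularity (\ref{8.987-4})--(\ref{8.987-6}), I would rerun the computation in the proof of Proposition \ref{fibration-blowup}. The points $f_\lambda$ are replaced by the maps $f_\gamma:=\psi(\gamma)^{-1}\circ f\circ\gamma$, and the partition of unity is replaced by constant weights $1/|\Gamma|$, so all $D\varphi$-terms disappear. Each $f_\gamma$ satisfies the same Lipschitz, Laplacian and almost-orthonormality bounds on $\gamma^{-1}B$, since $\gamma$ is a measure-preserving isometry and $\psi(\gamma)$ is an isometry of $N$. Any two of the $f_\gamma$ are pointwise $\Psi$-close, so Lemma \ref{lem-6.1} controls the cross terms $\langle\nabla f^\alpha_\gamma,\nabla f^\beta_{\gamma'}\rangle$ exactly as in (\ref{5.67}). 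The identities (\ref{5.19}), (\ref{5.27}) and (\ref{5.35}) then go through with all constants independent of $|\Gamma|$, because only averages over $\Gamma$ appear. The off-diagonal block $\langle\nabla\Upsilon_1^\alpha,\nabla v^\beta\rangle$ is handled as in (\ref{4.31111}). Rescaling back produces the factor $1/\sqrt{\Psi_1}$ in (\ref{8.987-5}).

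I expect the main obstacle to be Stage 2 under (c1). There, only an almost-submetry restricted to level sets of $v$ is available rather than a GHA, and extracting the almost-splitting maps $u_p$ at the small scale $\sqrt{\Psi_1}$ needs a careful contradiction argument. A secondary technical point is ensuring that the $|\Gamma|$-independent constants really survive the averaging.
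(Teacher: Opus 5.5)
Your proposal is correct and is essentially the paper's proof. The paper likewise gets $\psi$ from an almost-homomorphism lemma (\cite{MRW08}*{Lemma A2}, a Grove--Karcher--Ruh type result) after showing $\Xi$ is $\delta$-almost onto; under (c1) it proves this by an open--closed argument on $N$. It then rescales, applies Proposition~\ref{fibration-blowup} to get $f$, and takes the center of mass of the maps $\psi(\gamma)^{-1}\circ f\circ\gamma$ with uniform weights, rerunning the same estimates.

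One small correction concerns the almost-orthonormality of $v$ on the small balls. It comes from Bishop--Gromov volume comparison, not the transformation theorem, which would replace $v$ by $T_r v$ and break $\pi_2\circ\Upsilon=v$. This is why the paper rescales by $\delta^{-1/(2n)}$: the volume ratio then costs only $\delta^{-1/2}$. Your $\Psi_1$ must likewise satisfy $\delta\,\Psi_1(\delta)^{-n/2}\to 0$.
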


\begin{cor}\label{cor7.2-smooth}
In Proposition \ref{prop-eq-fibration}, if $(X,d, m)$ is induced by an $n$-dimensional Riemannian manifold  $(X,g)$ with $\mathrm{Ric}\geq -1$, and $\Gamma$ is a compact Lie group, then $\Upsilon: U\rightarrow N\times B_{\frac{1}{10}L}(0^{s})$ can be constructed so that $\pi_{1}\circ \Upsilon$ is smooth.
Suppose in addition that $U_0$ satisfies the $(\Phi_{0}, 1; k+s,\delta)$-generalized Reifenberg condition for some function $\Phi_{0}:\mathbb{R}^{+}\rightarrow \mathbb{R}^{+}$ with $\lim_{s\rightarrow 0^{+}}\Phi_{0}(s)=0$,  then if $\Psi_{1}(\delta)$ is sufficiently small (depending on $\Phi_{0}$),  $\Upsilon$ is  a non-degenerate surjective fibration.
\end{cor}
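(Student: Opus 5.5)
The plan is to go through the construction in the proof of Proposition \ref{prop-eq-fibration} and check two things. First, that every ingredient can be chosen smooth when $X$ is a Riemannian manifold with $\Ric\ge -1$. Second, that the analytic estimates \eqref{8.987-4}--\eqref{8.987-6} feed into the non-degeneracy Theorem \ref{thm-trans-GRC}, exactly as in the proof of Proposition \ref{fibration-smooth}.

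\textbf{Smoothness of $\pi_1\circ\Upsilon$.} I expect the map $\pi_1\circ\Upsilon$ to be obtained by a $\Gamma$-equivariant center of mass construction. In outline:
\begin{itemize}
\item take local harmonic almost-splitting maps $u_\lambda$, composed with exponential maps of $N$, a partition of unity $\varphi_\lambda$, and an energy of the form $H(x,y)=\frac12\int_\Gamma\sum_\lambda\varphi_\lambda(\gamma x)\,d_N^2\big(\psi(\gamma)^{-1}f_\lambda(\gamma x),y\big)\,d\gamma$, integrated against Haar measure on $\Gamma$;
\item on a smooth manifold the $u_\lambda$ are smooth (harmonic), and the cut-off functions can be chosen smooth;
\item integrating a smooth integrand over a compact Lie group $\Gamma$ with its Haar measure keeps $H$ smooth in $(x,y)$ near the minimum;
\item by \eqref{5.125}, $\partial_y^2H$ is $\Psi(\delta)$-close to the identity, so the implicit function theorem applied to $\partial_yH(x,y)=0$ shows that the minimizer $x\mapsto\pi_1\circ\Upsilon(x)$ is smooth.
\end{itemize}
The component $\pi_2\circ\Upsilon=v$ only has bounded Laplacian. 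Elliptic regularity on a smooth manifold still makes it $C^{1,\alpha}$, and $C^1$ is all that Theorem \ref{thm-trans-GRC} requires.

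\textbf{Non-degeneracy.} Fix $p\in U$ with $B_{\sqrt{\Psi_1(\delta)}}(p)\subset U$ and rescale the metric by $\Psi_1(\delta)^{-1}$. After this rescaling:
\begin{itemize}
\item $\bar\Upsilon_p$, also rescaled, is $C(n)$-Lipschitz;
\item its Laplacian is bounded by $C(n)\sqrt{\Psi_1(\delta)}\le C(n)$;
\item it satisfies the integral condition (c3) with error $\Psi_1(\delta)$ on a ball of radius $1$;
\item the Ricci lower bound becomes $-\Psi_1(\delta)$.
\end{itemize}
The generalized Reifenberg condition is inherited at small scales. Moreover, \eqref{8.987-2} together with (c1) or (c2) shows that the ball is $(\Psi,k+s)$-Euclidean. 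Theorem \ref{thm-trans-GRC} then gives:
\begin{itemize}
\item $d\bar\Upsilon_p$ is non-degenerate on the half ball;
\item the bi-Hölder estimate \eqref{BiHolder-app} holds;
\item $B_{1/2}(\bar\Upsilon_p(p))\subset\bar\Upsilon_p(B_{1/2+\Psi}(p))$.
\end{itemize}
Since $\exp_q$ is a diffeomorphism at this scale, $d\Upsilon$ is non-degenerate everywhere on $U$.

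\textbf{Surjectivity and fibration.} These are the step I expect to be the main obstacle, because $U$ is open rather than compact. I would first argue surjectivity onto $N\times B_{\frac1{10}L}(0^s)$ as in case (A) of Proposition \ref{fibration-smooth}. Given a target point $z$, the hypothesis (c1)/(c2) on $\Xi$ together with \eqref{8.987-2} produces a $p$ whose image $\Upsilon(p)$ lies within a small fraction of the scale of $z$. The local openness statement $B_{1/2}\subset\bar\Upsilon_p(\cdots)$ then captures $z$.

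For the fibration, I would shrink $U$ to $\Upsilon^{-1}(N\times B_{\frac1{10}L}(0^s))$. Because $v^{-1}(\omega_0)$ is compact in $U_0$ for every $|\omega_0|<L/2$, and $\pi_2\circ\Upsilon=v$, the restricted map is proper. A proper surjective submersion is a locally trivial fibration by Ehresmann's lemma.

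Finally, equivariance \eqref{8.987-3} is untouched by all of this, since smoothing and restriction commute with the $\Gamma$-action: the set $\Upsilon^{-1}(N\times B)$ is $\Gamma$-invariant because $v\circ\gamma=v$.
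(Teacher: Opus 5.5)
Your proposal is correct and follows the paper's argument. Smoothness comes from smooth harmonic and cut-off ingredients plus the implicit function theorem for the $\Gamma$-averaged center-of-mass equation. Non-degeneracy and local openness come from Theorem \ref{thm-trans-GRC} at the blow-up scale, and global surjectivity comes from the almost-ontoness of $\Xi$. Your explicit properness/Ehresmann step spells out what the paper leaves to ``an argument similar to'' Proposition \ref{fibration-smooth}.

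The one difference is how you average over $\Gamma$. The paper does it in a second step: it minimizes $E(x,y)=\frac{1}{2}\int_{\Gamma}d^{2}_{\tilde{N}}(f_{\gamma}(x),y)\,dm_{\Gamma}(\gamma)$, where $f$ is already smooth by Proposition \ref{fibration-smooth}. The estimates of Proposition \ref{prop-eq-fibration} then carry over verbatim once one notes that smoothness makes (\ref{6.29}) hold at every point for compact (not only finite) $\Gamma$. You should state this explicitly, since Proposition \ref{prop-eq-fibration} was proved only for finite $\Gamma$. Your one-step energy would also need those estimates re-derived, since it is a different construction.
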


\begin{proof}[Proof of Proposition \ref{prop-eq-fibration}.] Firstly, we have the following fact.

\begin{lem}\label{lem7.200001}
    Suppose $\Xi$ satisfies (c1), then for every  $y_0\in N$, $\omega_0\in B_{\frac{1}{2}L}(0^s)$, there exists $x\in U_{0}$ such that $d_N(\Xi_1(x),y_0))<\delta$ and $v(x)=\omega_0$.
\end{lem}

\begin{proof}
    For every $\omega_0\in B_{\frac{1}{2}L}(0^s)$, let $A:=\mathrm{Im}(\Xi_1(v^{-1}(\omega_0)))$, which is not empty. It suffices to prove  $N=T_\delta(A)$.  Obviously, $T_\delta(A)$ is open. We will prove that $T_\delta(A)$ is closed.
    Suppose $y_i\in  T_\delta(A)$, $y_i\rightarrow y_0$, then take $z_i\in A$ with $d_N(y_i,z_i)<\delta$, and $z_i=\Xi_1(x_i)$, $v(x_i)=\omega_0$ for some $x_i\in U_0$.  Up to choosing a subsequence, suppose $z_i\rightarrow z$. By (\ref{7.589}),
    \begin{align}
B_{\frac{1}{2}}(z_i)=B_{\frac{1}{2}}(\Xi_{1}(x_i)) \subset T_{\delta}(\Xi_{1}(B_{\frac{1}{2}+\delta}(x_i)\cap v^{-1}(\omega_0))).
\end{align}
Then $y_0\in B_{\frac{1}{2}}(z_i)\subset T_\delta(A)$.
\end{proof}
Hence, no matter whether $\Xi$ satisfies (c1) or (c2) , we always have:
\begin{cor}\label{cor-almost_onto}
     $\Xi:v^{-1}(B_{\frac{1}{2}L}(0^s))\rightarrow N\times B_{\frac{1}{2}L}(0^s) $ is $\delta$-almost onto.
\end{cor}
Recall the following lemma from \cite{MRW08}.

\begin{lem}[\cite{MRW08}*{Lemma A2}]\label{lem6.1}
Given a compact Lie group $G$ with a bi-invariant metric $g$ (which induces a distance $d$), there is $\epsilon(G)> 0$ so that if $H$ is a closed Lie group and $\phi:H\rightarrow G$ is a map such that $d(\phi(kh),\phi(k)\phi(h)) <\epsilon<\epsilon(G)$ for any $h,k\in H$, then there is a homomorphism $\psi:H\rightarrow G$  such that $d(\phi(h),\psi(h))<2\epsilon$ for all $h\in H$.
\end{lem}

The following lemma is a fibration version of \cite{MRW08}*{Lemma 3.2}.
\begin{lem}\label{lem6.2}
Under the assumption of Proposition \ref{prop-eq-fibration}, there exists a homomorphism $\psi:\Gamma\rightarrow G$  such that  for every $\gamma \in \Gamma$ and  $x \in U_0$,
		\begin{align} \label{1.343411234123}
			d\big(\Xi(\gamma(x)),\, \psi(\gamma)(\Xi(x))\big) \leq C_1\delta.
		\end{align}
where $C_{1}$ depends on $G$, $N$ and the $G$-action on $N$.

\end{lem}

\begin{proof}

We fix a bi-invariant metric on $G$, whose induced distance is denoted by $d_{0}$.
Note that $G$ has a natural metric defined by $d_G(\eta,\eta'):=\max\{d_{N}(\eta(y),\eta'(y))|y\in N\}$.
Since $G$ is compact and $N$ is a compact manifold, there exist positive constants $c_{0}\leq C_{0}$ depending on $G$, $N$ and the $G$-action on $N$ such that
\begin{align}\label{6.1}
c_{0}d_{0}\leq d_G \leq C_{0}d_{0}.
\end{align}

Given any $y\in N$, by Corollary \ref{cor-almost_onto}, take $x\in U_{0}$ such that $d(\Xi(x),(y,0))<\delta$, then for every $\gamma,\gamma'\in \Gamma_{i}$,
\begin{align}\label{6.2}
&d_{N}(\phi(\gamma\gamma')(y),\phi(\gamma)(\phi(\gamma')(y)))\\
\leq &d(\Xi(x),(y,v(x)))+d(\phi(\gamma\gamma')(\Xi(x)),\Xi(\gamma\gamma'(x)))+ d(\Xi(\gamma\gamma'(x)),\phi(\gamma)(\Xi(\gamma'(x)))) \nonumber\\
&+ d(\phi(\gamma)(\Xi(\gamma'(x))),\phi(\gamma)(\phi(\gamma')(\Xi(x))))+d(\Xi(x),(y,v(x))) \nonumber\\
\leq &  5\delta.  \nonumber
\end{align}

By Lemma \ref{lem6.1}, (\ref{6.1}) and (\ref{6.2}), for every sufficiently small $\delta$ depending on  $c_{0}$, $C_{0}$ (in (\ref{6.1})) and $\epsilon(G)$ (in Lemma \ref{lem6.1}), there exists a homomorphism $\psi:\Gamma\rightarrow G$ such that
\begin{align}\label{6.6}
d_G(\phi(\gamma),\psi(\gamma))\leq C_{0}d_{0}(\phi(\gamma),\psi(\gamma))<10\frac{C_{0}}{c_{0}}\delta
\end{align}
for all $\gamma\in \Gamma$.
For any $\gamma\in \Gamma$ and $x\in U_{0}$,
\begin{align}
&d(\Xi(\gamma(x)),\psi(\gamma)(\Xi(x)))\\
\leq &d(\Xi(\gamma(x)),\phi(\gamma)(\Xi(x)))+ d(\psi(\gamma)(\Xi(x)),\phi(\gamma)(\Xi(x)))\nonumber\\
\leq &\delta + 10\frac{C_{0}}{c_{0}}\delta :=C_{1}\delta.\nonumber
\end{align}
This completes the proof of Lemma \ref{lem6.2}.
\end{proof}

In the following we will consider $\psi$ instead of $\phi$. Without loss of generality, we may assume $C_1=1$ in (\ref{1.343411234123}).

In the remaining of the proof, we use $C$, $C_{2}$, $C_{3}, \ldots$ (respectively, $\Psi$, $\Psi_{2}$, $\Psi_{3}, \ldots$) to denote positive constants (respectively, positive functions with $\lim_{s\downarrow0}\Psi(s)=0$ etc.) depending only on $n$, $L$, $C_{1}$ and the manifold $N$.
The values of $C$ or $\Psi$ may change from lines to lines.

Let $\tilde{d}:=\delta^{-\frac{1}{2n}}d$, $\tilde{m}=m$, $\tilde{h}:=\delta^{-\frac{1}{n}}h$.
We use $\tilde{X}$, $\tilde{U}_{0}$, $\tilde{N}$ to denote $(X,\tilde{d}, \tilde{m})$, $(U_{0},\tilde{d}, \tilde{m})$, $(N,\tilde{h})$ respectively,
and use $\tilde{B}_{0}$, $\tilde{B}_{1}$, $\tilde{B}_{2}$ to denote $B_{\delta^{-\frac{1}{2n}}\frac{1}{4}L}(0^{s})$, $B_{\delta^{-\frac{1}{2n}}\frac{1}{5}L}(0^{s})$ and $B_{\delta^{-\frac{1}{2n}}\frac{1}{6}L}(0^{s})$ respectively.
$\Gamma$ and $G$ act isometrically on $\tilde{U}_{0}$ and $\tilde{N}\times \tilde{B}_{0}$ in a natural way.
Denote  $\tilde{\Xi}=\delta^{-\frac{1}{2n}}\Xi$, $\tilde{v}=\delta^{-\frac{1}{2n}}v$ and $\tilde{\Xi}_{1}=\pi_{1}\circ\tilde{\Xi}$.
Then $(\tilde{\Xi},\psi)$ satisfies the following:
\begin{description}
\item[(a')]  for every  $\gamma \in \Gamma$ and  $x \in\tilde{U}_0$,
		\begin{align} \label{1.3434112344141}
			d\big(\tilde{\Xi}(\gamma(x)),\, \psi(\gamma)(\tilde{\Xi}(x))\big) \leq \delta^{\frac{2n-1}{2n}};
		\end{align}
        \item[(b')] $\tilde{\Xi}_{2}:=\pi_{2}\circ \tilde{\Xi}=\tilde{v}$;
        \item[(c')]  for every $p \in \tilde{U}_0$ and every $r \in(0, \delta^{-\frac{1}{2n}})$, $\tilde{\Xi}_{1}:=\pi_{1}\circ \tilde{\Xi}$ satisfies
\begin{align}\label{7.589'}
     \tilde{\Xi}_{1}(B_{r-\delta^{\frac{2n-1}{2n}}}(p)) \subset B_{r}(\tilde{\Xi}_{1}(p)) \subset T_{\delta^{\frac{2n-1}{2n}}}(\tilde{\Xi}_{1}(B_{r+\delta^{\frac{2n-1}{2n}}}(p)\cap \tilde{v}^{-1}(\tilde{v}(p)))),
\end{align}
or $\tilde{\Xi}:T_{2\delta^{-\frac{1}{2n}}}(\tilde{U}_{0})\rightarrow \tilde{N}\times B_{\delta^{-\frac{1}{2n}}L}(0^{s})$ is a $\delta^{\frac{2n-1}{2n}}$-Gromov-Hausdorff approximation;
\item[(d')] $\tilde{\Xi}:\tilde{v}^{-1}(B_{\frac{1}{2}L\delta^{-\frac{1}{2n}}}(0^s))\rightarrow \tilde{N}\times B_{\frac{1}{2}L\delta^{-\frac{1}{2n}}}(0^s) $ is $\delta^{\frac{2n-1}{2n}}$-almost onto.
\end{description}
In addition,
\begin{align}\label{4.10--100}
|\Delta \tilde{v}^{i}|_{L^{\infty}}\leq \delta^{\frac{1}{2n}},
\end{align}
and for any $p\in \tilde{U}_{0}$, we have
\begin{align}\label{4.09--100}
&\bbint_{B_{100}(p)}|\langle\nabla \tilde{v}^{i}, \nabla \tilde{v}^{j} \rangle- \delta_{ij}|\\
\leq & \frac{m(B_{\delta^{-\frac{1}{2n}}}(p))}{m(B_{100}(p))}
\bbint_{{B}_{\delta^{-\frac{1}{2n}}}(p)}| \langle\nabla \tilde{v}^{i}, \nabla \tilde{v}^{j} \rangle- \delta_{ij}|\leq C_{2}\delta^{\frac{1}{2}}. \nonumber
\end{align}
Then by \cite{HondaPeng2023}*{Corollary 3.3}, on $\tilde{U}_{0}$ we have
\begin{align}\label{4.11--100}
\Lip \tilde{v}^{i}\leq C_{3}.
\end{align}

Suppose $\delta$ is sufficiently small (depending only on the injectivity radius lower bound and curvature bounds of $N$) such that, for each $q\in \tilde{N}$,
the exponential map $E_{q}\overset{\Delta}{=}\exp_{q}:(B_{1000}(0^k),0^k)\to (B_{1000}(q),q)$,
and its inverse map, denoted by $F_{q}:(B_{1000}(q),q)\to (B_{1000}(0^k),0^k)$ are both $\Psi_2(\delta)$-Gromov-Hausdorff approximations and $(1+\Psi_2(\delta))$-Lipschitz maps, and under the normal coordinate $F_{q}$, (\ref{5.160})-(\ref{5.161}) holds.
Since (c') holds, by an argument by contradiction we can prove that, for each $p\in \tilde{U}_{0}$, there exists a map $u_p: (B_{50}(p),p)\to (\R^k,0^k)$, so that $(u_{p},\tilde{v})|_{B_{50}(p)}$ is $\Psi_{3}(\delta)$-close to $(F_\lambda,\mathrm{id})\circ \tilde{\Xi}|_{B_{50}(p)}$, and (\ref{5.131})-(\ref{5.133567}) hold.

Denote $\tilde{U}_{1}=T_{200}(\tilde{\Xi}^{-1}(\tilde{N}\times \tilde{B}_{1}))$, $\tilde{U}_{2}=T_{1}(\tilde{\Xi}^{-1}(\tilde{N}\times \tilde{B}_{2}))$.
By Proposition \ref{fibration-blowup}, there exists a map $f:\tilde{U}_{1} \to \tilde{N}$, such that  for any $p\in \tilde{U}_{2}$, the map  $\bar{f}_{p}\overset{\Delta}{=}(f^{1},\ldots,f^{k})=F_{q}\circ f:B_{10}(p)\rightarrow \R^{k}$ satisfies  (\ref{5.18-1})-(\ref{5.18-2}) and
\begin{align}\label{6.13}
\bbint_{B_{10}(p)}|\langle\nabla {f}^{i}, \nabla \tilde{v}^{\alpha} \rangle|\leq \Psi_{4}(\delta)
\end{align}
for $i\in\{1,\ldots,k\}$ and $\alpha\in\{1,\ldots,s\}$, where  $q=f(p)$.
Moreover, $f$ is $\Psi_4(\delta)$-close to $\tilde{\Xi}_1$ on $\tilde{U}_{1}$.

For each $\gamma\in \Gamma$, we consider the map $\pi_{1}\circ \psi(\gamma^{-1})\circ \Pi\circ \gamma: \tilde{U}_{1}\rightarrow \tilde{N}$, which is short-listed by $f_{\gamma}$.
Since (\ref{6.7--1}) and (\ref{1.3434112344141}) hold,  it is easy to check that for any $x\in \tilde{U}_{1}$,
\begin{align}\label{6.11}
    d_{\tilde{N}}(f_{\gamma}(x),f(x))<\Psi_{5}(\delta).
\end{align}
In addition, since (\ref{5.18-1})-(\ref{5.18-2}) , (\ref{6.13}) and (\ref{6.7--1}) hold,
it is easy to see that for every $\gamma\in \Gamma$, $\bar{f}_{\gamma,p}=(f_{\gamma}^{1},\ldots,f_{\gamma}^{k}):=F_{f_{\gamma}(p)}\circ f_{\gamma}$ satisfies
\begin{align}\label{5.18-111}
\mathrm{Lip} \bar{f}_{\gamma,p} \leq C_{4},
\end{align}
\begin{align}\label{5.18-211}
|\Delta f_{\gamma}^{i}|_{L^{\infty}}\leq C_{4},
\end{align}
\begin{align}\label{5.18-311}
\bbint_{B_{10}(p)}|\langle\nabla f_{\gamma}^{i}, \nabla f_{\gamma}^{j} \rangle- \delta_{ij}|\leq \Psi_{6}(\delta),
\end{align}
\begin{align}\label{6.13-11}
\bbint_{B_{10}(p)}|\langle\nabla f_{\gamma}^{i}, \nabla \tilde{v}^{\alpha} \rangle|\leq \Psi_{6}(\delta)
\end{align}
for every $i,j\in\{1,\ldots,k\}$, $\alpha\in\{1,\ldots,s\}$.

We fix a bi-invariant probability measure $m_{\Gamma}$ on $\Gamma$ ($m_{\Gamma}$ can be chosen to be $m_{\Gamma}=\frac{1}{|\Gamma|}(\sum_{\gamma\in\Gamma} \delta_{\gamma})$), and define $E : \tilde{U}_{2}\times \tilde{N}\rightarrow \R$ by
\begin{align}\label{6.12}
E(x,y)=\frac{1}{2}\int_{\Gamma}d_{\tilde{N}}^{2}(f_{\gamma}(x),y)d m_{\Gamma}(\gamma).
\end{align}

By the convex radius estimate, for any fixed $x\in \tilde{U}_{2}$, $E(x,\cdot)$ is strictly convex on $B_5(f(x))$.
Let $\vartheta(x)$ be the unique minimum point of $E(x,\cdot)$ on $\tilde{N}$.
Then we define $\Upsilon=(\vartheta,\tilde{v}):\tilde{U}_{2}\rightarrow \tilde{N}\times \tilde{B}_{2}$.
The following properties are easy to check:
\begin{description}
  \item[(1)] $d_{\tilde{N}}(\vartheta(x),f(x))<\Psi(\delta)$ for every $x\in \tilde{U}_{2}$ (and hence $d_{\tilde{N}}(\vartheta(x),f_{\gamma}(x))<\Psi(\delta)$ for every $x$ and every $\gamma\in \Gamma$);
  \item[(2)] for every $\gamma\in \Gamma$, we have $\Upsilon\circ \gamma=\psi(\gamma)\circ \Upsilon$.
\end{description}

Let $p\in\tilde{U}_{3}:=\Upsilon^{-1}(\tilde{N}\times B_{\delta^{-\frac{1}{2n}}\frac{1}{7}L}(0^{s}))$ be fixed.
We will  abuse the notation that $y=(y^{i})$ stands for both a point around $q=\vartheta(p)$ and its coordinate with respect to the normal coordinate $F_{q}$.
Denote by $\bar{\vartheta}_{p}\overset{\Delta}{=}(\vartheta^{1},\ldots,\vartheta^{k})=F_{q}\circ \vartheta:B_{10}(p)\rightarrow \R^{k}$.

By definition, $y=\bar{\vartheta}_{p}=(\vartheta^{1},\ldots,\vartheta^{k})$ satisfies the equations
\begin{align}\label{5.14---1}
\frac{\partial}{\partial y^\alpha }E(x,y)=0,\qquad \alpha=1,\ldots,k.
\end{align}
Analogous to the proof of (\ref{5.18-1}), we can prove
\begin{align}\label{5.18---1}
\Lip \bar{\vartheta}_{p}\leq  C.
\end{align}

By an argument similar to (\ref{5.15}), we can prove that for every $x\in B_{10}(p)$ at where $\bar{\vartheta}_{p}$ is differentiable and around where a chart exists, it holds
\begin{align}\label{5.15-1111}
D\frac{\partial E}{\partial y^\alpha }+\frac{\partial^{2}E}{\partial y^{\beta}\partial y^{\alpha}} D\vartheta^{\beta}=0,
\end{align}
and hence
\begin{align}\label{5.19-1111}
D \vartheta^{\alpha}=-K^{\alpha\beta}D \frac{\partial E}{\partial y^{\beta}},
\end{align}
where $(K^{\alpha\beta})$ is the inverse matrix of $(\frac{\partial^{2}E}{\partial y^{\alpha}\partial y^\beta}\bigl|_{(x,\bar{\vartheta}_{p}(x))})$.
By (\ref{6.12}) and (\ref{5.162}), it is easy to see that
\begin{align}\label{6.9}
\bigl|K^{\alpha\beta}- \delta_{\alpha\beta}\bigr|\leq \Psi(\delta).
\end{align}

For each $\gamma\in\Gamma$, we consider the coordinate transformation matrix $\eta_{\gamma}:F_{f_{\gamma}(p)}(B_{\frac{21}{2}}(q))\rightarrow F_{q}(B_{\frac{21}{2}}(q))$.
i.e. $\eta_{\gamma}(\vec{y})=F_{q}(E_{f_{\gamma}(p)}(\vec{y}))$.
By Appendix \ref{sec-8},
we have
\begin{align}\label{5.43--1}
\left|\frac{\partial \eta_{\gamma}^{\nu}}{\partial y^{\xi}_{\gamma}}\right|\leq C_{5},
\qquad
\left|\frac{\partial^{2}\eta_{\gamma}^{\xi}}{\partial y_{\gamma}^{\omega} \partial y_{\gamma}^{\theta}}\right|\leq C_{5}.
\end{align}
Moreover, the matrix $(\frac{\partial \eta_{\gamma}^{\nu}}{\partial y^{\xi}_{\gamma}})_{\nu\xi}$ can be written in the form $(B^{(\lambda)}+A^{(\lambda)})_{\nu\xi}$, where the matrix $B^{(\lambda)}\in \mathrm{SO}(k)$, while the components of the matrix $A^{(\lambda)}$ satisfy $|A^{(\lambda)}_{\nu\xi}|\leq \Psi_7(\delta)$.

Let $\tilde{f}_{\gamma}= \eta_{\gamma}\circ \bar{f}_{\gamma,p}$.
It is easy to see
\begin{align}
|\tilde{f}_{\gamma}(x)-\bar{f}_{p}(x)|\leq \Psi(\delta)
\end{align}
for every $x\in B_{10}(p)$.
In addition, by  (\ref{5.18-111}), (\ref{5.18-211}) and (\ref{5.43--1}), we have
\begin{align}\label{5.5332--1}
|\Lip \tilde{f}_{\gamma}|\leq C,
\end{align}
\begin{align}\label{5.5333--1}
|\Delta \tilde{f}_{\gamma}^{i}|_{L^{\infty}}\leq C.
\end{align}
Then according to Lemma \ref{lem-6.1}, we have
\begin{align}\label{5.46---1}
\bbint_{B_{10}(p)}|\nabla(\tilde{f}_{\gamma}^{i}-{f}^{i})|^{2}
\leq \Psi(\delta).
\end{align}

On the other hand, analogous to (\ref{5.45}), we have
\begin{align}\label{5.18-311-11}
\bbint_{B_{10}(p)}|\langle\nabla \tilde{f}_{\gamma}^{i}, \nabla \tilde{f}_{\gamma}^{j} \rangle- \delta_{ij}|\leq \Psi(\delta).
\end{align}
Thus, similar to (\ref{5.67}), for any $\gamma,\gamma'\in \Gamma$,
\begin{align}\label{6.31}
\bbint_{B_{10}(p)} |\langle \nabla \tilde{f}_{\gamma}^{i},\nabla \tilde{f}_{\gamma'}^{j}\rangle-\delta_{ij}|\leq \Psi(\delta).
\end{align}

Since $\Gamma$ is finite, for almost every $x\in B_{10}(p)$, $f_{\gamma}$ is differentiable at $x$ for every $\gamma\in\Gamma$.
Then at such an $x$, and for those $y$ with $d_{\tilde{N}}(f(x),y)<50$, $\frac{\partial E}{\partial y^{\beta}}$ is differentiable at $x$, and
\begin{align}\label{6.29}
D \frac{\partial E}{\partial y^{\beta}}=\frac{1}{2}\int_{\Gamma} \frac{\partial^{2}d_{\tilde{N}}^{2}(y_{1},y_{2})}{\partial y_{1}^{\omega}\partial y_{2}^{\beta}}\biggl|_{\substack{y_{1}=\tilde{f}_{\gamma}(x)\\y_{2}=\bar{\vartheta}_{p}(x)}}\cdot D \tilde{f}_{\gamma}^{\omega} d m_{\Gamma}(\gamma).
\end{align}

By (\ref{5.161}),  (\ref{6.9}), (\ref{5.19-1111}), (\ref{6.31}), (\ref{6.29}) and (\ref{5.5332--1}), we have

\begin{align}\label{5.18---3}
&\bbint_{B_{10}(p)}\bigl|\langle\nabla \vartheta^{\alpha}, \nabla \vartheta^{\beta}\rangle-\delta_{\alpha\beta}\bigr| \\
=&\bbint_{B_{10}(p)}\bigl|\langle D \vartheta^{\alpha}, D \vartheta^{\beta}\rangle-\delta_{\alpha\beta}\bigr| \nonumber \\
=&\bbint_{B_{10}(p)}\bigl|K^{\alpha\rho}K^{\beta\theta}\langle D \frac{\partial E}{\partial y^{\rho}}, D\frac{\partial E}{\partial y^{\theta}}\rangle-\delta_{\alpha\beta}\bigr| \nonumber\\
=&\bbint_{B_{10}(p)}\biggl|(\delta_{\alpha\rho}\pm\Psi(\delta))(\delta_{\beta\theta}\pm\Psi(\delta))) \int_{\Gamma\times\Gamma} \bigl[(\delta_{\rho\omega}\pm\Psi(\delta))(\delta_{\theta\tau}\pm\Psi(\delta)) \nonumber\\
&\quad \langle D \tilde{f}_{\gamma}^{\omega}, D
 \tilde{f}_{\gamma'}^{\tau} \rangle  \bigr]
dm_{\Gamma}\otimes dm_{\Gamma}-\delta_{\alpha\beta}\biggr| \nonumber\\
=&\bbint_{B_{10}(p)}\biggl|\int_{\Gamma\times\Gamma} \bigl( \langle \nabla \tilde{f}_{\gamma}^{\alpha},\nabla \tilde{f}_{\gamma'}^{\beta} \rangle
-\delta_{\alpha\beta}\bigr)dm_{\Gamma}\otimes dm_{\Gamma}\pm \Psi(\delta)\biggr| \nonumber\\
\leq &\int_{\Gamma\times\Gamma} \biggl(\bbint_{B_{10}(p)}\bigl|\langle \nabla \tilde{f}_{\gamma}^{\alpha},\nabla \tilde{f}_{\gamma'}^{\beta} \rangle  -\delta_{\alpha\beta}\bigr|\biggr)
dm_{\Gamma}\otimes dm_{\Gamma} + \Psi(\delta) \nonumber\\
\leq & \Psi(\delta). \nonumber
\end{align}

Similar to the proof of Proposition \ref{fibration-blowup}, using the Leibniz rule for the divergence, we can prove that each $\alpha=1,\ldots,k$, $\vartheta^{\alpha}$ belongs to the domain of the local Laplacian on $B_{10}(p)$, and
\begin{align}\label{5.18---2}
|\Delta {\vartheta}^{\alpha}|_{L^{\infty}(B_{10}(p))}\leq C.
\end{align}

Since $d_{\tilde{N}}(\vartheta(x),f(x))<\Psi(\delta)$ holds for every $x\in\tilde{U}_{2}$, and (\ref{5.18-1}), (\ref{5.18-2}), (\ref{5.18---1}), (\ref{5.18---2}) hold, by Lemma  \ref{lem-6.1}, we have
\begin{align}\label{5.3121111}
\bbint_{B_{10}(p)}|\nabla ({\vartheta}^{\alpha}-{f}^{\alpha})|^{2}
\leq\Psi(\delta).
\end{align}
Hence for $\alpha\in\{1,\ldots,k\}$ and $\beta\in\{1,\ldots,s\}$,

\begin{align}\label{6.13123}
&\bbint_{B_{10}(p)}|\langle\nabla {\vartheta}^{\alpha}, \nabla \tilde{v}^{\beta} \rangle|\\
\leq &\bbint_{B_{10}(p)}|\langle\nabla {f}^{\alpha}, \nabla \tilde{v}^{\beta} \rangle| + \bbint_{B_{10}(p)}|\langle\nabla ({\vartheta}^{\alpha}-{f}^{\alpha}), \nabla \tilde{v}^{\beta} \rangle|\nonumber\\
\leq & \Psi(\delta),\nonumber
\end{align}
where we use  (\ref{4.11--100}), (\ref{6.13}), (\ref{5.3121111}), and the Cauchy-Schwarz inequality in the last line.

The proof of Proposition \ref{prop-eq-fibration} can be completed after we rescale back the distances.
\end{proof}

\begin{proof}[Proof of Corollary \ref{cor7.2-smooth}]

If $(X,g)$ is a Riemannian manifold $(X,g)$, then according to Proposition \ref{fibration-smooth}, the map $f:\tilde{U}_{1} \to \tilde{N}$ is smooth.
In particular, for every $\gamma \in \Gamma$, $f_{\gamma}$ is smooth.
Then (\ref{6.29}) holds in the smooth sense at every $x\in B_p(10)$, without assuming $\Gamma$ to be finite.
In addition, by the implicit function Theorem, $\vartheta$ is smooth, and (\ref{5.15-1111}) holds in the smooth sense.

Then the remaining part of the proof of Proposition \ref{prop-eq-fibration} goes through.

Suppose in addition that $U_0$ satisfies the $(\Phi_{0}, 1; k+s,\delta)$-generalized Reifenberg condition. Since $(\bar{\vartheta}_p, \tilde{v})=(F_{q},\id)\circ {\Upsilon}:B_{10}(p)\rightarrow \tilde{N}\times \tilde{B}_{1}$ satisfies (\ref{4.10--100}), (\ref{4.09--100}), (\ref{4.11--100}), (\ref{5.18---1}), (\ref{5.18---2}), (\ref{5.18---3}) and (\ref{6.13123}) , according to Theorem \ref{thm-trans-GRC},
if $\delta$ is sufficiently small (depending on $n$, $L$, $C_{1}$, $\Phi_{0}$ and $N$), then $d\Upsilon$ is non-degenerated on $B_{5}(p)$, and $B_4(0^{k+s})\subset \Upsilon(B_5(p))$.

By an argument similar to the last part of proof of Proposition \ref{fibration-smooth}, we can prove that there exists an open set $\tilde{U}\subset \tilde{U}_{2}$ such that ${\Upsilon}:\tilde{U}\rightarrow \tilde{N}\times B_{\delta^{-\frac{1}{2n}}\frac{1}{10}L}(0^{s})$ is a surjective fibration.
The proof of Corollary \ref{cor7.2-smooth} is completed after rescaling back the metrics.
\end{proof}

\appendix
\section{Normal coordinates}\label{sec-8}

Recall that a metric $g_{ij}(x)dx^{i}dx^{j}$ defined in a ball around the origin is in normal coordinates if every line through the origin is a geodesic (parametrized proportional to arc length) and if $g_{ij}=I_{ij}$ at the origin $x=0$.
The metric $g_{ij}$ is in normal coordinates is equivalent to $g_{ij}x^{i}=I_{ij}x^{i}$.

Let $\nabla$ be the covariant derivative with respect to the metric $g_{ij}$.

\begin{theorem}[\cite{Ham95}*{Theorem 4.5 and Corollary 4.11}]\label{thm-Curvbound-metricbound}
There exist positive constants $c_{1}$ and $\tilde{C}_{0}$ depending only on the dimension $n$ so that the following holds.
Suppose $|\Rm|\leq B_{0}$ in $B_{r}(p)$ and $\inj_{p}\geq r$, where $r\leq \frac{c_{1}}{\sqrt{B_{0}}}$, then in normal coordinates around $p$, at any $x\in B_{r}(p)$,
we have
\begin{align}
|g_{ij}-I_{ij}|\leq \tilde{C}_{0}B_{0}d(x,p)^{2}.
\end{align}
Furthermore, given $q\in \mathbb{Z}^{+}$, suppose $|\nabla^{k}\Rm|\leq B_{k}$ on $B_{r}(p)$, where $r\leq \frac{c_{1}}{\sqrt{B_{0}}}$, $k\leq q$, then
there exists a positive constant $\tilde{C}_{q}$ depending only on $n$, $q$ and $B_{k}$, $1\leq k\leq q$, so that in normal coordinates around $p$, at any $x\in B_{r}(p)$, we have
\begin{align}
\left|\frac{\partial^{q} }{\partial x^{l_{1}}\partial x^{l_{2}}\ldots\partial x^{l_{q}}}g_{ij}\right|\leq \tilde{C}_{q}.
\end{align}
\end{theorem}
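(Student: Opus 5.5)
The plan is to compare the metric in normal coordinates with Jacobi fields along radial geodesics, and to run an ODE/Gronwall argument whose scale is fixed by $r\le c_1/\sqrt{B_0}$.

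\textbf{Setup.} Since $\inj_p\ge r$, the map $\exp_p$ is a diffeomorphism from $B_r(0)\subset T_pM$ onto $B_r(p)$, so normal coordinates are defined on all of $B_r(p)$. Fix an orthonormal basis $e_1,\ldots,e_n$ of $T_pM$, take a unit vector $\theta$, and let $\gamma(t)=\exp_p(t\theta)$. Let $E_a(t)$ be the parallel transport of $e_a$ along $\gamma$. For $x=t\theta$, the coordinate field satisfies $\partial_i|_x=\frac1t J_i(t)$, where $J_i$ is the Jacobi field along $\gamma$ with $J_i(0)=0$ and $J_i'(0)=e_i$. Write $J_i(t)=t\,A_i^a(t)E_a(t)$. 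Then
\begin{align*}
g_{ij}(t\theta)=\sum_a A_i^a(t)A_j^a(t).
\end{align*}
The Jacobi equation $J''+R(J,\gamma')\gamma'=0$ becomes the regular linear ODE
\begin{align*}
tA''+2A'+t\,\mathcal{R}(t)A=0,\qquad A(0)=I,
\end{align*}
where $\mathcal{R}_{ab}(t)=\langle R(E_a,\gamma')\gamma',E_b\rangle$. This ODE has no singularity at $t=0$ when written in integral form.

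\textbf{Zeroth order.} From $|\mathcal R|\le B_0$, the integral form of the ODE, and Gronwall's inequality on $[0,r]$ with $B_0 t^2\le c_1^2$ small, I expect $|A(t)-I|\le C B_0t^2$. This immediately gives $|g_{ij}-I_{ij}|\le \tilde C_0B_0\,d(x,p)^2$. The constant $c_1$ is chosen small enough that $A$ stays uniformly invertible.

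\textbf{Higher derivatives.} I will regard $A$ as a function of $x=t\theta\in B_r(0)$ and prove by induction on $q$ that $\partial_x^{\ell}A$ is bounded for all $\ell\le q$. Differentiating in $x$ moves the geodesic, and the variation field is itself a Jacobi field, already controlled by the zeroth-order step. The parallel frame also changes with $x$: its variation $\nabla_x E_a$ satisfies $\nabla_t\nabla_xE_a=R(\gamma',\partial_x\gamma)E_a$, with zero initial value, so it is bounded. Consequently, differentiating the ODE $q$ times shows that $\partial_x^{q}A$ satisfies the same linear ODE with a source term. This source term is a polynomial in $\partial_x^{\ell}A$ for $\ell<q$, in the derivatives of the frame, and in $\nabla^{j}\Rm$ for $j\le q$, contracted with powers of $t$. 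The source is bounded by the induction hypothesis and the bounds $|\nabla^j\Rm|\le B_j$. Gronwall then bounds $\partial_x^qA$ by a constant depending on $n$, $q$ and $B_1,\ldots,B_q$. Finally, the product formula $g=A^TA$ bounds $\partial^q g_{ij}$, which gives $\tilde C_q$.

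\textbf{Main obstacle.} The delicate point is the bookkeeping in the inductive step. One must check that every $x$-derivative of $\mathcal R(t)$ is expressed through covariant derivatives $\nabla^j\Rm$ (with $j\le q$) and through lower-order derivatives of $A$ and of the frame, and that the extra factors of $t$ exactly cancel the $1/t$ produced by differentiating the angular variable $\theta=x/|x|$. To handle this, I would work throughout with the Cartesian variable $x$ and the regular form of the ODE above, rather than in polar coordinates, so that no $1/|x|$ singularities ever appear.
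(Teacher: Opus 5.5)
The paper gives no proof of this statement; it is quoted from Hamilton. Your route (Jacobi fields in the radially parallel frame, a radial ODE estimate, then induction on the order of derivatives) is the standard one. Your zeroth-order step is correct: $(t^2A')'=-t^2\mathcal R A$ gives $|A-I|\le \tfrac16 B_0t^2\sup|A|$.

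The inductive step, however, does not close as you state it. Write it in Cartesian form. Because $\gamma'$ is parallel, $X=x^k\partial_k=x^aE_a$, and $t$ times your ODE reads
\[
X(X+1)A+\mathcal R^X A=0,\qquad \mathcal R^X_{ab}=t^2\mathcal R_{ab}=x^cx^d\,\langle R(E_a,E_c)E_d,E_b\rangle .
\]
Since $\partial_m X=(X+1)\partial_m$, you get $(X+q)(X+q+1)\partial^qA=-\partial^q(\mathcal R^XA)$. This is not the same operator as before, but a better one: for $(X+m)f=h$ with $m>0$ it is inverted by $f(x)=\int_0^1 s^{m-1}h(sx)\,ds$, so no $1/t$ ever appears.

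The trouble is the source term. Write $\nabla_{\partial_m}E_a=\omega_{ma}{}^bE_b$. Differentiating frame components gives
\[
\partial_m[\Rm(E_a,\dots)]=A_m^e(\nabla_{E_e}\Rm)(E_a,\dots)+\Rm(\omega_{ma}{}^bE_b,\dots)+\cdots,
\]
so $\partial^q\mathcal R^X$ contains $\partial^{q-1}\omega$. You control only $\omega$ itself (your bound on $\nabla_xE_a$). Hence for $q\ge 2$ the source is \emph{not} bounded by an induction hypothesis on $A$ alone.

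The missing ingredient is the radial equation for the frame. From $\nabla_XE_a=0$ and $[X,\partial_m]=-\partial_m$,
\[
(X+1)\,\omega_{ma}{}^b=\langle R(X,\partial_m)E_a,E_b\rangle=x^cA_m^e\,\langle R(E_c,E_e)E_a,E_b\rangle .
\]
Consequently $(X+\ell+1)\partial^\ell\omega$ equals an expression in $\partial^{\le \ell}A$, $\partial^{\le \ell-1}\omega$ and $\nabla^{\le \ell}\Rm$. The induction must therefore run on the pair $(A,\omega)$: at stage $q$, first bound $\partial^{q-1}\omega$, then bound $\partial^qA$. In the second step, absorb the term $\mathcal R^X\partial^qA$ using $|\mathcal R^X|\le B_0|x|^2\le c_1^2$.

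Alternatively, use coordinate components via $E_a=(A^{-1})^i_a\partial_i$. This avoids $\omega$, but then top-order terms $\partial^qA$ enter the source through $\partial^qA^{-1}$ and $\partial^{q-1}\Gamma$. They are not covered by the induction hypothesis and must be absorbed using their factor $O(B_0|x|^2)$.

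Second, the final constant cannot depend only on $n,q,B_1,\dots,B_q$, as you (following the statement) claim. The quadratic Taylor term of $g_{ij}$ in normal coordinates is $-\tfrac13R_{ikjl}x^kx^l$, so $\partial^2 g(0)$ is a fixed nonzero linear expression in $\Rm(p)$. A round sphere of radius $\rho$ has $\nabla\Rm\equiv 0$, yet $|\partial^2 g(0)|\sim\rho^{-2}$. Your own source terms show the same thing: $\partial^2(x^cx^d)\,\Rm$ and $x^cx^d\,\nabla\Rm$ produce bounds involving $B_0$ and powers of $|x|\le r$. With $r\le c_1/\sqrt{B_0}$, the bound you actually obtain depends on $B_0,\dots,B_q$, so the range $1\le k\le q$ in the statement must include $k=0$.
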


\begin{theorem}[\cite{Ham95}*{Theorem 5.1}]\label{prop-trans-matrix}
Let $y = F(x)$ be an isometry from a ball in Euclidean space with a metric $g_{ij}dx^{i}dx^{j}$ to a ball in Euclidean space with a metric $h_{pq}dy^{p}dy^{q}$.
Assume $a_{0}I_{ij}x^{i}x^{j}\leq g_{ij}x^{i}x^{j}\leq A_{0}I_{ij}x^{i}x^{j}$, $a_{0}I_{pq}y^{p}y^{q}\leq h_{pq}y^{p}y^{q}\leq A_{0}I_{pq}y^{p}y^{q}$, and
\begin{align}
\left|\frac{\partial^{k} }{\partial x^{l_{1}}\partial x^{l_{2}}\ldots\partial x^{l_{k}}}g_{ij}\right|\leq A_{k},
\end{align}
\begin{align}
\left|\frac{\partial^{k} }{\partial y^{l_{1}}\partial y^{l_{2}}\ldots\partial y^{l_{k}}}h_{pq}\right|\leq A_{k}
\end{align}
for $1\leq k\leq s$.
Then for $1\leq k\leq s+1$, we have
\begin{align}
\left|\frac{\partial^{k} y^{p}}{\partial x^{l_{1}}\partial x^{l_{2}}\ldots\partial x^{l_{k}}}\right|\leq B_{k},
\end{align}
where $B_{k}$ depends only on $a_{0}$ and $A_{t}$, $0\leq t\leq k-1$.
\end{theorem}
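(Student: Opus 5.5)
The plan is to use the fact that an isometry intertwines the Levi-Civita connections. This turns the first-order pullback relation into a second-order PDE for $y=F(x)$, whose right-hand side involves only Christoffel symbols and first derivatives of $y$. Higher derivatives then follow by induction, differentiating this identity.

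\textbf{Step 1: first derivatives ($k=1$).} Since $F$ is an isometry, $g_{ij}(x)=h_{pq}(y(x))\,\frac{\partial y^p}{\partial x^i}\frac{\partial y^q}{\partial x^j}$. Fix a Euclidean unit vector $\xi$ and set $w^p=\frac{\partial y^p}{\partial x^i}\xi^i$. Then
\begin{align*}
a_0|w|^2\le h_{pq}w^pw^q=g_{ij}\xi^i\xi^j\le A_0 .
\end{align*}
Hence $|\partial y|\le \sqrt{A_0/a_0}=:B_1$.

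\textbf{Step 2: second derivatives ($k=2$).} Because $F$ maps geodesics to geodesics, equivalently preserves the connection, we have the transformation law
\begin{align*}
\frac{\partial^2 y^p}{\partial x^i\partial x^j}=\Gamma(g)^{l}_{ij}(x)\,\frac{\partial y^p}{\partial x^l}-\Gamma(h)^{p}_{qr}(y(x))\,\frac{\partial y^q}{\partial x^i}\frac{\partial y^r}{\partial x^j}.
\end{align*}
Each Christoffel symbol is $\frac12 g^{-1}\partial g$. The inverse metrics are bounded by $a_0^{-1}$ and the first derivatives of the metrics by $A_1$, so $|\Gamma(g)|,|\Gamma(h)|\le C(n)A_1/a_0$. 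Combining with Step 1 gives $B_2$ depending on $a_0,A_0,A_1$.

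\textbf{Step 3: induction on $k$.} Suppose bounds $B_1,\dots,B_{k-1}$ on the derivatives of $y$ are known, with $3\le k\le s+1$. Differentiate the identity of Step 2 $(k-2)$ more times in $x$. The left side is $\partial^k y$. The right side is a polynomial in three kinds of terms. First, $x$-derivatives of $\Gamma(g)$ up to order $k-2$, which involve $\partial^{\le k-1}g$ and derivatives of $g^{-1}$; the latter are polynomials in $g^{-1}$ and $\partial g$, and $|g^{-1}|\le a_0^{-1}$. Second, $y$-derivatives of $\Gamma(h)$ up to order $k-2$, composed with $y(x)$; by the chain rule (Faà di Bruno) these are multiplied by derivatives of $y$ of order at most $k-2$. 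Third, derivatives of $y$ of order at most $k-1$. Every quantity appearing is therefore controlled by $a_0$, $A_t$ with $t\le k-1$, and $B_1,\dots,B_{k-1}$, which yields $B_k$. Since $k\le s+1$ requires only $A_t$ for $t\le s$, the hypotheses suffice.

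The main obstacle is justifying the connection transformation law in Step 2 rigorously and at the needed regularity. I would obtain it by differentiating $g_{ij}=h_{pq}\,\partial_i y^p\,\partial_j y^q$ once. One then cyclically permutes the indices $i,j,l$ and combines the three resulting equations, as in the derivation of the Koszul formula, using the invertibility of $\partial y$ from Step 1. This argument also shows inductively that $y$ is as smooth as the metrics permit, so the successive differentiations in Step 3 are legitimate. The rest is bookkeeping of which derivative orders appear.
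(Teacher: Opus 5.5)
Your proposal is correct and is essentially Hamilton's proof of the cited Theorem 5.1; the paper itself only quotes the result without proof. As in Hamilton, you bound $\partial y$ from $g_{ij}=h_{pq}\,\partial_i y^p\,\partial_j y^q$, solve the Christoffel transformation law for $\partial^2 y$, and differentiate that identity inductively so that $\partial^k y$ is controlled by $a_0$ and $A_t$, $t\le k-1$.

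The one soft spot is your regularity remark: the Koszul-type derivation already assumes $y\in C^2$. This does no harm here, because the theorem is only applied to smooth transition maps between normal coordinate charts.
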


\begin{theorem}[\cite{CCG-I-07}*{Proposition 4.48}]\label{thm-Curvbound-expdbound}
There exists a positive constant $c_{2}$ depending only on the dimension $n$ (we may assume $c_{2}\leq c_{1}$ in Theorem \ref{thm-Curvbound-metricbound}) so that the following holds.
Suppose $\inj_{p}\geq 4\frac{c_{2}}{\sqrt{B_{0}}}$, and $|R_{ijkl}|\leq B_{0}$, $|\nabla^{j}\Rm|\leq B_{j}$, $j\leq q$, $q\in \mathbb{Z}^{+}$, in $B_{r}(p)$, where $r\leq \frac{c_{2}}{\sqrt{B_{0}}}$.
Then for any $x,y\in B_{r}(p)$
such that $x$ is not in the cut locus of $y$,
we have
\begin{align}\label{6.3}
|\nabla^{l_{1}}_{x}\nabla^{l_{2}}_{y}\exp_{y}^{-1}x|\leq \bar{C}_{q}
\end{align}
for  $|l_{1}|+|l_{2}|\leq q-1$,
where $\bar{C}_{q}$ is a positive constant depending only on $n$, $q$ and $B_{j}$, $j\leq q$.
\end{theorem}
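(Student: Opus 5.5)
The plan is to reduce everything to a fixed coordinate chart in which the metric is uniformly controlled, and then treat $(x,y)\mapsto \exp_y^{-1}x$ as the inverse of a smooth flow map given by the geodesic ODE.

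\textbf{Step 1: the chart.} First choose $c_2\le c_1$ so small that the ball $B_{4c_2/\sqrt{B_0}}(p)$ lies inside the domain of normal coordinates at $p$. This is possible because $\inj_p\ge 4c_2/\sqrt{B_0}$. Apply Theorem \ref{thm-Curvbound-metricbound} on this larger ball. Since we may rescale so that $B_0\le 1$, in normal coordinates $x=(x^1,\dots,x^n)$ around $p$ we get
\begin{align*}
|g_{ij}-I_{ij}|\le \tfrac{1}{10}, \qquad |\partial^{m} g_{ij}|\le \tilde C_m \quad (m\le q).
\end{align*}
Consequently the Christoffel symbols $\Gamma^k_{ij}$ and their coordinate derivatives up to order $q-1$ are bounded by constants depending only on $n,q,B_j$. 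In these coordinates, covariant derivatives of a function of $(x,y)$ differ from coordinate derivatives by terms involving $\Gamma$ and its derivatives. Hence it suffices to bound ordinary coordinate derivatives of $V(x,y):=\exp_y^{-1}x$, written in the coordinate frame $\partial_i|_y$.

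\textbf{Step 2: derivative bounds for the exponential map.} Write $\mathcal E(y,v)=\exp_y v=\sigma(1)$, where $\sigma$ solves
\begin{align*}
\ddot\sigma^k+\Gamma^k_{ij}(\sigma)\dot\sigma^i\dot\sigma^j=0,\qquad \sigma(0)=y,\ \dot\sigma(0)=v.
\end{align*}
Take $|v|\le 3r$. By the metric comparison, the geodesic stays inside the coordinate ball of radius $4c_2/\sqrt{B_0}$. Standard smooth dependence of ODE solutions on initial data (Gronwall applied to the variational equations) then bounds all derivatives of $\mathcal E$ in $(y,v)$ up to order $q-1$ by constants depending on $n,q,B_j$.

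\textbf{Step 3: inversion, the main obstacle.} The heart of the matter is a uniform lower bound on $\partial_v\mathcal E$. For this I would use Rauch comparison for Jacobi fields. Since $|\Rm|\le B_0$ and $|v|\le 3r\le 3c_2/\sqrt{B_0}$ is well below the conjugate radius $\pi/\sqrt{B_0}$, a Jacobi field $J$ with $J(0)=0$ satisfies $\tfrac12|J'(0)|\le |J(1)|\le 2|J'(0)|$ once $c_2$ is small. Combined with $g\approx I$, this gives $|\partial_v\mathcal E-\mathrm{Id}|\le \tfrac12$ in coordinates.

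Next, $V(x,y)$ is well defined. Because $x,y\in B_r(p)$ and $x$ is not in the cut locus of $y$, the minimizing geodesic from $y$ to $x$ has length $<2r$ and stays in the chart, so $V(x,y)=v$ is the unique solution of $\mathcal E(y,v)=x$ with $|v|<2r$. The implicit function theorem then gives
\begin{align*}
\partial_x V=(\partial_v\mathcal E)^{-1},\qquad \partial_y V=-(\partial_v\mathcal E)^{-1}\partial_y\mathcal E .
\end{align*}

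\textbf{Step 4: higher derivatives.} Differentiate these identities repeatedly. Each further derivative of $V$ is a polynomial in derivatives of $\mathcal E$ of order at most $q-1$ and in $(\partial_v\mathcal E)^{-1}$, which is bounded by Step 3. Inducting on $|l_1|+|l_2|\le q-1$ yields \eqref{6.3}. Converting back from coordinate derivatives to covariant derivatives via the bounds on $\Gamma$ costs only constants of the same type.

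I expect the delicate points to be the Jacobi-field nondegeneracy in Step 3 and the bookkeeping that keeps every geodesic inside the controlled chart. Both are handled by taking $c_2$ small relative to $c_1$ and to the constant in Theorem \ref{thm-Curvbound-metricbound}.
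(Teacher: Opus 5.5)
The paper gives no proof of this statement; it is quoted from \cite{CCG-I-07}, so there is no in-paper argument to compare with. Your route is a standard, self-contained proof: normal coordinates at $p$, $\exp$ as the time-one map of the geodesic ODE, Gronwall on the variational equations, Rauch for invertibility of $\partial_v\mathcal{E}$, then the implicit function theorem. Your derivative count is also right. The bounds $|\nabla^m\Rm|\le B_m$ for $m\le q$ give $\partial^q g$ bounded by Theorem \ref{thm-Curvbound-metricbound}, hence $\Gamma\in C^{q-1}$, hence $\mathcal{E}$ and $V$ are $C^{q-1}$ with bounds. Converting back to covariant derivatives of order $\le q-1$ only costs $\partial^{\le q-2}\Gamma$.

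The genuine issue is that you use curvature bounds the statement does not grant. In Step 1 you apply Theorem \ref{thm-Curvbound-metricbound} on $B_{4c_2/\sqrt{B_0}}(p)$, and Steps 2--3 run Gronwall and Rauch along geodesics lying in $B_{4r}(p)$. The hypotheses, however, only control $\Rm$ on $B_r(p)$. This is not cosmetic: minimizing geodesics between points of $B_r(p)$ can leave $B_r(p)$, and then the estimate fails.

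Here is an example. For $n\ge 3$, take $d\rho^2+f(\rho)^2g_{S^{n-1}}$ with $f(\rho)=\rho$ on $[0,r]$ and $f$ dropping quickly to a tiny constant $\epsilon$ just beyond $r$. Then $\Rm\equiv 0$ on $B_r(p)$ and $\inj_p=\infty$. Take $y=\rho_1e_1$ and $x=-\rho_1e_1$ with $\rho_1$ close to $r$. The route through the thin region is much shorter than $2\rho_1$, so the minimizing geodesics lie off the axis and come in an $SO(n-1)$-family. Hence $x$ is conjugate to $y$, and $|\nabla_x\exp_y^{-1}|$ tends to infinity at non-cut points $x'\to x$ of $B_r(p)$.

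So you must assume the bounds on a ball containing all the geodesics you use, e.g.\ $B_{4c_2/\sqrt{B_0}}(p)$ with $4c_2\le c_1$. With that stated, your argument goes through. This is also the only setting in which the paper uses the result, since the appendix assumes global curvature bounds.

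Two smaller corrections. First, Rauch gives $\frac12|w|\le|\partial_v\mathcal{E}\,w|\le 2|w|$. That does not imply $|\partial_v\mathcal{E}-\mathrm{Id}|\le\frac12$, since a rotation satisfies the same norm bounds. Fortunately the resulting bound on $(\partial_v\mathcal{E})^{-1}$ is all Step 4 needs. Second, your claim that $V(x,y)$ is the unique preimage with $|v|<2r$ would require $\inj_y\ge 2r$, which you do not prove. You can skip it: $(x,y)\mapsto\exp_y^{-1}x$ is already smooth near non-cut pairs, and the implicit-function identities are only used pointwise at $(y,V(x,y))$.
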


Given an $n$-dimensional manifold $(M,g)$ such that $|\Rm|\leq B_{0}\leq \delta$, $|\nabla^{j}\Rm|\leq 1$ for $j=1, 2, 3, 4$, and $\inj \geq \frac{1}{\delta}$.
For any fixed $p\in M$, we consider the normal coordinate around $p$.
According to Theorem \ref{thm-Curvbound-metricbound}, on $B_{\delta^{-\frac{1}{4}}}(p)$ we have
\begin{align}\label{Ap1.1}
|g_{jk}-I_{jk}|\leq \delta^{\frac{1}{2}},
\end{align}
\begin{align}
\left|\frac{\partial^{q} }{\partial x^{j_{1}}\ldots\partial x^{j_{q}}}g_{jk}\right|\leq \tilde{C}_{1}, \quad q=1, 2, 3, 4.
\end{align}
(\ref{Ap1.1}) means $g$ is $(1+2\delta^{\frac{1}{2}})$-bi-Lipschitz to $g_{\mathrm{Eucl}}$, and it is easy to see $B_{\delta^{-\frac{1}{4}}}(p)$ is $\delta^{\frac{1}{4}}$-Gromov-Hausdorff close to $B_{\delta^{-\frac{1}{4}}}(0^{n})$.
Furthermore, by an argument by contradiction, it is easy to prove that, for any given $L>0$,
\begin{align}
\left|\frac{\partial}{\partial x^{i}}g_{jk}\right|\leq \Psi(\delta|n, L)
\end{align}
holds on $B_{L}(p)$.
Thus the Christoffel symbols $\Gamma^{i}_{jk}$ satisfies
\begin{align}\label{6.7}
|\Gamma^{i}_{jk}|\leq \Psi(\delta|n, L)
\end{align}
on $B_{L}(p)$.

By Hessian comparison Theorem and (\ref{Ap1.1}), we have
\begin{align}\label{7.111112222}
\left|\nabla_{\frac{\partial}{\partial x^{i}}}\nabla_{\frac{\partial}{\partial x^{j}}} d^{2}(x,y)-2I_{ij}\right|\leq \Psi(\delta|n),
\end{align}
\begin{align}\label{7.112222}
\left|\nabla_{\frac{\partial}{\partial x^{i}}}\nabla_{\frac{\partial}{\partial y^{j}}}d^{2}(x,y)+2I_{ij}\right|\leq \Psi(\delta|n),
\end{align}
and then by (\ref{6.7}), we have
\begin{align}\label{7.11111}
\left|\frac{\partial^{2}}{\partial x^{i}\partial x^{j}}d^{2}(x,y)-2I_{ij}\right|\leq \Psi(\delta|n, L),
\end{align}
\begin{align}\label{7.11}
\left|\frac{\partial^{2}}{\partial x^{i}\partial y^{j}}d^{2}(x,y)+2I_{ij}\right|\leq \Psi(\delta|n, L).
\end{align}

For any $x,y\in B_{\delta^{-\frac{1}{4}}}(p)$ such that $d(x,y)\leq L$, since $d^{2}(x,y)=|\exp^{-1}_{y}x|^{2}$, by Theorem \ref{thm-Curvbound-expdbound}, we have
\begin{align}\label{6.71}
&|\nabla_{x}\nabla_{y}d^{2}(x,y)|=|\nabla_{x}\nabla_{y}|\exp_{y}^{-1}x|^{2}|\\
\leq& 2|\langle\nabla_{x}\nabla_{y} \exp_{y}^{-1}x, \exp_{y}^{-1}x\rangle|+ 2|\langle\nabla_{x} \exp_{y}^{-1}x, \nabla_{y}\exp_{y}^{-1}x\rangle|\nonumber\\
\leq& C_{1},\nonumber
\end{align}
and similarly, we can prove
\begin{align}\label{6.72}
&|\nabla_{x}^{l_{1}}\nabla_{y}^{l_{2}}d^{2}(x,y)|\leq  C_{2},
\end{align}
for $|l_{1}|+|l_{2}|=3$,
where $C_{1}$, $C_{2}$ are positive constants depending only on $n$ and $L$.

By (\ref{6.71}), (\ref{6.72}), (\ref{6.7}) and the boundedness of the Christoffel symbols (\ref{6.7}),
\begin{align}
\left|\frac{\partial^{3}}{\partial x^{i}\partial x^{j}\partial y^{k}}d^{2}(x,y)\right|\leq C_{3}
\end{align}
for any $x,y\in B_{\delta^{-\frac{1}{4}}}(p)$ satisfying $d(x,y)\leq L$,
with $C_{3}>0$ depends only on $n$ and $L$.

Suppose $B_{50}(p_{3})\subset B_{\delta^{-\frac{1}{4}}}(p_{1})\cap B_{\delta^{-\frac{1}{4}}}(p_{2})$.
On $B_{50}(p_{3})$, we use $\{x^{i}\}$, $\{y^{p}\}$ to denote normal coordinates around $p_{1}$ and $p_{2}$ respectively, and the metric is written in the forms $g_{ij}dx^{i}dx^{j}$ and $h_{pq}dy^{p}dy^{q}$ respectively.

Denote by $G=(g(\frac{\partial}{\partial x^{j}}, \frac{\partial}{\partial x^{k}}))_{1\leq j,k\leq n}$, $H=(h(\frac{\partial}{\partial y^{p}}, \frac{\partial}{\partial y^{q}}))_{1\leq p, q\leq n}$, $S=(S_{kq})_{1\leq k, q\leq n}$, where $S_{kq}=\frac{\partial y^{q}}{\partial x^{k}}$.
Then $G=SHS^{T}$.

At any $B_{50}(p_{3})$, since
$|g_{jk}-I_{jk}|\leq \delta^{\frac{1}{2}}$, there exists a $P\in \mathrm{SO}(n)$ and $\Lambda_{1}=\mathrm{diag}(\lambda_{1}, \ldots,\lambda_{n})$ with $|\lambda_{i}-1|\leq \Psi(\delta)$ so that $G=P^{T}\Lambda_{1}^{2}P$.
Similarly, $H=Q^{T}\Lambda_{2}^{2}Q$ for some $Q\in \mathrm{SO}(n)$ and $\Lambda_{2}=\mathrm{diag}(\mu_{1}, \ldots,\mu_{n})$ with $|\mu_{i}-1|\leq \Psi(\delta)$.

Thus $\Lambda_{1}^{-1}PSQ^{T}\Lambda_{2}^{2}QS^{T}P^{T}\Lambda_{1}^{-1}=\mathrm{Id}$, and hence $L:=\Lambda_{1}^{-1}PSQ^{T}\Lambda_{2}\in \mathrm{SO}(n)$.
Since $S= P^{T}\Lambda_{1}L\Lambda_{2}^{-1}Q$, there exist matrix value functions $B=(b_{kq})_{1\leq k, q\leq n}\in \mathrm{SO}(n)$ and $A=(a_{kq})_{1\leq k, q\leq n}$ with $|a_{kq}|\leq \Psi(\delta)$, so that
\begin{align}
\frac{\partial y^{q}}{\partial x^{k}}=a_{kq}+b_{kq}.
\end{align}

In addition, by Theorem \ref{prop-trans-matrix}, we have
\begin{align}
\left|\frac{\partial^{2} y^{p}}{\partial x^{i}\partial x^{j}}\right|\leq C.
\end{align}

\bibliographystyle{alpha}
\bibliography{ref}

\end{document}